\documentclass[twoside,11pt]{article}

\usepackage{blindtext}

%

%
%
%
\usepackage[abbrvbib, preprint]{jmlr2e}




\usepackage{lastpage}
\jmlrheading{xx}{xxxx}{1-\pageref{LastPage}}{3/25; Revised xx/xx}{xx/xx}{21-0000}{Author One and Author Two}

\usepackage{amsmath}
\usepackage{amssymb}
\usepackage{graphicx}
\usepackage{color}
\usepackage{ifpdf}
\usepackage{url}
\usepackage{algorithm}
\usepackage[usenames,dvipsnames]{xcolor}
\usepackage{paralist}
\usepackage{mathtools}

\usepackage{algorithm}
\usepackage{algpseudocode}
\usepackage{algorithmicx}
\usepackage[ruled,vlined, linesnumbered, algo2e]{algorithm2e}
\usepackage[ruled,vlined]{algorithm2e}
\usepackage{makecell}

\newtheorem{assumption}{Assumption}[section]

\newcommand{\Espace}{\mathbb{R}^p}
\newcommand{\Id}{\mathbb{I}}

\newcommand{\R}{\mathbb{R}}

\newcommand{\Rext}{\R\cup\{+\infty\}}

\newcommand{\set}[1]{\left\{#1\right\}}
\newcommand{\sets}[1]{\{#1\}}

\newcommand{\norms}[1]{\Vert#1\Vert}

\newcommand{\prox}{\mathrm{prox}}

\newcommand{\dom}[1]{\mathrm{dom}(#1)}

\newcommand{\zero}[1]{{\boldsymbol{0}}}

\newcommand{\Expsn}[2]{\mathbb{E}_{#1}\big[#2\big]}
\newcommand{\Exp}[1]{\mathbb{E}\left[#1\right]}
\newcommand{\Expn}[1]{\mathbb{E}\big[#1\big]}

\newcommand{\Expsb}[2]{\mathbb{E}_{#1}\big[#2\big]}

\newcommand{\zer}[1]{\mathrm{zer}(#1)}

\newcommand{\gra}[1]{\mathrm{gra}(#1)}

\newcommand{\mcal}[1]{\mathcal{#1}}

\newcommand{\Hc}{\mathcal{H}}

\newcommand{\Xc}{\mathcal{X}}

\newcommand{\Sc}{\mathcal{S}}

\newcommand{\Lc}{\mathcal{L}}
\newcommand{\Qc}{\mathcal{Q}}

\newcommand{\Tc}{\mathcal{T}}

\newcommand{\Fb}{\mathbf{F}}

\newcommand{\Fc}{\mathcal{F}}
\newcommand{\Uc}{\mathcal{U}}
\newcommand{\Vc}{\mathcal{V}}

\newcommand{\Ec}{\mathcal{E}}
\newcommand{\Nc}{\mathcal{N}}
\newcommand{\Pc}{\mathcal{P}}

\newcommand{\Wc}{\mathcal{W}}

\newcommand{\iprods}[1]{\langle #1\rangle}

\newcommand{\BigO}[1]{\mathcal{O}\left(#1\right)}
\newcommand{\BigOs}[1]{\mathcal{O}\big(#1\big)}
\newcommand{\SmallOs}[1]{o\big(#1\big)}
\newcommand{\SmallO}[1]{o\left(#1\right)}


\newcommand{\mblue}[1]{\textcolor{blue}{#1}}
\newcommand{\myblue}[1]{{\color{blue}{#1}}}



\newcommand{\mbf}[1]{\mathbf{#1}}

\newcommand{\beforesec}{\vspace{-1.25ex}}
\newcommand{\aftersec}{\vspace{-1ex}}
\newcommand{\beforesubsec}{\vspace{-1.25ex}}
\newcommand{\aftersubsec}{\vspace{-1.0ex}}
\newcommand{\beforesubsubsec}{\vspace{-1ex}}
\newcommand{\aftersubsubsec}{\vspace{-0.5ex}}

\newcommand{\myeq}[2]{
\begin{equation}\label{#1}
{#2}
\end{equation}
}
\newcommand{\myeqn}[1]{
\begin{equation*}
{#1}
\end{equation*}
}

\newcommand{\rv}[1]{{\color{black}#1}}
\newcommand{\rvs}[1]{{\color{black}#1}}
\newcommand{\rvt}[1]{{\color{black}#1}}

\title{Accelerated Variance-Reduced Fixed-Point-Based Methods for Generalized Equations: Better Convergence Guarantees}

\author{Quoc Tran-Dinh \vspace{0.25ex}\\
\newline {Department of Statistics and Operations Research}\\
\newline The University of North Carolina at Chapel Hill\\
318 Hanes Hall, UNC-Chapel Hill, NC 27599-3260.\\
\newline \textit{Email:} \url{quoctd@email.unc.edu}.}


\ShortHeadings{Variance-Reduced Fast Operator Splitting Methods}{Q. Tran-Dinh} 
\firstpageno{1}

\begin{document}

\title{Variance-Reduced Fast Operator Splitting Methods for Generalized Equations}

\author{\name Quoc Tran-Dinh \email quoctd@email.unc.edu \\
       \addr Department of Statistics and Operations Research \\
       The University of North Carolina at Chapel Hill\\
       318 Hanes Hall, CB \#3260, NC 27599-3260
       }

\editor{My editor}

\maketitle

\begin{abstract}
We develop two variance-reduced fast operator splitting methods to approximate solutions of a class of generalized equations, covering fundamental problems such as \rvs{minimization}, minimax problems, and variational inequalities as special cases.
Our approach integrates recent advances in accelerated operator splitting and fixed-point methods, co-hypomonotonicity, and variance reduction.
First, we introduce a class of variance-reduced estimators and establish their variance-reduction bounds.
This class includes both unbiased and biased instances and comprises common estimators as special cases, including SVRG, SAGA, SARAH, and Hybrid-SGD.
Second, we design a novel accelerated variance-reduced forward-backward splitting (FBS) method using these estimators to solve generalized equations in both finite-sum and expectation settings.
Our algorithm achieves both $\BigOs{1/k^2}$ and $\SmallOs{1/k^2}$ convergence rates on the expected squared norm $\Expn{\norms{G_{\lambda}x^k}^2}$ of the FBS residual $G_{\lambda}$, where $k$ is the iteration counter.
Additionally, we establish almost sure convergence rates and the almost sure convergence of iterates to a solution of the underlying generalized equation.
Unlike existing stochastic operator splitting algorithms, our methods accommodate co-hypomonotone operators, which can include nonmonotone problems arising in recent applications.
\rvs{Third, we specify our method for each concrete estimator mentioned above and derive the corresponding oracle complexity, demonstrating that these variants achieve the best-known oracle complexity bounds without requiring additional enhancement techniques.}
Fourth, we develop a variance-reduced fast backward-forward splitting (BFS) method, which attains similar convergence results and oracle complexity bounds as our FBS-based algorithm.
Finally, we validate our results through numerical experiments and compare their performance with existing methods.
\end{abstract}

\begin{keywords}
Forward-backward splitting method; backward-forward splitting method; Nesterov's acceleration; variance-reduction;  co-hypomonotonicity; generalized equation.
\end{keywords}

\section{Introduction}\label{sec:intro}
\aftersec
The generalized equation (GE), also known as the nonlinear inclusion, serves as a versatile framework with broad applications across various domains, including operations research, engineering, mechanics, economics, statistics, and machine learning, see, e.g., \citep{Bauschke2011,Facchinei2003,phelps2009convex,reginaset2008,ryu2022large,ryu2016primer}.
\rvs{The recent surge in modern machine learning and distributionally robust optimization has reinvigorated interest in minimax problems, which are special cases of GE.
These minimax models, particularly in the context of generative adversary, imitation learning, reinforcement learning, and distributionally robust optimization, can be effectively modeled and solved using the GE framework, see, e.g., \citep{arjovsky2017wasserstein,faghri2021bridging,goodfellow2014generative,kuhn2025distributionally,madry2018towards,namkoong2016stochastic,shi2022minimax,swamy2021moments,yu2022fast}.}
This paper develops two novel classes of stochastic accelerated operator splitting algorithms with variance reduction specifically designed for solving GEs.

\vspace{0.5ex}
\noindent\textbf{$\mathrm{(a)}$~Problem statement.} 
In this work, we focus on the following generalized equation (also known as an inclusion):
\myeq{eq:NI}{
\textrm{Find $x^{\star}\in \R^p$ such that:}~ \rv{0 \in \Phi{x}^{\star} := Fx^{\star} + Tx^{\star}},
\tag{GE}
}
where $F : \R^p \to \R^p$ is a single-valued mapping,  $T : \R^p \rightrightarrows 2^{\R^p}$ is a possibly multi-valued mapping, and \rv{$\Phi := F + T$}.
We consider two different settings of \eqref{eq:NI} as follows.
\begin{compactitem}
\item[($\mathsf{F}$)] [\textbf{Finite-sum setting}] $F$ is a large finite-sum of the form:
\vspace{-0.5ex}
\myeq{eq:finite_sum_form}{
Fx  =  \frac{1}{n}\sum_{i=1}^nF_ix,
\tag{$\mathsf{F}$}
\vspace{-0.5ex}
}
where $F_i : \R^p \to \R^p$ for $i \in [n] := \sets{1, \cdots, n}$ and $n$ is often sufficiently large.
\item[($\mathsf{E}$)] [\textbf{Expectation setting}] 
$F$ is equipped with an unbiased stochastic oracle $\mbf{F}(\cdot, \xi)$, where $\xi$ is a random variable defined on a given probability space $(\Omega, \mathbb{P}, \Sigma)$, i.e., for any $x\in\dom{F}$, we have
\myeq{eq:expectation_form}{
Fx = \Expsn{\xi}{ \mbf{F}(x, \xi)}.
\tag{$\mathsf{E}$}
}
\end{compactitem}
Note that the finite-sum setting \eqref{eq:finite_sum_form} can be viewed as a special case of the expectation setting \eqref{eq:expectation_form} by choosing $\mbf{F}(x, \xi) = \frac{1}{n\mbf{p}_i}F_ix$ for $\mbf{p}_i := \mathbb{P}(\xi = i) \in (0, 1)$.
However, since our algorithms have different oracle complexity bounds on each setting, we treat them separately.

\rvs{The mapping $T$ in \eqref{eq:NI} is possibly multi-valued and maximally $\rho$-co-hypomonotone (see Subsection~\ref{subsec:preliminary_defs} for the definition) as stated in Assumption~\ref{as:A1}(iii) below.}

\vspace{0.5ex}
\noindent\textbf{$\mathrm{(b)}$~Fundamental assumptions.} 
To develop our algorithms for solving \eqref{eq:NI}, we require the following assumptions on \eqref{eq:NI}.

\begin{assumption}\label{as:A1}
The generalized equation \eqref{eq:NI} satisfies the following conditions:
\begin{compactitem}
\item[$\mathrm{(i)}$] $\zer{\Phi} := \set{x^{\star} \in \R^p : 0 \in \Phi{x}^{\star} } \neq\emptyset$ $($i.e., there exists a solution $x^{\star}$ of \eqref{eq:NI}$)$.
\item[$\mathrm{(ii)}$] $($\textbf{Bounded variance}$)$ 
For the expectation setting \eqref{eq:expectation_form}, there exists $\sigma \geq 0$ such that $\Expsn{\xi}{\norms{\Fb(x, \xi) - Fx}^2 } \leq \sigma^2$ for all $x \in \dom{F}$.
\item[$\mathrm{(iii)}$] $($\textbf{Maximal $\rho$-co-hypomonotonicity}$)$ $T$ is maximally $\rho$-co-hypomonotone.
\end{compactitem}
\end{assumption}
Assumption~\ref{as:A1}(i) and Assumption~\ref{as:A1}(ii) are standard.
While Assumption~\ref{as:A1}(i) makes sure that \eqref{eq:NI} is solvable, \rv{Assumption~\ref{as:A1}(ii) has been widely used in various stochastic methods.
It was also modified and generalized in different ways, see, e.g., \citep{beznosikov2023stochastic,gorbunov2020unified}.
We use this assumption to derive our oracle complexities in the sequel that depend on $\sigma^2$ and a mega batch-size $n_k$ in the expectation setting  \eqref{eq:expectation_form}.
}

Assumption~\ref{as:A1}(iii) covers the case $T$ is maximally monotone, but also covers a class of nonmonotone operators as shown in Subsection \ref{subsec:assumptions_discussion} below with concrete examples.
If $T$ is maximally monotone, then it already encompasses the normal cone of a nonempty, closed, and convex set and the subdifferential of a proper, closed, and convex function as special cases.
Consequently, under Assumption~\ref{as:A1}, \eqref{eq:NI} includes constrained convex minimization and convex-concave minimax problems, and monotone [mixed] variational inequality problems (VIPs) as special cases.

\begin{assumption}\label{as:A2}
The mapping $F$ in \eqref{eq:NI} satisfies one of the following assumptions.
\begin{compactitem}
\item[$(\mathsf{F})$] $[$\textbf{The finite-sum setting}$]$ 
For the finite-sum setting \eqref{eq:finite_sum_form}, $F$ is $\frac{1}{L}$-average co-coercive, i.e., for all $x, y \in\dom{F}$, there exists $L > 0$ such that 
\myeq{eq:F_averaged_cocoerciveness}{
\begin{array}{lcl}
\iprods{Fx - Fy, x - y} \geq \tfrac{1}{n L} \sum_{i=1}^n\norms{F_ix - F_iy}^2.
\end{array}
}
\item[$(\mathsf{E})$] $[$\textbf{The expectation setting}$]$ 
For the expectation setting \eqref{eq:expectation_form}, $F$ is $\frac{1}{L}$-co-coercive in expectation, i.e., for all $x, y \in \dom{F}$, there exists $L > 0$ such that:
\myeq{eq:F_expected_cocoerciveness}{
\hspace{-0.5ex}
\begin{array}{lcl}
\iprods{Fx - Fy, x - y} = \Expsn{\xi}{\iprods{\Fb(x,\xi) - \Fb(y, \xi), x - y}} \geq \tfrac{1}{L}\Expsn{\xi}{ \norms{ \Fb(x,\xi) - \Fb(y, \xi) }^2 }.
\end{array}
\hspace{-1ex}
}
\end{compactitem}
\end{assumption}
The average \rv{co-coercivity} \eqref{eq:F_averaged_cocoerciveness} is generally stronger than the co-coercivity of $F$ since we have $\frac{1}{n} \sum_{i=1}^n\norms{F_ix - F_iy}^2 \geq \norms{Fx - Fy}^2$ by \rvs{Jensen's} inequality.
Similarly, the \rv{co-coercivity} in expectation \eqref{eq:F_expected_cocoerciveness} is generally stronger than the co-coercivity of $F$ since $\Expsn{\xi}{ \norms{ \Fb(x,\xi) - \Fb(y, \xi) }^2 } \geq \norms{ \Expsn{\xi}{ \Fb(x,\xi) - \Fb(y,\xi)}}^2 = \norms{Fx - Fy}^2$ by Jensen's inequality. 
Both cases lead to $\iprods{Fx - Fy, x - y} \geq \frac{1}{L}\norms{Fx - Fy}^2$, i.e., $F$ is $\frac{1}{L}$-co-coercive.
Consequently, we get $\norms{Fx - Fy} \leq L\norms{x - y}$, showing that $F$ is $L$-Lipschitz continuous.
See Subsection \ref{subsec:assumptions_discussion} for further discussion and its connection to the gradient of a smooth and convex function.

\vspace{0.5ex}
\noindent\textbf{$\mathrm{(c)}$~Motivating examples.}
\ref{eq:NI} looks simple, but it is sufficiently general to cover various models across disciplines.
We recall some important special cases of \eqref{eq:NI} here.
We also refer to \citet{davis2022variance,peng2016arock,ryu2016primer} for additional examples.

\vspace{0.5ex}
\noindent\textit{$\mathrm{(i)}$~\textbf{Composite minimization.}}
Consider the following composite minimization problem:
\vspace{-0.5ex}
\begin{equation}\label{eq:opt_prob}
\min_{x \in \R^p} \big\{ \phi(x) := f(x) + g(x) \big\},
\tag{OP}
\vspace{-0.5ex}
\end{equation}
where $f : \R^p \to \R$ is convex and $L$-smooth (i.e., $\nabla{f}$ is $L$-Lipschitz continuous) and $g : \R^p\to\Rext$ is proper, closed, and not necessarily convex.

\rv{Let $\nabla{f}$ be the gradient of $f$ and  $\partial{g}$ be the [abstract] subdifferential of $g$, see \citep{bauschke2020generalized}.
Then, under appropriate regularity assumptions \citep{Rockafellar2004}, the optimality condition of \eqref{eq:opt_prob} is
\begin{equation}\label{eq:opt_cond_of_opt_prob}
0 \in \nabla{f}(x^{\star}) + \partial{g}(x^{\star}).
\end{equation}
If $g$ is convex, then $x^{\star}$ solves \eqref{eq:opt_cond_of_opt_prob} \rv{if and only if} it solves \eqref{eq:opt_prob}.
Clearly, \eqref{eq:opt_prob} is a special case of \eqref{eq:NI} with $F := \nabla{f}$ and $T := \partial{g}$.
We can also verify Assumptions~\ref{as:A1} and \ref{as:A2} for this special case.
For instance, in the finite-sum setting \eqref{eq:finite_sum_form}, if $f$ is $L$-average smooth, then $\nabla{f}$ satisfies Assumption~\ref{as:A2}.
If $g$ is proper, closed, and convex, then Assumption~\ref{as:A1}(iii) is automatically satisfied with $\rho = 0$.

Problem~\eqref{eq:opt_prob} covers many representative applications in machine learning and data science~\citep{Bottou2018,sra2012optimization,wright2017optimization}.
As a concrete example, consider the case where $f(x) := \frac{1}{n} \sum_{i=1}^n\ell(\iprods{Z_i, x}; y_i)$ represents an empirical loss associated with a dataset $\sets{(Z_i, y_i)}_{i=1}^n$, and $g(x) := \tau R(x)$ is a regularizer used to promote desirable structures in the solution $x$ (e.g., sparsity via an $\ell_1$-norm or a nonconvex SCAD regularizer).
In this form, \eqref{eq:opt_prob} captures many statistical learning problems such as linear regression, logistic regression, and support vector machines; see, e.g., \citep{friedman2001elements}.
}

\vspace{0.5ex}
\noindent\textit{$\mathrm{(ii)}$~\textbf{Minimax problem.}}
Consider the following minimax problem:
\begin{equation}\label{eq:minimax_prob}
\min_{u\in\R^{p_1}}\max_{v \in \R^{p_2}}\Big\{ \Lc(u, v) := f(u) + \Hc(u, v) - g^{*}(v) \Big\},
\tag{MP}
\end{equation}
where $f : \R^{p_1} \to\Rext$ and $g^{*} : \R^{p_2}\to\Rext$ are proper, closed, and not necessarily convex, and $\Hc : \R^{p_1}\times\R^{p_2}\to\R$ is a given jointly differentiable and convex-concave function.

\rv{Under appropriate regularity conditions  \citep{Bauschke2011}, the optimality condition of \eqref{eq:minimax_prob} becomes
\begin{equation}\label{eq:minimax_opt_cond}
0 \in \begin{bmatrix} \nabla_u{\Hc}(u^{\star}, v^{\star})  \\  -\nabla_v{\Hc}(u^{\star}, v^{\star}) \end{bmatrix} + \begin{bmatrix} \partial{f}(u^{\star}) \\   \partial{g}^{*}(v^{\star}) \end{bmatrix}.
\end{equation}
In particular, if $f$ and $g^{*}$ are convex, then \eqref{eq:minimax_opt_cond} is necessary and sufficient for $(u^{\star}, v^{\star})$ to be an optimal solution of \eqref{eq:minimax_prob}.
Otherwise, it is only a necessary condition.
If we define $x := [u, v]$, $F := [\nabla_u{\Hc}, -\nabla_v{\Hc}]$, and $T := [\partial{f}, \partial{g}^{*}]$, then \eqref{eq:minimax_opt_cond} is  a special case of \eqref{eq:NI}.
If $f$ and $g^{*}$ are convex, then $T$ is monotone and automatically satisfies Assumption~\ref{as:A1}(iii).
Moreover, under appropriate smoothness conditions on $\Hc$, Assumption~\ref{as:A2} also holds.

The minimax problem~\eqref{eq:minimax_prob} serves as a fundamental model in robust and distributionally robust optimization~\citep{Ben-Tal2001,rahimian2019distributionally,namkoong2016stochastic}, two-player games~\citep{ho2022game,kuhn1996work}, fair machine learning~\citep{du2021fairness,martinez2020minimax}, and generative adversarial networks (GANs)~\citep{arjovsky2017wasserstein,daskalakis2018training,goodfellow2014generative}, among many others.

As a specific example, if $f(u) := \delta_{\Delta_{p_1}}(u)$ and $g^{*}(v) := \delta_{\Delta_{p_2}}(v)$ are the indicators of the standard simplexes $\Delta_{p_1}$ and $\Delta_{p_2}$, respectively, and $\Hc(u, v) := \iprods{\mbf{L}u, v}$ is a bilinear form with a given payoff matrix $\mbf{L}$, then \eqref{eq:minimax_prob} reduces to the classical bilinear game problem.
Another representative example is a non-probabilistic robust optimization model derived from  Wald's minimax framework: $\min_{u \in \R^{p_1}}\big\{ \phi(u) := h(u) + f(u) \equiv \max_{ v \in \mathcal{V} } \Hc(u, v) + f(u) \big\}$, where $u$ is a decision variable, $v$ denotes an uncertainty vector over the uncertainty set $\mathcal{V}\subset\R^{p_2}$, and the function $h(u) := \max_{ v \in \mathcal{V} } \Hc(u, v)$ captures the worst-case risk across all possible realizations of $v$. 
See \citep{Ben-Tal2001} for concrete instances.
}

\vspace{0.5ex}
\noindent\textit{$\mathrm{(iii)}$~\textbf{Variational inequality problems $\mathrm{(VIPs)}$.}}
If $T = \Nc_{\Xc}$, the normal cone of a nonempty, closed, and convex set $\Xc$ in $\R^p$, then \eqref{eq:NI} reduces to 
\begin{equation}\label{eq:VIP}
\textrm{Find $x^{\star} \in \Xc$ such that:} \   \iprods{Fx^{\star}, x - x^{\star}} \geq 0, \  \textrm{for all} \ x \in \Xc.
\tag{VIP}
\end{equation}
More generally, if $T = \partial{g}$, the subdifferential of a convex function $g$, then \eqref{eq:NI} reduces to a mixed VIP.
\rvs{Since $\Xc$ is convex, $T = \Nc_{\Xc}$ automatically satisfies Assumption~\ref{as:A1}(iii).}

The \ref{eq:VIP} covers many well-known problems in practice, including unconstrained and constrained minimization, minimax problems, complementarity problems, and Nash's equilibria, see also \citep{Facchinei2003,Konnov2001} for more details and direct applications in traffic networks and economics.

\vspace{0.5ex}
\noindent\textit{$\mathrm{(iv)}$~\textbf{Fixed-point problem of nonexpansive mapping.}}
The fixed-point problem is a fundamental topic in computational mathematics, with numerous applications in numerical analysis, ordinary and partial differential equations, engineering, and physics \citep{agarwal2001fixed,Bauschke2011,Combettes2011a}.
Let $P : \R^p \to \R^p$ be a given nonexpansive mapping, i.e., $\norms{Px - Py} \leq \norms{x - y}$ for all $x, y \in \R^p$.
Then, the classical fixed-point problem is stated as follows:
\myeq{eq:fixed_point}{
\textrm{Find $x^{\star} \in \R^p$ such that:} \quad  x^{\star} = Px^{\star}.
\tag{FP}
}
This problem is equivalent to \eqref{eq:NI} with $F := \Id - P$ and $T = 0$, where $\Id$ is the identity mapping.
\rvs{It is well-known that  $P$ is nonexpansive if and only if $F$ is $(1/2)$-co-coercive.}
The algorithms developed in this paper for \eqref{eq:NI} can be applied to solve  \eqref{eq:fixed_point}.
We can also generalize \eqref{eq:fixed_point} to a Kakutani's fixed-point problem $x^{\star} \in Px^{\star} + Tx^{\star}$ of a single-valued mapping $F$ and a multi-valued mapping $T$, which is also equivalent to \eqref{eq:NI} with $F = P - \Id$.

\vspace{0.5ex}
\noindent\textbf{$\mathrm{(d)}$~Motivation and challenges.}
Advanced numerical methods such as acceleration, stochastic approximation, and variance reduction, have received significant attention over the past few decades for solving special cases of \eqref{eq:NI}, including \eqref{eq:opt_prob}, \eqref{eq:minimax_prob}, and \eqref{eq:VIP}, due to their broad applications in modern machine learning and data science.
Relevant works include, but are not limited to, \citep{alacaoglu2021stochastic,alacaoglu2021forward,davis2016smart,davis2022variance,Defazio2014,emmanouilidis2024stochastic,SVRG,nguyen2017sarah,sadiev2024stochastic,Tran-Dinh2019a}.
Moreover, biased variance-reduced estimators such as SARAH \citep{nguyen2017sarah} and Hybrid-SGD \citep{Tran-Dinh2019a} have demonstrated better oracle complexity than their unbiased counterparts, such as SVRG \citep{SVRG} and SAGA \citep{Defazio2014}; see also \citep{driggs2019accelerating,Pham2019}.
However, designing new algorithms for \eqref{eq:NI} that combine both acceleration and biased variance-reduction remains a largely unexplored topic.
This is due to several challenges, including the following.
\begin{compactenum}
\item[$\mathrm{(i)}$] 
First, most convergence analyses of  \rvs{Nesterov's accelerated} randomized and stochastic methods for merely convex optimization and convex-concave minimax problems rely on the objective function as a key metric for constructing a suitable Lyapunov function.
However, such a function does not exist in \eqref{eq:NI}, and it remains unclear how to define an alternative metric that can play a similar role.

\item[$\mathrm{(ii)}$] 
\rvs{
Second, unlike stochastic methods in optimization, there is a lack of convergence analysis techniques for handling biased estimators in algorithms for solving \eqref{eq:NI}.
}

\item[$\mathrm{(iii)}$] 
Third, in convex optimization, accelerated methods achieve faster convergence rates by leveraging convexity (or, equivalently, the monotonicity of [sub]gradients).
Extending such acceleration to settings beyond convexity or monotonicity, particularly in stochastic methods for solving \eqref{eq:NI}, remains a significant challenge.
\end{compactenum}
Hitherto, most existing methods for \eqref{eq:NI} still face these challenges \citep{driggs2019accelerating,driggs2019bias}, and only a few works have managed to overcome some of them without compromising convergence guarantees \citep{cai2022stochastic,cai2023variance,condat2022murana}.
In this paper, we develop new classes of accelerated schemes for solving \eqref{eq:NI} that can incorporate both unbiased and biased estimators.
This capability enables our methods to achieve better oracle complexity than several existing approaches and cover a broad family of algorithms.

\vspace{0.5ex}
\noindent\textbf{$\mathrm{(e)}$~Our contributions.}
Our contributions in this paper consist of the following.
\begin{compactitem}
\item[(i)] 
We introduce a class of variance-reduced estimators that includes a broad spectrum of both unbiased and biased variance-reduced instances. 
We demonstrate that common estimators, including SVRG, SAGA, SARAH, and Hybrid-SGD, fall within this class. 
Furthermore, we establish the necessary bounds required for our convergence analysis.

\item[(ii)] 
We develop a new class of accelerated forward-backward splitting (FBS) methods with variance reduction to approximate a solution of \eqref{eq:NI} in both the finite-sum and expectation settings under appropriate \rv{co-coercivity} of $F$ and co-hypomonotonicity of $T$.
Our algorithm is single-loop and simple to implement.
It also covers a broad range of variance-reduced estimators introduced in (i).
Our method achieves both $\BigOs{1/k^2}$ and $\SmallOs{1/k^2}$ convergence rates in expectation on the squared norm of the FBS residual, along with several summability bounds.
We further prove  $\SmallOs{1/k^2}$ almost sure convergence rates.
We also show that the sequences of iterates generated by our method  almost surely converge to a solution of \eqref{eq:NI}.

\item[(iii)] 
We specify our method to cover four specific estimators: SVRG, SAGA, SARAH, and Hybrid-SGD, each achieving the ``best-known''\footnote{They may be different by  a poly-logarithmic factor $\log^{\nu}(n)$ or $\log^{\nu}(1/\epsilon)$.} oracle complexity.
For the SVRG and SAGA estimators, we establish a complexity of $\widetilde{\mathcal{O}}(n + n^{2/3}\epsilon^{-1})$ in the finite-sum setting \eqref{eq:finite_sum_form}, and $\widetilde{\mathcal{O}}(\epsilon^{-3})$ in the expectation setting \eqref{eq:expectation_form}.
For SARAH and Hybrid-SGD, this complexity improves to $\widetilde{\mathcal{O}}(n + n^{1/2}\epsilon^{-1})$  and $\BigOs{\epsilon^{-3}}$, respectively.

\item[(iv)] 
Alternatively, we also propose a class of accelerated backward-forward splitting (BFS) algorithms with variance reduction for solving \eqref{eq:NI}, which attain the same convergence properties and oracle complexities as our accelerated FBS method.
\end{compactitem}

\begin{table}[ht!]
\vspace{-2ex}
\newcommand{\cell}[1]{{\!\!}{#1}{\!\!}}
\begin{center}
\caption{Comparison of existing variance-reduced single-loop methods and our algorithms}\label{tbl:existing_vs_ours}
\vspace{1ex}
\begin{small}
\resizebox{\textwidth}{!}{  
\begin{tabular}{|c|c|c|c|c|c|c|} \hline
Refs & \cell{Ass. on $F$} & Add. Ass. & Estimators & {\!\!\!} Setting {\!\!\!} &  \cell{Residual Rates} & Complexity \\ \hline
\textbf{Work 1}             & co-coercive     & $T=0$, $\mu$-SQM. & {\!\!\!} SVRG \& SAGA {\!\!\!} &  \eqref{eq:finite_sum_form} & $\BigOs{1/k}$  & $\BigOs{(L/\mu)\log(\epsilon^{-1})}$  \\ \hline
\textbf{Work 2}             & monotone       & monotone $T$ & SVRG & \eqref{eq:finite_sum_form} & $\BigOs{1/k}$  & --  \\ \hline
\textbf{Work 3}             & co-coercive     & monotone $T$ & a class & \eqref{eq:finite_sum_form} & $\BigOs{1/k^2}$  & $\mcal{O}( n + n^{2/3}\epsilon^{-1} )$  \\ \hline
\textbf{Work 4}             & co-coercive     & monotone $T$ & SARAH  &  \eqref{eq:finite_sum_form}   & $\BigOs{1/k^2}$ & $\widetilde{\mcal{O}}\big( n + n^{1/2}\epsilon^{-1} \big)$ \\ \hline
\textbf{Work 5}             & co-coercive     & monotone $T$ & SARAH  &  \eqref{eq:expectation_form} &  -- &  $\BigOs{\epsilon^{-3}}$ \\ \hline
\myblue{\textbf{Ours}} & co-coercive     & {\!\!\!} \myblue{co-hypomonotone $T$} {\!\!\!} & \myblue{a class} & \eqref{eq:finite_sum_form}   &{\!\!\!\!\!}  \makecell{\myblue{ $\BigOs{1/k^2}$, $\SmallOs{1/k^2}$} \\ \myblue{$x^k \to x^{\star}$ \textit{a.s.} } } {\!\!\!\!\!} &{\!\!\!}  \makecell{$\widetilde{\mcal{O}}\big(n + n^{2/3}\epsilon^{-1}\big)$ \\  $\to \widetilde{\mcal{O}}\big(n + n^{1/2}\epsilon^{-1}\big)$ } {\!\!\!}  \\ \hline
\myblue{\textbf{Ours}} & co-coercive     & {\!\!\!} \myblue{co-hypomonotone $T$} {\!\!\!} & \myblue{a class} & \eqref{eq:expectation_form} &{\!\!\!\!\!}  \makecell{\myblue{ $\BigOs{1/k^2}$, $\SmallOs{1/k^2}$} \\ \myblue{$x^k \to x^{\star}$ \textit{a.s.} } } {\!\!\!\!\!}  &  $\widetilde{\mcal{O}}(\epsilon^{-3}) \to  \mcal{O}(\epsilon^{-3})$ \\ \hline
\end{tabular}}
\end{small}
\end{center}
{\footnotesize
\vspace{-1ex}
\textbf{Abbreviations:} 
\textbf{Refs} $=$ References; 
\textbf{Ass.} $=$ Assumptions; 
\textbf{Add. Ass.} $=$ Additional Assumptions; 
\textbf{SQM} $=$ strong quasi-monotonicity;  
\eqref{eq:finite_sum_form} $=$ the finite-sum setting,  and \eqref{eq:expectation_form} $=$ the expectation setting; 
\textbf{Residual rate} $=$ the convergence rate on $\Expn{\norms{G_{\lambda}x^k}^2}$, where $G_{\lambda}$ is either the equation operator or the FBS residual in \eqref{eq:FBS_residual}; 
\textbf{\myblue{a class}} $=$ a class of variance-reduced estimators satisfying Definition~\ref{de:VR_Estimators};
and \textit{a.s.} $=$ almost surely.

\textbf{References:} \textbf{Work 1} is \citet{davis2022variance}; 
\textbf{Work 2} is \citet{alacaoglu2021forward,alacaoglu2021stochastic}; 
\textbf{Work 3} is \citet{tran2024accelerated}; 
\textbf{Work 4} is \citet{cai2023variance}; 
and 
\textbf{Work 5} is \citet{cai2022stochastic}.
}
\vspace{-1ex}
\end{table}

Table~\ref{tbl:existing_vs_ours} summarizes the most related results to our work.
Let us further discuss in detail our contributions and compare our results with the most related works.
First, our approach is indirect compared to \citep{cai2022stochastic,cai2023variance}, i.e., we reformulate \eqref{eq:NI} into an equation (or equivalently, a fixed-point problem) before developing our algorithms.
This approach offers certain advantages:
(i) it enables us to handle a co-hypomonotone operator $T$ with a co-hypomonotonicity modulus $\rho$ that is independent of the algorithmic parameters;
(ii) it enhances the flexibility of our method, making it readily applicable to other reformulations such as FBS and BFS.
Second, unlike \citep{alacaoglu2021forward,alacaoglu2021stochastic,bot2019forward,davis2022variance}, our new class of variance-reduced estimators is sufficiently broad to cover many existing ones as special cases and can  potentially accommodate new estimators.
Third, our methods differ from Halpern’s fixed-point iterations in \citep{cai2022stochastic,cai2023variance}, \rvt{which enables us to employ different parameter update rules than those in Halpern’s schemes.}
This distinction is crucial for achieving faster convergence rates of $\SmallOs{1/k^2}$ and allows us to establish both the almost sure convergence of the iterates and the  $\SmallOs{1/k^2}$ almost sure convergence rates.
Fourth, our algorithms are accelerated and single-loop, making them easier to implement compared to double-loop or catalyst methods \citep{khalafi2023accelerated,yang2020catalyst}.
Fifth, our rates and oracle complexity rely on the metric $\mathbb{E}\big[ \norms{G_{\lambda}x^k}^2 \big]$.
This differs from existing results  using a gap or a restricted gap function, which only works for monotone problems. 
Sixth, our rate offers a $1/k$ factor improvement over non-accelerated methods \citep{alacaoglu2021forward,alacaoglu2021stochastic,davis2022variance}.
Finally, our oracle complexity matches the best-known results for methods using SARAH estimators, without requiring any enhancement strategies such as scheduled restarts or multiple loops, as employed, e.g., in \citet{cai2022stochastic,cai2023variance}.

\vspace{0.5ex}
\noindent\textbf{$\mathrm{(f)}$~Related work.}
Problem \eqref{eq:NI} and its special cases are well-studied in the literature, see, e.g., \citep{Bauschke2011,reginaset2008,Facchinei2003,phelps2009convex,ryu2022large,ryu2016primer}. 
We focus on the most recent works relevant to our methods in both the finite-sum and expectation settings.

\textit{Accelerated methods.}
\rv{Deterministic accelerated methods have been broadly developed to solve \eqref{eq:NI} and its special cases in early works \citep{he2016accelerated,kolossoski2017accelerated,attouch2019convergence}, and further studied in subsequent papers  \citep{adly2021first,attouch2020convergence,attouch2022ravine,boct2024generalized,chen2017accelerated,gorbunov2022convergence,kim2021accelerated,mainge2021accelerated,park2022exact,tran2022connection}.}
These methods are based on Nesterov’s acceleration technique \citep{Nesterov1983}.
However, unlike in convex optimization, extending Nesterov’s acceleration to monotone inclusions presents a fundamental challenge due to the absence of an objective function, which complicates the construction of a suitable Lyapunov function as mentioned earlier.
This limitation necessitates a different approach for solving \eqref{eq:NI} \citep{attouch2019convergence,mainge2021accelerated}.
Our approach builds on insights from \citep{alcala2023moving,attouch2019convergence,mainge2021accelerated,tran2022connection,yuan2024symplectic}, combined with variance reduction strategies to develop new methods.

Alternatively, Halpern's fixed-point iteration  \citep{halpern1967fixed}  has recently been proven to achieve a better convergence rates, see \citep{diakonikolas2020halpern,lieder2021convergence,sabach2017first}, matching Nesterov's acceleration schemes. 
\citet{yoon2021accelerated} extended Halpern's method to extragradient-type schemes, relaxing the co-coercivity assumption. 
Many subsequent works have exploited this idea to other methods, e.g., \citep{alcala2023moving,cai2022accelerated,cai2022baccelerated,lee2021semi,lee2021fast,park2022exact,tran2021halpern,tran2023extragradient,tran2022connection}. 
Recently, \citet{tran2022connection} established a connection between Nesterov’s and Halpern’s accelerations for various iterative schemes.

\textit{Stochastic methods.}
Stochastic methods for \eqref{eq:NI} and its special cases have been extensively developed; see, e.g., \citep{juditsky2011solving,kotsalis2022simple,pethick2023solving}.
Some approaches exploit mirror-prox and averaging techniques, such as those in \citep{juditsky2011solving,kotsalis2022simple}, while others rely on projection or extragradient-type schemes, e.g., \citep{bohm2022two,cui2021analysis,iusem2017extragradient,kannan2019optimal,mishchenko2020revisiting,pethick2023solving,yousefian2018stochastic}.
Many algorithms employ standard Robbins–Monro's stochastic approximation with fixed or increasing batch sizes.
\rv{Other works extend the analysis to a broader class of algorithms, including both unbiased and biased estimators, e.g., \citep{beznosikov2023stochastic,demidovich2023guide,gorbunov2022stochastic,loizou2021stochastic}, thereby covering standard stochastic and unbiased variance-reduction methods.}
The complexity typically depends on an upper bound of the variance, which often leads to inefficient oracle complexity bounds.

\textit{Variance-reduction methods.}	
Variance-reduction schemes using control variate techniques are widely developed in optimization, where many estimators have been proposed, including SAGA \citep{Defazio2014}, SVRG \citep{SVRG}, SARAH \citep{nguyen2017sarah}, and Hybrid-SGD \citep{Tran-Dinh2019}. 
Researchers have adopted these estimators to develop methods for solving \eqref{eq:NI}. 
For instance,  \citet{davis2016smart,davis2022variance} proposed SAGA-type methods for \eqref{eq:NI}, under a ``star'' \rv{co-coercivity} and strong quasi-monotonicity, most relevant to our work. 
However, we focus on accelerated methods that achieve better convergence rates and complexity.
The authors in  \citet{alacaoglu2021forward,alacaoglu2021stochastic} employed SVRG estimators to develop variance-reduced extragradient-type methods to solve \eqref{eq:VIP}, but these are non-accelerated. 
Other works can be found in \citet{bot2019forward,carmon2019variance,chavdarova2019reducing,huang2022accelerated,palaniappan2016stochastic,yu2022fast}, some of which focus on minimax problems or bilinear matrix games. 
More recently, \citet{cai2022stochastic,cai2023variance} exploited Halpern's fixed-point iteration to develop variance-reduced methods, often achieving better oracle complexity by employing the SARAH estimator.
All these results differ from ours due to the generalization of Definition~\ref{de:VR_Estimators} and the new accelerated methods that we develop in this paper.
  
\vspace{0.25ex}
\noindent\textbf{$\mathrm{(g)}$~Paper organization.}
The rest of this paper is organized as follows.
In Section~\ref{sec:preleminary}, we recall some related notations, concepts, and technical results used in this paper.
We also further discuss our assumptions imposed on \eqref{eq:NI}.
Section~\ref{sec:VR_estimators} introduces a class of variance-reduced estimators and establishes their bounds.
Section~\ref{sec:VR_AFBS_method} develops our accelerated forward-backward splitting method with variance-reduction to solve \eqref{eq:NI} and establishes its convergence properties. 
We also specify this algorithm for each concrete estimator to obtain the corresponding variant, and estimate its oracle complexity bound.
Section~\ref{sec:VrABFS_method} presents an alternative: an accelerated backward-forward splitting method with variance reduction for solving \eqref{eq:NI} and establishes similar convergence results as in Section~\ref{sec:VR_AFBS_method}.
Section~\ref{sec:num_examples} provides two numerical examples to validate our results, and compare different methods.
For clarity of presentation, all the technical proofs are deferred to the appendix.

\vspace{-0.5ex}
\beforesec
\section{Background and Mathematical Tools}\label{sec:preleminary}
\aftersec
\vspace{-0.25ex}
First, we recall the necessary notations and concepts.
Next, we further discuss our Assumptions \ref{as:A1} and \ref{as:A2}.
Finally, we prove a key result essential for developing our algorithms.

\vspace{-1ex}
\beforesubsec
\subsection{Notations and basic concepts}\label{subsec:preliminary_defs}
\aftersubsec
We work with the finite dimensional space $\Espace$ equipped with the standard inner product $\iprods{\cdot, \cdot}$ and the Euclidean norm $\norms{\cdot}$.
For a single-valued or multi-valued mapping $T : \Espace \rightrightarrows 2^{\Espace}$, $\dom{T} = \set{x \in \Espace : Tx \not= \emptyset}$ denotes its domain, $\gra{T} = \set{(x, u) \in \Espace \times \Espace : u \in Tx}$ denotes its graph, where $2^{\Espace}$ is the set of all subsets of $\Espace$.

For a convex function $f$, $\nabla{f}$ denotes its [sub]gradient, and $\partial{f}$ denotes its [abstract] subdifferential.
For a given symmetric matrix $\mbf{X}$, $\lambda_{\max}(\mbf{X})$ and $\lambda_{\min}(\mbf{X})$ denote its largest and smallest eigenvalues, respectively.
We also use standard $\mcal{O}(\cdot)$ and $o(\cdot)$ for convergence rates and complexity bounds, and $\widetilde{\mcal{O}}(s)$ is $\mcal{O}(s\log^{\nu}(s))$ (hiding  a poly-log factor).

Let $\Fc_k$ be the $\sigma$-algebra generated by all the randomness arising from the algorithm, including $x^0, x^1, \cdots, x^k$, up to the current iteration $k$.
Let $\mathbb{E}_k[{\cdot}] := \mathbb{E}[ \cdot \mid \Fc_k ]$ be the conditional expectation and $\mathbb{E}[\cdot]$ be the total expectation. 

\rv{
Next, let us recall the concepts of co-hypomonotonicity, monotonicity, and co-coercivity for operators, see, e.g., \citep{Bauschke2011,bauschke2020generalized}.

\begin{definition}\label{de:monotonicity}
For a multi-valued mapping $T : \Espace \rightrightarrows 2^{\Espace}$, we say that:
\begin{compactitem}
\item $T$ is $\rho$-co-hypomonotone  if there exists $\rho \geq 0$ such that
\myeq{eq:co_hypomonotone_def}{
\iprods{u - v, x - y} \geq -\rho\norms{u - v}^2, \quad \textrm{for all}~(x, u),  (y, v) \in \gra{T}.
}
Here, $\rho$ is referred to as the co-hypomonotonicity modulus of $T$.
\item $T$ is monotone if \eqref{eq:co_hypomonotone_def} holds with $\rho = 0$, i.e., $\iprods{u - v, x - y} \geq 0$ for $(x, u),  (y, v) \in \gra{T}$.
\item $T$ is maximally [co-hypo]monotone if $\gra{T}$ is not properly contained in the graph of any other [co-hypo]monotone operator.
\end{compactitem}
\end{definition}
If $T$ is single-valued, then \eqref{eq:co_hypomonotone_def} reduces to $\iprods{Tx - Ty, x - y} \geq -\rho\norms{Tx - Ty}^2$ for all $x, y \in \dom{T}$.
A co-hypomonotone operator is not necessarily monotone, see Subsection~\ref{subsec:assumptions_discussion} for concrete examples.
The co-hypomonotonicity concept in Definition~\ref{de:monotonicity} is global.
We say that $T$ is locally $\rho$-co-hypomonotone around $(\bar{x},\bar{u}) \in \gra{T}$ if there exists a neighborhood $\Wc$ of $(\bar{x},\bar{u})$ such that for all $(x, u), (y,v) \in \gra{T}\cap\Wc$, we have $\iprods{u - v, x - y} \geq -\rho\norms{u - v}^2$.

\begin{definition}\label{de:co_coercivity}
Given a single-valued mapping $F : \R^p \to \R^p$, we say that:
\begin{compactitem}
\item $F$ is $\frac{1}{L}$-co-coercive if there exists $L > 0$ such that
\myeqn{
\iprods{Fx - Fy, x - y} \geq \tfrac{1}{L}\norms{Fx - Fy}^2, \quad \textrm{for all}~x, y \in\dom{F}.
}
\item $F$ is $L$-Lipschitz continuous if $\norms{Fx - Fy} \leq L\norms{x - y}$ for all $x, y\in\dom{F}$, where $L \geq 0$ is the Lipschitz constant. 
In particular, if $L = 1$, then $F$ is nonexpansive.  
\end{compactitem}
\end{definition}
If $F$ is $\frac{1}{L}$-co-coercive, then it is also monotone and $L$-Lipschitz continuous.
}

The operator $J_Tx := \set{w \in \Espace : x \in w + Tw}$ is called the resolvent of $T$, often denoted by $J_Tx = (\Id + T)^{-1}x$, where $\Id$ is the identity mapping.
If $T$ is monotone, then $J_T$ is singled-valued, and if $T$ is maximally monotone, then $J_T$ is singled-valued and $\dom{J_T} = \Espace$.

\beforesubsec
\subsection{Further discussion of Assumptions~\ref{as:A1} and \ref{as:A2}}\label{subsec:assumptions_discussion}
\aftersubsec
\noindent\textbf{$\mathrm{(a)}$~Pros and cons of Assumption~\ref{as:A2}.}
Similar to variance-reduction methods using control variate techniques in optimization, we require Assumption \ref{as:A2} in our methods.
This assumption has some pros and cons as follows.

\textit{$\mathrm{(i)}$~\textbf{Pros}}.
First, if $Fx = \nabla{f}(x)$, the gradient of a differentiable convex function, then the $\frac{1}{L}$-\rv{co-coercivity} of $F$ is equivalent to the convexity and $L$-smoothness of $f$ (i.e., $\nabla{f}$ is $L$-Lipschitz continuous).
Therefore, Assumption~\ref{as:A2} covers convex and $L$-smooth functions as special cases, including the finite-sum and expectation settings.

\rv{Second, the \rv{$\frac{1}{L}$-co-coercivity} of $F$ is equivalent to the nonexpansiveness of $G = \Id - \frac{2}{L} F$, see \citep[Proposition 4.11]{Bauschke2011}.
Therefore, our methods can also be applied to find approximate fixed-points of a nonexpansive operator.
Note that several problems without satisfying Assumption~\ref{as:A2} can be reformulated equivalently to a fixed-point problem of a nonexpansive operator, and thus can be indirectly solved by our methods.}
More specifically,  as shown in \citet{tran2022connection,tran2022accelerated}, there are several ways to reformulate \eqref{eq:NI} and its special cases into a co-coercive equation (e.g., using the Douglas-Rachford splitting or three-operator splitting techniques).
This approach possibly expands the applicability of  our methods to other problem classes.

\textit{$\mathrm{(ii)}$~\textbf{Cons}}.
Though Assumption~\ref{as:A2} is reasonable and relatively broad, it may have some extreme cases.
For instance, it does not directly cover general linear mappings $Fx := \mathbb{F}x + q$ for a given square matrix $\mathbb{F}$ and a vector $q$, unless $\mathbb{F}$ is positive definite.
One way to handle this extreme case is to consider its Moreau-Yosida's approximation instead of $F$ itself.

\vspace{0.5ex}
\noindent\textbf{$\mathrm{(b)}$~Examples of co-hypomonotone operators.}
We provide here two examples of co-hypomonotone operators.
However, other examples exist, see, e.g., \citep{evens2023convergence}.

\rv{
\textit{Example 1.}
Consider $Tx := \mathbb{T}x + s$, where $\mathbb{T}$ is symmetric and invertible, but not positive semidefinite, and $s \in\R^p$ is given.
Assume that $\rho := -\lambda_{\min}(\mathbb{T}^{-1}) > 0$.
Then, $T$ is $\rho$-co-hypomonotone and thus satisfies Assumption~\ref{as:A1}(iii).
Generally, it is easy to check that if $\mathbb{T} + \mathbb{T}^{\top} + 2\rho\mathbb{T}^{\top}\mathbb{T} \succeq 0$ for some $\rho \geq 0$, then $T$ is $\rho$-co-hypomonotone.

Next, we consider $Fx := \mathbb{F}x + q$, where $\mathbb{F}$ is a symmetric positive semidefinite matrix and $q \in \R^p$.
Clearly, $F$ satisfies Assumption~\ref{as:A2} with $L := \lambda_{\max}(\mathbb{F})$.
However, $\Phi := F + T$ is nonmonotone if we impose \rvs{$\lambda_{\min}(\mathbb{F} + \mathbb{T}) < 0$}.
In addition, it is possible to choose \rvs{$\mathbb{F}$} and $\mathbb{T}$ such that $L \rho < 1$, which satisfies the range condition of $L\rho$ in Lemma~\ref{le:FBS_cocoerciveness} below.

\textit{Example 2.}
Let $\Psi : \R^{p_1}\times\R^{p_2} \to \R$ be a twice continuously differentiable and $\mu$-convex-concave function for some $\mu\in\R$, i.e., $\nabla^2_{uu}\Psi(x) \succeq \mu\Id$ and $-\nabla^2_{vv}\Psi(x) \succeq \mu\Id$ for all $x = [u, v] \in \R^{p_1+p_2}$.
We say that $\Psi$ is $\alpha$-interaction dominate, see \citep{grimmer2023landscape}, if there exist some $\alpha > -\frac{1}{\rho}$ and $\rho \in \big(0, \frac{1}{\max\sets{-\mu, 0}}\big)$ such that for all $x \in \R^{p_1+p_2}$, we have
\begin{equation*}
\arraycolsep=0.2em
\begin{array}{lcl}
\nabla^2_{uu}\Psi(x) + \nabla^2_{uv}\Psi(x)(\frac{1}{\rho}\Id - \nabla^2_{vv}\Psi(x))^{-1}\nabla^2_{vu}\Psi(x) & \succeq & \alpha\Id, \vspace{1ex}\\
-\nabla^2_{vv}\Psi(x) + \nabla^2_{vu}\Psi(x)(\frac{1}{\rho}\Id + \nabla^2_{uu}\Psi(x))^{-1}\nabla^2_{uv}\Psi(x) & \succeq & \alpha\Id.
\end{array}
\end{equation*}
As proven in \citet[Proposition 4.17]{evens2023convergence}, if $\alpha > 0$, then the saddle mapping $Tx := [\nabla_{u}\Psi(x), -\nabla_{v}\Psi(x)]$ is $\rho$-co-hypomonotone. 
The $\alpha$-interaction dominance notion was studied in \citet{grimmer2023landscape} for nonconvex-nonconcave minimax problems.
}

\beforesubsec
\subsection{Equivalent reformulations}\label{subsec:NI_as_NE}
\aftersubsec
The first step of our approach is to reformulate \eqref{eq:NI} into an equation using either the forward-backward splitting (FBS) residual or the backward-forward splitting (BFS) residual mapping.
These reformulations are also equivalent to the fixed-point problem \eqref{eq:fixed_point}.

\vspace{0.5ex}
\noindent\textbf{$\mathrm{(a)}$~Forward-backward splitting reformulation.}
We consider the following \textbf{forward-backward splitting} residual mapping of \eqref{eq:NI}:
\begin{equation}\label{eq:FBS_residual}
G_{\lambda}x := \tfrac{1}{\lambda}\big(x - J_{\lambda T}(x - \lambda Fx) \big),
\end{equation}
for a given $\lambda > 0$, and $J_{\lambda T}$ is the resolvent of $\lambda T$.
Then, \textbf{$x^{\star}$ solves \eqref{eq:NI} iff $G_{\lambda}x^{\star} = 0$}, i.e.:
\begin{equation}\label{eq:FBS_reform}
0 \in \Phi{x}^{\star} := Fx^{\star} + Tx^{\star} \quad \Leftrightarrow \quad G_{\lambda}x^{\star} = 0.
\end{equation}
\rv{
The equation $G_{\lambda}x^{\star} = 0$ can also be rewritten  equivalently to the fixed-point formulation: $x^{\star} = J_{\lambda T}(x^{\star} - \lambda Fx^{\star})$ of the FBS operator $J_{\lambda T}((\cdot) - \lambda F(\cdot))$.
}

\vspace{0.5ex}
\noindent\textbf{$\mathrm{(b)}$~Backward-forward splitting reformulation.}
Alternatively, we can also consider the following \textbf{backward-forward splitting} residual mapping of \eqref{eq:NI}:
\begin{equation}\label{eq:BFS_residual}
S_{\lambda}u := F(J_{\lambda T}u) + \tfrac{1}{\lambda}(u - J_{\lambda T}u),
\end{equation}
for a given $\lambda > 0$.
Then, \textbf{$x^{\star}$ solves \eqref{eq:NI} iff $u^{\star}$ solves $S_{\lambda}u^{\star} = 0$ and $x^{\star} = J_{\lambda T}u^{\star}$}, i.e.:
\begin{equation}\label{eq:BFS_reform}
0 \in \Phi{x}^{\star} := Fx^{\star} + Tx^{\star} \quad \Leftrightarrow \quad S_{\lambda}u^{\star} = 0 \quad\textrm{and} \quad x^{\star} = J_{\lambda T}u^{\star}.
\end{equation}
\rv{\noindent\textbf{$\mathrm{(c)}$~Comparison between \eqref{eq:FBS_residual} and \eqref{eq:BFS_residual}.}
Note that $G_{\lambda}$ in \eqref{eq:BFS_residual} does not preserve the finite-sum or expectation structure as of $F$ due to the composition $J_{\lambda T} \circ (\Id - \lambda F)$.
In contrast, $S_{\lambda}$ in \eqref{eq:BFS_residual} maintains this structure as of $F$.
Moreover, the primal variable of $G_{\lambda}$ is $x$, which directly corresponds to the variable $x$ in \eqref{eq:NI}.
However, the primal variable of $S_{\lambda}$ is $u$, which is indirectly related to $x$ via the resolvent $x = J_{\lambda T} u$, making $x$ a shadow variable.
}

To develop our algorithms, we require further properties of $G_{\lambda}$ and $S_{\lambda}$ as stated in the following lemma, whose proof can be found in Appendix~\ref{apdx:le:FBS_cocoerciveness}.

\begin{lemma}\label{le:FBS_cocoerciveness}
For \eqref{eq:NI}, suppose that $F$ is $\frac{1}{L}$-co-coercive and $T$ is maximally $\rho$-co-hypomonotone such that $L\rho < 1$.
For given $\hat{L}$ and $\lambda$ such that $\hat{L} \geq L$, $\hat{L}\rho < 1$, and $\rho < \lambda \leq \frac{2(1 + \sqrt{1 - \hat{L}\rho})}{\hat{L}}$, we define $\bar{\beta} := \frac{\lambda(4 - \hat{L}\lambda) - 4\rho}{4(1 - \rho\hat{L})} \geq 0$ and $\Lambda := \frac{\hat{L}-L}{L\hat{L}} \geq 0$.
Then
\begin{compactitem}
\item[$\mathrm{(i)}$] 
if $G_{\lambda}$ is defined by \eqref{eq:FBS_residual}, then for all $x, y \in \dom{\Phi}$, we have 
\begin{equation}\label{eq:G_cocoerciveness} 
\arraycolsep=0.2em
\begin{array}{lcl}
\iprods{G_{\lambda}x - G_{\lambda}y, x - y} \geq \bar{\beta} \norms{G_{\lambda}x - G_{\lambda}y}^2 + \Lambda L \iprods{Fx - Fy, x - y}.
\end{array} 
\end{equation}

\item[$\mathrm{(ii)}$] 
if $S_{\lambda}$ is defined by \eqref{eq:BFS_residual}, then for all $u, v \in \R^p$, we have 
\begin{equation}\label{eq:S_cocoerciveness} 
\hspace{-2ex}
\arraycolsep=0.1em
\begin{array}{lcl}
\iprods{S_{\lambda}u - S_{\lambda}v, u - v}  & \geq & \bar{\beta} \norms{S_{\lambda}u - S_{\lambda}v}^2 + \Lambda L  \iprods{F(J_{\lambda T}u) - F(J_{\lambda T}v), J_{\lambda T}u  - J_{\lambda T}v}.
\end{array} 
\hspace{-1ex}
\end{equation}
\item[$\mathrm{(iii)}$]  
if additionally $\lambda \geq 2\rho$, then $\norms{J_{\lambda T}x - J_{\lambda T}y} \leq \norms{x - y}$ for all $x, y \in \dom{T}$, i.e., the resolvent $J_{\lambda T}$ is nonexpansive.

\end{compactitem}
\end{lemma}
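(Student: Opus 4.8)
The plan is, for each residual mapping, to re-express it as a point of $\gra{T}$, feed that into the $\rho$-co-hypomonotonicity inequality~\eqref{eq:co_hypomonotone_def} together with the $\tfrac{1}{L}$-co-coercivity of $F$, and then recognize the leftover terms as a degenerate (hence nonnegative) quadratic form whose discriminant vanishes precisely because of the chosen value of $\bar{\beta}$. For part~(i), set $u := J_{\lambda T}(x-\lambda Fx)$ and $v := J_{\lambda T}(y-\lambda Fy)$; since $\lambda>\rho$ and $T$ is maximally $\rho$-co-hypomonotone, $J_{\lambda T}$ is single-valued (and, by maximality, defined on all of $\Espace$), and the definition of the resolvent gives $G_{\lambda}x-Fx\in Tu$, $G_{\lambda}y-Fy\in Tv$, with $G_{\lambda}x=\tfrac{1}{\lambda}(x-u)$. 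Writing $g:=G_{\lambda}x-G_{\lambda}y$, $f:=Fx-Fy$, $d:=x-y$ and using $u-v=d-\lambda g$, plugging the two graph points into~\eqref{eq:co_hypomonotone_def} and expanding all inner products yields $\iprods{g,d}\geq(\lambda-\rho)\norms{g}^2+\iprods{f,d}+(2\rho-\lambda)\iprods{f,g}-\rho\norms{f}^2$.

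Next I would split $\iprods{f,d}=\Lambda L\,\iprods{f,d}+\tfrac{L}{\hat{L}}\iprods{f,d}$, which is exact because $\Lambda L+\tfrac{L}{\hat{L}}=1$, and bound $\tfrac{L}{\hat{L}}\iprods{f,d}\geq\tfrac{1}{\hat{L}}\norms{f}^2$ via co-coercivity of $F$. It then suffices to show that the quadratic form $Q(g,f):=(\lambda-\rho-\bar{\beta})\norms{g}^2-(\lambda-2\rho)\iprods{g,f}+(\tfrac{1}{\hat{L}}-\rho)\norms{f}^2$ is nonnegative. This is exactly where the value of $\bar{\beta}$ enters: a short computation gives $\lambda-\rho-\bar{\beta}=\tfrac{\hat{L}(\lambda-2\rho)^2}{4(1-\rho\hat{L})}\geq 0$ and $\tfrac{1}{\hat{L}}-\rho=\tfrac{1-\rho\hat{L}}{\hat{L}}>0$, whose product is precisely $\tfrac{(\lambda-2\rho)^2}{4}$; hence the discriminant of $Q$ vanishes, $Q$ is a perfect square, and~\eqref{eq:G_cocoerciveness} follows. (The bound $\bar{\beta}\geq 0$ itself comes from $\lambda$ not exceeding $\tfrac{2(1+\sqrt{1-\hat{L}\rho})}{\hat{L}}$, the larger root of $\hat{L}\lambda^2-4\lambda+4\rho=0$.)

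Part~(ii) runs on the same template with $p:=J_{\lambda T}u$ and $q:=J_{\lambda T}v$ taking the roles of $x,y$: from $S_{\lambda}u=Fp+\tfrac{1}{\lambda}(u-p)$ we get $S_{\lambda}u-Fp\in Tp$, and $u-v=(p-q)+\lambda\big((S_{\lambda}u-S_{\lambda}v)-(Fp-Fq)\big)$, so the identical expansion and quadratic-form argument give~\eqref{eq:S_cocoerciveness}; here co-coercivity of $F$ is invoked along $p-q=J_{\lambda T}u-J_{\lambda T}v$, which is why the last term carries the resolvent. For part~(iii), with $p=J_{\lambda T}x$, $q=J_{\lambda T}y$ we have $\tfrac{1}{\lambda}(x-p)\in Tp$ and $\tfrac{1}{\lambda}(y-q)\in Tq$; feeding these into~\eqref{eq:co_hypomonotone_def} and clearing $\lambda$ gives $(\lambda-2\rho)\iprods{x-y,p-q}\geq(\lambda-\rho)\norms{p-q}^2-\rho\norms{x-y}^2$. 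Since $\lambda\geq 2\rho$, Cauchy--Schwarz turns this into a scalar inequality $(\lambda-\rho)t^2-(\lambda-2\rho)t-\rho\leq 0$ in $t:=\norms{p-q}/\norms{x-y}$ whose discriminant equals $\lambda^2$ and whose positive root is exactly $1$, forcing $\norms{p-q}\leq\norms{x-y}$ (and $p=q$ when $x=y$, giving single-valuedness).

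The routine expansions aside, the one genuinely delicate point is the bookkeeping that makes everything tight: the split of $\iprods{Fx-Fy,x-y}$ must be chosen so that the residual quadratic form is exactly — not merely approximately — degenerate, which is what pins down the precise formula for $\bar{\beta}$ and hence the sharp constants in~\eqref{eq:G_cocoerciveness} and~\eqref{eq:S_cocoerciveness}. A secondary technical item is confirming well-definedness of $J_{\lambda T}$ (single-valued, full domain) under maximal $\rho$-co-hypomonotonicity with $\lambda>\rho$, and verifying $\bar{\beta}\geq 0$ throughout the admissible range of $\lambda$.
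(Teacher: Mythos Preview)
Your proposal is correct and follows essentially the same approach as the paper. The paper packages the $\rho$-co-hypomonotonicity of $T$ as the $(\lambda-\rho)$-co-coercivity of the Yosida approximation $A_{\lambda T}(\cdot)=\tfrac{1}{\lambda}((\cdot)-J_{\lambda T}(\cdot))$ and completes the square explicitly as $\tfrac{1-\hat{L}\rho}{\hat{L}}\bigl\Vert f-\tfrac{(\lambda-2\rho)\hat{L}}{2(1-\hat{L}\rho)}g\bigr\Vert^{2}\geq 0$, whereas you go straight from the graph points of $T$ and verify the discriminant of $Q$ vanishes; these are the same computation, and the paper's proof of~(iii) via $\norms{J_{\lambda T}x-J_{\lambda T}y}^{2}=\norms{(x-y)-\lambda(A_{\lambda T}x-A_{\lambda T}y)}^{2}\leq\norms{x-y}^{2}-\lambda(\lambda-2\rho)\norms{A_{\lambda T}x-A_{\lambda T}y}^{2}$ is a slightly cleaner variant of your quadratic-in-$t$ argument.
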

Unlike deterministic methods that only require the \rv{co-coercivity} of $G_{\lambda}$ and $S_{\lambda}$, which were already proven in the literature, see, e.g., \citep{Bauschke2011}, we need the new bounds \eqref{eq:G_cocoerciveness}  and \eqref{eq:S_cocoerciveness} for our analysis, which allow us to cover a $\rho$-co-hypomonotone operator $T$ in \eqref{eq:NI}.
This mapping is not necessarily monotone as demonstrated earlier.

\beforesec
\section{A Class of Variance-Reduced Estimators}\label{sec:VR_estimators}
\aftersec
We are given an unbiased stochastic oracle $\mbf{F}(\cdot,\xi)$ of $F$ such that for any $x \in \dom{F}$ we have $Fx = \Expsn{\xi}{\mbf{F}(x, \xi)}$ in both settings \eqref{eq:finite_sum_form} and \eqref{eq:expectation_form}.
We denote by $\widetilde{F}(x, \Sc)$ an unbiased  stochastic estimator of $Fx$  constructed from an i.i.d. sample $\Sc$ of $\xi$ by querying  $\mbf{F}(\cdot,\xi)$.

\beforesubsec
\subsection{The class of variance-reduced stochastic estimators}\label{subsec:Vr_Estimator}
\aftersubsec
Given a sequence of iterates $\sets{x^k}$ generated by our algorithm, we consider the following class of variance-reduced estimators $\widetilde{F}^k$ of $Fx^k$ that covers various instances specified later.

\begin{definition}\label{de:VR_Estimators}
Let $\sets{x^k}_{k \geq 0}$ be a given sequence of iterates and $\widetilde{F}^k := \widetilde{F}(x^k, \Sc_k)$ be a stochastic estimator of $Fx^k$ constructed from an i.i.d. sample $\Sc_k$ adapted to the filtration $\Fc_k$.
We say that $\widetilde{F}^k$ satisfies a $\textbf{VR}(\Delta_k; \kappa_k, \Theta_k, \sigma_k)$  $($variance-reduction$)$  property if there exist three parameters $\kappa_k \in (0, 1]$, $\Theta_k \geq  0$, and $\sigma_k \geq 0$, and a random quantity $\Delta_k \geq 0$  such that
\myeq{eq:VR_property}{
\arraycolsep=0.2em
\begin{array}{lcl}
\Expsn{k}{ \norms{\widetilde{F}^k - Fx^k }^2 } & \leq & \Expsn{k}{ \Delta_k }, \vspace{1ex}\\
\Expsn{k}{ \Delta_k }  & \leq & (1- \kappa_k)\Delta_{k-1}  + \Theta_k  \bar{\Ec}_k + \sigma_k^2,
\end{array}
}
almost surely for $k\geq 0$, where $\bar{\Ec}_k := \Expsn{\xi}{\norms{\mathbf{F}(x^{k}, \xi) - \mbf{F}(x^{k-1}, \xi)}^2 }$, $x^{-1} := x^0$, and $\Delta_{-1} := 0$.
\end{definition}

Note that $\widetilde{F}^k$ covers both unbiased and biased estimators of $Fx^k$ since we do not impose the unbiased condition $\Expsn{\Sc_k}{\widetilde{F}^k} = Fx^k$.
In the finite-sum setting \eqref{eq:finite_sum_form}, $\bar{\Ec}_k$ in Definition~\ref{de:VR_Estimators} reduces to $\bar{\Ec}_k := \frac{1}{n}\sum_{i=1}^n  \norms{ F_ix^{k} - F_ix^{k-1} }^2$.
Moreover, from \eqref{eq:VR_property}, we also have $\Expn{\Delta_0} \leq \sigma_0^2$.

We highlight that Definition~\ref{de:VR_Estimators} is different from existing works, including \citep{driggs2019accelerating}, as we only require the condition~\eqref{eq:VR_property}, making it broader to cover several existing estimators that may violate the definition in \citet{driggs2019accelerating}.
It is also broader than and different from the class of unbiased estimators in  \citet{tran2024accelerated} and \citet{TranDinh2024}.
Our class is also different from other general classes of estimators such as \citep{beznosikov2023stochastic,gorbunov2022stochastic,loizou2021stochastic} for stochastic optimization algorithms since they require the unbiasedness and  additional or different conditions.

\beforesubsec
\subsection{Concrete variance-reduced estimators}\label{subsec:VR_estimator_examples}
\aftersubsec
In this section we present four different variance-reduced estimators satisfying Definition~\ref{de:VR_Estimators}: Loopless-SVRG, SAGA, Loopless-SARAH, and Hybrid-SGD, which will be used to develop methods for solving \eqref{eq:NI} in this paper.
Note that the Loopless-SVRG and SAGA are unbiased, while the Loopless-SARAH and Hybrid-SGD are biased.
Though we only focus on these four estimators, we believe that other variance-reduced estimators such as SAG \citep{LeRoux2012,schmidt2017minimizing}, SARGE \citep{driggs2019bias}, SEGA \citep{hanzely2018sega}, and JacSketch \citep{gower2021stochastic} can possibly be used in our methods.

\beforesubsubsec
\subsubsection{Loopless-SVRG estimator}\label{subsubsec:svrg_estimator}
\aftersubsubsec
The SVRG estimator was introduced in \citet{SVRG}, and its loopless version was proposed in \citet{kovalev2019don}.
We show that this estimator satisfies Definition~\ref{de:VR_Estimators}.

For given $Fx$ in \eqref{eq:NI}, its unbiased stochastic oracle $\mbf{F}(\cdot,\xi)$, two iterates $x^k$ and $\tilde{x}^k$, and an i.i.d. sample $\Sc_k$ of  size $b_k$, we construct
\begin{equation}\label{eq:loopless_svrg}
\widetilde{F}^k := \bar{F}\tilde{x}^k  + \Fb(x^k, \Sc_k) -  \Fb(\tilde{x}^k,  \Sc_k),
\tag{L-SVRG}
\end{equation}
where, for $k\geq 1$,  $\tilde{x}^k$ is updated by
\begin{equation}\label{eq:xy_hat}
\tilde{x}^{k}  := \begin{cases}
x^{k-1} &\text{with probability}~\mbf{p}_{k} \vspace{1ex}\\
\tilde{x}^{k-1} &\text{with probability}~1-\mbf{p}_{k}.
\end{cases}
\end{equation}
Here, $\mbf{p}_k \in [\underline{\mbf{p}}, 1)$ is a given probability, $\tilde{x}^0 := x^0$, and $\Fb(\cdot, \Sc_k) := \frac{1}{b_k}\sum_{\xi_i \in \Sc_k}\Fb(\cdot, \xi_i)$ is a mini-batch estimator of $F(\cdot)$.
The quantity $\bar{F}\tilde{x}^k$ is constructed by one of the two options:
\begin{compactitem}
\item \textbf{Full-batch evaluation.} We choose $\bar{F}\tilde{x}^k  := F\tilde{x}^k$.
\item \textbf{Mega-batch evaluation.} We compute $\bar{F}\tilde{x}^k := \frac{1}{n_k}\sum_{\xi \in \bar{\Sc}_k}\mbf{F}(\tilde{x}^k, \xi)$, an unbiased estimator of $F\tilde{x}^k$ using a mega-batch $\bar{\Sc}_k$ of size $n_k$.
Then, we have $\Expsb{\bar{\Sc}_k}{\bar{F}\tilde{x}^k} = F\tilde{x}^k$ and $\Expsb{\bar{\Sc}_k}{ \norms{\bar{F}\tilde{x}^k - F\tilde{x}^k}^2 } \leq \frac{\sigma^2}{n_k}$, where $\sigma^2$ is given in Assumption~\ref{as:A1}(ii) and $n_k \geq n_{k-1}$.
\end{compactitem}
We assume that the full batch evaluation $\bar{F}\tilde{x}^k  := F\tilde{x}^k$ is computed for the finite-sum setting \eqref{eq:finite_sum_form}, while we often use a mega-batch evaluation for the expectation setting \eqref{eq:expectation_form}.

The following lemma shows that $\widetilde{F}^k$ satisfies Definition~\ref{de:VR_Estimators}, whose proof is in Appendix~\ref{apdx:proof_SVRG_estimator_bound}.

\vspace{-1ex}
\begin{lemma}\label{le:loopless_svrg_bound}
Let $\widetilde{F}^k$ be constructed by \eqref{eq:loopless_svrg} and $\hat{\Delta}_k := \frac{1}{b_k}\Expsn{\xi}{ \norms{\Fb(x^k, \xi) - \Fb(\tilde{x}^k, \xi)}^2 }$ for $b_k \geq b_{k-1}$.
Then, for any $\tau > 0$, we have 
\begin{equation}\label{eq:loopless_svrg_bound2}
\Expsn{k}{\norms{\widetilde{F}^k - Fx^k}^2}  \leq  (1+\tau) \Expsn{k}{\hat{\Delta}_k} +  \tfrac{1+\tau}{\tau}\Expsn{k}{ \norms{ \bar{F}\tilde{x}^k - F\tilde{x}^k }^2 }.  
\end{equation}
For any $\alpha \in (0, 1)$ and $\bar{\Ec}_k$ defined in Definition~\ref{de:VR_Estimators}, we have
\begin{equation}\label{eq:loopless_svrg_bound}
\arraycolsep=0.2em
\begin{array}{lcl}
\Expsn{k}{ \hat{\Delta}_k } \leq  \left(1- \alpha \mbf{p}_k \right) \hat{\Delta}_{k-1}  + \frac{1}{(1-\alpha) b_k \mbf{p}_k} \cdot \bar{\Ec}_k.
\end{array}
\end{equation}
Consequently, $\widetilde{F}^k$ satisfies the $\textbf{VR}(\Delta_k; \kappa_k, \Theta_k,  \sigma_k)$ property in Definition~\ref{de:VR_Estimators} with $\Delta_k := 2\hat{\Delta}_k + \frac{2\sigma^2}{\tau n_k}$, $\kappa_k := \alpha \mbf{p}_k$, $\Theta_k := \frac{2}{ (1-\alpha) b_k \mbf{p}_k}$,  and $\sigma_k^2 := \frac{2\alpha\mbf{p}_k \sigma^2}{n_k}$.

In particular, if we choose $\bar{F}\tilde{x}^k := F\tilde{x}^k$, then $\widetilde{F}^k$ satisfies the $\textbf{VR}(\Delta_k; \kappa_k, \Theta_k, \sigma_k)$ property in Definition~\ref{de:VR_Estimators} with $\Delta_k := \hat{\Delta}_k$, $\kappa_k := \alpha \mbf{p}_k$, $\Theta_k := \frac{1}{ (1-\alpha) b_k \mbf{p}_k}$, and $\sigma^2_k := 0$.
\end{lemma}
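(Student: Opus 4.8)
\textbf{Proof proposal for Lemma~\ref{le:loopless_svrg_bound}.}

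The plan is to establish the three assertions in order: first the ``plus-tau'' decomposition \eqref{eq:loopless_svrg_bound2}, then the one-step recursion \eqref{eq:loopless_svrg_bound} for $\hat{\Delta}_k$, and finally to package these into the $\textbf{VR}$ property of Definition~\ref{de:VR_Estimators}. For \eqref{eq:loopless_svrg_bound2}, I would start from the explicit form $\widetilde{F}^k - Fx^k = \big(\Fb(x^k,\Sc_k) - \Fb(\tilde{x}^k,\Sc_k) - (Fx^k - F\tilde{x}^k)\big) + \big(\bar{F}\tilde{x}^k - F\tilde{x}^k\big)$, apply Young's inequality $\norms{a+b}^2 \leq (1+\tau)\norms{a}^2 + \tfrac{1+\tau}{\tau}\norms{b}^2$, and bound the first (centered, mini-batch) term: conditioning on $\Fc_k$ and on $\tilde{x}^k$, the $b_k$ i.i.d.\ samples are independent and each summand is centered, so the variance of the average is $\tfrac{1}{b_k}$ times the per-sample variance, which is at most $\tfrac{1}{b_k}\Expsn{\xi}{\norms{\Fb(x^k,\xi) - \Fb(\tilde{x}^k,\xi)}^2} = \hat{\Delta}_k$ (dropping the ``$-$mean'' only decreases the second moment). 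The second term contributes $\tfrac{1+\tau}{\tau}\Expsn{k}{\norms{\bar{F}\tilde{x}^k - F\tilde{x}^k}^2}$ directly.

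For the recursion \eqref{eq:loopless_svrg_bound}, the key observation is that $\hat{\Delta}_k$ involves $\Fb(x^k,\xi) - \Fb(\tilde{x}^k,\xi)$, and by \eqref{eq:xy_hat} the anchor $\tilde{x}^k$ equals $x^{k-1}$ with probability $\mbf{p}_k$ and $\tilde{x}^{k-1}$ with probability $1-\mbf{p}_k$. Taking the conditional expectation over this Bernoulli choice (which is independent of the history up to $\Fc_k$, hence also of $\Expsn{\xi}{\cdot}$), I would write
\[
\Expsn{k}{\hat{\Delta}_k} = \tfrac{\mbf{p}_k}{b_k}\Expsn{\xi}{\norms{\Fb(x^k,\xi)-\Fb(x^{k-1},\xi)}^2} + \tfrac{1-\mbf{p}_k}{b_k}\Expsn{\xi}{\norms{\Fb(x^k,\xi)-\Fb(\tilde{x}^{k-1},\xi)}^2},
\]
then on the second term insert and subtract $\Fb(x^{k-1},\xi)$ and apply Young's inequality with parameter tuned by $\alpha$: $\norms{\Fb(x^k,\xi)-\Fb(\tilde{x}^{k-1},\xi)}^2 \leq \tfrac{1}{1-\alpha}\norms{\Fb(x^k,\xi)-\Fb(x^{k-1},\xi)}^2 + \tfrac{1}{\alpha}\norms{\Fb(x^{k-1},\xi)-\Fb(\tilde{x}^{k-1},\xi)}^2$. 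Recognizing $\tfrac{1}{b_k}\Expsn{\xi}{\norms{\Fb(x^{k-1},\xi)-\Fb(\tilde{x}^{k-1},\xi)}^2} \leq \hat{\Delta}_{k-1}$ (using $b_k \geq b_{k-1}$) and $\tfrac{1}{b_k}\Expsn{\xi}{\norms{\Fb(x^k,\xi)-\Fb(x^{k-1},\xi)}^2} = \tfrac{1}{b_k}\bar{\Ec}_k$, and collecting the coefficient of $\hat{\Delta}_{k-1}$ as $(1-\mbf{p}_k)\cdot\tfrac{1}{\alpha}\cdot\alpha = 1-\mbf{p}_k \leq 1-\alpha\mbf{p}_k$ (after also absorbing the $\tfrac{\mbf{p}_k}{1-\alpha}$-weighted leftover into $\bar{\Ec}_k$), yields the stated bound with coefficient $\tfrac{1}{(1-\alpha)b_k\mbf{p}_k}$ on $\bar{\Ec}_k$. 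I would double-check that the arithmetic on the $\hat{\Delta}_{k-1}$-coefficient indeed collapses to $1-\alpha\mbf{p}_k$; this bookkeeping is where a sign or weight slip is most likely.

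Finally, to obtain the $\textbf{VR}$ property I would combine the two bounds with the mega-batch estimate $\Expsn{k}{\norms{\bar{F}\tilde{x}^k - F\tilde{x}^k}^2} \leq \sigma^2/n_k$ and set $\Delta_k := 2\hat{\Delta}_k + \tfrac{2\sigma^2}{\tau n_k}$. The first line of \eqref{eq:VR_property} then follows from \eqref{eq:loopless_svrg_bound2} by choosing the Young parameter so that $1+\tau \leq 2$ is not needed — rather one keeps $\tau$ free and checks $(1+\tau)\hat{\Delta}_k + \tfrac{1+\tau}{\tau}\cdot\tfrac{\sigma^2}{n_k} \leq 2\hat{\Delta}_k + \tfrac{2\sigma^2}{\tau n_k}$, which holds once $\tau \leq 1$; I would state the bound for $\tau \in (0,1]$ or simply absorb constants. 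For the second line, multiply \eqref{eq:loopless_svrg_bound} by $2$, add $\tfrac{2\sigma^2}{\tau n_k}$ to both sides, and use $n_k \geq n_{k-1}$ to write $\tfrac{2\sigma^2}{\tau n_k} \leq (1-\alpha\mbf{p}_k)\tfrac{2\sigma^2}{\tau n_{k-1}} + \alpha\mbf{p}_k\tfrac{2\sigma^2}{\tau n_k}$; then $\Expsn{k}{\Delta_k} \leq (1-\alpha\mbf{p}_k)\Delta_{k-1} + \tfrac{2}{(1-\alpha)b_k\mbf{p}_k}\bar{\Ec}_k + \tfrac{2\alpha\mbf{p}_k\sigma^2}{\tau n_k}$, matching $\kappa_k = \alpha\mbf{p}_k$, $\Theta_k = \tfrac{2}{(1-\alpha)b_k\mbf{p}_k}$, $\sigma_k^2 = \tfrac{2\alpha\mbf{p}_k\sigma^2}{n_k}$ (after renaming $\tau$). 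The full-batch specialization is immediate: with $\bar{F}\tilde{x}^k = F\tilde{x}^k$ the second term in \eqref{eq:loopless_svrg_bound2} vanishes, one may take $\tau \to 0$, so $\Delta_k := \hat{\Delta}_k$ works with $\sigma_k^2 = 0$ and the $\bar{\Ec}_k$-coefficient halved to $\tfrac{1}{(1-\alpha)b_k\mbf{p}_k}$. The main obstacle is purely the recursion \eqref{eq:loopless_svrg_bound}: getting the Bernoulli-anchor update to interact cleanly with the Young split so that the contraction factor is exactly $1-\alpha\mbf{p}_k$ and the residual coefficient is exactly $\tfrac{1}{(1-\alpha)b_k\mbf{p}_k}$ — everything else is assembly.
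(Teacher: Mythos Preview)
Your treatment of \eqref{eq:loopless_svrg_bound2} and the final packaging into the $\textbf{VR}$ property are essentially the paper's argument. The gap is in the recursion \eqref{eq:loopless_svrg_bound}, precisely at the step you flagged as delicate.

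With your Young split $\norms{\Fb(x^k,\xi)-\Fb(\tilde{x}^{k-1},\xi)}^2 \leq \tfrac{1}{1-\alpha}\norms{\Fb(x^k,\xi)-\Fb(x^{k-1},\xi)}^2 + \tfrac{1}{\alpha}\norms{\Fb(x^{k-1},\xi)-\Fb(\tilde{x}^{k-1},\xi)}^2$, the coefficient on $\hat{\Delta}_{k-1}$ becomes $\tfrac{1-\mbf{p}_k}{\alpha}$, not $1-\alpha\mbf{p}_k$. The extra ``$\cdot\,\alpha$'' you insert to turn $\tfrac{1-\mbf{p}_k}{\alpha}$ into $1-\mbf{p}_k$ has no source; and $\tfrac{1-\mbf{p}_k}{\alpha}$ can exceed $1$ whenever $\mbf{p}_k < 1-\alpha$, so the recursion is not even contractive in the regime of interest (small $\mbf{p}_k$). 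Swapping which term gets the $\tfrac{1}{\alpha}$ weight does not help: $\tfrac{1-\mbf{p}_k}{1-\alpha}$ is also generically larger than $1-\alpha\mbf{p}_k$.

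The fix is that the Young parameter must \emph{depend on $\mbf{p}_k$}. Write the split as $(1+c)\norms{\Fb(x^{k-1},\xi)-\Fb(\tilde{x}^{k-1},\xi)}^2 + (1+\tfrac{1}{c})\norms{\Fb(x^k,\xi)-\Fb(x^{k-1},\xi)}^2$ and choose $c := \tfrac{(1-\alpha)\mbf{p}_k}{1-\mbf{p}_k}$. Then $(1+c)(1-\mbf{p}_k) = 1-\alpha\mbf{p}_k$ exactly, and the combined $\bar{\Ec}_k$ coefficient becomes $\tfrac{1}{b_k}\big[\mbf{p}_k + (1-\mbf{p}_k)(1+\tfrac{1}{c})\big] = \tfrac{1-(1+\alpha)\mbf{p}_k+\mbf{p}_k^2}{(1-\alpha)b_k\mbf{p}_k} \leq \tfrac{1}{(1-\alpha)b_k\mbf{p}_k}$. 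This is the paper's choice; once you make it, the rest of your outline goes through unchanged.
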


\beforesubsubsec
\subsubsection{SAGA estimator for finite-sum setting \eqref{eq:finite_sum_form}}\label{subsubsec:saga_estimator}
\aftersubsubsec
The SAGA estimator was introduced in \citet{Defazio2014} for solving finite-sum optimization problems.
We apply it to the finite-sum setting \eqref{eq:finite_sum_form}.
It  is constructed as follows.

For a given $F$ defined in the finite-sum setting \eqref{eq:finite_sum_form} of \eqref{eq:NI}, a given sequence $\sets{x^k}_{k\geq 0}$, and a given i.i.d. sample $\Sc_k$ of size $b_k$, for $k\geq 1$, we update $\hat{F}^k_i$ for all $i\in [n]$ as
\begin{equation}\label{eq:SAGA_ref_points}
\hat{F}_i^k := \left\{\begin{array}{lcl}
F_ix^{k-1}  &\text{if}~ i \in \Sc_k, \vspace{1ex} \\
\hat{F}_i^{k-1} & \text{if}~i \notin \Sc_k.
\end{array}\right.
\end{equation}
Then, we  construct a SAGA estimator for $Fx^k$ as follows:
\begin{equation}\label{eq:SAGA_estimator}
\arraycolsep=0.2em
\begin{array}{lcl}
\widetilde{F}^k & := & \frac{1}{n} \sum_{i=1}^n\hat{F}_i^{k}  + F_{\Sc_k}x^k  - \hat{F}_{\Sc_k}^{k}, 
\end{array}
\tag{SAGA}
\end{equation}
where $F_{\Sc_k}x^k := \frac{1}{b_k}\sum_{i\in\Sc_k}F_ix^k$ and $\hat{F}_{\Sc_k}^{k} := \frac{1}{b_k}\sum_{i\in\Sc_k}\hat{F}_i^{k}$.
Moreover, we need to store $n$ component $\hat{F}^k_i$ computed so far for all $i \in [n]$ in a table $\mbf{T}_k := [\hat{F}_1^k, \hat{F}_2^k, \cdots, \hat{F}_n^k]$ initialized at $\hat{F}_i^0 := F_ix^0$ for all $i \in [n]$.
SAGA requires significant memory to store $\mbf{T}_k$ if $n$ and $p$ are both large.
We have the following result, whose proof is given in Appendix~\ref{apdx:proof_SAGA_estimator_bound}.

\vspace{-1ex}
\begin{lemma}\label{le:SAGA_estimator_full}
Let $\widetilde{F}^k$ be constructed by \eqref{eq:SAGA_estimator} such that $b_{k-1} - \frac{(1-\alpha)b_kb_{k-1}}{2n} \leq b_k \leq b_{k-1}$ for all $k\geq 1$ and a given $\alpha \in (0, 1)$, and $\Delta_k :=   \frac{1}{nb_k} \sum_{i=1}^n \norms{ F_ix^k -  \hat{F}_{i}^k }^2$.
Then, we have 
\vspace{-0.5ex}
\begin{equation}\label{eq:SAGA_variance}
\left\{\begin{array}{lcl}
\Expsn{k}{ \norms{\widetilde{F}^k - Fx^k }^2 }  & \leq &  \Expsn{k}{ \Delta_k }, \vspace{1ex}\\
\Expsn{k}{ \Delta_k } & \leq &  \big(1 -  \frac{\alpha b_k}{n} \big)  \Delta_{k-1}   +   \frac{\Theta_k }{n} \sum_{i=1}^n \norms{F_ix^k - F_ix^{k-1}}^2,
\end{array}\right.
\vspace{-0.5ex}
\end{equation}
where $\Theta_k := \frac{(3 - \alpha)n}{(1-\alpha)b_k^2}$.

Consequently, $\widetilde{F}^k$ satisfies the $\textbf{VR}(\Delta_k; \kappa_k, \Theta_k, \sigma_k)$ property in Definition~\ref{de:VR_Estimators} with $\kappa_k := \frac{\alpha b_k }{n} \in (0, 1]$, $\Delta_k$ and $\Theta_k$ given above, and $\sigma_k^2 = 0$.
\end{lemma}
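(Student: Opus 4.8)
The plan is to verify the two inequalities in \eqref{eq:SAGA_variance} separately and then read off the $\textbf{VR}$ parameters. First I would establish the unbiasedness of the SAGA estimator: since $\Sc_k$ is sampled i.i.d.\ and independent of $\Fc_k$, a direct computation gives $\Expsn{k}{F_{\Sc_k}x^k} = Fx^k$ and $\Expsn{k}{\hat F_{\Sc_k}^k} = \frac{1}{n}\sum_{i=1}^n\hat F_i^k$, so the two last terms in \eqref{eq:SAGA_estimator} cancel in expectation and $\Expsn{k}{\widetilde F^k} = Fx^k$. For the first inequality I would then use $\Expsn{k}{\norms{\widetilde F^k - Fx^k}^2} = \Expsn{k}{\norms{(F_{\Sc_k}x^k - \hat F_{\Sc_k}^k) - \Expsn{k}{F_{\Sc_k}x^k - \hat F_{\Sc_k}^k}}^2} \le \Expsn{k}{\norms{F_{\Sc_k}x^k - \hat F_{\Sc_k}^k}^2}$, dropping the mean-subtraction since $\Exp{\norms{Z-\Exp{Z}}^2}\le\Exp{\norms{Z}^2}$. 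Expanding the mini-batch average with the standard sampling-without-replacement (or with-replacement) variance bound and the inequality $\norms{\frac{1}{b_k}\sum_{i\in\Sc_k} a_i}^2 \le \frac{1}{b_k}\sum_{i\in\Sc_k}\norms{a_i}^2$ (Jensen), then taking expectation over $\Sc_k$ turns $\frac{1}{b_k}\sum_{i\in\Sc_k}\norms{F_ix^k-\hat F_i^k}^2$ into $\frac{1}{n}\sum_{i=1}^n\norms{F_ix^k-\hat F_i^k}^2$, which after dividing by $b_k$ is exactly $\Delta_k$. (The precise combinatorial constant is routine and I would not grind through it.)

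For the second inequality — the recursion on $\Delta_k$ — I would write $\Delta_k = \frac{1}{nb_k}\sum_{i=1}^n\norms{F_ix^k - \hat F_i^k}^2$ and condition on $\Fc_k$. For each coordinate $i$, with probability $b_k/n$ the index $i$ lies in $\Sc_k$ and $\hat F_i^k = F_ix^{k-1}$, and with probability $1-b_k/n$ we have $\hat F_i^k = \hat F_i^{k-1}$; hence $\Expsn{k}{\norms{F_ix^k-\hat F_i^k}^2} = \frac{b_k}{n}\norms{F_ix^k - F_ix^{k-1}}^2 + \big(1-\frac{b_k}{n}\big)\Expsn{k}{\norms{F_ix^k - \hat F_i^{k-1}}^2}$. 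The term $\norms{F_ix^k - \hat F_i^{k-1}}^2$ I would bound by a Young/Peter-Paul split $\norms{F_ix^k - \hat F_i^{k-1}}^2 \le (1+\eta)\norms{F_ix^{k-1}-\hat F_i^{k-1}}^2 + (1+\tfrac1\eta)\norms{F_ix^k - F_ix^{k-1}}^2$ for a parameter $\eta>0$ to be tuned; summing over $i$, dividing by $nb_k$, and using $\frac{1}{nb_{k-1}}\sum_i\norms{F_ix^{k-1}-\hat F_i^{k-1}}^2 = \Delta_{k-1}$ (note the denominator is $b_{k-1}$, not $b_k$) gives a recursion of the form $\Expsn{k}{\Delta_k}\le c_k\Delta_{k-1} + d_k\cdot\frac{1}{n}\sum_i\norms{F_ix^k-F_ix^{k-1}}^2$. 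Choosing $\eta$ appropriately (e.g.\ $\eta \asymp \frac{b_k}{n}$) and using the stated hypothesis $b_{k-1} - \frac{(1-\alpha)b_kb_{k-1}}{2n}\le b_k$, which is precisely what is needed to absorb the $b_{k-1}/b_k$ mismatch into the contraction factor, collapses $c_k$ to $1-\frac{\alpha b_k}{n}$ and $d_k$ to $\frac{\Theta_k}{b_k\cdot\text{(something)}}$; I would solve for the constant so that $\Theta_k = \frac{(3-\alpha)n}{(1-\alpha)b_k^2}$ comes out exactly. Finally, matching with Definition~\ref{de:VR_Estimators} with $\bar\Ec_k = \frac{1}{n}\sum_i\norms{F_ix^k - F_ix^{k-1}}^2$, $\sigma_k^2 = 0$, and $\Delta_{-1}=0$ (consistent since $\hat F_i^0 = F_ix^0$ and $x^{-1}=x^0$ give $\Delta_0 = 0$-compatible data) finishes the proof.

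The main obstacle I expect is the careful bookkeeping of the variable batch size: because $\Delta_k$ is normalized by $b_k$ but the "old" table entries are normalized by $b_{k-1}$, the Young-inequality parameter $\eta$ and the hypothesis $b_{k-1}(1 - \frac{(1-\alpha)b_k}{2n})\le b_k$ must be balanced so that the cross term is exactly a $(1-\alpha)$-fraction of the contraction budget, leaving $\alpha b_k/n$ for genuine contraction. Getting $\Theta_k$ to match the stated closed form $\frac{(3-\alpha)n}{(1-\alpha)b_k^2}$ on the nose (rather than merely up to a constant) is the delicate part; everything else — unbiasedness, the Jensen step for the variance, and the per-coordinate conditional expectation — is standard SAGA analysis adapted to the operator setting.
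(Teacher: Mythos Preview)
Your plan matches the paper's proof almost exactly: the per-coordinate conditional expectation for $\Delta_k$, the Young split with a tunable parameter, and the use of the batch-size hypothesis to absorb the $b_{k-1}/b_k$ mismatch are precisely how the paper proceeds. The paper makes the specific choice $c := \tfrac{(n-\alpha b_k)b_k}{(n-b_k)b_{k-1}} - 1$ (your $\eta$), which forces the contraction coefficient to be exactly $1-\tfrac{\alpha b_k}{n}$, and then bounds the resulting $\Theta$-coefficient by $\tfrac{(3-\alpha)n}{(1-\alpha)b_k^2}$ via the lower bound $n(b_k-b_{k-1})+b_kb_{k-1}-\alpha b_k^2 \ge \tfrac{(1-\alpha)b_k^2}{4}$, which is where the stated hypothesis on $b_k$ enters. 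So your instincts about where the hypothesis is used and what $\eta$ should be are correct.

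There is one ordering issue in your sketch of the first inequality that would cost you a factor of $b_k$ if followed literally. You first apply $\Exp{\norms{Z-\Exp{Z}}^2}\le\Exp{\norms{Z}^2}$ at the level of the \emph{whole} mini-batch average, obtaining $\Expsn{k}{\norms{F_{\Sc_k}x^k - \hat F_{\Sc_k}^k}^2}$, and only then invoke a ``variance bound'' or Jensen. But once the mean has been dropped, the summands $F_ix^k-\hat F_i^k$ are no longer centered, so the i.i.d.\ variance formula does not apply, and Jensen alone gives only $\tfrac{1}{n}\sum_i\norms{F_ix^k-\hat F_i^k}^2 = b_k\Delta_k$. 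The paper instead keeps the centering inside the sum: writing $X_i^k := F_ix^k-\hat F_i^k$ and noting $\widetilde F^k - Fx^k = \tfrac{1}{b_k}\sum_{i\in\Sc_k}(X_i^k - \Expsn{i}{X_i^k})$, the i.i.d.\ variance identity gives $\tfrac{1}{b_k}\Expsn{i}{\norms{X_i^k-\Expsn{i}{X_i^k}}^2}\le \tfrac{1}{b_k}\Expsn{i}{\norms{X_i^k}^2}=\Delta_k$ directly. The fix is trivial---just swap the order of the two steps---but as written your argument does not close.
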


\beforesubsubsec
\subsubsection{Loopless-SARAH estimator}\label{subsubsec:sarah_estimator}
\aftersubsubsec
The SARAH estimator was introduced in \citet{nguyen2017sarah} for finite-sum convex optimization.
Its loopless variant  was studied in \citet{driggs2019accelerating,Li2019,li2020page}, and recently in \citet{cai2022stochastic,cai2023variance}.
This estimator is constructed as follows.

Given $\sets{x^k}$ and an i.i.d. sample $\Sc_k$ of size $b_k$, we construct an estimator $\widetilde{F}^k$ of $Fx^k$ as
\begin{equation}\label{eq:loopless_sarah}
\widetilde{F}^k := \begin{cases} 
\widetilde{F}^{k-1}  + \Fb(x^k, \Sc_k) - \Fb(x^{k-1}, \Sc_k) &\text{with probability}~1 - \mbf{p}_k, \vspace{1ex} \\
\bar{F}x^k &\text{with probability}~\mbf{p}_k,
\end{cases} 
\tag{L-SARAH}
\end{equation}
where $\Fb(\cdot, \Sc_k) := \frac{1}{b_k}\sum_{\xi_i\in\Sc_k}\Fb(\cdot, \xi_i)$, $\widetilde{F}^0 := \bar{F}x^0$, and $\mbf{p}_k \in [\underline{\mbf{p}}, 1)$ is a given probability of the switching rule in \eqref{eq:loopless_sarah}.
The quantity $\bar{F}x^k$ is constructed by one of the two options:
\begin{compactitem}
\item \textbf{Full-batch evaluation.} We compute $\bar{F}x^k  := Fx^k$.
\item \textbf{Mega-batch evaluation.} We compute $\bar{F}x^k := \frac{1}{n_k}\sum_{\xi\in\bar{\Sc}_k}\mbf{F}(x^k, \xi)$ using a mega-batch $\bar{\Sc}_k$ of size $n_k$.
In this case, we have $\Expsn{\bar{\Sc}_k}{\bar{F}x^k} = Fx^k$ and $\Expsn{\bar{\Sc}_k}{ \norms{\bar{F}x^k - Fx^k}^2 } \leq \frac{\sigma^2}{n_k}$, where $\sigma^2$ is given in Assumption~\ref{as:A1}(ii).
\end{compactitem}
The following lemma shows that $\widetilde{F}^k$ satisfies Definition~\ref{de:VR_Estimators}, whose proof is in Appendix~\ref{apdx:proof_SARAH_estimator_bound}.

\vspace{-1ex}
\begin{lemma}\label{le:loopless_sarah_bound}
Let $\widetilde{F}_k$ be constructed by \eqref{eq:loopless_sarah} and $\bar{\Ec}_k$ be defined in Definition~\ref{de:VR_Estimators}.
Then
\begin{equation}\label{eq:loopless_sarah_var_bound}
\arraycolsep=0.2em
\begin{array}{lcl}
\Expsn{k}{\norms{\widetilde{F}^k - Fx^k }^2 }  & \leq & (1 - \mbf{p}_k)  \norms{ \widetilde{F}^{k-1} - Fx^{k-1} }^2   +  \mbf{p}_k  \norms{\bar{F}x^k - Fx^k }^2 + \frac{ 1-\mbf{p}_k }{b_k} \cdot \bar{\Ec}_k.
\end{array}
\end{equation}
Consequently, $\widetilde{F}^k$ satisfies the $\textbf{VR}(\Delta_k; \kappa_k, \Theta_k, \sigma_k)$ property in Definition~\ref{de:VR_Estimators} with 
$\Delta_k :=  \norms{\widetilde{F}^k - Fx^k }^2$,  $\kappa_k =  \mbf{p}_k$,  $\Theta_k  := \frac{1}{b_k}$, and $\sigma_k^2 := \frac{\mbf{p}_k \sigma^2}{n_k}$.
In particular, if we choose $\bar{F}x^k := Fx^k$, then $\widetilde{F}^k$ satisfies Definition~\ref{de:VR_Estimators} with the same $\Delta_k$, $\kappa_k$, and $\Theta_k$, but with $\sigma_k = 0$. 
\end{lemma}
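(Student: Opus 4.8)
The plan is to compute, for each $k \geq 1$, the conditional expectation $\Expsn{k}{\norms{\widetilde{F}^k - Fx^k}^2}$ directly, conditioning on $\Fc_k$ and on the Bernoulli coin that selects between the two branches of \eqref{eq:loopless_sarah}. Throughout I would use that $x^k$, $x^{k-1}$, and $\widetilde{F}^{k-1}$ are $\Fc_k$-measurable, while the fresh mini-batch $\Sc_k$, the optional mega-batch $\bar{\Sc}_k$, and the coin at iteration $k$ are independent of $\Fc_k$ and mutually independent. Conditioning on the coin immediately gives
\[
\Expsn{k}{\norms{\widetilde{F}^k - Fx^k}^2} = \mbf{p}_k\,\Expsn{k}{\norms{\bar{F}x^k - Fx^k}^2} + (1-\mbf{p}_k)\,\Expsn{k}{\norms{\widetilde{F}^{k-1} + \Fb(x^k,\Sc_k) - \Fb(x^{k-1},\Sc_k) - Fx^k}^2},
\]
so the whole content lies in controlling the recursive-branch term.

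For that term I would perform a bias--variance split: write its argument as $a + Y$ with $a := \widetilde{F}^{k-1} - Fx^{k-1}$ ($\Fc_k$-measurable) and $Y := \big(\Fb(x^k,\Sc_k) - \Fb(x^{k-1},\Sc_k)\big) - \big(Fx^k - Fx^{k-1}\big)$. By unbiasedness of $\mbf{F}(\cdot,\xi)$ and measurability of $x^k, x^{k-1}$, one has $\Expsn{k}{\Fb(x^k,\Sc_k)} = Fx^k$ and $\Expsn{k}{\Fb(x^{k-1},\Sc_k)} = Fx^{k-1}$, hence $\Expsn{k}{Y} = 0$; the cross term then vanishes and $\Expsn{k}{\norms{a+Y}^2} = \norms{\widetilde{F}^{k-1} - Fx^{k-1}}^2 + \Expsn{k}{\norms{Y}^2}$. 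To bound $\Expsn{k}{\norms{Y}^2}$ I would write $Y = \tfrac{1}{b_k}\sum_{\xi_i\in\Sc_k} Z_i$ with $Z_i := \big(\mbf{F}(x^k,\xi_i) - \mbf{F}(x^{k-1},\xi_i)\big) - \big(Fx^k - Fx^{k-1}\big)$; the $Z_i$ are conditionally i.i.d.\ and mean-zero, so all off-diagonal inner products drop out and $\Expsn{k}{\norms{Y}^2} = \tfrac{1}{b_k}\Expsn{\xi}{\norms{Z_1}^2}$. Since $Z_1$ is the centering of $\mbf{F}(x^k,\xi) - \mbf{F}(x^{k-1},\xi)$, its second moment is at most $\Expsn{\xi}{\norms{\mbf{F}(x^k,\xi) - \mbf{F}(x^{k-1},\xi)}^2} = \bar{\Ec}_k$. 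Plugging back gives exactly \eqref{eq:loopless_sarah_var_bound}.

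To obtain the \textbf{VR} property I would set $\Delta_k := \norms{\widetilde{F}^k - Fx^k}^2$ (so the first line of \eqref{eq:VR_property} is an identity), bound $1-\mbf{p}_k \leq 1$ in the $\bar{\Ec}_k$-coefficient of \eqref{eq:loopless_sarah_var_bound}, and, in the mega-batch case, invoke Assumption~\ref{as:A1}(ii) on the fresh batch $\bar{\Sc}_k$ to get $\Expsn{k}{\norms{\bar{F}x^k - Fx^k}^2} \leq \sigma^2/n_k$. This yields the recursion with $\kappa_k = \mbf{p}_k$ (which lies in $(0,1)$ because $\mbf{p}_k \in [\underline{\mbf{p}},1)$), $\Theta_k = 1/b_k$, and $\sigma_k^2 = \mbf{p}_k\sigma^2/n_k$; with the full-batch choice $\bar{F}x^k = Fx^k$ the last term disappears and $\sigma_k = 0$.

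The argument is essentially a routine variance computation; the only place that needs genuine care is the measurability/independence bookkeeping --- that conditioning on $\Fc_k$ freezes $\widetilde{F}^{k-1}$ into a deterministic offset (so that the SARAH recursion contributes $\norms{\widetilde{F}^{k-1} - Fx^{k-1}}^2$ with no extra factor), that the coin is independent of $\Sc_k$ so the coin-conditioning identity is exact, and that the samples in $\Sc_k$ are i.i.d.\ so the averaged noise carries the $1/b_k$ gain. Assumption~\ref{as:A1}(ii) enters only through the mega-batch term.
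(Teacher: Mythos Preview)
Your proposal is correct and follows essentially the same route as the paper's proof: condition on the Bernoulli coin, then on the recursive branch perform the bias--variance split $a + Y$ with $a = \widetilde{F}^{k-1} - Fx^{k-1}$ and $Y$ the centered mini-batch difference, use $\Expsn{k}{Y}=0$ to drop the cross term, and bound $\Expsn{k}{\norms{Y}^2}\le \tfrac{1}{b_k}\bar{\Ec}_k$ via the i.i.d.\ structure. The only cosmetic difference is that the paper writes the intermediate identity as $\norms{\widetilde{F}^{k-1}-Fx^{k-1}}^2 + \Expsn{k}{\norms{\Fb(x^k,\Sc_k)-\Fb(x^{k-1},\Sc_k)}^2} - \norms{Fx^k-Fx^{k-1}}^2$ before bounding, whereas you center first; the content is identical.
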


\vspace{-2ex}
\beforesubsubsec
\subsubsection{Hybrid SGD estimator}\label{subsec:HSGD_estimator}
\aftersubsubsec
\rvs{The hybrid stochastic gradient estimator (HSGD), was introduced in  \citet{Tran-Dinh2019,Tran-Dinh2019a}} to construct biased variance-reduced estimators for nonconvex optimization. 
We extend it here for operator $F$ to solve \eqref{eq:NI}.
It is constructed as follows.

Given  $\sets{x^k}$ generated by our algorithm, an initial estimate $\widetilde{F}^0$ such that $\Expsn{0}{\norms{\widetilde{F}^0 - Fx^0}^2 } \leq \frac{\sigma^2}{n_0}$, and an i.i.d. sample $\Sc_k$ of size $b_k$, we construct $\widetilde{F}^k$ for $Fx^k$ as follows:
\begin{equation}\label{eq:HSGD_estimator}
\arraycolsep=0.2em
\begin{array}{lcl}
\widetilde{F}^k  := (1 - \tau_k)\big[ \widetilde{F}^{k-1} + \Fb(x^k, \Sc_k) - \Fb(x^{k-1}, \Sc_k) \big] +  \tau_k \bar{F}x^k,
\end{array}
\tag{HSGD}
\end{equation}
where $\tau_k \in [0, 1]$  is a given weight, $\bar{F}x^k$ is an unbiased estimator of $Fx^k$ constructed from an i.i.d. sample $\hat{\Sc}_k$ of size $\hat{b}_k$, i.e., $\Expsn{\hat{\Sc}_k}{ \bar{F}x^k } = Fx^k$ and $\Expsn{\hat{\Sc}_k}{ \norms{ \bar{F}x^k - Fx^k}^2 } \leq \frac{\sigma^2}{\hat{b}_k}$ for $k\geq 1$.
Here, we allow $\Sc_k$ and $\hat{\Sc}_k$ to be dependent or even identical.
Our \ref{eq:HSGD_estimator} estimator covers the following special cases.
\begin{compactitem}
\item If $\tau_k = 0$, then it reduces to the SARAH estimator \citep{nguyen2017sarah}.
\item If $\tau_k = 1$, then $\widetilde{F}^k = \bar{F}x^k$ as a mini-batch unbiased estimator.
\item If $\bar{F}x^k = \Fb(x^k, \Sc_k)$ (i.e., $\Sc_k \equiv \hat{\Sc}_k$), then $\widetilde{F}_k$ reduces to the STORM estimator in \citet{Cutkosky2019}.
\end{compactitem}
The following lemma provides a key property of \eqref{eq:HSGD_estimator}, whose proof is in Appendix~\ref{apdx:proof_HSGD_estimator_bound}.

\vspace{-1ex}
\begin{lemma}\label{le:HSGD_estimator_bound}
Let $\widetilde{F}^k$ be constructed by \eqref{eq:HSGD_estimator}, $\Delta_k :=  \norms{ \widetilde{F}^k  - Fx^k }^2$, $\delta_k^2 :=  \Expsn{\hat{\Sc}_k }{\norms{\bar{F}x^k  - Fx^k}^2 }$, and $\bar{\Ec}_k$ be defined in Definition~\ref{de:VR_Estimators}.
Then, the following statements hold.
\begin{compactitem}
\item[$\mathrm{(i)}$] If $\Sc_k$ is independent of $\hat{\Sc}_k$, then
\begin{equation}\label{eq:HSGD_key_est_a}
\hspace{-0.5ex}
\arraycolsep=0.1em
\begin{array}{lcl}
\Expsn{k}{ \Delta_k } &\leq & (1-\tau_k)^2  \Delta_{k-1}  +  \frac{ (1-\tau_k)^2}{b_k} \bar{\Ec}_k  +  \tau_k^2 \delta_k^2.
\end{array}
\hspace{-3ex}
\end{equation}
\item[$\mathrm{(ii)}$] If $\Sc_k$ and $\hat{\Sc}_k$ are dependent or identical, then
\begin{equation}\label{eq:HSGD_key_est_b}
\hspace{-0.5ex}
\arraycolsep=0.2em
\begin{array}{lcl}
\Expsn{k}{ \Delta_k } &\leq & (1-\tau_k)^2  \Delta_{k-1}  +  \frac{ 2(1-\tau_k)^2}{b_k} \bar{\Ec}_k + 2\tau_k^2 \delta_k^2.
\end{array}
\hspace{-3ex}
\end{equation}
\item[$\mathrm{(iii)}$] The estimator $\widetilde{F}^k$ satisfies the $\textbf{VR}(\Delta_k; \kappa_k, \Theta_k, \sigma_k)$ property in Definition~\ref{de:VR_Estimators}  with  $\Delta_k$ given above, $\kappa_k := 1 - (1-\tau_k)^2$, and
\begin{compactitem}
\item[$\bullet$] $\Theta_k = \frac{(1-\tau_k)^2}{b_k}$ and $\sigma_k^2 = \frac{\tau_k^2\sigma^2}{\hat{b}_k}$, if $\Sc_k$ is independent of $\hat{\Sc}_k$;
\item[$\bullet$] $\Theta_k = \frac{ 2(1-\tau_k)^2}{b_k}$ and $\sigma_k^2 = \frac{2\tau_k^2 \sigma^2}{ b_k}$, otherwise.
\end{compactitem}
\end{compactitem}
\end{lemma}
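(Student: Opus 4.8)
The plan is to track the estimator error $e_k := \widetilde{F}^k - Fx^k$ directly and derive a one-step recursion for $\norms{e_k}^2$. Subtracting the identity $Fx^k = (1-\tau_k)\big[Fx^{k-1} + (Fx^k - Fx^{k-1})\big] + \tau_k Fx^k$ from \eqref{eq:HSGD_estimator} yields the decomposition
\[
e_k = (1-\tau_k)e_{k-1} + (1-\tau_k)u_k + \tau_k v_k,
\]
where $u_k := \Fb(x^k,\Sc_k) - \Fb(x^{k-1},\Sc_k) - (Fx^k - Fx^{k-1})$ and $v_k := \bar{F}x^k - Fx^k$. Here $\Expsn{k}{u_k} = 0$ by unbiasedness of the mini-batch oracle $\Fb(\cdot,\Sc_k)$ and $\Expsn{k}{v_k} = 0$ by unbiasedness of $\bar{F}x^k$, while $e_{k-1}$, $\bar{\Ec}_k$, and $\delta_k^2$ are all $\Fc_k$-measurable. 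This rearrangement is the crux: it isolates one conditionally mean-zero term driven by $\Sc_k$ and one driven by $\hat{\Sc}_k$.

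Next I would expand $\norms{e_k}^2$ and take $\Expsn{k}{\cdot}$. Writing $w_k := (1-\tau_k)u_k + \tau_k v_k$, so that $\Expsn{k}{w_k} = 0$, the cross term with $e_{k-1}$ vanishes and $\Expsn{k}{\norms{e_k}^2} = (1-\tau_k)^2\norms{e_{k-1}}^2 + \Expsn{k}{\norms{w_k}^2}$. Bounding $\Expsn{k}{\norms{w_k}^2}$ then needs two elementary variance estimates: (a) $\Expsn{k}{\norms{u_k}^2} \le \tfrac{1}{b_k}\bar{\Ec}_k$, because $u_k$ is the centered average of the $b_k$ i.i.d. terms $\Fb(x^k,\xi_i) - \Fb(x^{k-1},\xi_i)$, so its conditional second moment equals $\tfrac{1}{b_k}$ times their common variance, which is at most $\tfrac{1}{b_k}\Expsn{\xi}{\norms{\Fb(x^k,\xi) - \Fb(x^{k-1},\xi)}^2} = \tfrac{1}{b_k}\bar{\Ec}_k$; and (b) $\Expsn{k}{\norms{v_k}^2} = \delta_k^2 \le \sigma^2/\hat{b}_k$ by the definition of $\delta_k$ and the mega-batch variance bound.

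For part (i), since $\Sc_k$ and $\hat{\Sc}_k$ are independent given $\Fc_k$, the cross term satisfies $\Expsn{k}{\iprods{u_k, v_k}} = \iprods{\Expsn{k}{u_k}, \Expsn{k}{v_k}} = 0$, hence $\Expsn{k}{\norms{w_k}^2} = (1-\tau_k)^2\Expsn{k}{\norms{u_k}^2} + \tau_k^2\Expsn{k}{\norms{v_k}^2}$, and combining with (a)--(b) gives \eqref{eq:HSGD_key_est_a}. For part (ii), when $\Sc_k$ and $\hat{\Sc}_k$ may be correlated I would instead use $\norms{w_k}^2 \le 2(1-\tau_k)^2\norms{u_k}^2 + 2\tau_k^2\norms{v_k}^2$, which together with (a)--(b) yields \eqref{eq:HSGD_key_est_b}. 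Finally, part (iii) is bookkeeping: setting $\Delta_k := \norms{e_k}^2$ and comparing \eqref{eq:HSGD_key_est_a}--\eqref{eq:HSGD_key_est_b} with the $\textbf{VR}$ template \eqref{eq:VR_property}, the first inequality of \eqref{eq:VR_property} holds with equality, $\kappa_k := 1 - (1-\tau_k)^2$, and $(\Theta_k,\sigma_k^2)$ are read off after replacing $\delta_k^2$ by its upper bound $\sigma^2/\hat{b}_k$ (with $\hat{b}_k = b_k$ in the dependent/identical case, where $\hat{\Sc}_k = \Sc_k$).

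I do not anticipate a genuine obstacle: once the decomposition $e_k = (1-\tau_k)e_{k-1} + (1-\tau_k)u_k + \tau_k v_k$ is in place, the rest is routine bias--variance bookkeeping. The two points that need care are (1) keeping straight which quantities are $\Fc_k$-measurable versus conditionally mean-zero, so that exactly the intended cross terms cancel, and (2) in the dependent case, correctly tying $\hat{b}_k$ to $b_k$ so that the stated $\sigma_k^2$ is indeed an upper bound on $2\tau_k^2\delta_k^2$.
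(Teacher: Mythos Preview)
Your proposal is correct and follows essentially the same route as the paper: the decomposition $e_k = (1-\tau_k)e_{k-1} + (1-\tau_k)u_k + \tau_k v_k$, the vanishing of the $e_{k-1}$ cross terms by conditional mean-zero, and then independence (for part~(i)) versus Young's inequality (for part~(ii)) to handle the $\iprods{u_k,v_k}$ term, are exactly what the paper does. Your observation that part~(iii) in the dependent case implicitly uses $\hat b_k = b_k$ is also accurate and matches how the paper reads off $\sigma_k^2 = 2\tau_k^2\sigma^2/b_k$.
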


\vspace{-1ex}
\beforesec
\section{Variance-Reduced Accelerated Forward-Backward Splitting Method}\label{sec:VR_AFBS_method}
\aftersec
In this section, we propose a novel \textbf{V}ariance-reduced \textbf{F}ast \textbf{O}perator \textbf{S}plitting (forward-backward splitting)  \textbf{A}lgorithm to solve \eqref{eq:NI},  abbreviated by \eqref{eq:VrAFBS4NI}.
Instead of using a specific stochastic estimator as in many existing works, like \citep{cai2023variance,davis2022variance}, we develop an algorithmic framework that covers all estimators satisfying Definition~\ref{de:VR_Estimators}.

\vspace{-0.5ex}
\beforesubsec
\subsection{Variance-reduced fast FBS algorithmic framework}\label{subsec:VR_AFP4NI}
\aftersubsec
\noindent\textbf{$\mathrm{(a)}$~The proposed algorithm.}
Motivated by Nesterov's acceleration techniques \citep{Nesterov1983,Nesterov2004}, our \ref{eq:VrAFBS4NI} framework for solving \eqref{eq:NI} is presented as follows:
Starting from $x^0 \in \dom{\Phi}$, set $z^0 := x^0$, and at each iteration $k \geq 0$, we update
\begin{equation}\label{eq:VrAFBS4NI}
\arraycolsep=0.2em
\left\{\begin{array}{lcl}
y^k & := &  \frac{t_k - 1}{t_k} x^k + \frac{1}{t_k} z^k, \vspace{1ex}\\
x^{k+1} &:= & y^k - \eta_k \widetilde{G}_{\lambda}^k, \vspace{1ex}\\
z^{k+1} &:= &  z^k + \nu(x^{k+1} - y^k),
\end{array}\right.
\tag{VFOSA$_{+}$}
\end{equation}
where $t_k > 0$, $\eta_k > 0$, and $\nu \in (0, 1]$ are given parameters, determined later.
Here, $ \widetilde{G}_{\lambda}^k$ is a stochastic estimator of $G_{\lambda}x^k$ defined by \eqref{eq:FBS_residual}, which is constructed as follows:
\vspace{-0.5ex}
\begin{equation}\label{eq:VrAFBS4NI_estimator}
\arraycolsep=0.2em
\begin{array}{lcl}
 \widetilde{G}_{\lambda}^k := \frac{1}{\lambda}\big(x^k - J_{\lambda T}(x^k - \lambda \widetilde{F}^k) \big),
\end{array}
\vspace{-0.5ex}
\end{equation}
where $\widetilde{F}^k$ is a stochastic estimator of $Fx^k$ satisfying the $\mathbf{VR}(\Delta_k; \kappa_k, \Theta_k, \sigma_k)$ property in Definition~\ref{de:VR_Estimators}.
By the non-expansiveness of $J_{\lambda T}$ from Lemma~\ref{le:FBS_cocoerciveness}, one can easily show that
\begin{equation}\label{eq:G_estimator_bound1}
\norms{ \widetilde{G}_{\lambda}^k - G_{\lambda}x^k} \leq \norms{\widetilde{F}^k - Fx^k}.
\end{equation}
This relation shows that if $\widetilde{F}^k$ well approximates $Fx^k$, then $\widetilde{G}_{\lambda}^k$ well approximates $G_{\lambda}x^k$.

\vspace{0.5ex}
\noindent\textbf{$\mathrm{(b)}$~The implementation version.}
By combining \eqref{eq:VrAFBS4NI}, \eqref{eq:VrAFBS4NI_estimator}, and the update rules in Theorem~\ref{th:VrAFBS4NI_convergence}, we obtain Algorithm~\ref{alg:A1}, which is presented for implementation purposes.

\rv{
\begin{algorithm}[hpt!]\caption{(\textbf{V}ariance-reduced \textbf{F}ast [FB] \textbf{O}perator \textbf{S}plitting \textbf{A}lgorithm (\textbf{VFOSA$_{+}$}))}\label{alg:A1}
\normalsize
\begin{algorithmic}[1]
\State\label{step:A1_i0}{\bfseries Initialization:} Take an initial point $x^0 \in \dom{\Phi}$.

\State \hspace{2.5ex}Choose $\mu$, $\lambda$, and $\beta$ from Theorem~\ref{th:VrAFBS4NI_convergence}.
Set $\nu := \frac{\mu}{2}$, $r := 2 + \frac{1}{\mu}$, and $z^0 := x^0$.
\State\hspace{0ex}\label{step:A1_o1}{\bfseries For $k := 0,\cdots, k_{\max}$ do}
\vspace{0.25ex}   
\State\hspace{2.5ex}\label{step:A1_o2}Construct an estimator $\widetilde{F}^k$ of $Fx^k$ satisfying Definition~\ref{de:VR_Estimators}.
\State\hspace{2.5ex}\label{step:A1_o3}Update $t_k := \mu(k + r)$ and $\eta_k := \frac{2\beta(t_k - 1)}{t_k - \nu}$.
\State\hspace{2.5ex}\label{step:A1_o4}Update the following iterates:
\vspace{-0.5ex}
\myeq{eq:VrAFBS4NI_impl}{
\arraycolsep=0.2em
\left\{\begin{array}{lcl}
y^k & := &  \frac{t_k - 1}{t_k} x^k + \frac{1}{t_k} z^k, \vspace{1ex}\\
w^k &:= &  J_{\lambda T}(x^k - \lambda \widetilde{F}^k), \vspace{1ex}\\
x^{k+1} &:= & y^k - \frac{\eta_k}{\lambda}(x^k - w^k), \vspace{1ex}\\
z^{k+1} &:= &  z^k + \nu(x^{k+1} - y^k).
\end{array}\right.
\vspace{-0.5ex}
}
\State\hspace{0ex}{\bfseries End For}
\end{algorithmic}
\end{algorithm}
}

Algorithm~\ref{alg:A1} is single-loop, and at each iteration $k$, it requires one evaluation $\widetilde{F}^k$ of $Fx^k$ and one evaluation $J_{\lambda T}$ of $T$, while other steps are only scalar-vector multiplications or vector additions.
\rv{For simplicity of implementation, we can use the following parameters:
\begin{compactitem}
\item Choose $\mu := 0.95\cdot\frac{2}{3}$ and $r := 5$ (but other values of $r$ such as $r  = 10$ still work).
\item Given an estimate of $L$, choose $\hat{L} := L + \zeta$ and $\lambda := \frac{1}{L+\zeta}$ for some small $\zeta > 0$.
\item Given $\rho \geq 0$, compute $\bar{\beta} :=  \frac{\lambda(4 - \hat{L}\lambda) - 4\rho}{4(1 - \rho\hat{L})}$ and set $\beta := \frac{(2-\mu)\bar{\beta}}{2+\mu}$.
In particular, if $\rho = 0$ (i.e., $T$ is monotone), then we can choose $\lambda := \frac{1}{\hat{L}}$ and $\bar{\beta} := \frac{\lambda(4 - \hat{L}\lambda)}{4}$.
\end{compactitem}
This parameter configuration reflects what we mainly used for our experiments in Section~\ref{sec:num_examples}.
However, in practice, depending on applications, we can find appropriate parameters which possibly improve the performance of Algorithm~\ref{alg:A1}, while still still satisfying Theorem~\ref{th:VrAFBS4NI_convergence}.
}

\vspace{0.5ex}
\noindent\textbf{$\mathrm{(c)}$~Comparison to Nesterov's acceleration in convex optimization.}
Suppose that we apply \eqref{eq:VrAFBS4NI} to solve the composite convex minimization problem \eqref{eq:opt_prob}, where $f$ is convex and $L$-smooth and $g$ is proper, closed, and convex. 
In this case, Assumptions~\ref{as:A1}(iii) and \ref{as:A2} automatically hold.
The key step is $x^{k+1} := y^k - \eta_k\widetilde{G}_{\lambda}^k$, which becomes $x^{k+1} := y^k - \frac{\eta_k}{\lambda}(x^k - \prox_{\lambda g}(x^k - \lambda\widetilde{\nabla}{f}(x^k))$. 
This is a proximal-gradient step using the gradient mapping $\mcal{G}_{\lambda}(x) := \lambda^{-1}(x - \prox_{\lambda g}(x - \lambda \nabla{f}(x)))$.
Thus, \eqref{eq:VrAFBS4NI} reduces to
\vspace{-0.5ex}
\begin{equation}\label{eq:VrAFP4Cvx}
\arraycolsep=0.2em
\left\{\begin{array}{lcl}
y^k & := &  \big(1 - \frac{1}{t_k} \big) x^k + \frac{1}{t_k} z^k, \vspace{1ex}\\
x^{k+1} & := & y^k - \frac{\eta_k}{\lambda}(x^k - \prox_{\lambda g}(x^k - \lambda\widetilde{\nabla}{f}(x^k)), \vspace{1ex}\\
z^{k+1} &:= &  z^k + \nu(x^{k+1} - y^k),
\end{array}\right.
\vspace{-0.5ex}
\end{equation}
where $\widetilde{\nabla}{f}(x^k)$ is a stochastic estimator of $\nabla{f}(x^k)$.
This scheme is a new algorithm for solving the convex optimization problem \eqref{eq:opt_prob}.

\rvs{Note that the proximal-gradient variant of Nesterov's accelerated methods \citep{Nesterov1983} is equivalent to FISTA in \citep{Beck2009}}  for solving \eqref{eq:opt_prob}, which can be expressed as follows using the gradient mapping value $\mcal{G}_{\lambda}(y^k)$ and $\lambda = \eta_k$:
\vspace{-0.5ex}
\begin{equation}\label{eq:VrNes4Cvx}
\arraycolsep=0.2em
\left\{\begin{array}{lcl}
y^k & := &  \big(1 - \frac{1}{t_k} \big) x^k + \frac{1}{t_k} z^k, \vspace{1ex}\\
x^{k+1} & := & y^k - \frac{\eta_k}{\lambda}(y^k - \prox_{\lambda g}(y^k - \lambda\nabla{f}(y^k)) \equiv \prox_{\lambda g}(y^k - \lambda\nabla{f}(y^k)), \vspace{1ex}\\
z^{k+1} &:= &  z^k + t_k(x^{k+1} - y^k).
\end{array}\right.
\vspace{-0.5ex}
\end{equation}
Clearly,  our scheme \eqref{eq:VrAFP4Cvx} has some similarity to \eqref{eq:VrNes4Cvx} in terms of structure and iterate updates $y^k$ and $x^{k+1}$.
However, there are two key differences between \eqref{eq:VrAFP4Cvx} and \eqref{eq:VrNes4Cvx}.
\begin{compactitem}
\item First, \eqref{eq:VrAFP4Cvx} evaluates the gradient mapping  $\mcal{G}_{\lambda}$ at $x^k$ instead of at $y^k$ as in \eqref{eq:VrNes4Cvx}.
\item Second, \eqref{eq:VrAFP4Cvx} uses a fixed parameter $\nu \in (0, 1)$ in the last step instead of $t_k$ as in \eqref{eq:VrNes4Cvx}.
\end{compactitem}
These two differences fundamentally change the convergence analysis of our method.

Due to the second and third lines, \ref{eq:VrAFBS4NI} is also different from the accelerated schemes in  \citep{attouch2020convergence,kim2021accelerated,mainge2021fast,mainge2021accelerated,tran2022connection} since these methods were though derived from Nesterov's accelerated techniques  such as in \citep{Nesterov1983}, they were aided by a [gradient] correction term or a Hessian-driven damping term.

\beforesubsec
\subsection{Key estimates for convergence analysis}\label{subsec:VrAFBS_key_estimates}
\aftersubsec
\textbf{$\mathrm{(a)}$~Lyapunov function.}
To analyze the convergence of \eqref{eq:VrAFBS4NI}, 
for given $\Lambda \geq 0$ and $\bar{\beta} \geq 0$ in Lemma~\ref{le:FBS_cocoerciveness}, we construct the following functions w.r.t. the iteration counter $k$:
\begin{equation}\label{eq:VrAFBS_Lyapunov_func}
\arraycolsep=0.2em
\begin{array}{lcl}
\Lc_k & := &  \beta a_k \norms{G_{\lambda}x^k}^2 + t_{k-1} \iprods{G_{\lambda}x^k, x^k - z^k} + \frac{c_k}{2\nu\beta} \norms{z^k - x^{\star}}^2, \vspace{1ex}\\
\Qc_k &:= & \Lc_k + \big[ \bar{\beta} - (1+s) \beta  \big] t_{k-1} (t_{k-1} -1)  \norms{G_{\lambda}x^k - G_{\lambda}x^{k-1} }^2 \vspace{1ex}\\
&& + {~} \Lambda L t_{k-1}(t_{k-1}-1)  \iprods{Fx^k - Fx^{k-1}, x^k - x^{k-1}}, \vspace{1ex}\\
\Pc_k &:= & \Qc_k + \frac{[\mu(1-\kappa_k )\Gamma_k + \beta] t_{k-1}(t_{k-1} -1)}{2\mu}  \Delta_{k-1},

\end{array}
\end{equation}
where  $\beta > 0$, $\mu \in (0, 1]$, and $\nu \in [0, 1]$ are given in \eqref{eq:VrAFBS4NI}, and $s > 0$ and $\Gamma_k \geq 0$ are given parameters, determined later.
The quantities $\Delta_k$ and $\kappa_k$ are given in Definition~\ref{de:VR_Estimators}, and the coefficients $a_k$ and $c_k$ are respectively given by
\begin{equation}\label{eq:VrAFBS_coefficients}
\arraycolsep=0.2em
\begin{array}{lcl}
a_k & := & t_{k-1} [ t_{k-1} - 1 - s(1 - \nu) ] \quad \textrm{and} \quad  c_k  :=  \frac{(1-\mu) [(t_k-\nu)(t_{k-1}-1) + \mu(1 - \nu)]}{2(t_{k-1}-1)(t_k-1)}.
\end{array}
\end{equation}

\vspace{0.5ex}
\noindent\textbf{$\mathrm{(b)}$~Key lemmas.}
The following three lemmas provide key bounds for our analysis.
We first state the first important lemma, whose proof can be found in Appendix~\ref{apdx:le:VrAFBS4NI_descent_property}.

\vspace{-1ex}
\begin{lemma}\label{le:VrAFBS4NI_descent_property}
Suppose that Assumptions~\ref{as:A1} and \ref{as:A2} hold for \eqref{eq:NI}.
Let  $\Lc_k$ be defined by \eqref{eq:VrAFBS_Lyapunov_func} and $\sets{(x^k, z^k)}$ be generated by \eqref{eq:VrAFBS4NI} using $t_k$ and $\eta_k$ respectively as
\begin{equation}\label{eq:VrAFBS4NI_tk_etak_update}
\begin{array}{l}
t_k := \mu(k+r) \quad \textrm{and} \quad \eta_k = \frac{2\beta(t_k - 1)}{t_k - \nu},
\end{array}
\end{equation}
for given $\mu \in (0, 1]$,  $r \geq 0$, and $\beta > 0$.
Then, for any $s > 0$, we have
\begin{equation}\label{eq:VrAFBS4NI_desecent_property}
\arraycolsep=0.2em
\begin{array}{lcl}
\Lc_k - \Lc_{k+1} & \geq & \beta \varphi_k  \norms{G_{\lambda}x^k}^2 + (1-\mu) \iprods{G_{\lambda} x^k, x^k - x^{\star}}   +  \Lambda t_k(t_k-1) \Ec_{k+1}   \vspace{1ex}\\
&& + {~} \big[ \bar{\beta} - (1+s)\beta  \big] t_k(t_k-1)\norms{G_{\lambda}x^{k+1} - G_{\lambda}x^k}^2 - \psi_k  \norms{e^k}^2,
\end{array}
\end{equation}
where $\bar{\beta}$ and $\Lambda$ are given constants in Lemma~\ref{le:FBS_cocoerciveness},  $e^k := \widetilde{F}^k - Fx^k$,  $\Ec_{k+1} :=  L \iprods{Fx^{k+1} - Fx^k, x^{k+1} - x^k}$,  and the coefficients $\varphi_k$ and $\psi_k$ are respectively given by
\begin{equation}\label{eq:VrAFBS4NI_mk_param}
\arraycolsep=0.2em
\begin{array}{lcl}
\varphi_k & := & t_{k-1} \big[ t_{k-1} - 1 - s(1 - \nu) \big] - \frac{ (t_k-1) }{t_k - \nu} \big[ t_k(t_k - 2 + \nu - s(1-\nu)) + 2(1-\mu)\nu \big], \vspace{1ex}\\
\psi_k &:= & \beta(t_k-1)\big[ \frac{t_k(t_k-1)}{s(t_k - \nu)} +  \frac{2\nu(1-\mu)}{t_k-\nu} + \frac{(1-\mu)\nu(t_{k-1} - 1) }{\mu(1-\nu)}  \big].
\end{array}
\end{equation}
\end{lemma}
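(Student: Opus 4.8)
\textbf{Proof plan for Lemma~\ref{le:VrAFBS4NI_descent_property}.}

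The plan is to track the three terms of $\Lc_k$ in \eqref{eq:VrAFBS_Lyapunov_func} across one iteration of \eqref{eq:VrAFBS4NI} and assemble the difference $\Lc_k - \Lc_{k+1}$. First I would express $x^{k+1} - y^k = -\eta_k \widetilde{G}_{\lambda}^k$ and $z^{k+1} - z^k = \nu(x^{k+1} - y^k) = -\nu\eta_k\widetilde{G}_{\lambda}^k$, and use the definition $y^k = \frac{t_k-1}{t_k}x^k + \frac{1}{t_k}z^k$, equivalently $t_k(y^k - x^k) = z^k - x^k$, to relate $x^{k+1} - x^k$, $z^{k+1} - x^k$, and $z^{k+1} - x^\star$ to the ``true'' residual $G_{\lambda}x^k$ plus the error term $e^k := \widetilde{F}^k - Fx^k$ (note $\widetilde{G}_{\lambda}^k = G_{\lambda}x^k + (\widetilde{G}_\lambda^k - G_\lambda x^k)$, and the latter is controlled in norm by $\norms{e^k}$ via \eqref{eq:G_estimator_bound1}). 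The quadratic term $\frac{c_k}{2\nu\beta}\norms{z^k - x^\star}^2$ expands by the identity $\norms{z^{k+1} - x^\star}^2 = \norms{z^k - x^\star}^2 + 2\iprods{z^{k+1} - z^k, z^k - x^\star} + \norms{z^{k+1} - z^k}^2$; the cross-term $\iprods{G_{\lambda}x^k, z^k - x^\star}$ is exactly what the middle term $t_{k-1}\iprods{G_\lambda x^k, x^k - z^k}$ is designed to telescope against, after rewriting $x^k - z^k = -t_k(y^k - x^k)$ and shifting indices $k \to k+1$. The coefficient choices \eqref{eq:VrAFBS_coefficients} for $a_k$ and $c_k$ and the step sizes \eqref{eq:VrAFBS4NI_tk_etak_update} are precisely tuned so that after this expansion the surviving non-quadratic content is the monotonicity-type term $(1-\mu)\iprods{G_\lambda x^k, x^k - x^\star}$.

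Next I would invoke Lemma~\ref{le:FBS_cocoerciveness}(i) with $x = x^{k+1}$, $y = x^k$: the bound \eqref{eq:G_cocoerciveness} gives $\iprods{G_\lambda x^{k+1} - G_\lambda x^k, x^{k+1} - x^k} \geq \bar\beta\norms{G_\lambda x^{k+1} - G_\lambda x^k}^2 + \Lambda L\iprods{Fx^{k+1} - Fx^k, x^{k+1} - x^k}$, which is where the $\bar\beta$-weighted difference term, the $\Lambda t_k(t_k-1)\Ec_{k+1}$ term, and (after splitting off an $(1+s)\beta$ piece with Young's inequality applied to the $\iprods{G_\lambda x^{k+1} - G_\lambda x^k, \cdot}$ cross-terms generated above) the coefficient $\bar\beta - (1+s)\beta$ all originate. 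The free parameter $s > 0$ enters exactly through this Young split, producing a penalty $\propto \frac{1}{s}$ that lands in $\psi_k$; similarly the error $e^k$ is split off everywhere it appears using Young's inequality, and collecting all its coefficients yields $\psi_k\norms{e^k}^2$ with $\psi_k$ as in \eqref{eq:VrAFBS4NI_mk_param}. The coefficient $\varphi_k$ is then simply whatever multiplies $\norms{G_\lambda x^k}^2$ after combining $\beta a_k$ from $\Lc_k$, the $-\beta a_{k+1}$ from $\Lc_{k+1}$, the $\eta_k^2\norms{\widetilde G_\lambda^k}^2$-type terms from the $z$-update, and the $\bar\beta$-related reshuffling — a bookkeeping exercise that reproduces the displayed formula.

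The main obstacle I anticipate is the algebraic bookkeeping of verifying that, with $t_k = \mu(k+r)$ and $\eta_k = \frac{2\beta(t_k-1)}{t_k-\nu}$, the telescoping of the middle inner-product term and the quadratic term is \emph{exact} — i.e., that $c_k$ in \eqref{eq:VrAFBS_coefficients} solves the one-step recursion forced by matching $\iprods{G_\lambda x^{k+1}, \cdot}$ coefficients, and that no spurious $\iprods{G_\lambda x^k, z^k - x^\star}$ or $\norms{z^k - x^\star}^2$ residue survives. A secondary delicate point is handling the stochastic estimator $\widetilde G_\lambda^k$ rather than $G_\lambda x^k$ throughout: every place the exact residual would appear must be corrected by an $e^k$-dependent term, and one must be careful that \eqref{eq:VrAFBS4NI_desecent_property} is a \emph{deterministic} (pathwise) inequality in terms of $e^k$ — the conditional expectation is taken only later when $\Delta_k$ and $\kappa_k$ are brought in. Once these two checks go through, the remaining steps (Young's inequality applications, sign verification of $\bar\beta - (1+s)\beta$ being deferred to the parameter choice, and collecting coefficients) are routine.
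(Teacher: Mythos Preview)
Your proposal is correct and follows essentially the same approach as the paper: derive the algorithmic identities $t_k(x^{k+1}-x^k) = (z^k-x^k) - t_k\eta_k\widetilde{G}_\lambda^k$ and $z^{k+1}-z^k = -\nu\eta_k\widetilde{G}_\lambda^k$, apply the co-coercivity bound \eqref{eq:G_cocoerciveness} to $\iprods{G_\lambda x^{k+1}-G_\lambda x^k, x^{k+1}-x^k}$ to produce the telescoping pair $(t_k-1)\iprods{G_\lambda x^k, x^k-z^k} - t_k\iprods{G_\lambda x^{k+1}, x^{k+1}-z^{k+1}}$, expand the quadratic $\norms{z-x^\star}^2$ difference, and split all error cross-terms with Young's inequality (parameter $s$). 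The one refinement worth flagging: the $\norms{z^k-x^\star}^2$ term does \emph{not} telescope cleanly --- a Young split on $(1-\mu)\iprods{e_\lambda^k, z^k-x^\star}$ leaves a residue $\frac{\mu(1-\mu)(1-\nu)}{4\nu\beta(t_{k-1}-1)(t_k-1)}\norms{z^k-x^\star}^2$, and $c_k$ in \eqref{eq:VrAFBS_coefficients} is defined to absorb exactly this piece on top of $\frac{1-\mu}{2\nu\eta_k}$; likewise the key identity $t_{k-1}-t_k+1 = 1-\mu$ is what converts the leftover $(1-\mu)\iprods{G_\lambda x^k, x^k-z^k} + (1-\mu)\iprods{G_\lambda x^k, z^k-x^\star}$ into $(1-\mu)\iprods{G_\lambda x^k, x^k-x^\star}$.
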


Lemma~\ref{le:VrAFBS4NI_descent_property} is of independent interest as it can serve as a core step to analyze inexact variants of \eqref{eq:VrAFBS4NI} beyond this work.
For example, we can use it to analyze inexact methods (either deterministic or stochastic), where the approximation error $e^k := \widetilde{F}^k - Fx^k$ between $\widetilde{F}^k$ and $Fx^k$ is adaptively controlled along the iterations. 
In particular, we can assume that $\Expsn{k}{ \norms{e^k}^2 } \leq \delta_0 \norms{Fx^k -Fx^{k-1}}^2$ for a given $\delta_0 \geq 0$.

Next, we show that $\Lc_k$ is lower bounded in Lemma~\ref{le:VrAFBS4NI_Lk_lowerbound}, whose proof is in Appendix~\ref{apdx:le:VrAFBS4NI_Lk_lowerbound}.

\vspace{-1ex}
\begin{lemma}\label{le:VrAFBS4NI_Lk_lowerbound}
Under the same setting as in Lemma~\ref{le:VrAFBS4NI_descent_property}, $\Lc_k$ defined by \eqref{eq:VrAFBS_Lyapunov_func} satisfies
\begin{equation}\label{eq:VrAFBS4NI_Lyapunov_func_lowerbound}
\arraycolsep=0.2em
\begin{array}{lcl}
\Lc_k \geq \frac{A_k}{2}  \norms{G_{\lambda}x^k }^2 +  \frac{(t_{k-1}-1)[(1-\mu-2\nu)t_k + \nu(1+\mu)] + \mu(1-\mu)(1-\nu)}{4\nu\beta(t_{k-1}-1)(t_k-1)}    \norms{z^k - x^{\star}}^2,
\end{array}
\end{equation}
where $A_k := \beta t_{k-1}[ t_{k-1} - 2 - 2s(1 - \nu)] +   2\bar{\beta}t_{k-1}$.
Moreover, $\Qc_k$ and $\Pc_k$ defined by \eqref{eq:VrAFBS_Lyapunov_func} satisfy $\Pc_k \geq \Qc_k \geq \Lc_k \geq 0$.
\end{lemma}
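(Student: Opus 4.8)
The plan is to remove the cross term $t_{k-1}\iprods{G_{\lambda}x^k, x^k - z^k}$ in \eqref{eq:VrAFBS_Lyapunov_func}, replacing it by the two quantities $\norms{G_{\lambda}x^k}^2$ and $\norms{z^k - x^{\star}}^2$ that appear on the right-hand side of \eqref{eq:VrAFBS4NI_Lyapunov_func_lowerbound}. Since $x^{\star}\in\zer{\Phi}$, the equivalence \eqref{eq:FBS_reform} gives $G_{\lambda}x^{\star} = 0$; applying Lemma~\ref{le:FBS_cocoerciveness}(i) at $y = x^{\star}$ and using $\Lambda\geq 0$ together with the monotonicity of $F$ (which follows from its $\tfrac{1}{L}$-co-coercivity) yields $\iprods{G_{\lambda}x^k, x^k - x^{\star}} \geq \bar{\beta}\norms{G_{\lambda}x^k}^2$. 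Writing $x^k - z^k = (x^k - x^{\star}) - (z^k - x^{\star})$, the cross term is then at least $\bar{\beta}t_{k-1}\norms{G_{\lambda}x^k}^2 - t_{k-1}\iprods{G_{\lambda}x^k, z^k - x^{\star}}$, and I would bound the last inner product by Young's inequality with the weight $\theta := \tfrac{1}{\beta t_{k-1}}$, which is the unique choice that makes the resulting coefficients match those in the statement:
\[
-t_{k-1}\iprods{G_{\lambda}x^k, z^k - x^{\star}} \;\geq\; -\tfrac{\beta t_{k-1}^2}{2}\norms{G_{\lambda}x^k}^2 - \tfrac{1}{2\beta}\norms{z^k - x^{\star}}^2 .
\]

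Next I would substitute the definitions of $a_k$ and $c_k$ from \eqref{eq:VrAFBS_coefficients} and simplify. The coefficient of $\norms{G_{\lambda}x^k}^2$ becomes $\beta a_k + \bar{\beta}t_{k-1} - \tfrac{\beta t_{k-1}^2}{2} = \tfrac12\big(\beta t_{k-1}[t_{k-1}-2-2s(1-\nu)] + 2\bar{\beta}t_{k-1}\big) = \tfrac{A_k}{2}$, and the coefficient of $\norms{z^k - x^{\star}}^2$ becomes $\tfrac{c_k}{2\nu\beta} - \tfrac{1}{2\beta} = \tfrac{c_k-\nu}{2\nu\beta}$; clearing the denominator $2(t_{k-1}-1)(t_k-1)$ in $c_k-\nu$ and collecting powers of $t_k$ produces exactly the numerator $(t_{k-1}-1)[(1-\mu-2\nu)t_k + \nu(1+\mu)] + \mu(1-\mu)(1-\nu)$ in \eqref{eq:VrAFBS4NI_Lyapunov_func_lowerbound}. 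To then conclude $\Lc_k\geq 0$, I would check that both coefficients are nonnegative under the standing parameter constraints (from Lemma~\ref{le:VrAFBS4NI_descent_property} and Theorem~\ref{th:VrAFBS4NI_convergence}): with $t_k = \mu(k+r)$ one has $t_{k-1}\geq 1$, which makes $A_k$ increasing in $k$ and controlled below by its value at the smallest index, while the conditions linking $\nu$ to $\mu$ keep the displayed numerator nonnegative for all $k\geq 0$.

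For the chain $\Pc_k\geq\Qc_k\geq\Lc_k$, I would simply read off the three differences from \eqref{eq:VrAFBS_Lyapunov_func}. First, $\Pc_k - \Qc_k = \tfrac{[\mu(1-\kappa_k)\Gamma_k + \beta]\,t_{k-1}(t_{k-1}-1)}{2\mu}\Delta_{k-1}\geq 0$, since $\Gamma_k\geq 0$, $\kappa_k\in(0,1]$, $\beta,\mu>0$, $\Delta_{k-1}\geq 0$, and $t_{k-1}(t_{k-1}-1)\geq 0$. Second, $\Qc_k - \Lc_k = [\bar{\beta} - (1+s)\beta]\,t_{k-1}(t_{k-1}-1)\norms{G_{\lambda}x^k - G_{\lambda}x^{k-1}}^2 + \Lambda L\,t_{k-1}(t_{k-1}-1)\iprods{Fx^k - Fx^{k-1}, x^k - x^{k-1}}\geq 0$, because the second summand is nonnegative by monotonicity of $F$ and $\Lambda\geq 0$, and the first is nonnegative thanks to the parameter condition $\bar{\beta}\geq(1+s)\beta$ (the same sign condition that makes the $\norms{G_{\lambda}x^{k+1}-G_{\lambda}x^k}^2$ term a genuine gain in \eqref{eq:VrAFBS4NI_desecent_property}). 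Combined with $\Lc_k\geq 0$ this gives $\Pc_k\geq\Qc_k\geq\Lc_k\geq 0$.

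The step I expect to be the real obstacle is the positivity bookkeeping: the Young estimate and the algebraic collapse to $A_k/2$ and to the stated numerator are routine, but verifying that the coefficient of $\norms{z^k-x^{\star}}^2$ is genuinely nonnegative for \emph{every} $k\geq 0$ depends on the precise form of $c_k$ and on how the constraints on $\mu$, $\nu$, $s$, $r$, and $\beta$ fit together, with the boundary index $k=0$ (where $t_{-1}$ enters) being the easiest place to slip.
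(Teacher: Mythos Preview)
Your proposal is correct and follows essentially the same approach as the paper's proof. The only cosmetic difference is that where you bound $-t_{k-1}\iprods{G_{\lambda}x^k, z^k-x^{\star}}$ via Young's inequality with weight $1/(\beta t_{k-1})$, the paper instead completes the square as $\tfrac{1}{2\beta}\norms{z^k-x^{\star}-\beta t_{k-1}G_{\lambda}x^k}^2\ge 0$; these two formulations are algebraically identical. Your coefficient computations for $A_k/2$ and for $(c_k-\nu)/(2\nu\beta)$ match the paper's, and your argument for $\Pc_k\ge\Qc_k\ge\Lc_k$ by reading off the nonnegative differences is exactly what the paper does (the paper is terser and defers the explicit verification that $A_k\ge 0$ and the $\norms{z^k-x^{\star}}^2$ coefficient is nonnegative to the proof of Theorem~\ref{th:VrAFBS4NI_convergence}, where the concrete choices $\nu=\mu/2$ and $s=2\mu/(2-\mu)$ make this immediate).
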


Finally, Lemma~\ref{le:VrAFBS4NI_descent_property2} states a descent property of $\Pc_k$, whose proof is in Appendix~\ref{apdx:le:VrAFBS4NI_descent_property2}.

\vspace{-1ex}
\begin{lemma}\label{le:VrAFBS4NI_descent_property2}
Under the same setting as in Lemma~\ref{le:VrAFBS4NI_descent_property} and $\lambda \geq 2\rho$, suppose further that $\kappa_k$ in Definition~\ref{de:VR_Estimators},  $\Gamma_k$ in \eqref{eq:VrAFBS_Lyapunov_func}, and $\psi_k$ in \eqref{eq:VrAFBS4NI_mk_param} satisfy the following condition:
\begin{equation}\label{eq:VrAFBS4NI_param2}
\arraycolsep=0.2em
\begin{array}{lcl}
2 \psi_k  + \big[ \Gamma_{k+1}(1-\kappa_{k+1})  + \frac{\beta}{\mu} \big] t_k(t_k-1) \leq \Gamma_k  t_{k-1}(t_{k-1}-1).
\end{array}
\end{equation}
Then, for $\bar{\Ec}_k$ defined in Definition \ref{de:VR_Estimators} and $\Pc_k$ defined by \eqref{eq:VrAFBS_Lyapunov_func}, we have
\begin{equation}\label{eq:VrAFBS4NI_desecent_property2}
\arraycolsep=0.2em
\begin{array}{lcl}
\Pc_k  & \geq &  \Expsn{k}{ \Pc_{k+1}} + \beta \varphi_k \norms{G_{\lambda}x^k}^2  +  (1-\mu)  \iprods{G_{\lambda} x^k, x^k - x^{\star}}  \vspace{1ex}\\
&& + {~} \big[ \bar{\beta} - (1+s)\beta  \big] t_{k-1}(t_{k-1} - 1) \norms{G_{\lambda}x^k - G_{\lambda}x^{k-1} }^2 \vspace{1ex}\\
&& - {~} \frac{ \Gamma_k t_{k-1}(t_{k-1} - 1)  }{2}  \sigma_k^2  + \frac{\beta t_{k-1}(t_{k-1} - 1) }{2\mu}\Delta_{k-1}  +  \frac{( 2\Lambda - \Gamma_k\Theta_k ) t_{k-1}(t_{k-1}-1)}{2} \bar{\Ec}_k.
\end{array}
\end{equation}
\end{lemma}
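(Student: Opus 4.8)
The plan is to propagate the descent estimate of Lemma~\ref{le:VrAFBS4NI_descent_property} for $\Lc_k$ through the definitions \eqref{eq:VrAFBS_Lyapunov_func} of $\Qc_k$ and $\Pc_k$, take the conditional expectation $\Expsn{k}{\cdot}$, and absorb the remaining error terms using the $\tfrac{1}{L}$-co-coercivity of $F$ (Assumption~\ref{as:A2}) and the $\mathbf{VR}$-property \eqref{eq:VR_property}. Throughout, every quantity indexed by $k$ (in particular $G_{\lambda}x^k$, $x^k$, $z^k$, $Fx^k$, $\Delta_{k-1}$, $\kappa_k$, $\Gamma_k$, hence $\Lc_k,\Qc_k,\Pc_k$) is $\Fc_k$-measurable, while the index-$(k+1)$ quantities carry randomness only through $\widetilde{F}^k = Fx^k + e^k$. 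The extra hypothesis $\lambda\ge 2\rho$ is inherited from Lemma~\ref{le:FBS_cocoerciveness}(iii) and is needed only to keep $J_{\lambda T}$ nonexpansive (hence \eqref{eq:G_estimator_bound1}) inside the proof of Lemma~\ref{le:VrAFBS4NI_descent_property}.

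First, at the $\Qc$-level: apply Lemma~\ref{le:VrAFBS4NI_descent_property}, take $\Expsn{k}{\cdot}$, and add the index-$(k-1)$ correction terms that turn $\Lc$ into $\Qc$. The term $[\bar{\beta}-(1+s)\beta]t_k(t_k-1)\norms{G_{\lambda}x^{k+1}-G_{\lambda}x^k}^2$ produced by the descent bound cancels exactly the corresponding term inside $-\Expsn{k}{\Qc_{k+1}}$; and since $\Ec_{k+1}=L\iprods{Fx^{k+1}-Fx^k,x^{k+1}-x^k}$, the term $\Lambda t_k(t_k-1)\Expsn{k}{\Ec_{k+1}}$ cancels the analogous cross-term inside $-\Expsn{k}{\Qc_{k+1}}$. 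What survives from $+\Qc_k$ is the pair $[\bar{\beta}-(1+s)\beta]t_{k-1}(t_{k-1}-1)\norms{G_{\lambda}x^k-G_{\lambda}x^{k-1}}^2$ and $\Lambda L t_{k-1}(t_{k-1}-1)\iprods{Fx^k-Fx^{k-1},x^k-x^{k-1}}$, and the latter I would bound below by $\Lambda t_{k-1}(t_{k-1}-1)\bar{\Ec}_k$ using \eqref{eq:F_averaged_cocoerciveness} or \eqref{eq:F_expected_cocoerciveness} (in both settings $L\iprods{Fx^k-Fx^{k-1},x^k-x^{k-1}}\ge\bar{\Ec}_k$) together with $\Lambda\ge0$. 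This yields a lower bound for $\Qc_k-\Expsn{k}{\Qc_{k+1}}$ consisting of the announced $\norms{G_{\lambda}x^k}^2$, cross, residual-difference, and $\bar{\Ec}_k$ terms, plus the penalty $-\psi_k\Expsn{k}{\norms{e^k}^2}$ that the next step disposes of.

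Second, at the $\Pc$-level: write $D_k:=\tfrac{[\mu(1-\kappa_k)\Gamma_k+\beta]t_{k-1}(t_{k-1}-1)}{2\mu}$ so that $\Pc_k=\Qc_k+D_k\Delta_{k-1}$ and $\Pc_k-\Expsn{k}{\Pc_{k+1}}=(\Qc_k-\Expsn{k}{\Qc_{k+1}})+D_k\Delta_{k-1}-D_{k+1}\Expsn{k}{\Delta_k}$. Using $\psi_k\ge0$ and the first line of \eqref{eq:VR_property}, replace $-\psi_k\Expsn{k}{\norms{e^k}^2}$ by $-\psi_k\Expsn{k}{\Delta_k}$; then, since $\psi_k+D_{k+1}\ge0$, apply the second line of \eqref{eq:VR_property}, $\Expsn{k}{\Delta_k}\le(1-\kappa_k)\Delta_{k-1}+\Theta_k\bar{\Ec}_k+\sigma_k^2$, reducing the right-hand side to a combination of $\Delta_{k-1}$, $\bar{\Ec}_k$, $\sigma_k^2$ with coefficients $D_k-(\psi_k+D_{k+1})(1-\kappa_k)$, $\Lambda t_{k-1}(t_{k-1}-1)-(\psi_k+D_{k+1})\Theta_k$, and $-(\psi_k+D_{k+1})$. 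The crucial observation closing the proof is that hypothesis \eqref{eq:VrAFBS4NI_param2}, after substituting $2D_{k+1}=[(1-\kappa_{k+1})\Gamma_{k+1}+\tfrac{\beta}{\mu}]t_k(t_k-1)$, is exactly $\psi_k+D_{k+1}\le\tfrac12\Gamma_k t_{k-1}(t_{k-1}-1)$; feeding this together with $D_k=\tfrac{(1-\kappa_k)\Gamma_k t_{k-1}(t_{k-1}-1)}{2}+\tfrac{\beta t_{k-1}(t_{k-1}-1)}{2\mu}$, $\kappa_k\le1$, $\Theta_k\ge0$ into the three coefficients shows they are bounded below by $\tfrac{\beta t_{k-1}(t_{k-1}-1)}{2\mu}$, $\tfrac{(2\Lambda-\Gamma_k\Theta_k)t_{k-1}(t_{k-1}-1)}{2}$, and $-\tfrac{\Gamma_k t_{k-1}(t_{k-1}-1)}{2}$ respectively, which is precisely \eqref{eq:VrAFBS4NI_desecent_property2}.

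The main obstacle is not conceptual but sign bookkeeping: one must verify $\psi_k\ge0$ (and $t_k>\nu$, $t_k>1$, so $t_k(t_k-1)\ge0$) under the schedule \eqref{eq:VrAFBS4NI_tk_etak_update} so that the two $\mathbf{VR}$ inequalities are applied in the correct direction, and one must keep the co-coercivity step aligned with the setting-dependent form of $\bar{\Ec}_k$ (namely $\tfrac1n\sum_{i=1}^n\norms{F_ix^k-F_ix^{k-1}}^2$ in the finite-sum case versus $\Expsn{\xi}{\norms{\Fb(x^k,\xi)-\Fb(x^{k-1},\xi)}^2}$ in the expectation case). Everything else is the cancellation bookkeeping described above.
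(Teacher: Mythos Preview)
Your proposal is correct and follows essentially the same route as the paper's proof: take $\Expsn{k}{\cdot}$ of the $\Lc_k$-descent inequality, lift to $\Qc_k$ by adding the index-$(k{-}1)$ correction terms (cancelling the index-$(k{+}1)$ ones produced by the descent), replace $L\iprods{Fx^k-Fx^{k-1},x^k-x^{k-1}}$ by $\bar{\Ec}_k$ via Assumption~\ref{as:A2}, and then absorb $-\psi_k\Expsn{k}{\norms{e^k}^2}$ through the two lines of \eqref{eq:VR_property} using precisely the inequality $\psi_k+D_{k+1}\le\tfrac12\Gamma_k t_{k-1}(t_{k-1}-1)$, which is \eqref{eq:VrAFBS4NI_param2}. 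The only cosmetic difference is that the paper first multiplies the VR bound by $\tfrac12\Gamma_k t_{k-1}(t_{k-1}-1)$ and then splits off $\psi_k$, whereas you group $\psi_k+D_{k+1}$ as a single coefficient and bound it once; the algebra and the resulting coefficients on $\Delta_{k-1}$, $\bar{\Ec}_k$, and $\sigma_k^2$ are identical.
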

In the proofs of Lemmas~\ref{le:VrAFBS4NI_descent_property}, \ref{le:VrAFBS4NI_Lk_lowerbound}, and \ref{le:VrAFBS4NI_descent_property2}, we use Young's inequality several times.
We do not attempt to optimize its coefficients, resulting in somewhat loose bounds on our results.

\beforesubsec
\subsection{Convergence analysis of \ref{eq:VrAFBS4NI}}\label{subsec:convergence_of_VrAFBS4NI}
\aftersubsec
Now, utilizing the above three technical lemmas, we are ready to state and prove our convergence results in the following theorem, whose proof is given in Appendix~\ref{apdx:subsec:th:VrAFBS4NI_convergence}

\begin{theorem}\label{th:VrAFBS4NI_convergence}
Suppose that Assumptions~\ref{as:A1} and \ref{as:A2} hold for \eqref{eq:NI} such that $L\rho < 1$.
Let $\sets{(x^k, y^k, z^k)}$ be generated by \eqref{eq:VrAFBS4NI} using an estimator $\widetilde{F}^k$ for $Fx^k$ satisfying Definition~\ref{de:VR_Estimators}.
Let $\hat{L} > L$ be such that $\hat{L}\rho < 1$, and  $\bar{\beta}$ and $\Lambda$ be defined in Lemma~\ref{le:FBS_cocoerciveness}.
Assume  that we choose $\lambda$, $\mu$, $r$, $\nu$, and $\beta$, and update $t_k$ and $\eta_k$ as follows:
\begin{equation}\label{eq:VrAFBS4NI_param_update}
\arraycolsep=0.1em
\begin{array}{ll}
& 2\rho \leq \lambda < \frac{2(1 + \sqrt{1-\hat{L}\rho})}{\hat{L}}, \quad 0 < \mu < \frac{2}{3}, \quad r \geq 2 + \frac{1}{\mu},  \quad \nu := \frac{\mu}{2}, \vspace{1ex}\\
&  0 < \beta \leq \frac{(2-\mu)\bar{\beta}}{2+\mu}, \quad t_k := \mu(k + r),  \quad  \textrm{and} \quad \eta_k := \frac{2\beta(t_k - 1)}{t_k - \nu}.
\end{array}
\end{equation}
Suppose further that, for all $k\geq 0$, $\kappa_k$ and $\Theta_k$ in Definition~\ref{de:VR_Estimators} and $\Gamma_k$ in \eqref{eq:VrAFBS_Lyapunov_func} satisfy
\begin{equation}\label{eq:VrAFBS4NI_param_cond}
\arraycolsep=0.2em
\begin{array}{ll}
\kappa_k \geq 1 - \frac{\Gamma_{k-1} t_{k-2}(t_{k-2}-1)}{\Gamma_k t_{k-1}(t_{k-1}-1)} + \frac{5\beta}{\mu\Gamma_k }  \quad  \textrm{and} \quad \Gamma_k\Theta_k \leq 2\Lambda.
\end{array}
\end{equation}
Then, for all $K \geq 0$,  $G_{\lambda}$ defined by \eqref{eq:FBS_residual} satisfies
\begin{equation}\label{eq:VrAFBS4NI_th1_convergence1} 
\Expn{ \norms{G_{\lambda}x^K }^2}   \leq   \frac{2( \Psi_0^2 + E_0^2 + B_{K-1}) }{\mu^2\mblue{(K + r - 1)^2}},
\end{equation}
where $\Psi_0^2$, $E_0^2$, and $B_K$ are respectively given by
\begin{equation}\label{eq:VrAFBS4NI_th1_convergence1_init} 
\arraycolsep=0.2em
\begin{array}{lcl}
\Psi_0^2 & := & \mu^2r^2 \Expn{ \norms{G_{\lambda}x^0}^2 }  + \frac{2r-1}{4\beta^2(\mu r - 1)}\norms{x^0 - x^{\star}}^2, \vspace{1ex}\\
E_0^2 & := &  \frac{\mu r^2(\Gamma_0\mu + \beta)}{2\beta} \sigma_0^2, \vspace{1ex}\\
B_K & := & \frac{\Lambda}{\beta}\sum_{k=0}^K   t_{k-1}(t_{k-1} - 1) \frac{\sigma_k^2}{\Theta_k}.
\end{array}
\end{equation}
In addition, for any $K \geq 0$, we also have
\begin{equation}\label{eq:VrAFBS4NI_th1_summable_bound1} 
\arraycolsep=0.2em
\begin{array}{lcl}
\sum_{k=0}^K\big[   (2 - 3\mu)\mu(k+r)  + 6\mu^2  \big] \Expn{\norms{ G_{\lambda}x^k }^2 } & \leq &   2\Psi_0^2 + 2E_0^2 + 2B_K, \vspace{1ex}\\
\sum_{k=0}^{K} (k+r)(k+r - \mu^{-1})  \Expn{ \norms{\widetilde{F}^k - Fx^k}^2 } & \leq & \frac{ 2 (\Psi_0^2 + E_0^2 + B_K ) }{\beta \mu}, \vspace{1ex}\\
\sum_{k=0}^K (k+r)(k+r-\mu^{-1}) \Expn{ \norms{ G_{\lambda}x^{k+1} - G_{\lambda}x^k }^2 } & \leq &  \frac{ (2-\mu)\beta ( \Psi_0^2 + E_0^2 + B_K ) }{\mu^2[(2-\mu)\bar{\beta} - (2+\mu)\beta]}.
\end{array}
\end{equation}
Here, the last summability bound in \eqref{eq:VrAFBS4NI_th1_summable_bound1} requires $0 < \beta < \frac{(2-\mu)\bar{\beta}}{2+\mu}$.
\end{theorem}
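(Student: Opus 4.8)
The plan is to assemble Theorem~\ref{th:VrAFBS4NI_convergence} from the three technical lemmas (Lemmas~\ref{le:VrAFBS4NI_descent_property}, \ref{le:VrAFBS4NI_Lk_lowerbound}, and \ref{le:VrAFBS4NI_descent_property2}) by first verifying that the parameter choices in \eqref{eq:VrAFBS4NI_param_update}--\eqref{eq:VrAFBS4NI_param_cond} make the coefficients appearing in those lemmas have the right signs, and then telescoping the resulting supermartingale-type inequality. First I would check the sign of $\varphi_k$ from \eqref{eq:VrAFBS4NI_mk_param}: with $t_k = \mu(k+r)$, $\nu = \mu/2$, and $r \geq 2 + 1/\mu$, one expands $\varphi_k$ as a quadratic in $k$ and shows $\beta\varphi_k \geq \tfrac12\big[(2-3\mu)\mu(k+r) + 6\mu^2\big] \geq 0$ when $0 < \mu < 2/3$ and $s$ is chosen appropriately (the natural choice being $s$ of order $1$, e.g. tied to $\mu$); this is exactly the coefficient that reappears in the first summability bound of \eqref{eq:VrAFBS4NI_th1_summable_bound1}. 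Simultaneously one verifies $\bar\beta - (1+s)\beta \geq 0$ from $\beta \leq \frac{(2-\mu)\bar\beta}{2+\mu}$, which also controls the third summability bound, and that the condition $2\psi_k + [\Gamma_{k+1}(1-\kappa_{k+1}) + \beta/\mu]t_k(t_k-1) \leq \Gamma_k t_{k-1}(t_{k-1}-1)$ required by Lemma~\ref{le:VrAFBS4NI_descent_property2} follows from the assumed lower bound on $\kappa_k$ in \eqref{eq:VrAFBS4NI_param_cond} together with an upper estimate on $\psi_k$ (from \eqref{eq:VrAFBS4NI_mk_param}, $\psi_k \leq \frac{C\beta}{\mu}t_{k-1}(t_{k-1}-1)$ for an absolute constant $C$, whence the slack $\frac{5\beta}{\mu\Gamma_k}$ in \eqref{eq:VrAFBS4NI_param_cond} is what absorbs it).

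Next, granting $\Gamma_k\Theta_k \leq 2\Lambda$ from \eqref{eq:VrAFBS4NI_param_cond}, the last term $\frac{(2\Lambda - \Gamma_k\Theta_k)t_{k-1}(t_{k-1}-1)}{2}\bar{\mathcal E}_k$ in \eqref{eq:VrAFBS4NI_desecent_property2} is nonnegative and can be dropped; similarly the cross term $(1-\mu)\iprods{G_\lambda x^k, x^k - x^\star}$ is nonnegative because $G_\lambda$ is monotone (it inherits monotonicity from the co-coercive structure via Lemma~\ref{le:FBS_cocoerciveness}, using $G_\lambda x^\star = 0$), and the $\Delta_{k-1}$ term is nonnegative by definition. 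So Lemma~\ref{le:VrAFBS4NI_descent_property2} yields $\mathcal P_k \geq \Expsn{k}{\mathcal P_{k+1}} + \beta\varphi_k\norms{G_\lambda x^k}^2 + [\bar\beta-(1+s)\beta]t_{k-1}(t_{k-1}-1)\norms{G_\lambda x^k - G_\lambda x^{k-1}}^2 + \frac{\beta}{2\mu}t_{k-1}(t_{k-1}-1)\Delta_{k-1} - \frac{\Gamma_k}{2}t_{k-1}(t_{k-1}-1)\sigma_k^2$. Taking total expectations, summing from $k=0$ to $K$, telescoping, and using $\mathcal P_{K+1} \geq 0$ (Lemma~\ref{le:VrAFBS4NI_Lk_lowerbound}) gives $\sum_{k=0}^K\big(\beta\varphi_k\Exp{\norms{G_\lambda x^k}^2} + \dots\big) \leq \mathcal P_0 + \sum_{k=0}^K\frac{\Gamma_k}{2}t_{k-1}(t_{k-1}-1)\sigma_k^2$. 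One then computes $\mathcal P_0$ in terms of the initial data: since $\Delta_{-1} = 0$, $x^{-1} = x^0$, and $z^0 = x^0$, the definitions \eqref{eq:VrAFBS_Lyapunov_func} collapse and $\mathcal P_0$ reduces to $\mathcal P_0 = \mathcal L_0 + \text{(terms with $k=0$)} = \Psi_0^2 + E_0^2$ after bounding $\Exp{\Delta_0} \leq \sigma_0^2$ and identifying the $\norms{x^0-x^\star}^2$ coefficient via the lower-bound constant $c_0$; matching $\frac{\Gamma_k}{2}t_{k-1}(t_{k-1}-1)\sigma_k^2$ against $\frac{\Lambda}{\beta}t_{k-1}(t_{k-1}-1)\frac{\sigma_k^2}{\Theta_k}$ using $\Gamma_k\Theta_k \leq 2\Lambda$ gives the $B_K$ term. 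The three summability bounds in \eqref{eq:VrAFBS4NI_th1_summable_bound1} then fall out directly by reading off the three nonnegative sums on the left-hand side and plugging in $\beta\varphi_k \geq \tfrac12[(2-3\mu)\mu(k+r)+6\mu^2]$, $\norms{G_\lambda x^k - G_\lambda x^{k+1}}$-coefficient $\geq [\bar\beta-(1+s)\beta]t_k(t_k-1)$, and $\Exp{\norms{e^k}^2} \leq \Exp{\Delta_k}$ with the $\Delta_{k-1}$-coefficient $\frac{\beta}{2\mu}t_{k-1}(t_{k-1}-1)$ (re-indexed); the constant $\frac{(2-\mu)\beta}{\mu^2[(2-\mu)\bar\beta-(2+\mu)\beta]}$ comes from inverting $\bar\beta-(1+s)\beta$ with the extremal $s$ tied to $\beta = \frac{(2-\mu)\bar\beta}{2+\mu}$.

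Finally, for the rate \eqref{eq:VrAFBS4NI_th1_th1_convergence1} one returns to Lemma~\ref{le:VrAFBS4NI_Lk_lowerbound}: from \eqref{eq:VrAFBS4NI_Lyapunov_func_lowerbound}, $\mathcal P_K \geq \mathcal L_K \geq \frac{A_K}{2}\norms{G_\lambda x^K}^2$ with $A_K = \beta t_{K-1}[t_{K-1} - 2 - 2s(1-\nu)] + 2\bar\beta t_{K-1}$, which is $\Theta(t_{K-1}^2) = \Theta(\mu^2(K+r-1)^2)$ for $K$ large; more precisely $A_K \geq \mu^2(K+r-1)^2$ after using $2\bar\beta \geq \frac{2(2+\mu)}{2-\mu}\beta$ and the lower bound on $r$. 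Taking expectations and using the telescoped inequality $\Exp{\mathcal P_K} \leq \mathcal P_0 + \sum_{k=0}^{K-1}\frac{\Gamma_k}{2}t_{k-1}(t_{k-1}-1)\sigma_k^2 \leq \Psi_0^2 + E_0^2 + B_{K-1}$ (all the extra nonnegative sums only help) gives $\Exp{\norms{G_\lambda x^K}^2} \leq \frac{2(\Psi_0^2+E_0^2+B_{K-1})}{\mu^2(K+r-1)^2}$. The main obstacle I anticipate is the bookkeeping in verifying $\beta\varphi_k \geq \tfrac12[(2-3\mu)\mu(k+r)+6\mu^2]$ and that the recursive inequality \eqref{eq:VrAFBS4NI_param2} is implied by \eqref{eq:VrAFBS4NI_param_cond}: both require pinning down the auxiliary parameter $s$ consistently across Lemmas~\ref{le:VrAFBS4NI_descent_property}--\ref{le:VrAFBS4NI_descent_property2}, expanding quadratics in $k$ with the substitutions $t_k = \mu(k+r)$, $\nu = \mu/2$, and checking that the negative contributions from $\psi_k$ and from the $s(1-\nu)$ shifts in $a_k$ are dominated — this is where loose Young's-inequality constants accumulate, and keeping the final constants clean (matching the exact expressions in \eqref{eq:VrAFBS4NI_th1_convergence1_init}) will be the delicate part rather than any conceptual difficulty.
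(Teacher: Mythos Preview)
Your proposal is correct and follows essentially the same route as the paper's proof: the paper makes the specific choice $s := \frac{2\mu}{2-\mu}$ (so that $1+s = \frac{2+\mu}{2-\mu}$, matching the constraint on $\beta$), from which it derives $\varphi_k \geq \frac{(2-3\mu)t_k}{2} + 3\mu^2$ and $\psi_k \leq \frac{2\beta}{\mu}t_k(t_k-1)$ (the latter giving exactly the constant $5$ in \eqref{eq:VrAFBS4NI_param_cond} after combining $2\psi_k$ with the $\beta/\mu$ term), and then telescopes \eqref{eq:VrAFBS4NI_desecent_property2} precisely as you outline. The bookkeeping you flag as delicate---pinning down $s$ and tracking the $\beta$ factors so that $\Expn{\mathcal P_0} \leq \beta(\Psi_0^2+E_0^2)$ cancels against $A_K \geq \beta t_{K-1}^2$---is indeed all that remains.
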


Note that the conclusions of Theorem~\ref{th:VrAFBS4NI_convergence} hold under the  condition \eqref{eq:VrAFBS4NI_param_cond}, and the right-hand side bounds depend on the quantity $B_K$.
Next, we state both $\BigOs{1/k^2}$ and $\SmallOs{1/k^2}$ convergence rates of \eqref{eq:VrAFBS4NI} under the following condition:
\begin{equation}\label{eq:summable_variance}
B_{\infty} := \frac{\Lambda}{\beta} \sum_{k=0}^{\infty} t_{k-1}(t_{k-1}-1)  \frac{\sigma_k^2}{\Theta_k} < +\infty.
\end{equation}
The proof of the following theorem is deferred to Appendix~\ref{apdx:subsec:th:VrAFBS4NI_o_rates_convergence2}.

\begin{theorem}\label{th:VrAFBS4NI_o_rates_convergence2}
Under the same conditions and settings as in Theorem~\ref{th:VrAFBS4NI_convergence}, and assuming that the condition \eqref{eq:summable_variance} also holds, for all $k \geq 0$, we have
\begin{equation}\label{eq:VrAFBS4NI_BigO_rate} 
\Expn{ \norms{G_{\lambda}x^k }^2}   \leq  \frac{2(\Psi_0^2 + E_0^2 + B_{\infty} ) }{\mu^2( k + r - 1)^2 } .
\end{equation}
In addition, we have the following summability bounds:
\begin{equation}\label{eq:VrAFBS4NI_summable_bound2}
\begin{array}{llcl}
& \sum_{k=0}^{\infty} (k+1)  \Expn{\norms{ G_{\lambda}x^k }^2 } & < & +\infty, \vspace{1.5ex}\\
& \sum_{k=0}^{\infty} (k+1)^2  \Expn{ \norms{\widetilde{F}^k - Fx^k}^2 } & < & +\infty, \vspace{1.5ex}\\
& \sum_{k=0}^{\infty} (k+1)  \Expn{ \norms{x^{k+1} - x^k}^2 } & < & +\infty.
\end{array}
\end{equation}
We also have the following $\SmallO{1/k^2}$ convergence rates in expectation:
\begin{equation}\label{eq:VrAFBS4NI_small_o_rates}
\arraycolsep=0.2em
\begin{array}{llcl}
& \lim_{k\to\infty}k^2  \Expn{ \norms{x^{k+1} - x^k}^2 } &= & 0, \vspace{1.5ex}\\
& \lim_{k\to\infty}k^2   \Expn{ \norms{G_{\lambda}x^k}^2 } &= & 0. 
\end{array}
\end{equation}
\end{theorem}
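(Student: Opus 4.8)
The plan is to build everything on top of Theorem~\ref{th:VrAFBS4NI_convergence}, whose conclusions are available.

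\textbf{The $\BigOs{1/k^2}$ rate and the summability bounds.} First I would note that under \eqref{eq:summable_variance} the nondecreasing sequence $\{B_K\}$ satisfies $B_{K-1}\le B_\infty<\infty$, so replacing $B_{K-1}$ by $B_\infty$ in \eqref{eq:VrAFBS4NI_th1_convergence1} gives \eqref{eq:VrAFBS4NI_BigO_rate}. Then I would let $K\to\infty$ in \eqref{eq:VrAFBS4NI_th1_summable_bound1}: the right-hand sides stay bounded because $B_K\le B_\infty$, while on the left $(2-3\mu)\mu(k+r)+6\mu^2\ge(2-3\mu)\mu(k+1)$ (using $0<\mu<\tfrac{2}{3}$) and $(k+r)(k+r-\mu^{-1})\ge(k+1)^2$ (using $r\ge2+\mu^{-1}$), which gives the first two bounds in \eqref{eq:VrAFBS4NI_summable_bound2}. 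For the third I would use, from \eqref{eq:VrAFBS4NI}, $x^{k+1}-x^k=t_k^{-1}(z^k-x^k)-\eta_k\widetilde{G}_{\lambda}^k$ with $\eta_k<2\beta$ and, by \eqref{eq:G_estimator_bound1}, $\norms{\widetilde{G}_{\lambda}^k}^2\le2\norms{\widetilde{F}^k-Fx^k}^2+2\norms{G_{\lambda}x^k}^2$; the only term not handled by the first two bounds is $\sum_k(k+1)t_k^{-2}\Expn{\norms{z^k-x^k}^2}$. To control it, set $u^k:=x^k-z^k$; the updates give $u^{k+1}=\tfrac{t_k-1}{t_k}u^k-(1-\nu)\eta_k\widetilde{G}_{\lambda}^k$, hence after a Young step $\Expn{\norms{u^{k+1}}^2}\le(1-t_k^{-1})\Expn{\norms{u^k}^2}+c\,t_k\Expn{\norms{\widetilde{G}_{\lambda}^k}^2}$ for a constant $c$; since $t_k=\mathcal{O}(k)$ makes $\sum_k t_k\Expn{\norms{\widetilde{G}_{\lambda}^k}^2}<\infty$ by the first two bounds and $\sum_k t_k^{-1}=\infty$, a standard Robbins--Siegmund/Polyak lemma yields $\Expn{\norms{u^k}^2}\to0$ and, summing that recursion, $\sum_k t_k^{-1}\Expn{\norms{u^k}^2}<\infty$; as $(k+1)t_k^{-2}=\mathcal{O}(t_k^{-1})$, this finishes the third bound.

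\textbf{The $\SmallOs{1/k^2}$ rates.} Here I would first iterate the descent \eqref{eq:VrAFBS4NI_desecent_property} of Lemma~\ref{le:VrAFBS4NI_descent_property} and take total expectation. Its only sign-indefinite term is $-\psi_k\norms{e^k}^2$ with $e^k=\widetilde{F}^k-Fx^k$, and since $\psi_k=\mathcal{O}(k^2)$ while $\sum_k(k+1)^2\Expn{\norms{\widetilde{F}^k-Fx^k}^2}<\infty$ from the first step, $\sum_k\psi_k\Expn{\norms{e^k}^2}<\infty$; using $\Lc_k\ge0$ (Lemma~\ref{le:VrAFBS4NI_Lk_lowerbound}) and the nonnegativity of the remaining progress terms ($\varphi_k\ge0$ since $\mu<\tfrac{2}{3}$, $\bar\beta-(1+s)\beta>0$ since $\beta<\tfrac{(2-\mu)\bar\beta}{2+\mu}$, and $\iprods{G_{\lambda}x^k,x^k-x^{\star}}\ge\bar\beta\norms{G_{\lambda}x^k}^2\ge0$ by \eqref{eq:G_cocoerciveness}), this shows $\Expn{\Lc_k}$ converges to some $\ell\ge0$ and that $\sum_k\varphi_k\Expn{\norms{G_{\lambda}x^k}^2}<\infty$, $\sum_k t_k^2\Expn{\norms{G_{\lambda}x^{k+1}-G_{\lambda}x^k}^2}<\infty$, $\sum_k t_k^2\Expn{\iprods{Fx^{k+1}-Fx^k,x^{k+1}-x^k}}<\infty$; in particular $\liminf_k k^2\Expn{\norms{G_{\lambda}x^k}^2}=0$. (Equivalently, one could apply Robbins--Siegmund to $\{\Pc_k\}$ via \eqref{eq:VrAFBS4NI_desecent_property2}, the noise $\tfrac{1}{2}\Gamma_k t_{k-1}(t_{k-1}-1)\sigma_k^2$ being summable by \eqref{eq:summable_variance} and $\Gamma_k\Theta_k\le2\Lambda$.) Next I would complete the square in $\Lc_k$ using $\iprods{G_{\lambda}x^k,x^k-x^{\star}}\ge\bar\beta\norms{G_{\lambda}x^k}^2$: from \eqref{eq:VrAFBS_Lyapunov_func}, $\Lc_k=\tfrac{1}{2}\tilde{A}_k\norms{G_{\lambda}x^k}^2+\tilde{C}_k\norm{z^k-x^{\star}-\tfrac{\nu\beta t_{k-1}}{c_k}G_{\lambda}x^k}^2+R_k$ with $\tilde{A}_k$ of order $k^2$ and strictly positive (again because $\mu<\tfrac{2}{3}$), $\tilde{C}_k\to\tfrac{1-\mu}{2\mu\beta}>0$, and $R_k:=t_{k-1}\big(\iprods{G_{\lambda}x^k,x^k-x^{\star}}-\bar\beta\norms{G_{\lambda}x^k}^2\big)\ge0$; combining $\Expn{\Lc_k}\to\ell$, $\Expn{\norms{z^k-x^k}^2}\to0$ (whence $\Expn{\norms{x^k-x^{\star}}^2}$ is bounded, $\Expn{R_k}$ is bounded, and $\Expn{R_k}\to0$ along any subsequence on which $t_{k-1}^2\Expn{\norms{G_{\lambda}x^k}^2}\to0$), and $\liminf_k k^2\Expn{\norms{G_{\lambda}x^k}^2}=0$, I would argue that $\ell$ is realized by the squared-distance term alone, forcing $\tilde{A}_k\Expn{\norms{G_{\lambda}x^k}^2}\to0$, i.e.\ $\lim_k k^2\Expn{\norms{G_{\lambda}x^k}^2}=0$. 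The first rate then follows from $k^2\Expn{\norms{x^{k+1}-x^k}^2}\le2k^2 t_k^{-2}\Expn{\norms{z^k-x^k}^2}+2k^2\eta_k^2\Expn{\norms{\widetilde{G}_{\lambda}^k}^2}\to0$, since $k^2 t_k^{-2}\to\mu^{-2}$, $\Expn{\norms{z^k-x^k}^2}\to0$, $k^2\Expn{\norms{\widetilde{F}^k-Fx^k}^2}\to0$ (a term of the convergent series in \eqref{eq:VrAFBS4NI_summable_bound2}), and $k^2\Expn{\norms{G_{\lambda}x^k}^2}\to0$.

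\textbf{Main obstacle.} Everything up to the convergence of $\Expn{\Lc_k}$ is bookkeeping on Theorem~\ref{th:VrAFBS4NI_convergence}. The hard part will be upgrading $\liminf_k k^2\Expn{\norms{G_{\lambda}x^k}^2}=0$ to a genuine limit: the $(k+1)$-weighted summability of $\Expn{\norms{G_{\lambda}x^k}^2}$ by itself does not suffice (a quasi-monotone-increment estimate on $k^2\Expn{\norms{G_{\lambda}x^k}^2}$ does not close with only the available second-difference summability), so one has to exploit the convergence of the Lyapunov sequence together with its exact completing-the-square structure and the auxiliary fact $\Expn{\norms{z^k-x^k}^2}\to0$ proved in the second step.
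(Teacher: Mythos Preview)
Your treatment of \eqref{eq:VrAFBS4NI_BigO_rate} and the three summability bounds in \eqref{eq:VrAFBS4NI_summable_bound2} is correct and, in fact, coincides with the paper's proof: your $u^k:=x^k-z^k$ satisfies $u^k=-t_kv^k$ for $v^k:=x^{k+1}-x^k+\eta_k\widetilde G_\lambda^k$, which is precisely the auxiliary quantity the paper introduces, and the two recursions are identical.

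The gap is in the $o(1/k^2)$ part. You obtain $\mathbb{E}[\Lc_k]\to\ell$ and the decomposition $\mathbb{E}[\Lc_k]=\tfrac12\tilde A_k\,\mathbb{E}[\Vert G_\lambda x^k\Vert^2]+D_k+\mathbb{E}[R_k]$ with all three terms nonnegative. Along a subsequence where $t_{k-1}^2\mathbb{E}[\Vert G_\lambda x^k\Vert^2]\to0$ you correctly infer $D_{k_j}\to\ell$. But to conclude that the full sequence $\tilde A_k\,\mathbb{E}[\Vert G_\lambda x^k\Vert^2]\to0$ you need the full sequence $D_k$ (equivalently $d_k:=\mathbb{E}[\Vert z^k-x^\star\Vert^2]$) to converge, and your proposal asserts this (``$\ell$ is realized by the squared-distance term alone'') without proof. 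It is not automatic: from $z^{k+1}=z^k-\nu\eta_k\widetilde G_\lambda^k$ one must show that every contribution to $d_{k+1}-d_k$ is either summable or has a sign, which requires splitting $\langle\widetilde G_\lambda^k,z^k-x^\star\rangle$ along $z^k-x^k$, $x^k-x^\star$, and $e_\lambda^k$ and carefully Young-balancing each piece against the already-established summable quantities; none of this is in your outline.

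The paper bypasses this entirely. It introduces a second auxiliary quantity $w^k:=x^{k+1}-x^k+\eta_k e_\lambda^k$ (your $v^k$ with the exact residual $G_\lambda x^k$ stripped out), derives a recursion for $t_kw^k$ parallel to the one for $t_kv^k$ in which the new cross term $-2t_{k-1}t_k\eta_k\langle G_\lambda x^k-G_\lambda x^{k-1},x^k-x^{k-1}\rangle$ is handled via the $\bar\beta$-co-coercivity of $G_\lambda$, and concludes by the same Lemma~\ref{le:A1_descent}/Lemma~\ref{le:A2_sum} argument that $t_k^2\mathbb{E}[\Vert w^k\Vert^2]\to0$. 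Since $\eta_kG_\lambda x^k=v^k-w^k$, the identity $\eta_k^2\Vert G_\lambda x^k\Vert^2\le2\Vert v^k\Vert^2+2\Vert w^k\Vert^2$ gives $\lim_k k^2\mathbb{E}[\Vert G_\lambda x^k\Vert^2]=0$ immediately, without ever touching $\mathbb{E}[\Vert z^k-x^\star\Vert^2]$ or completing the square in $\Lc_k$. The $x^{k+1}-x^k$ limit then follows from $\Vert x^{k+1}-x^k\Vert^2\le2\Vert w^k\Vert^2+8\beta^2\Vert e^k\Vert^2$.
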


Finally, we state the following almost sure  convergence properties of \eqref{eq:VrAFBS4NI}.
The proof of this theorem can be found in Appendix \ref{apdx:th:VrAFBS4NI_o_rates_convergence3}.

\begin{theorem}\label{th:VrAFBS4NI_o_rates_convergence3}
Under the same conditions and settings as in Theorem~\ref{th:VrAFBS4NI_convergence}, and assuming that \eqref{eq:summable_variance} holds, $0 < \beta < \frac{(2-\mu) \bar{\beta} }{2 + \mu}$, and there exist $\underline{\Theta} > 0$ and $\underline{\Gamma} > 0$ such that
\begin{equation}\label{eq:VrAFBS4NI_param_cond_new}
\arraycolsep=0.2em
\begin{array}{ll}
\Gamma_k \geq\underline{\Gamma}, \quad \Theta_k \geq \underline{\Theta}, \quad \textrm{and} \quad \Gamma_k\Theta_k \leq \Lambda, 
\end{array}
\end{equation}
we have 
\begin{equation}\label{eq:VrAFBS4NI_summable_bound2_as}
\arraycolsep=0.2em
\begin{array}{llcll}
& \sum_{k=0}^{\infty} (k+1)  \norms{ G_{\lambda}x^k }^2 & < & +\infty &\textrm{almost surely}, \vspace{1ex}\\
& \sum_{k=0}^{\infty} (k+1)^2  \norms{\widetilde{F}^k - Fx^k}^2  & < & +\infty &\textrm{almost surely}, \vspace{1ex}\\
& \sum_{k=0}^{\infty} (k+1)  \norms{x^{k+1} - x^k}^2  & < & +\infty &\textrm{almost surely}.
\end{array}
\end{equation}
The following almost sure limits also hold $($showing $\SmallOs{1/k^2}$ almost sure convergence rates$)$:
\begin{equation}\label{eq:VrAFBS4NI_small_o_rates_as}
\arraycolsep=0.2em
\begin{array}{llcl}
& \lim_{k\to\infty}k^2    \norms{G_{\lambda}x^k}^2  &= & 0, \vspace{1ex}\\
& \lim_{k\to\infty}k^2  \norms{x^{k+1} - x^k}^2   &= & 0.
\end{array}
\end{equation}
Moreover, both $\sets{x^k}$ and $\sets{z^k}$ almost surely converge  to a $\zer{\Phi}$-valued random variable $x^{\star}$ as a solution of \eqref{eq:NI}.
\end{theorem}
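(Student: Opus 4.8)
The plan is to run a Robbins--Siegmund (almost supermartingale) argument on the Lyapunov sequence $\{\Pc_k\}$ of \eqref{eq:VrAFBS_Lyapunov_func}, then to harvest from it the a.s.\ summability bounds, the a.s.\ $\SmallOs{1/k^2}$ rates, and—via an Opial/quasi-Fej\'er argument—the a.s.\ convergence of the iterates. First I would rewrite \eqref{eq:VrAFBS4NI_desecent_property2} of Lemma~\ref{le:VrAFBS4NI_descent_property2} as
\begin{equation*}
\Expsn{k}{\Pc_{k+1}} \;\leq\; \Pc_k \;-\; \Xi_k \;+\; \zeta_k,\qquad \zeta_k := \tfrac12\Gamma_k t_{k-1}(t_{k-1}-1)\sigma_k^2,
\end{equation*}
where $\Xi_k$ collects the terms $\beta\varphi_k\norms{G_{\lambda}x^k}^2$, $(1-\mu)\iprods{G_{\lambda}x^k,\, x^k-x^{\star}}$, $[\bar\beta-(1+s)\beta]t_{k-1}(t_{k-1}-1)\norms{G_{\lambda}x^k-G_{\lambda}x^{k-1}}^2$, $\tfrac{\beta}{2\mu}t_{k-1}(t_{k-1}-1)\Delta_{k-1}$, and $\tfrac12(2\Lambda-\Gamma_k\Theta_k)t_{k-1}(t_{k-1}-1)\bar\Ec_k$. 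Each of these is nonnegative under the hypotheses: $\varphi_k>0$ of order $k$ for $\mu<\tfrac23$ (cf.\ the coefficient $(2-3\mu)\mu(k+r)+6\mu^2$ in \eqref{eq:VrAFBS4NI_th1_summable_bound1}); $\bar\beta-(1+s)\beta\geq 0$ and $2\Lambda-\Gamma_k\Theta_k\geq\Lambda\geq 0$ by \eqref{eq:VrAFBS4NI_param_update}--\eqref{eq:VrAFBS4NI_param_cond_new}; and $\iprods{G_{\lambda}x^k, x^k-x^{\star}}\geq\bar\beta\norms{G_{\lambda}x^k}^2+\Lambda L\iprods{Fx^k-Fx^{\star},x^k-x^{\star}}\geq 0$ by Lemma~\ref{le:FBS_cocoerciveness}(i) with $y=x^{\star}$ and the $\tfrac1L$-co-coercivity of $F$. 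Moreover $\Pc_k\geq 0$ by Lemma~\ref{le:VrAFBS4NI_Lk_lowerbound}, and $\Gamma_k\Theta_k\leq\Lambda$ gives $\sum_k\zeta_k\leq \tfrac{\beta}{2}B_{\infty}<\infty$ via \eqref{eq:summable_variance}. Robbins--Siegmund then yields that $\{\Pc_k\}$ converges a.s.\ to a finite random variable $\Pc_{\infty}\geq 0$ and that $\sum_{k}\Xi_k<\infty$ a.s.

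The a.s.\ summability bounds \eqref{eq:VrAFBS4NI_summable_bound2_as} are quickest to obtain directly from the in-expectation versions already established: by Tonelli's theorem, \eqref{eq:VrAFBS4NI_summable_bound2} gives $\Expn{\sum_k(k+1)\norms{G_{\lambda}x^k}^2}<\infty$, $\Expn{\sum_k(k+1)^2\norms{\widetilde{F}^k-Fx^k}^2}<\infty$, and $\Expn{\sum_k(k+1)\norms{x^{k+1}-x^k}^2}<\infty$, so the three series are a.s.\ finite; applying the same reasoning to the third line of \eqref{eq:VrAFBS4NI_th1_summable_bound1} also gives $\sum_k(k+1)^2\norms{G_{\lambda}x^{k+1}-G_{\lambda}x^k}^2<\infty$ a.s. In particular $(k+1)^2\norms{\widetilde{F}^k-Fx^k}^2\to0$, $(k+1)^2\norms{G_{\lambda}x^{k+1}-G_{\lambda}x^k}^2\to0$, and, via \eqref{eq:G_estimator_bound1}, $\norms{\widetilde{G}_{\lambda}^k}\leq\norms{G_{\lambda}x^k}+\norms{\widetilde{F}^k-Fx^k}\to 0$, all a.s.

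For the a.s.\ convergence of $\{x^k\}$ and $\{z^k\}$ I would first derive from \eqref{eq:VrAFBS4NI} the clean recursion for $d^k:=z^k-x^k$: since $z^{k+1}=z^k-\nu\eta_k\widetilde{G}_{\lambda}^k$ and $z^k-y^k=(1-t_k^{-1})d^k$, one gets $d^{k+1}=(1-t_k^{-1})d^k+(1-\nu)\eta_k\widetilde{G}_{\lambda}^k$; as $\sum_k t_k^{-1}=\infty$, $\eta_k$ is bounded, and $\widetilde{G}_{\lambda}^k\to0$ a.s.\ (and in a suitably fast weighted sense), this forces $\norms{z^k-x^k}=\norms{d^k}\to 0$ a.s. Next, fixing $x^{\star}\in\zer{\Phi}$, I would use the Robbins--Siegmund conclusion for $\Pc_k(x^\star)$ together with $\norms{d^k}\to0$ and the vanishing facts above to extract that $\{\norms{z^k-x^{\star}}^2\}$ converges a.s.\ (this is exactly the step where the design of $\Pc_k$—which controls $\norms{z^k-x^{\star}}^2$ up to terms that converge—is used, and where the possible bias of $\widetilde{F}^k$ is absorbed, so that no direct quasi-Fej\'er estimate on $\norms{z^k-x^\star}^2$ is needed); running this over a countable dense subset of $\zer{\Phi}$ and extending by continuity gives a single a.s.\ event on which $\{\norms{z^k-x^{\star}}\}$ converges for every $x^{\star}\in\zer{\Phi}$. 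Since $\norms{G_{\lambda}x^k}\to0$ and $G_{\lambda}$ is continuous with $\zer{G_{\lambda}}=\zer{\Phi}$ by \eqref{eq:FBS_reform}, every cluster point of $\{x^k\}$ lies in $\zer{\Phi}$, and since $\norms{z^k-x^k}\to0$ so does every cluster point of $\{z^k\}$; Opial's lemma then yields that $\{z^k\}$, and hence $\{x^k\}$, converges a.s.\ to a $\zer{\Phi}$-valued random variable. Finally, for the a.s.\ $\SmallOs{1/k^2}$ rates \eqref{eq:VrAFBS4NI_small_o_rates_as}: Lemma~\ref{le:VrAFBS4NI_Lk_lowerbound} gives $(k+r-1)^2\norms{G_{\lambda}x^k}^2\lesssim \Lc_k\leq\Pc_k$, which is a.s.\ bounded; combining this with $\sum_k(k+1)\norms{G_{\lambda}x^k}^2<\infty$ a.s.\ and a sharp form of the descent estimate of Lemma~\ref{le:VrAFBS4NI_descent_property}—which yields enough contraction on $\norms{G_{\lambda}x^k}^2$ that $(k+1)\norms{G_{\lambda}x^k}^2$ is, up to a summable perturbation, nonincreasing—one upgrades boundedness to $\lim_k (k+r-1)^2\norms{G_{\lambda}x^k}^2=0$ by an Attouch--Peypouquet-type summation lemma; then $\lim_k k^2\norms{x^{k+1}-x^k}^2=0$ follows from $x^{k+1}-x^k=t_k^{-1}d^k-\eta_k\widetilde{G}_{\lambda}^k$, using $k^2 t_k^{-2}\norms{d^k}^2\to0$ (since $\norms{d^k}\to0$, $t_k=\Theta(k)$) and $k^2\eta_k^2\norms{\widetilde{G}_{\lambda}^k}^2\to0$ (from $k^2\norms{G_{\lambda}x^k}^2\to0$ and $(k+1)^2\norms{\widetilde{F}^k-Fx^k}^2\to0$).

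\emph{Main obstacle.} The genuinely delicate step is the a.s.\ $\SmallOs{1/k^2}$ rate for $G_{\lambda}$: promoting pathwise boundedness of $(k+r-1)^2\norms{G_{\lambda}x^k}^2$ to convergence to $0$ requires disentangling the three constituents of $\Lc_k$ (in particular the cross term $t_{k-1}\iprods{G_{\lambda}x^k, x^k-z^k}$ and the $\norms{z^k-x^{\star}}^2$ term), which is intertwined with the iterate-convergence argument, and it leans on a version of the descent inequality that is quantitatively sharp enough for the summation lemma to apply. A secondary technical point is that everything must be run with $\widetilde{F}^k$ controlled only in conditional expectation (Definition~\ref{de:VR_Estimators} is an inequality, not an identity), which is precisely why the analysis goes through the elaborate Lyapunov sequence $\Pc_k$; the measurability bookkeeping in the Opial step (simultaneity over a countable dense subset of $\zer{\Phi}$) is routine but should be stated.
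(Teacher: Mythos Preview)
Your Robbins--Siegmund setup on $\Pc_k$ is exactly how the paper begins, and your Tonelli shortcut for the three summability bounds \eqref{eq:VrAFBS4NI_summable_bound2_as} is a legitimate and in fact simpler alternative to the paper's route (the paper reapplies Robbins--Siegmund pathwise to several auxiliary sequences rather than importing the in-expectation sums from Theorem~\ref{th:VrAFBS4NI_o_rates_convergence2}). The Opial argument via a countable dense subset is also fine, though the paper instead invokes a ready-made result (Proposition~4.1 of \citet{davis2022variance}, stated as Lemma~\ref{le:A3_lemma}) which needs only that $\lim_k\norms{x^k-x^{\star}}$ exists for each $x^{\star}\in\zer{\Phi}$ and $\norms{G_{\lambda}x^k}\to 0$.

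The genuine gap is the one you yourself flag, but your proposal does not close it. To extract the a.s.\ existence of $\lim_k\norms{z^k-x^{\star}}^2$ from the a.s.\ convergence of $\Pc_k$, you must show that the \emph{other} constituents of $\Pc_k$ in \eqref{eq:VrAFBS_Lyapunov_func} converge; in particular the leading term $\beta a_k\norms{G_{\lambda}x^k}^2$ with $a_k=\Theta(k^2)$ forces you to already know that $k^2\norms{G_{\lambda}x^k}^2$ has a limit. So your order (convergence of iterates first, $\SmallOs{1/k^2}$ rate afterwards) is circular, and the substitute you offer---a ``sharp form of the descent estimate'' making $(k+1)\norms{G_{\lambda}x^k}^2$ nonincreasing up to a summable perturbation---is not something Lemma~\ref{le:VrAFBS4NI_descent_property} gives: that lemma controls $\Lc_k$, not $\norms{G_{\lambda}x^k}^2$ alone.

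The paper breaks this circularity by introducing two auxiliary sequences $v^k:=x^{k+1}-x^k+\eta_k\widetilde{G}_{\lambda}^k$ and $w^k:=x^{k+1}-x^k+\eta_k e_{\lambda}^k$, for which the algorithm yields clean one-step recursions of the form $t_k^2\norms{v^k}^2\leq t_{k-1}^2\norms{v^{k-1}}^2-t_{k-1}\norms{v^{k-1}}^2+\text{(summable)}$ (and similarly for $w^k$). Robbins--Siegmund on these gives both $\sum_k t_k\norms{v^k}^2<\infty$ and the existence of $\lim_k t_k^2\norms{v^k}^2$ a.s.; the elementary Lemma~\ref{le:A2_sum} then forces the limits to be $0$. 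Since $\eta_k\norms{G_{\lambda}x^k}\leq\norms{v^k}+\norms{w^k}$ and $z^k-x^k=t_kv^k$, this yields \emph{both} $k^2\norms{G_{\lambda}x^k}^2\to 0$ and $\norms{z^k-x^k}\to 0$ a.s.\ in one stroke, \emph{before} any Fej\'er/Opial reasoning. With these in hand, every non-$\norms{z^k-x^{\star}}^2$ term in $\Pc_k$ is shown to vanish, so $\lim_k\norms{z^k-x^{\star}}^2$ exists a.s., and Lemma~\ref{le:A3_lemma} finishes the convergence of $\sets{x^k}$ and $\sets{z^k}$. Your last step for $k^2\norms{x^{k+1}-x^k}^2\to 0$ is then correct once the rate for $G_{\lambda}$ is in place.
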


The second condition of  \eqref{eq:VrAFBS4NI_param_cond_new} comes from  the second condition of \eqref{eq:VrAFBS4NI_param_cond}.
As we will see from Subsections~\ref{subsec:complexity_bounds1} and \ref{subsec:complexity_bounds2}, there exists $\underline{\Theta} > 0$ such that $\Theta_k \geq \underline{\Theta} > 0$ and we choose  $\Gamma_k = \Gamma > 0$ for all $k \geq 0$ such that $\Gamma\Theta_k \leq \Lambda$.
Thus, the  condition \eqref{eq:VrAFBS4NI_param_cond_new} automatically holds.

\rv{\begin{remark}[Sufficient condition for \eqref{eq:summable_variance}]\label{re:finiteness_of_Binf}
Since $t_k := \mu(k+r)$, from \eqref{eq:summable_variance}, we have
\begin{equation*}
B_{\infty} = \frac{\mu^2\Lambda}{\beta}{\displaystyle\sum_{k=0}^{\infty}} \frac{(k+r-1)(k+r-2)\sigma_k^2}{\Theta_k}.
\end{equation*}
If $\Theta_k := \Theta > 0$ is fixed for all $k \geq 0$, then a sufficient condition for $B_{\infty} < +\infty$ is either $\sigma_k = 0$ or $\sigma_k^2 = \BigOs{\frac{\sigma^2}{k^{3+\omega}}}$ for any $\omega > 0$ and $\sigma^2$ given in Assumption~\ref{as:A1}$\mathrm{(ii)}$.
If we use either SVRG or SARAH estimators to construct $\widetilde{F}^k$, then we need to choose an increasing mega-batch size $n_k$ for evaluating $\bar{F}\tilde{x}^k$ or $\bar{F}x^k$ such that $n_k := \BigOs{k^{3+\omega}}$ to guarantee $B_{\infty} < +\infty$.
\end{remark}}

\begin{remark}\label{re:strong_monotonicity}
\rv{
In this paper, we do not consider the strong monotonicity of $\Phi$ in \eqref{eq:NI}.
This case was studied in, e.g.,  \citet{tran2024accelerated} when $T=0$ using a different class of unbiased variance-reduced estimators.
We believe that our methods can be customized to handle the strong monotonicity of $\Phi$ and can achieve a linear convergence rate as in \citet{tran2024accelerated}.
{\!\!\!}}
\end{remark}

\begin{remark}\label{re:local_property}
\rv{
Our analysis above relies on the main inequality \eqref{eq:G_cocoerciveness} for $x = x^{k+1}$ and $y = x^k$, and for $x = x^k$ and $y = x^{\star}$.
We have proven that both $\sets{ \norms{x^{k+1}-x^k}^2}$ and $\sets{\norms{x^k - x^{\star}}^2}$ almost surely converge to zero $($the former converges with a $\SmallOs{1/k^2}$ rate$)$.
Consequently, when $k$ is sufficiently large, both $\norms{x^{k+1} - x^k}$ and $\norms{x^k - x^{\star}}$ are almost surely sufficiently small.
Thus, we just require  \eqref{eq:G_cocoerciveness} to holds locally, which can be guaranteed if $T$ is locally $\rho$-co-hypomonotone.
This weaker condition expands the potential applicability of our methods to locally $\rho$-co-hypomonotone operators $T$ in \eqref{eq:NI}.
}
\end{remark}

%


\beforesubsec
\subsection{Complexity of \ref{eq:VrAFBS4NI} for specific estimators in the finite-sum setting}\label{subsec:complexity_bounds1}
\aftersubsec
In this section, we apply Theorem~\ref{th:VrAFBS4NI_convergence}  to concrete estimators described in Section~\ref{sec:VR_estimators} to obtain explicit complexity bound for three cases: L-SVRG, SAGA, and L-SARAH.
The parameters and constants $\beta$, $\mu$, $\nu$, $r$, $\Lambda$, and $\Psi_0$ used in what follows are given in Theorem~\ref{th:VrAFBS4NI_convergence}.
The proof of these results can be found in Appendices~\ref{apdx:co:SVRG_complexity}, \ref{apdx:co:SAGA_complexity}, and \ref{apdx:co:SARAH_complexity}, respectively.

\begin{corollary}[L-SVRG]\label{co:SVRG_complexity}
Suppose that Assumptions~\ref{as:A1} and \ref{as:A2} hold for \eqref{eq:NI} in the finite-sum setting \eqref{eq:finite_sum_form}.
Let $\sets{(x^k, z^k)}$ be generated by \eqref{eq:VrAFBS4NI} using  $t_k$, $\lambda$, and $\beta$ as in Theorem~\ref{th:VrAFBS4NI_convergence}.
Suppose that $\widetilde{F}^k$ is constructed by \eqref{eq:loopless_svrg} with $\tilde{x}^0 = x^0$, $\bar{F}\tilde{x}^k := F\tilde{x}^k$, and
\begin{equation*}
\begin{array}{lcl}
b_k := \lfloor c_b  n^{2\omega} \rfloor  \quad \textrm{and} \quad 
\mbf{p}_k := \begin{cases}
\frac{2}{c_p n^{\omega}} + \frac{4\mu}{\mu(k+r-1) - 1} &\textrm{if}~0 \leq k \leq K_0 := \lfloor 4c_p n^{\omega} - r + 1 + \mu^{-1} \rfloor, \\
\frac{3}{c_p n^{\omega}}  & \textrm{otherwise},
\end{cases}
\end{array}
\end{equation*}
where $c_p > 0$ is a given constant, $r > 5 + \frac{1}{\mu}$, $n^{\omega} \geq \frac{1}{c_p}\max\big\{ \frac{2\mu(r-1)-2}{\mu(r-5)-1}, \frac{\mu(r-1) -1 }{4\mu} \big\}$ for a fixed $\omega \in [0, 1]$, and $c_b := \frac{5\beta c_p^2}{\mu\Lambda}$.

Then, for a given $\epsilon > 0$, the expected total number $\bar{\Tc}_K$ of oracle calls $F_i$ and evaluations  $J_{\lambda T}$ to obtain $x^K$ such that $\Expn{ \norms{G_{\lambda}x^K}^2} \leq \epsilon^2$ is at most
\begin{equation*}
\begin{array}{lcl}
\bar{\Tc}_K := \Big\lfloor n + 4 n\big[ 2 + \ln(4c_pn^{\omega}) \big] + \frac{\sqrt{2}\Psi_0}{\mu \epsilon}  \big( 2c_bn^{2\omega} + \frac{ 3n^{1-\omega} }{c_p} \big) \Big\rfloor.
\end{array}
\end{equation*}
In particular, if we choose $\omega := \frac{1}{3}$, then our oracle complexity is $\bar{\Tc}_K = \BigOs{ n\ln(n) +   \frac{n^{2/3} }{\epsilon}}$.
\end{corollary}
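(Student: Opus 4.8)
The plan is to derive Corollary~\ref{co:SVRG_complexity} by specializing Theorem~\ref{th:VrAFBS4NI_convergence} to the L-SVRG estimator and then carrying out an oracle-count bookkeeping. Since the corollary prescribes the full-batch option $\bar{F}\tilde{x}^k := F\tilde{x}^k$, Lemma~\ref{le:loopless_svrg_bound} immediately gives that the estimator $\widetilde{F}^k$ of \eqref{eq:loopless_svrg} satisfies the $\mathbf{VR}(\Delta_k;\kappa_k,\Theta_k,\sigma_k)$ property of Definition~\ref{de:VR_Estimators} with $\Delta_k = \hat{\Delta}_k$, $\kappa_k = \alpha\mbf{p}_k$, $\Theta_k = \tfrac{1}{(1-\alpha)b_k\mbf{p}_k}$, and $\sigma_k = 0$ for all $k$, for any $\alpha\in(0,1)$; I would fix $\alpha = \tfrac{1}{2}$. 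Because $\sigma_k\equiv 0$, the quantities $E_0^2$, $B_K$ and $B_\infty$ from \eqref{eq:VrAFBS4NI_th1_convergence1_init} and \eqref{eq:summable_variance} all vanish, so condition \eqref{eq:summable_variance} is automatic; it then remains only to verify the parameter conditions \eqref{eq:VrAFBS4NI_param_update}--\eqref{eq:VrAFBS4NI_param_cond} of the theorem.

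Conditions \eqref{eq:VrAFBS4NI_param_update} on $\lambda,\mu,r,\nu,\beta$ are exactly the corollary's hypotheses ($r > 5 + \tfrac1\mu$ exceeds the required $2 + \tfrac1\mu$). For \eqref{eq:VrAFBS4NI_param_cond} I would take $\Gamma_k\equiv\Gamma$ constant, so $\Gamma\Theta_k\le 2\Lambda$ becomes $\Gamma\le 2\Lambda(1-\alpha)b_k\mbf{p}_k$ and, after substituting $t_k = \mu(k+r)$ and simplifying, the $\kappa_k$-condition becomes $\alpha\mbf{p}_k\ge \tfrac{2\mu(k+r)-3\mu-1}{(k+r-1)(\mu(k+r-1)-1)} + \tfrac{5\beta}{\mu\Gamma}$. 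For $k\le K_0$, the summand $\tfrac{4\mu}{\mu(k+r-1)-1}$ designed into $\mbf{p}_k$ is exactly what is needed to absorb the decaying term on the right-hand side; for $k > K_0$, i.e. $k+r-1 > 4c_pn^\omega$, that decaying term is at most a small multiple of $\tfrac{1}{c_pn^\omega}$, so only the constant pieces matter. Thus $\Gamma$ must be chosen so that $\tfrac{5\beta}{\mu\Gamma}$ is dominated by the constant part $\tfrac{2\alpha}{c_pn^\omega}$ of $\alpha\mbf{p}_k$ while still keeping $\Gamma\le 2\Lambda(1-\alpha)b_k\mbf{p}_k$; using the floor estimates $b_k\ge c_bn^{2\omega}-1$ and $\mbf{p}_k\ge\tfrac{2}{c_pn^\omega}$ together with the prescribed $c_b = \tfrac{5\beta c_p^2}{\mu\Lambda}$, the resulting interval for $\Gamma$ is nonempty precisely because $8\alpha(1-\alpha)\ge1$; $\alpha=\tfrac12$ works and, e.g., $\Gamma = \tfrac{5\beta c_pn^\omega}{\mu}$ is admissible. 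The stated lower bound on $n^\omega$ and $r > 5 + \tfrac1\mu$ guarantee the remaining technical requirements $\mbf{p}_k\in[\underline{\mbf{p}},1)$, $K_0\ge 0$, $b_k\ge 1$. I expect this two-regime verification, and in particular making the transition at $K_0$ airtight, to be the main obstacle; everything afterwards is routine.

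With Theorem~\ref{th:VrAFBS4NI_convergence} applicable and $E_0 = B_{K-1} = 0$, estimate \eqref{eq:VrAFBS4NI_th1_convergence1} gives $\Expn{\norms{G_\lambda x^K}^2}\le \tfrac{2\Psi_0^2}{\mu^2(K+r-1)^2}$, hence $\Expn{\norms{G_\lambda x^K}^2}\le\epsilon^2$ once $K\ge K_\epsilon := \big\lceil\tfrac{\sqrt{2}\,\Psi_0}{\mu\epsilon} - r + 1\big\rceil = \BigOs{\Psi_0/(\mu\epsilon)}$. For the oracle count, each iteration $k$ needs one evaluation of $J_{\lambda T}$ and $2b_k = 2\lfloor c_bn^{2\omega}\rfloor$ component calls for $\Fb(x^k,\Sc_k)$ and $\Fb(\tilde{x}^k,\Sc_k)$, and --- only when $\tilde{x}^k$ is refreshed, which occurs with probability $\mbf{p}_k$ --- a further full batch of $n$ components for $F\tilde{x}^k$ (plus one full batch at $k=0$). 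Taking total expectation, the full-batch part costs $n\big(1 + \sum_{k=1}^{K}\mbf{p}_k\big)$; splitting at $K_0$ and comparing the harmonic part with an integral (using $K_0 + r - 1 - \tfrac1\mu\le 4c_pn^\omega$ and $r - 1 - \tfrac1\mu\ge 1$) yields $\sum_{k=1}^{K_0}\mbf{p}_k\le 8 + 4\ln(4c_pn^\omega)$ and $\sum_{k>K_0}\mbf{p}_k\le\tfrac{3K}{c_pn^\omega}$, so this part is at most $n + 4n[2 + \ln(4c_pn^\omega)] + \tfrac{3n^{1-\omega}}{c_p}K$. Adding the $2c_bn^{2\omega}$-per-iteration mini-batch cost and the $K+1$ resolvent evaluations (of lower order), and substituting $K = K_\epsilon\le\tfrac{\sqrt2\,\Psi_0}{\mu\epsilon}$, reproduces the claimed $\bar{\Tc}_K$. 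Finally, taking $\omega = \tfrac13$ makes $2c_bn^{2\omega} = \BigOs{n^{2/3}}$ and $\tfrac{3n^{1-\omega}}{c_p} = \BigOs{n^{2/3}}$, while $\Psi_0 = \BigOs{1}$ in $n$ and $\epsilon$, so $\bar{\Tc}_K = \BigOs{n\ln n + n^{2/3}\epsilon^{-1}}$.
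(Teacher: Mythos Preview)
Your proposal is correct and follows essentially the same route as the paper: the paper also fixes $\alpha=\tfrac12$, takes $\Gamma_k\equiv\Gamma=\tfrac{5\beta c_pn^{\omega}}{\mu}$ (exactly the value you identify as admissible), verifies the two conditions in \eqref{eq:VrAFBS4NI_param_cond} via the same two-regime split at $K_0$, reads off $K=\lfloor\tfrac{\sqrt{2}\,\Psi_0}{\mu\epsilon}\rfloor-1$ from \eqref{eq:VrAFBS4NI_th1_convergence1} with $E_0^2=B_K=0$, and bounds the expected oracle count $n+\sum_{k=0}^K(n\mbf{p}_k+2b)$ by the same harmonic-sum/integral comparison. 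The only cosmetic difference is that the paper simply postulates the value of $\Gamma$ and checks it, whereas you derive the feasibility window (your $8\alpha(1-\alpha)\ge1$ observation) before selecting it.
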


Note that when $\omega = \frac{1}{3}$, our mini-batch size $b_k$ is $b_k = \BigOs{n^{2/3}}$ and our probability $\mbf{p}_k = \BigO{n^{-1/3}}$.
The oracle complexity $\BigOs{ n\ln(n) +   \frac{n^{2/3} }{\epsilon}}$ appears to be slightly worse than the  $\BigOs{n +   \frac{n^{2/3} }{\epsilon}}$ bound obtained in \citet{tran2024accelerated} by a $\ln(n)$ factor.
\rv{In our experiments, we often set $\mbf{p}_k := \frac{1}{2n^{1/3}}$ and $b_k := \lfloor \frac{n^{2/3}}{2}\rfloor$, but we can appropriately adjust these parameters.}


\begin{corollary}[SAGA]\label{co:SAGA_complexity}
Suppose that Assumptions~\ref{as:A1} and \ref{as:A2} hold for \eqref{eq:NI} in the finite-sum setting \eqref{eq:finite_sum_form}.
Let $\sets{(x^k, z^k)}$ be generated by \eqref{eq:VrAFBS4NI} using $t_k$, $\lambda$, and $\beta$ from Theorem~\ref{th:VrAFBS4NI_convergence}.
Suppose that $\widetilde{F}^k$ is constructed by \eqref{eq:SAGA_estimator} with 
\begin{equation*}
b_k := \begin{cases}
2c_bn^{2/3} + \frac{4\mu n}{\mu(k+r-1)-1} & \textrm{if}~~0 \leq k \leq K_0 := \lfloor 4n^{1/3} + 1 + \mu^{-1} - r \rfloor, \\
3c_bn^{2/3}, &\textrm{otherwise},
\end{cases}
\end{equation*}
where $c_b :=  \frac{5}{2}\sqrt{ \frac{\beta }{\mu\Lambda} }$, $r > 5 + \frac{1}{\mu}$, and $n^{1/3} \geq \max\big\{ \frac{2c_b[\mu(r-1)-1]}{\mu(r-5)-1}, \frac{\mu(r-1) -1 }{4\mu} \big\}$.

Then, for a given tolerance $\epsilon > 0$, the expected total number $\bar{\Tc}_K$ of oracle calls $F_i$ and $J_{\lambda T}$ evaluations to obtain $x^K$ such that $\Expn{ \norms{G_{\lambda}x^K}^2} \leq \epsilon^2$ is at most
\begin{equation*}
\begin{array}{lcl}
\bar{\Tc}_K :=  \Big\lfloor [ 8c_b + 4 \ln(4n^{1/3})] n + \frac{3\sqrt{2} c_b \Psi_0 n^{2/3}}{\mu\epsilon} \Big\rfloor.
\end{array}
\end{equation*}
\end{corollary}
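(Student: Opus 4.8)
The plan is to obtain Corollary~\ref{co:SAGA_complexity} by specializing Theorem~\ref{th:VrAFBS4NI_convergence} to the SAGA estimator \eqref{eq:SAGA_estimator} through Lemma~\ref{le:SAGA_estimator_full}, so the argument splits into three pieces: (a) checking that the prescribed parameters meet the hypotheses of Theorem~\ref{th:VrAFBS4NI_convergence}; (b) reading off the residual bound; and (c) a bookkeeping count of oracle calls.

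Step (a) is the technical core. By Lemma~\ref{le:SAGA_estimator_full}, the SAGA estimator enjoys the $\mathbf{VR}(\Delta_k;\kappa_k,\Theta_k,\sigma_k)$ property with $\kappa_k=\alpha b_k/n$, $\Theta_k=(3-\alpha)n/[(1-\alpha)b_k^2]$, and $\sigma_k=0$, provided the batch sizes satisfy $b_{k-1}-\tfrac{(1-\alpha)b_kb_{k-1}}{2n}\le b_k\le b_{k-1}$. I would fix $\alpha$ (a value $\le\tfrac12$) and take $\Gamma_k\equiv\Gamma$ constant, then verify, in turn: (i) the SAGA decrease/monotonicity condition --- on the decaying branch $b_k=2c_bn^{2/3}+\tfrac{4\mu n}{\mu(k+r-1)-1}$ the decrements are of order $n/k^2$, which are dominated by $\tfrac{(1-\alpha)}{2n}b_kb_{k-1}$ precisely because $\alpha\le\tfrac12$ and $b_k$ is of order $n/k$, while the transition at $k=K_0=\lfloor 4n^{1/3}+1+\mu^{-1}-r\rfloor$ to the constant branch $3c_bn^{2/3}$ is nonincreasing once $c_b\le 1$; (ii) the bound $\Gamma_k\Theta_k\le 2\Lambda$ in \eqref{eq:VrAFBS4NI_param_cond}, which, since $b_k$ is smallest on the constant branch, reduces to $\Gamma\lesssim c_b^2n^{1/3}\Lambda$; and (iii) the lower bound on $\kappa_k$ in \eqref{eq:VrAFBS4NI_param_cond}, i.e. $\kappa_k\ge 1-\tfrac{t_{k-2}(t_{k-2}-1)}{t_{k-1}(t_{k-1}-1)}+\tfrac{5\beta}{\mu\Gamma}$, where with $t_k=\mu(k+r)$ the first two terms equal $\tfrac{2\mu(k+r-1)-\mu-1}{(k+r-1)(\mu(k+r-1)-1)}$, which is of order $1/k$. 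On the decaying branch the $\tfrac{4\mu n}{\mu(k+r-1)-1}$-part of $b_k$ is exactly what absorbs this $\mathcal{O}(1/k)$ quantity (this is where $r>5+\mu^{-1}$ enters); on the constant branch $\kappa_k=\tfrac{3\alpha c_b}{n^{1/3}}$ must dominate $\tfrac{5\beta}{\mu\Gamma}$ plus an $\mathcal{O}(n^{-1/3})$ term, forcing $\Gamma\gtrsim \beta n^{1/3}/(c_b\mu)$. Reconciling the upper and lower requirements on $\Gamma$ gives a feasibility constraint on $c_b$, and the stated $c_b=\tfrac52\sqrt{\beta/(\mu\Lambda)}$ together with the lower bound on $n^{1/3}$ is exactly what makes the admissible interval for $\Gamma$ nonempty. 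I expect this simultaneous reconciliation --- the decaying regime, the constant regime, and the SAGA decrease constraint all at once --- to be the main obstacle.

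For step (b), since $\sigma_k=0$ for every $k$ and $\Delta_0=0$ (the table is initialized at $\hat F_i^0=F_ix^0$), we have $\sigma_0=0$, hence $E_0^2=0$ and $B_{K-1}=0$ in \eqref{eq:VrAFBS4NI_th1_convergence1}, and condition \eqref{eq:summable_variance} holds trivially with $B_\infty=0$. Therefore Theorem~\ref{th:VrAFBS4NI_convergence} collapses to $\Exp{\norms{G_\lambda x^K}^2}\le \tfrac{2\Psi_0^2}{\mu^2(K+r-1)^2}$; requiring the right-hand side to be at most $\epsilon^2$ gives $K+r-1\ge \tfrac{\sqrt2\,\Psi_0}{\mu\epsilon}$, so $K=\mathcal{O}\!\big(\tfrac{\Psi_0}{\mu\epsilon}\big)$ iterations suffice.

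For step (c), each iteration $k$ of \eqref{eq:VrAFBS4NI} with \eqref{eq:SAGA_estimator} issues exactly $b_k$ new queries of the $F_i$'s (for $F_{\mathcal S_k}x^k$; the running average $\tfrac1n\sum_i\hat F_i^k$ and the terms $\hat F_i^k$ are maintained from the stored table in $O(b_k)$ arithmetic, without oracle calls), together with one $J_{\lambda T}$ evaluation, plus an initial $n$ queries to build $\mathbf T_0$. I would split $\sum_{k=0}^{K-1}b_k$ at $K_0$: on the constant branch the contribution is at most $3c_bn^{2/3}K\le \tfrac{3\sqrt2\,c_b\Psi_0 n^{2/3}}{\mu\epsilon}$; on the decaying branch, since $K_0$ is of order $n^{1/3}$, the $2c_bn^{2/3}$-part sums to $O(c_bn)$ and the $\tfrac{4\mu n}{\mu(k+r-1)-1}$-part is a harmonic-type sum bounded by $4n\ln(4n^{1/3})$ up to lower-order terms, giving $\le[8c_b+4\ln(4n^{1/3})]n$ after absorbing the $O(n)$ initialization cost into the leading term. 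Adding the two contributions and taking integer parts yields the claimed $\bar{\mathcal T}_K$, which is $\widetilde{\mcal{O}}\big(n+n^{2/3}\epsilon^{-1}\big)$ in the finite-sum setting \eqref{eq:finite_sum_form}.
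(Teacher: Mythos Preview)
Your approach matches the paper's almost step for step: fix $\alpha=\tfrac12$ in Lemma~\ref{le:SAGA_estimator_full}, take $\Gamma_k$ constant, verify the two conditions in \eqref{eq:VrAFBS4NI_param_cond} on each branch of $b_k$ together with the batch-monotonicity hypothesis of Lemma~\ref{le:SAGA_estimator_full}, use $\sigma_k=0$, $\Delta_0=0$ to get $E_0^2=B_K=0$ in \eqref{eq:VrAFBS4NI_th1_convergence1}, solve for $K$, and sum $b_k$ split at $K_0$. Your remark that the transition at $K_0$ requires $c_b\le 1$ is in fact more careful than the paper's own argument, which simply asserts $b_k\le b_{k-1}$ without checking the branch change.
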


Again, the complexity of the SAGA estimator stated in Corollary \ref{co:SAGA_complexity} is $\mcal{O}\big(n\ln(n) + n^{2/3}\epsilon^{-1} \big)$, which is slightly worse than $\BigOs{n + n^{2/3}\epsilon^{-1}}$ in \citet{tran2024accelerated}.
\rv{In our experiments, we often choose $b_k := \min\sets{n, \lfloor \frac{n^{2/3}}{2}\rfloor}$, but we can appropriately adjust $b_k$.}


\begin{corollary}[L-SARAH]\label{co:SARAH_complexity}
Suppose that Assumptions~\ref{as:A1} and \ref{as:A2} hold for \eqref{eq:NI} in the finite-sum setting \eqref{eq:finite_sum_form}.
Let $\sets{(x^k, z^k)}$ be generated by \eqref{eq:VrAFBS4NI} using $t_k$, $\lambda$, and $\beta$ as in Theorem~\ref{th:VrAFBS4NI_convergence}.
Suppose that $\widetilde{F}^k$ is computed by \eqref{eq:loopless_sarah} with 
\begin{equation*}
b_k := \lfloor c_b n^{\omega} \rfloor  \quad \textrm{and} \quad \mbf{p}_k := \begin{cases}
\frac{1}{ c_p n^{\omega} } + \frac{2\mu}{\mu(k+r-1) - 1} &\textrm{if}~0 \leq k \leq K_0 := \lfloor 2c_p n^{\omega} - r + 1 + \mu^{-1}  \rfloor, \\
\frac{2}{c_p n^{\omega}}  & \textrm{otherwise},
\end{cases}
\end{equation*}
where $c_p > 0$ is a given constant,  $r > 3 + \frac{1}{\mu}$, and $n^{\omega} \geq \frac{1}{c_p}\max\big\{ \frac{\mu(r-1) - 1}{\mu(r-3)-1}, \frac{\mu(r-1)-1}{2\mu} \big\}$ for a fixed $\omega \in [0, 1]$, and $c_b := \frac{5\beta c_p}{\mu\Lambda}$.

Then, for a given $\epsilon > 0$, the expected total number $\bar{\Tc}_K$ of oracle calls $F_i$ and $J_{\lambda T}$ evaluations to obtain $x^K$ such that $\Expn{ \norms{G_{\lambda}x^K}^2} \leq \epsilon^2$ is at most
\begin{equation*}
\begin{array}{lcl}
\bar{\Tc}_K := \Big\lfloor n + 2 n\big[ 2 + \ln(2c_p n^{\omega}) \big] + \frac{\sqrt{2}\Psi_0}{\mu \epsilon}  \big( c_bn^{\omega} + \frac{2n^{1-\omega}}{c_p} \big) \Big\rfloor.
\end{array}
\end{equation*}
In particular, if we choose $\omega := \frac{1}{2}$, then our oracle complexity is $\bar{\Tc}_K := \BigOs{ n\ln(n^{1/2}) + \frac{\sqrt{n}}{\epsilon}}$.
\end{corollary}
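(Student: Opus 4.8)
\textbf{Setup.} The plan is to invoke Theorem~\ref{th:VrAFBS4NI_convergence}, specialized to the \ref{eq:loopless_sarah} estimator via Lemma~\ref{le:loopless_sarah_bound}. Recall from that lemma that the L-SARAH estimator satisfies the $\mathbf{VR}(\Delta_k;\kappa_k,\Theta_k,\sigma_k)$ property with $\Delta_k := \norms{\widetilde{F}^k - Fx^k}^2$, $\kappa_k = \mbf{p}_k$, $\Theta_k = \tfrac{1}{b_k}$, and $\sigma_k = 0$ (since we use the full-batch option $\bar{F}x^k := Fx^k$ in the finite-sum setting). Because $\sigma_k = 0$ for all $k$, the quantity $B_K$ in \eqref{eq:VrAFBS4NI_th1_convergence1_init} vanishes, and likewise $E_0^2 = 0$ once we also check $\sigma_0 = 0$; so \eqref{eq:VrAFBS4NI_th1_convergence1} reduces to $\Expn{\norms{G_\lambda x^K}^2} \leq \tfrac{2\Psi_0^2}{\mu^2(K+r-1)^2}$. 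Hence $\Expn{\norms{G_\lambda x^K}^2}\le\epsilon^2$ is guaranteed as soon as $K \geq \tfrac{\sqrt{2}\,\Psi_0}{\mu\epsilon} - r + 1$, i.e., after $K = \BigO{\Psi_0/(\mu\epsilon)}$ iterations.

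\textbf{Verifying the parameter conditions.} The first task is to check that the stated choices of $b_k$, $\mbf{p}_k$ meet the requirements \eqref{eq:VrAFBS4NI_param_cond} of Theorem~\ref{th:VrAFBS4NI_convergence}, for a suitable constant choice $\Gamma_k := \Gamma$. The condition $\Gamma_k\Theta_k \leq 2\Lambda$ becomes $\Gamma/b_k \leq 2\Lambda$; with $b_k \geq \lfloor c_b n^\omega\rfloor$ and $c_b := \tfrac{5\beta c_p}{\mu\Lambda}$ this pins down the admissible $\Gamma$ (roughly $\Gamma = \tfrac{5\beta c_p}{\mu} \cdot \tfrac{1}{\text{something}} \le 2\Lambda c_b n^\omega$, and one picks $\Gamma$ proportional to $n^\omega$ or simply takes $\Gamma$ a fixed multiple and lets $b_k$ absorb it). The first inequality in \eqref{eq:VrAFBS4NI_param_cond}, namely $\kappa_k \geq 1 - \tfrac{\Gamma_{k-1}t_{k-2}(t_{k-2}-1)}{\Gamma_k t_{k-1}(t_{k-1}-1)} + \tfrac{5\beta}{\mu\Gamma_k}$, is the delicate one: with $\Gamma$ constant it reads $\mbf{p}_k \geq 1 - \tfrac{t_{k-2}(t_{k-2}-1)}{t_{k-1}(t_{k-1}-1)} + \tfrac{5\beta}{\mu\Gamma}$. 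Using $t_k = \mu(k+r)$, the telescoping ratio $1 - \tfrac{t_{k-2}(t_{k-2}-1)}{t_{k-1}(t_{k-1}-1)}$ is $\BigO{1/k}$ — explicitly it equals $\tfrac{2\mu(k+r-1) - (1+\mu)}{(\mu(k+r-1))(\mu(k+r-1)-1)}$ or a similar rational expression — which for $k \leq K_0$ is dominated by the $\tfrac{2\mu}{\mu(k+r-1)-1}$ term in the definition of $\mbf{p}_k$, while the residual constant $\tfrac{5\beta}{\mu\Gamma}$ is covered by the $\tfrac{1}{c_p n^\omega}$ term (this is exactly why $c_b = \tfrac{5\beta c_p}{\mu\Lambda}$ and the lower bound on $n^\omega$ are imposed, and why the cross-over index $K_0 = \lfloor 2c_p n^\omega - r + 1 + \mu^{-1}\rfloor$ appears — it is the point where the $1/k$ term drops below the floor $\tfrac{1}{c_p n^\omega}$). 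For $k > K_0$ the telescoping term is below $\tfrac{1}{c_p n^\omega}$, so $\mbf{p}_k = \tfrac{2}{c_p n^\omega}$ suffices. Also need $\mbf{p}_k \in (0,1]$, which follows from the imposed lower bound on $n^\omega$. The side requirements $0 < \mu < \tfrac23$, $r > 3 + \tfrac{1}{\mu}$, $\nu = \mu/2$, $2\rho \leq \lambda < \tfrac{2(1+\sqrt{1-\hat L\rho})}{\hat L}$, $0 < \beta \leq \tfrac{(2-\mu)\bar\beta}{2+\mu}$ are inherited verbatim.

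\textbf{Counting oracle calls.} Once the iteration count $K = \lceil \tfrac{\sqrt2\,\Psi_0}{\mu\epsilon}\rceil$ (up to the $-r+1$ shift) is fixed, the oracle cost is assembled as follows. Each L-SARAH step costs $2b_k$ component evaluations on the "SARAH recursion" branch (probability $1-\mbf{p}_k$) and $n$ evaluations on the "restart" branch (probability $\mbf{p}_k$, since $\bar{F}x^k = Fx^k$ is a full pass), plus the initial full pass $\widetilde F^0 = Fx^0$ costing $n$; the per-iteration expectation is therefore $\le 2b_k + n\,\mbf{p}_k$. Summing over $k = 0,\dots,K$: the $2b_k$ term contributes $2\sum_k \lfloor c_b n^\omega\rfloor \leq 2 c_b n^\omega (K+1) = \BigO{\tfrac{c_b n^\omega \Psi_0}{\mu\epsilon}}$; the $n\,\mbf{p}_k$ term splits at $K_0$: for $k \leq K_0$ one gets $n\sum_{k=0}^{K_0}\big(\tfrac{1}{c_p n^\omega} + \tfrac{2\mu}{\mu(k+r-1)-1}\big)$, where the first piece is $\tfrac{n^{1-\omega}(K_0+1)}{c_p} = \BigO{n}$ (because $K_0 = \BigO{n^\omega}$) and the second piece is a harmonic sum $\approx 2n\ln(K_0) = \BigO{n\ln(n^\omega)}$; for $k > K_0$ one gets $n\sum_{k>K_0}^{K}\tfrac{2}{c_p n^\omega} = \tfrac{2n^{1-\omega}(K - K_0)}{c_p} = \BigO{\tfrac{n^{1-\omega}\Psi_0}{\mu\epsilon}}$. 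Collecting everything and adding the two initial full passes (one for $\widetilde F^0$, one implicit in $\Psi_0$ via $G_\lambda x^0$) yields the stated bound
\[
\bar{\Tc}_K = \Big\lfloor n + 2n\big[2 + \ln(2c_p n^\omega)\big] + \frac{\sqrt2\,\Psi_0}{\mu\epsilon}\Big(c_b n^\omega + \frac{2 n^{1-\omega}}{c_p}\Big)\Big\rfloor.
\]
Finally, optimizing the $\epsilon^{-1}$-coefficient $c_b n^\omega + \tfrac{2n^{1-\omega}}{c_p}$ over $\omega$: the two summands balance when $n^\omega \asymp n^{1-\omega}$, i.e., $\omega = \tfrac12$, giving the leading term $\BigO{\sqrt n/\epsilon}$ and the overall complexity $\BigOs{n\ln(n^{1/2}) + \sqrt n/\epsilon}$.

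\textbf{Main obstacle.} The routine part is the oracle-counting bookkeeping; the genuinely delicate step is showing that the piecewise-defined $\mbf{p}_k$ (with its $1/k$ correction active only up to $K_0$) simultaneously (i) dominates the telescoping gap $1 - \tfrac{t_{k-2}(t_{k-2}-1)}{t_{k-1}(t_{k-1}-1)}$ plus the constant floor $\tfrac{5\beta}{\mu\Gamma}$ appearing in \eqref{eq:VrAFBS4NI_param_cond}, and (ii) keeps $\mbf{p}_k \leq 1$, for \emph{all} $k$ including small $k$ — which is precisely where the hypotheses $r > 3 + \tfrac1\mu$ and $n^\omega \geq \tfrac{1}{c_p}\max\{\tfrac{\mu(r-1)-1}{\mu(r-3)-1}, \tfrac{\mu(r-1)-1}{2\mu}\}$ are consumed. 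Verifying these elementary but fiddly rational inequalities at the boundary indices $k=0$ and $k=K_0$ is the crux of the argument.
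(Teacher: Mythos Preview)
Your proposal is correct and follows essentially the same route as the paper's proof: specialize Theorem~\ref{th:VrAFBS4NI_convergence} via Lemma~\ref{le:loopless_sarah_bound}, use $\sigma_k=0$ so $B_K=E_0^2=0$, pick a constant $\Gamma_k$ proportional to $n^\omega$, verify \eqref{eq:VrAFBS4NI_param_cond}, read off $K=\BigOs{\Psi_0/(\mu\epsilon)}$, and sum the expected per-step costs $n\mbf{p}_k + 2(1-\mbf{p}_k)b$ by splitting at $K_0$. The one place you are vague is the exact value of $\Gamma$; the paper takes $\Gamma_k := \tfrac{5c_p\beta n^\omega}{\mu}$, which makes $\tfrac{5\beta}{\mu\Gamma_k} = \tfrac{1}{c_p n^\omega}$ exactly and turns the first condition of \eqref{eq:VrAFBS4NI_param_cond} into the clean inequality $\mbf{p}_k \ge \tfrac{1}{c_p n^\omega} + \tfrac{2\mu}{\mu(k+r-1)-1} - \tfrac{1+\mu}{(k+r-1)(\mu(k+r-1)-1)}$, from which the stated piecewise $\mbf{p}_k$ and the constraints on $r$ and $n^\omega$ drop out directly.
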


\rv{In our experiments,  we often choose $\mbf{p}_k := \frac{1}{2n^{1/2}}$ and $b_k := \lfloor \frac{\sqrt{n}}{2}\rfloor$, but again, we can appropriately adjust these parameters.}
Note that we can also apply the \ref{eq:HSGD_estimator} estimator to the finite-sum setting \eqref{eq:finite_sum_form} of \eqref{eq:NI}, but we still need to assume that $\Expsn{i}{\norms{\bar{F}x^k - Fx^k}^2} \leq \sigma^2$ in order to  estimate its oracle complexity.
Nevertheless, we omit this result here.


Finally, we specify Theorem~\ref{th:VrAFBS4NI_o_rates_convergence2} and Theorem~\ref{th:VrAFBS4NI_o_rates_convergence3}  for concrete estimators in Section~\ref{sec:VR_estimators}.
The following result is a direct consequence of Theorems \ref{th:VrAFBS4NI_o_rates_convergence2} and \ref{th:VrAFBS4NI_o_rates_convergence3} since $B_{\infty} = 0$.

\begin{corollary}\label{co:VrAFBS4NI_o_rates_convergence2}
For the finite-sum setting \eqref{eq:finite_sum_form} of \eqref{eq:NI}, suppose that the L-SVRG, SAGA, and L-SARAH estimators are constructed as in Corollaries~\ref{co:SVRG_complexity}, \ref{co:SAGA_complexity}, and \ref{co:SARAH_complexity}, respectively. 
Then, the conditions~\eqref{eq:VrAFBS4NI_param_cond} of Theorem~\ref{th:VrAFBS4NI_convergence} are fulfilled and $B_{\infty} = 0$ in \eqref{eq:summable_variance}.

Consequently, the conclusions of both Theorem~\ref{th:VrAFBS4NI_o_rates_convergence2} and Theorem~\ref{th:VrAFBS4NI_o_rates_convergence3} are valid for \eqref{eq:VrAFBS4NI} using these three estimators.
\end{corollary}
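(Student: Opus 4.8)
The plan is to reduce everything to facts already established for the three estimators in Section~\ref{sec:VR_estimators} and to the parameter bookkeeping already done in Corollaries~\ref{co:SVRG_complexity}, \ref{co:SAGA_complexity}, and \ref{co:SARAH_complexity}. First I would observe that in the finite-sum setting \eqref{eq:finite_sum_form} all three estimators are used with their \emph{full-batch} option: $\bar{F}\tilde{x}^k := F\tilde{x}^k$ for L-SVRG, the exact Jacobian table for SAGA, and $\bar{F}x^k := Fx^k$ for L-SARAH. By the ``in particular'' parts of Lemmas~\ref{le:loopless_svrg_bound}, \ref{le:SAGA_estimator_full}, and \ref{le:loopless_sarah_bound}, each estimator then satisfies Definition~\ref{de:VR_Estimators} with $\sigma_k = 0$ for every $k \geq 0$ (in particular $\sigma_0 = 0$). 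Substituting $\sigma_k \equiv 0$ into the definition \eqref{eq:summable_variance} of $B_{\infty}$ gives $B_{\infty} = 0$, and substituting into \eqref{eq:VrAFBS4NI_th1_convergence1_init} gives $E_0^2 = 0$; this is the second assertion of the corollary.

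Second, I would verify the parameter conditions \eqref{eq:VrAFBS4NI_param_cond} of Theorem~\ref{th:VrAFBS4NI_convergence}, namely $\kappa_k \geq 1 - \frac{\Gamma_{k-1}t_{k-2}(t_{k-2}-1)}{\Gamma_k t_{k-1}(t_{k-1}-1)} + \frac{5\beta}{\mu\Gamma_k}$ and $\Gamma_k\Theta_k \leq 2\Lambda$. These are precisely the inequalities checked inside the proofs of Corollaries~\ref{co:SVRG_complexity}, \ref{co:SAGA_complexity}, and \ref{co:SARAH_complexity}, where one takes $\Gamma_k \equiv \Gamma$ constant and plugs in the explicit forms of $(\kappa_k,\Theta_k)$ from the three lemmas --- $(\alpha\mbf{p}_k,\,\frac{1}{(1-\alpha)b_k\mbf{p}_k})$ for L-SVRG, $(\frac{\alpha b_k}{n},\,\frac{(3-\alpha)n}{(1-\alpha)b_k^2})$ for SAGA, and $(\mbf{p}_k,\,\frac{1}{b_k})$ for L-SARAH --- together with the piecewise choices of $b_k$ and $\mbf{p}_k$. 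The constants $c_b$ (e.g.\ $c_b = \frac{5\beta c_p^2}{\mu\Lambda}$ for L-SVRG) are calibrated exactly so that $\Gamma\Theta_k \leq \Lambda$ holds on both branches, and the additive correction term ``$+\frac{4\mu}{\mu(k+r-1)-1}$'' in $\mbf{p}_k$ on $0 \leq k \leq K_0$ is what forces the telescoping-type inequality on $\kappa_k$ to hold there, while the constant tail value of $\mbf{p}_k$ handles $k > K_0$. Hence \eqref{eq:VrAFBS4NI_param_cond} is fulfilled, which is the first assertion.

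Third, applying Theorem~\ref{th:VrAFBS4NI_o_rates_convergence2} only needs \eqref{eq:summable_variance}, and we just showed $B_{\infty} = 0 < +\infty$. For Theorem~\ref{th:VrAFBS4NI_o_rates_convergence3} I would additionally record the auxiliary requirements: (i) $\beta$ may be taken strictly below $\frac{(2-\mu)\bar{\beta}}{2+\mu}$ since it is a free parameter; and (ii) with $\Gamma_k \equiv \Gamma > 0$ one sets $\underline{\Gamma} := \Gamma$, while the piecewise $b_k$ and $\mbf{p}_k$ are uniformly bounded (for fixed $n$), so $\Theta_k$ is bounded below by some $\underline{\Theta} > 0$; the same calibration of $c_b$ already gives $\Gamma\Theta_k \leq \Lambda$. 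Thus condition \eqref{eq:VrAFBS4NI_param_cond_new} holds, and Theorem~\ref{th:VrAFBS4NI_o_rates_convergence3} yields the summability bounds, the $\SmallOs{1/k^2}$ almost sure rates, and the almost sure convergence of $\sets{x^k}$ and $\sets{z^k}$ to a solution of \eqref{eq:NI}.

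The main obstacle is not conceptual but purely a matter of bookkeeping: checking that the recursive inequality on $\kappa_k$ in \eqref{eq:VrAFBS4NI_param_cond} survives the transition at $k = K_0$ between the two branches of $\mbf{p}_k$ (resp.\ $b_k$) for each of the three estimators, and that the specific numerical constants ($c_b$, $c_p$, and the lower bounds on $n^{\omega}$) chosen in Corollaries~\ref{co:SVRG_complexity}--\ref{co:SARAH_complexity} simultaneously enforce $\Gamma\Theta_k \leq \Lambda$ and the $\kappa_k$ bound. Since all of this verification has already been carried out in the proofs of those corollaries, the present proof reduces to a cross-reference to them plus the single new observation that the full-batch choice forces $\sigma_k \equiv 0$, whence $B_{\infty} = 0$.
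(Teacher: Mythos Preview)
Your proposal is correct and mirrors the paper's own justification: the paper states this corollary as a direct consequence of Theorems~\ref{th:VrAFBS4NI_o_rates_convergence2} and~\ref{th:VrAFBS4NI_o_rates_convergence3} since $B_{\infty}=0$, and the proofs of Corollaries~\ref{co:SVRG_complexity}--\ref{co:SARAH_complexity} already verify $\sigma_k=0$, $B_K=0$, condition~\eqref{eq:VrAFBS4NI_param_cond}, and condition~\eqref{eq:VrAFBS4NI_param_cond_new}. Your write-up simply spells out in more detail what the paper leaves implicit in its one-line cross-reference.
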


Let us discuss our results and compare them with existing works.
The complexity of L-SARAH is better than that of L-SVRG and SAGA by a factor of $n^{1/6}$.
The complexity of the L-SARAH variant is the same as the stochastic Halpern method in \citet{cai2023variance}.
Nevertheless, our method is different from \citet{cai2023variance} and we also achieve better $\SmallO{1/k^2}$ convergence rates, several summability results, almost sure convergence rates, and the almost sure convergence of iterates.

\vspace{-1ex}
\beforesubsec
\subsection{Complexity of \ref{eq:VrAFBS4NI} for specific estimators in the expectation setting}\label{subsec:complexity_bounds2}
\aftersubsec
Now, we derive the oracle complexity of  \eqref{eq:VrAFBS4NI} to solve \eqref{eq:NI} in the expectation setting \eqref{eq:expectation_form}.
We have three variants corresponding to the L-SVRG, L-SARAH, and HSGD estimators.
The proof of these results are given in Appendicies~\ref{apdx:co:SVRG_complexity_Esetting}, \ref{apdx:co:SARAH_complexity_Esetting}, and \ref{apdx:co:HSGD_complexity_Esetting}.

\begin{corollary}[L-SVRG]\label{co:SVRG_complexity_Esetting}
Suppose that Assumptions~\ref{as:A1} and \ref{as:A2} hold for \eqref{eq:NI} in the expectation setting \eqref{eq:expectation_form}.
Let $\sets{(x^k, z^k)}$ be generated by \eqref{eq:VrAFBS4NI} using  $t_k$, $\lambda$, and $\beta$ as in Theorem~\ref{th:VrAFBS4NI_convergence}.
Suppose that $\widetilde{F}^k$ is constructed by \eqref{eq:loopless_svrg} with
\begin{equation*}
\arraycolsep=0.2em
\begin{array}{lcl}
b_k = b := \big\lfloor \frac{c_b}{ \epsilon^2} \big\rfloor, \quad n_k = n := \big\lfloor \frac{c_n}{\epsilon^3} \big\rfloor,  \quad \textrm{and} \quad \mbf{p}_k := 2\epsilon  + \frac{4\mu}{\mu(k+r - 1) - 1},
\end{array}
\vspace{-0.5ex}
\end{equation*}
where  $r > 5 + \frac{1}{\mu}$, $\epsilon \in \big(0, \frac{\mu(r-5)-1}{2\mu(r-1)-2} \big]$, $c_b := \frac{10\beta}{\mu\Lambda}$, and $c_n := 12\sigma^2 \max\sets{1, \frac{2\sqrt{2}\Psi_0}{\mu^2}}$.

Then, the expected total number $\bar{\Tc}_K$ of oracle calls $\Fb(\cdot, \xi)$ and  evaluations $J_{\lambda T}$ to obtain $x^K$ such that $\Expn{ \norms{G_{\lambda}x^K}^2} \leq \epsilon^2$ is at most $\bar{\Tc}_K :=  \BigOs{ \epsilon^{-3} +  \epsilon^{-3}\ln(\epsilon^{-1})}$.
\end{corollary}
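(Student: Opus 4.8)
The plan is to reduce everything to Theorem~\ref{th:VrAFBS4NI_convergence}: first verify that the L-SVRG schedule in the statement satisfies the hypotheses \eqref{eq:VrAFBS4NI_param_cond} (all the other requirements on $\mu,r,\nu,\lambda,\beta$ being inherited from the theorem and assumed in the corollary, with $r>5+\mu^{-1}$ stronger), then turn the $\BigOs{1/K^2}$ residual bound \eqref{eq:VrAFBS4NI_th1_convergence1} into a choice of $K$, and finally sum the per-iteration oracle cost.

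First I would instantiate Lemma~\ref{le:loopless_svrg_bound} with the mega-batch option, $\tau=1$, and $\alpha=\tfrac12$. Since $\tilde x^0=x^0$ forces $\hat\Delta_0=0$ and hence $\Delta_0=2\sigma^2/n$, the estimator obeys the $\mathbf{VR}$ property with $\kappa_k=\tfrac12\mbf{p}_k$, $\Theta_k=\tfrac{4}{b\mbf{p}_k}$, and $\sigma_k^2=\tfrac{\mbf{p}_k\sigma^2}{n}$. Next I would check $\mbf{p}_k\in[\underline{\mbf{p}},1)$: the bound $\mbf{p}_k\ge 2\epsilon$ is immediate, and $\mbf{p}_k<1$ holds because the assumed range $\epsilon\le\tfrac{\mu(r-5)-1}{2\mu(r-1)-2}$ is precisely equivalent to $2\epsilon+\tfrac{4\mu}{\mu(r-1)-1}\le 1$, with $\mbf{p}_k$ decreasing in $k$. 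Taking $\Gamma_k\equiv\Gamma:=\tfrac{5\beta}{\mu\epsilon}$ (constant in $k$), the two requirements of \eqref{eq:VrAFBS4NI_param_cond} should fall out. For the first, using $t_k=\mu(k+r)$ one computes $1-\tfrac{t_{k-2}(t_{k-2}-1)}{t_{k-1}(t_{k-1}-1)}=\tfrac{2\mu(k+r)-3\mu-1}{(k+r-1)(\mu(k+r-1)-1)}\le\tfrac{2\mu}{\mu(k+r-1)-1}$, and since $\tfrac{5\beta}{\mu\Gamma}=\epsilon$ the whole right-hand side is $\le\epsilon+\tfrac{2\mu}{\mu(k+r-1)-1}=\kappa_k$. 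For the second, $\Gamma\Theta_k=\tfrac{20\beta}{\mu\epsilon\, b\mbf{p}_k}$, and with $\mbf{p}_k\ge2\epsilon$, $b=\lfloor c_b/\epsilon^2\rfloor\ge\tfrac{c_b}{2\epsilon^2}$ (valid once $\epsilon$ is small), and $c_b=\tfrac{10\beta}{\mu\Lambda}$, one gets $b\mbf{p}_k\ge\tfrac{10\beta}{\mu\Lambda\epsilon}$, hence $\Gamma\Theta_k\le2\Lambda$.

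Next I would feed this into \eqref{eq:VrAFBS4NI_th1_convergence1}, giving $\Expn{\norms{G_\lambda x^K}^2}\le\tfrac{2(\Psi_0^2+E_0^2+B_{K-1})}{\mu^2(K+r-1)^2}$. Since $\sigma_0^2=2\sigma^2/n$, $\Gamma=\BigOs{1/\epsilon}$ and $n=\lfloor c_n/\epsilon^3\rfloor$, the term $E_0^2=\tfrac{\mu r^2(\Gamma\mu+\beta)}{2\beta}\sigma_0^2=\BigOs{\epsilon^2}$. For $B_{K-1}=\tfrac{\Lambda}{\beta}\sum_{k=0}^{K-1}t_{k-1}(t_{k-1}-1)\tfrac{\sigma_k^2}{\Theta_k}$ with $\tfrac{\sigma_k^2}{\Theta_k}=\tfrac{b\mbf{p}_k^2\sigma^2}{4n}$, I would split $\mbf{p}_k^2\le 8\epsilon^2+\tfrac{32\mu^2}{(\mu(k+r-1)-1)^2}$: the first piece contributes $\BigOs{\tfrac{b\sigma^2}{n}\epsilon^2K^3}$ and the second $\BigOs{\tfrac{b\sigma^2}{n}K}$, so, using $\tfrac bn=\BigOs{\epsilon}$, $B_{K-1}=\BigOs{\sigma^2\epsilon^3K^3+\sigma^2\epsilon K}$. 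Taking $K=\Theta(1/\epsilon)$ with the constant scaled by $\Psi_0/\mu$, the choice $c_n=12\sigma^2\max\{1,2\sqrt2\Psi_0/\mu^2\}$ is designed exactly so that $B_{K-1}=\BigOs{\Psi_0^2}$ uniformly over such $K$, whence the right-hand side is $\le\epsilon^2$; this pins down $K=\BigOs{1/\epsilon}$. Finally, each iteration uses $2b$ oracle calls of $\Fb(\cdot,\Sc_k)$ at $x^k$ and $\tilde x^k$, one $J_{\lambda T}$ in forming $\widetilde G^k_\lambda$, and --- only when $\tilde x^k$ is refreshed, i.e.\ with probability $\mbf{p}_k$ --- a mega-batch of $n$ oracle calls for $\bar F\tilde x^k$ (plus the one-time $\bar F\tilde x^0$ costing $n$). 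Taking expectations and summing, $\sum_{k=0}^K 2b=\BigOs{\epsilon^{-3}}$, $\sum_{k=0}^K 1=\BigOs{\epsilon^{-1}}$, and $\sum_{k=0}^K\mbf{p}_k n=n\big(2\epsilon(K+1)+4\mu\sum_{k=0}^K\tfrac{1}{\mu(k+r-1)-1}\big)=n\cdot\BigOs{1+\ln(1/\epsilon)}=\BigOs{\epsilon^{-3}\ln(\epsilon^{-1})}$; adding these yields $\bar{\Tc}_K=\BigOs{\epsilon^{-3}+\epsilon^{-3}\ln(\epsilon^{-1})}$.

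The hard part will be the balancing in the third step: $B_{K-1}$ grows like $\epsilon^3K^3$, while $K$ must grow like $1/\epsilon$ in order to drive the $\BigOs{1/K^2}$ envelope below $\epsilon^2$, so the two must be controlled simultaneously without circularity. The device that makes this close is the mega-batch size $n=\Theta(\epsilon^{-3})$ with $c_n$ scaled by $\Psi_0$, which keeps $B_{K-1}=\BigOs{\Psi_0^2}$ at $K=\Theta(\epsilon^{-1})$; getting every numerical constant to fit --- including the floor-function slack in $b$ and $n$ and the $\mbf{p}_k<1$ constraint --- is where the real bookkeeping lies.
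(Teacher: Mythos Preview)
Your proposal is correct and follows essentially the same route as the paper's proof: instantiate Lemma~\ref{le:loopless_svrg_bound} with $\tau=1$, $\alpha=\tfrac12$, pick $\Gamma_k\equiv\tfrac{5\beta}{\mu\epsilon}$ to verify \eqref{eq:VrAFBS4NI_param_cond}, bound $B_{K-1}$ via the $\mbf{p}_k^2$ splitting and the choice $n=\Theta(\epsilon^{-3})$, set $K=\Theta(\Psi_0/(\mu\epsilon))$, and sum the per-iteration cost $2b+n\mbf{p}_k$ to get $\BigOs{\epsilon^{-3}\ln(\epsilon^{-1})}$. The only cosmetic difference is that the paper bounds $B_K$ by first factoring $\mbf{p}_k$ out of $\mbf{p}_k^2$ rather than squaring and splitting, arriving at $B_K\le\tfrac{10\mu\sigma^2}{n}(K+\epsilon^{-1})(K+r-1)^2$, but this yields the same order and the same four-way inequality that pins down $K$ and $n$.
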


\begin{corollary}[L-SARAH]\label{co:SARAH_complexity_Esetting}
Suppose that Assumptions~\ref{as:A1} and \ref{as:A2} hold for \eqref{eq:NI} in the expectation setting \eqref{eq:expectation_form}.
Let $\sets{(x^k, z^k)}$ be generated by \eqref{eq:VrAFBS4NI} using $t_k$, $\lambda$, and $\beta$ as in Theorem~\ref{th:VrAFBS4NI_convergence}.
Suppose that $\widetilde{F}^k$ is constructed by \eqref{eq:loopless_sarah} with 
\begin{equation*}
\arraycolsep=0.2em
\begin{array}{ll}
b_k = b := \big\lfloor \frac{c_b}{\epsilon} \big\rfloor, \quad n_k = n := \big\lfloor \frac{c_n}{\epsilon^3} \big\rfloor,  \quad \textrm{and} \quad \mbf{p}_k := \epsilon + \frac{2\mu}{\mu(k+r-1) - 1},
\end{array}
\end{equation*}
where  $r > 3 + \frac{1}{\mu}$, $\epsilon \in \big(0, \frac{\mu(r-3)-1}{\mu(r-1)-1} \big]$, $c_b := \frac{5\beta}{\mu\Lambda}$, and $c_n := 24\sigma^2 \max\set{  \frac{\sqrt{2}\Psi_0}{\mu^2}, \frac{1}{\mu\beta} }$.

Then, the expected total number $\bar{\Tc}_K$ of oracle calls $\Fb(\cdot, \xi)$ and  evaluations $J_{\lambda T}$ to obtain $x^K$ such that $\Expn{ \norms{G_{\lambda}x^K}^2} \leq \epsilon^2$ is at most $\bar{\Tc}_K := \BigOs{ \epsilon^{-2} + \epsilon^{-3} +  \epsilon^{-3}\ln(\epsilon^{-1})}$.
\end{corollary}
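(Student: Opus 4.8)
The plan is to instantiate Theorem~\ref{th:VrAFBS4NI_convergence} for the \eqref{eq:loopless_sarah} estimator (in its mega-batch form), using Lemma~\ref{le:loopless_sarah_bound} to supply the $\mathbf{VR}$ parameters, and then convert the resulting $\BigOs{1/K^2}$ rate into an oracle count. By Lemma~\ref{le:loopless_sarah_bound}, with the mega-batch option the estimator satisfies Definition~\ref{de:VR_Estimators} with $\Delta_k = \norms{\widetilde F^k - Fx^k}^2$, $\kappa_k = \mbf{p}_k$, $\Theta_k = 1/b_k = 1/b$ (constant), and $\sigma_k^2 = \mbf{p}_k\sigma^2/n_k = \mbf{p}_k\sigma^2/n$. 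I would take $\Gamma_k \equiv \Gamma := \tfrac{5\beta}{\mu\epsilon}$ constant. Then the second condition in \eqref{eq:VrAFBS4NI_param_cond}, namely $\Gamma_k\Theta_k = \Gamma/b \le 2\Lambda$, follows from $b = \lfloor c_b/\epsilon\rfloor \ge \Gamma/(2\Lambda)$ for $\epsilon$ in the stated range, the constant $c_b = 5\beta/(\mu\Lambda)$ providing the slack against the floor. For the first condition, since $t_k = \mu(k+r)$ one has the telescoping identity $t_{k-1}(t_{k-1}-1) - t_{k-2}(t_{k-2}-1) = \mu(t_{k-1}+t_{k-2}-1) \le 2\mu t_{k-1}$, hence $1 - \tfrac{t_{k-2}(t_{k-2}-1)}{t_{k-1}(t_{k-1}-1)} \le \tfrac{2\mu}{t_{k-1}-1} = \tfrac{2\mu}{\mu(k+r-1)-1}$; combined with $\tfrac{5\beta}{\mu\Gamma} = \epsilon$, the choice $\mbf{p}_k = \epsilon + \tfrac{2\mu}{\mu(k+r-1)-1}$ is exactly what is needed, and $\mbf{p}_k\in(0,1)$ follows from $r > 3+\mu^{-1}$ and the upper bound on $\epsilon$.

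\textbf{Bounding the error terms.} Next I would estimate $B_{K-1}$ and $E_0^2$ in \eqref{eq:VrAFBS4NI_th1_convergence1_init}. Writing $\mbf{p}_k = \epsilon + q_k$ with $q_k := \tfrac{2\mu}{\mu(k+r-1)-1}$, and using $\sigma_k^2/\Theta_k = b\mbf{p}_k\sigma^2/n$ together with $t_{k-1}(t_{k-1}-1)\le\mu^2(k+r-1)^2$ and $t_{k-1}(t_{k-1}-1)q_k = 2\mu^2(k+r-1)$, summation gives $B_{K-1} \le \tfrac{\Lambda b\sigma^2}{\beta n}\big(\epsilon\cdot\BigOs{\mu^2K^3} + \BigOs{\mu^2K^2}\big)$. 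Since $b/n = \BigOs{\epsilon^2}$ by the choices $b = \lfloor c_b/\epsilon\rfloor$, $n = \lfloor c_n/\epsilon^3\rfloor$, and $c_n$ is taken proportional to $\sigma^2$ (with the $\max\{\sqrt2\Psi_0/\mu^2,\ 1/(\mu\beta)\}$ factor absorbing the remaining constants), this yields $B_{K-1} = \BigOs{\epsilon^3 K^3 + \epsilon^2 K^2}$. For $E_0^2 = \tfrac{\mu r^2(\Gamma\mu+\beta)}{2\beta}\sigma_0^2$ I would use $\sigma_0^2 \le \sigma^2/n_0 = \BigOs{\epsilon^3}$ and $\Gamma\mu = 5\beta/\epsilon$ to get $E_0^2 = \BigOs{\epsilon^2}$, while $\Psi_0^2 = \BigOs{1}$.

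\textbf{Choosing $K$ and counting oracle calls.} Substituting into \eqref{eq:VrAFBS4NI_th1_convergence1} gives $\Expn{\norms{G_\lambda x^K}^2} \le \tfrac{2(\Psi_0^2 + E_0^2 + B_{K-1})}{\mu^2(K+r-1)^2} = \BigOs{\Psi_0^2/K^2 + \epsilon^2 + \epsilon^3 K}$, so picking $K = \BigOs{1/\epsilon}$ makes both the $1/K^2$ term and $\epsilon^3 K$ become $\BigOs{\epsilon^2}$, hence $\Expn{\norms{G_\lambda x^K}^2}\le\epsilon^2$. For the complexity, each iteration queries $\mbf{F}(\cdot,\xi)$ either $2b_k = 2b$ times (with probability $1-\mbf{p}_k$) or $n_k = n$ times (with probability $\mbf{p}_k$), plus one $J_{\lambda T}$ evaluation, and the initialization costs $n_0 = n$. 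Therefore $\bar\Tc_K \le n + \sum_{k=0}^{K-1}\big[2b + \mbf{p}_k n\big] + K = n + 2bK + n\sum_{k=0}^{K-1}\mbf{p}_k + K$, and $\sum_{k=0}^{K-1}\mbf{p}_k = \epsilon K + 2\mu\sum_{k=0}^{K-1}\tfrac{1}{\mu(k+r-1)-1} = \epsilon K + \BigOs{\ln K}$. With $n = \BigOs{\epsilon^{-3}}$, $b = \BigOs{\epsilon^{-1}}$, $K = \BigOs{\epsilon^{-1}}$ this becomes $\bar\Tc_K = \BigOs{\epsilon^{-3}} + \BigOs{\epsilon^{-2}} + \BigOs{\epsilon^{-3}} + \BigOs{\epsilon^{-3}\ln(\epsilon^{-1})}$, i.e. $\BigOs{\epsilon^{-2} + \epsilon^{-3} + \epsilon^{-3}\ln(\epsilon^{-1})}$.

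\textbf{Main obstacle.} The real work is in calibrating constants, not in any single inequality. The mega-batch must grow like $\epsilon^{-3}$, and it is precisely the per-iteration term $\mbf{p}_k n$ that produces the $\BigOs{\epsilon^{-3}\ln(\epsilon^{-1})}$: one has to choose the constant part $\epsilon$ of $\mbf{p}_k$ \emph{just large enough} to satisfy the $\kappa_k$-condition in \eqref{eq:VrAFBS4NI_param_cond} (which forces $\kappa_k \gtrsim \epsilon$ once $\Gamma$ is set to meet $\Gamma\Theta_k\le 2\Lambda$), yet \emph{small enough} that $n\cdot\epsilon\cdot K = \BigOs{\epsilon^{-3}}$ and that the residual term $\tfrac{\Lambda b\sigma^2}{\beta n}\cdot\epsilon K$ inside the rate stays $\BigOs{\epsilon^2}$. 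Threading $c_b$, $c_n$, $\Gamma$, and $r$ through these competing constraints simultaneously — and checking that the floors in $b$, $n$ do not spoil them — is the only delicate part; once the parameters are fixed, the rest is substitution into Theorem~\ref{th:VrAFBS4NI_convergence}.
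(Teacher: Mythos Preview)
Your proposal is correct and follows essentially the same route as the paper's proof: you instantiate Theorem~\ref{th:VrAFBS4NI_convergence} with the L-SARAH parameters from Lemma~\ref{le:loopless_sarah_bound}, take $\Gamma_k\equiv 5\beta/(\mu\epsilon)$ constant, verify the two conditions in \eqref{eq:VrAFBS4NI_param_cond} via the same telescoping identity for $t_{k-1}(t_{k-1}-1)-t_{k-2}(t_{k-2}-1)$, bound $B_{K-1}$ by splitting $\mbf{p}_k$ into its $\epsilon$-part and its $\BigOs{1/k}$-part, and then count oracle calls exactly as the paper does. The only difference is presentational: the paper carries explicit constants (introducing an auxiliary exponent $\omega$ and fixing $\omega=1$ at the end, and splitting the final error bound into four separate inequalities to pin down $K$ and $n$), whereas you work at the $\BigOs{\cdot}$ level throughout; both arrive at the same $\BigOs{\epsilon^{-2}+\epsilon^{-3}+\epsilon^{-3}\ln(\epsilon^{-1})}$ complexity, with the logarithmic factor coming from the harmonic sum $\sum_k q_k$ multiplied by the mega-batch size $n$.
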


Unlike the variance-reduced Halpern fixed-point method in \citet{cai2022stochastic}, we choose a fixed mini-batch size $b_k$ instead of varying it.
It leads to a $\log$-factor in our complexity.

\begin{corollary}[HSGD]\label{co:HSGD_complexity_Esetting}
Suppose that Assumptions~\ref{as:A1} and \ref{as:A2} hold for \eqref{eq:NI} in the expectation setting \eqref{eq:expectation_form}.
Let $\sets{(x^k, z^k)}$ be generated by \eqref{eq:VrAFBS4NI} using the parameters $t_k$, $\lambda$, and $\beta$ as in Theorem~\ref{th:VrAFBS4NI_convergence}.
Suppose that $\widetilde{F}^k$ is constructed by \eqref{eq:HSGD_estimator} with 
\begin{equation*}
\arraycolsep=0.2em
\begin{array}{ll}
& \tau_k := 1 - \sqrt{\frac{(1- \theta)t_{k-1}(t_{k-1}-1)}{t_k(t_k-1)}}  \quad\textrm{for}~~ \theta := \epsilon, \vspace{1ex}\\
&b_k = b := \big\lfloor \frac{ c_b}{\epsilon} \big\rfloor, \quad  \hat{b}_k = \hat{b} := \big\lfloor \frac{ \hat{c}_b}{\epsilon^2 } \big\rfloor \quad  \textrm{and} \quad n_0 := \big\lfloor \frac{ c_n}{\epsilon} \big\rfloor,
\end{array}
\end{equation*}
where $\epsilon \in (0, 1/2]$ is a given tolerance,  $r \geq 5 + \frac{1}{\mu}$, and $c_b$, $\hat{c}_b$ and $c_n$ are constants independent of $\epsilon$.
Then, the expected  total number $\bar{\Tc}_K$ of oracle calls $\Fb(\cdot, \xi)$ and  evaluations $J_{\lambda T}$ to obtain $x^K$ such that $\Expn{ \norms{G_{\lambda}x^K}^2} \leq \epsilon^2$ is at most $\bar{\Tc}_K :=  \BigOs{ \epsilon^{-3}}$.
\end{corollary}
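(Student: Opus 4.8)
The plan is to derive the claimed complexity as a specialization of Theorem~\ref{th:VrAFBS4NI_convergence} applied to the \eqref{eq:HSGD_estimator} estimator, then balance the iteration count against the per-iteration oracle cost. First I would record, from Lemma~\ref{le:HSGD_estimator_bound}(iii) in its independent-sample form (the corollary uses $\Sc_k$ and $\hat\Sc_k$ of different sizes $b_k\neq\hat b_k$), that $\widetilde F^k$ satisfies the $\mathbf{VR}(\Delta_k;\kappa_k,\Theta_k,\sigma_k)$ property of Definition~\ref{de:VR_Estimators} with $\kappa_k = 1-(1-\tau_k)^2$, $\Theta_k = \tfrac{(1-\tau_k)^2}{b_k}$, $\sigma_k^2 = \tfrac{\tau_k^2\sigma^2}{\hat b_k}$, and $\sigma_0^2 \le \tfrac{\sigma^2}{n_0}$. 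The specific choice $\tau_k := 1-\sqrt{(1-\epsilon)q_k}$, with $q_k := \tfrac{t_{k-1}(t_{k-1}-1)}{t_k(t_k-1)}\in(0,1)$, gives the two identities $(1-\tau_k)^2 = (1-\epsilon)q_k$ and $b_k\Theta_k = (1-\epsilon)q_k$, which drive the rest of the argument; one checks $\kappa_k\in(0,1)$ and $\tau_k\in(0,1)$ so Definition~\ref{de:VR_Estimators} is legitimate.

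The key step is the choice of the auxiliary sequence $\Gamma_k$. I would set $\Gamma_k := \tfrac{2\Lambda}{\Theta_k} = \tfrac{2\Lambda b_k}{(1-\epsilon)q_k}$, so the second condition of \eqref{eq:VrAFBS4NI_param_cond} holds with equality; because $b_k\equiv b$ is constant, this makes $\tfrac{\Gamma_{k-1}t_{k-2}(t_{k-2}-1)}{\Gamma_k t_{k-1}(t_{k-1}-1)} = \tfrac{q_k}{q_{k-1}}\,q_{k-1} = q_k$, and the first condition of \eqref{eq:VrAFBS4NI_param_cond} collapses, via $\kappa_k = 1-(1-\epsilon)q_k$, to $\epsilon q_k \ge \tfrac{5\beta}{\mu\Gamma_k}$, i.e. to $b \ge \tfrac{5\beta(1-\epsilon)}{2\Lambda\mu\epsilon}$. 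Thus \eqref{eq:VrAFBS4NI_param_cond} holds once $c_b \ge \tfrac{5\beta}{\Lambda\mu}$ and $\epsilon$ is small enough that $\lfloor c_b/\epsilon\rfloor \ge c_b/(2\epsilon)$; since $q_k$ is increasing with $q_0>0$ a fixed constant (bounded below using $r\ge 5+1/\mu$), $\Gamma_k$ lies between positive multiples of $b=\Theta(1/\epsilon)$, so $\Gamma_k\ge\underline\Gamma>0$ and $\Theta_k\ge\underline\Theta>0$.

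Next I would feed the resulting quantities into \eqref{eq:VrAFBS4NI_th1_convergence1}, namely $\mathbb{E}[\norms{G_\lambda x^K}^2] \le \tfrac{2(\Psi_0^2+E_0^2+B_{K-1})}{\mu^2(K+r-1)^2}$, and bound the numerator uniformly in $\epsilon$. Here $\Psi_0^2$ is $\epsilon$-independent; $E_0^2 = \tfrac{\mu r^2(\Gamma_0\mu+\beta)}{2\beta}\sigma_0^2 = \mathcal{O}(\Gamma_0/n_0)$, which, since $\Gamma_0=\Theta(1/\epsilon)$ and $n_0=\lfloor c_n/\epsilon\rfloor$, is at most an $\epsilon$-independent quantity proportional to $1/c_n$; and for $B_{K-1} = \tfrac{\Lambda}{\beta}\sum_{k=0}^{K-1}t_{k-1}(t_{k-1}-1)\tfrac{\sigma_k^2}{\Theta_k}$ I would use $\tfrac{\sigma_k^2}{\Theta_k} = \tfrac{\tau_k^2\sigma^2 b}{(1-\tau_k)^2\hat b}$ together with the elementary estimates $\tau_k \le \tfrac{c_1}{k+r}+\epsilon$ (from $1-q_k\le c_1/(k+r)$ and concavity of $\sqrt{\cdot}$), $(1-\tau_k)^2 = (1-\epsilon)q_k \ge \tfrac12 q_0$, $t_{k-1}(t_{k-1}-1)\le\mu^2(k+r)^2$, and $\tfrac{b}{\hat b}=\mathcal{O}(\epsilon)$, to obtain $B_{K-1} \le A_1\,\epsilon K + A_2\,\epsilon^3(K+r)^3$ with $A_1,A_2$ $\epsilon$-independent and each proportional to $c_b/\hat c_b$ (hence arbitrarily small for large $\hat c_b$). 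Taking $K := \lceil c_K/\epsilon\rceil$ makes $\epsilon K \le c_K+1$ and $\epsilon^3(K+r)^3 \le (c_K+r+1)^3$, so the numerator is at most an $\epsilon$-independent quantity $N(c_K)$, while $(K+r-1)^2 \ge c_K^2/\epsilon^2$; thus $\mathbb{E}[\norms{G_\lambda x^K}^2] \le \tfrac{2N(c_K)\epsilon^2}{\mu^2 c_K^2}\le\epsilon^2$ once the constants are fixed in the order $c_b$ (for \eqref{eq:VrAFBS4NI_param_cond}), then $c_K$ large (to absorb $\Psi_0^2$), then $\hat c_b$ large (to make $A_1,A_2$ small relative to $\mu^2 c_K^2$), then $c_n$ large (for the $E_0^2$-term). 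Finally, counting oracle calls: at iteration $k\ge 1$, \eqref{eq:HSGD_estimator} queries $\Fb(\cdot,\xi)$ on the $b=\mathcal{O}(1/\epsilon)$ samples of $\Sc_k$ twice and on the $\hat b=\mathcal{O}(1/\epsilon^2)$ samples of $\hat\Sc_k$ once, plus one $J_{\lambda T}$ evaluation, and iteration $0$ uses $n_0=\mathcal{O}(1/\epsilon)$ queries; summed over $K=\mathcal{O}(1/\epsilon)$ iterations this gives $\bar{\mathcal{T}}_K = \mathcal{O}(1/\epsilon) + \mathcal{O}(1/\epsilon)\cdot\mathcal{O}(1/\epsilon^2) = \mathcal{O}(\epsilon^{-3})$, with no logarithmic factor since, unlike the L-SVRG and L-SARAH variants of Corollaries~\ref{co:SVRG_complexity_Esetting}--\ref{co:SARAH_complexity_Esetting}, the HSGD variant never evaluates an $\mathcal{O}(\epsilon^{-3})$-sized mega-batch; the only mega-batch ($n_0=\mathcal{O}(1/\epsilon)$) is used once, at $k=0$.

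The main obstacle is the tension forced by the scaling $\Gamma_k = \Theta(1/\epsilon)$ that \eqref{eq:VrAFBS4NI_param_cond} requires: it multiplies both $E_0^2$ and every summand of $B_{K-1}$ by a factor $1/\epsilon$, and since $B_{K-1}$ accumulates over $K=\Theta(1/\epsilon)$ iterations its dominant contribution scales like $\epsilon^3K^3=\Theta(1)$ but with a constant that grows cubically in the iteration-count constant $c_K$. Closing the estimate therefore hinges on shrinking that constant through the ratio $\hat b_k/b_k=\Theta(1/\epsilon)$ (a large but $\epsilon$-independent $\hat c_b$) before committing to $c_K$, and on noting that this enlargement of $\hat b_k$ only inflates the oracle count by a constant factor, precisely because the dominant cost term is $K\hat b = \Theta(\hat c_b\,\epsilon^{-3})$ rather than something with a worse power of $\epsilon$. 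The remaining ingredients, the expansion $\tau_k\le c_1/(k+r)+\epsilon$, the lower bound on $(1-\tau_k)^2$, and the bookkeeping of which constant depends on which, are routine but must be carried out with explicit constants valid for all $\epsilon\in(0,1/2]$ and $r\ge 5+1/\mu$.
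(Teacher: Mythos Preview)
Your proposal is correct and follows essentially the same route as the paper: both arguments take $\Gamma_k$ proportional to $t_k(t_k-1)/[t_{k-1}(t_{k-1}-1)]$ (the paper sets $\Gamma_k=\tfrac{5\beta}{\theta\mu}\cdot\tfrac{t_k(t_k-1)}{t_{k-1}(t_{k-1}-1)}$ so that the first condition of \eqref{eq:VrAFBS4NI_param_cond} reduces exactly to the defining equation of $\tau_k$, whereas you set $\Gamma_k=2\Lambda/\Theta_k$ so that the second condition is tight; either way the remaining condition becomes $b\gtrsim 1/\epsilon$), both bound $B_K$ via $\tau_k\lesssim \epsilon+1/(k+r)$ to get terms of order $\tfrac{b}{\hat b}[\epsilon^2K^3+K]$, and both close with $K=\Theta(1/\epsilon)$ and the same constant-balancing. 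Your use of the independent-sample form of Lemma~\ref{le:HSGD_estimator_bound} is the appropriate one given $b_k\neq\hat b_k$, and your explicit ordering $c_b\to c_K\to\hat c_b\to c_n$ cleanly resolves the apparent circularity in the constants.
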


Three variants: L-SVRG, L-SARAH, and HSGD, discussed in this subsection offer either  $\widetilde{\mcal{O}}(\epsilon^{-3} )$ or $\BigOs{\epsilon^{-3}}$ oracle complexity to achieve an $\epsilon$-solution.
However, each variant has its own advantages and disadvantages.
For instance, L-SVRG seems to have the worst complexity among three variants, but it is unbiased, which is often preferable in practice.
HSGD has the best complexity of $\BigOs{ \epsilon^{-3}}$.
The chosen mini-batch $b$ in L-SARAH is $\BigOs{\epsilon^{-1}}$ which is smaller than $\BigOs{\epsilon^{-2}}$ in both L-SVRG and HSGD.
Both L-SVRG and L-SARAH occasionally require mega-batches of the size $\BigOs{\epsilon^{-3}}$ to evaluate the snapshot point with a probability $\mbf{p}_k$, while HSGD does not need any mega-batch, except for the initial epoch. 
Note that our theoretical parameters given in this subsection aim at achieving the best theoretical complexity bounds.
However, one can adjust the values of parameters $r$, $\beta$, $b_k$, $\hat{b}_k$, $\mbf{p}_k$, and $\tau_k$ when implementing these methods, which may work better in practice. 

\beforesec
\section{Variance-Reduced Accelerated  Backward-Forward Splitting  Method}\label{sec:VrABFS_method}
\aftersec
In this section, we explore the BFS mapping $S_{\lambda}$ in \eqref{eq:BFS_residual} to develop an alternative algorithmic framework to solve \eqref{eq:NI}.
This algorithm is less popular in optimization and can be viewed as a gradient-proximal method in contrast to the proximal-gradient method.

\beforesubsec
\subsection{The algorithm and its implementation}\label{subsec:VrABFS4NI}
\aftersubsec
Our method relies on approximating the BFS mapping $S_{\lambda}(\cdot)$ at $u^k$ defined by \eqref{eq:BFS_residual} by a stochastic estimator $\widetilde{S}_{\lambda}^k$.
This estimator is constructed as follows:
\begin{equation}\label{eq:S_estimator}
\widetilde{S}_{\lambda}^k := \widetilde{F}^k + \tfrac{1}{\lambda}(u^k - x^k), 
\end{equation}
where $x^k := J_{\lambda T}u^k$ and $\widetilde{F}^k$ is a variance-reduced estimator of $Fx^k$ satisfying Definition~\ref{de:VR_Estimators}.
It is obvious to see that $\norms{\widetilde{S}_{\lambda}^k - S_{\lambda}u^k} = \norms{\widetilde{F}^k - Fx^k}$.

Now, we are ready to  present the following algorithm, Algorithm~\ref{alg:A2}, for solving \eqref{eq:NI}.

\begin{algorithm}[hpt!]\caption{(\textbf{V}ariance-reduced \textbf{F}ast [BF] \textbf{O}perator \textbf{S}plitting \textbf{A}lgorithm (\textbf{VFOSA$_{-}$}))}\label{alg:A2}
\normalsize
\rv{
\begin{algorithmic}[1]
\State\label{step:A2_i0}{\bfseries Initialization:} 
Take an initial point $x^0 \in \dom{\Phi}$.

\State \hspace{2.5ex}Choose $\mu$, $\lambda$, and $\beta$ as in Theorem~\ref{th:VrAFBS4NI_convergence}.
Set $\nu := \frac{\mu}{2}$ and $r := 2 + \frac{1}{\mu}$.

\State \hspace{2.5ex}Compute $u^0 := x^0 + \lambda\xi^0$ for any $\xi^0 \in Tx^0$ and set $s^0 := u^0$.
\State\hspace{0ex}\label{step:A2_o1}{\bfseries For $k := 0,\cdots, k_{\max}$ do}
\vspace{0.25ex}   
\State\hspace{2.5ex}\label{step:A2_o2}Compute \textit{the shadow iterate} $x^k :=  J_{\lambda T}u^k$.
\State\hspace{2.5ex}\label{step:A2_o3}Construct an estimator $\widetilde{F}^k$ of $Fx^k$ satisfying Definition~\ref{de:VR_Estimators}.
\State\hspace{2.5ex}\label{step:A2_o4}Update $t_k := \mu(k + r)$ and $\eta_k := \frac{2\beta(t_k - 1)}{t_k - \nu}$.
\State\hspace{2.5ex}\label{step:A2_o5}Update the following iterates:
\vspace{-0.5ex}
\myeq{eq:VrABFS4NI}{
\arraycolsep=0.2em
\left\{\begin{array}{lcl}
v^k & := &  \frac{t_k - 1}{t_k} u^k + \frac{1}{t_k} s^k, \vspace{1ex}\\
u^{k+1} &:= & v^k - \eta_k \widetilde{S}_{\lambda}^k \equiv v^k - \frac{\eta_k}{\lambda}(u^k - x^k) - \eta_k\widetilde{F}^k, \vspace{1ex}\\
s^{k+1} &:= &  s^k + \nu(u^{k+1} - v^k).
\end{array}\right.
\tag{VFOSA$_{-}$}
\vspace{-0.5ex}
}
\State\hspace{0ex}{\bfseries End For}
\end{algorithmic}}
\end{algorithm}

Algorithm~\ref{alg:A2} also requires only one evaluation $\widetilde{F}^k$ and one evaluation $J_{\lambda T}$ per iteration.
Hence, this method has the same per-iteration complexity as \eqref{eq:VrAFBS4NI}.

\beforesubsec
\subsection{Convergence analysis}\label{subsec:VrABFS4NI_convergence}
\aftersubsec
Now, we apply the analysis in Section~\ref{sec:VR_AFBS_method} to establish the convergence of Algorithm~\ref{alg:A2} in the following theorem, whose proof is given in Appendix~\ref{apdx:th:VrABFS4NI_O_rates}.

\begin{theorem}\label{th:VrABFS4NI_O_rates}
Suppose that Assumptions~\ref{as:A1} and \ref{as:A2} hold for \eqref{eq:NI}.
Let $\sets{(x^k, u^k)}$ be generated by \eqref{eq:VrABFS4NI} using an estimator $\widetilde{F}^k$  of $Fx^k$ satisfying Definition~\ref{de:VR_Estimators}.
Under the same parameters $\lambda$, $\nu$, $\beta$, $t_k$ and $\eta_k$ and the same conditions as in Theorem~\ref{th:VrAFBS4NI_convergence}, let  $S_{\lambda}$ be defined by \eqref{eq:BFS_residual} and $\xi^k := \frac{1}{\lambda}(u^k - x^k) \in Tx^k$ for all $k \geq 0$.
Then, we have 
\begin{equation}\label{eq:VrABFS4NI_th1_convergence1} 
\Expn{ \norms{Fx^K + \xi^K }^2 }   \leq   \frac{2( \bar{\Psi}_0^2 + \bar{E}_0^2 + B_{K-1}) }{\mu^2(K + r - 1)^2},
\end{equation}
where $\bar{\Psi}_0^2 := \mu^2r^2 \Expn{ \norms{Fx^0 + \xi^0}^2}  + \frac{2r-1}{4\beta^2(\mu r -1)} \norms{u^0 - u^{\star}}^2$, $\bar{E}_0^2 :=  \frac{\mu r^2(\mu\Gamma_0 + \beta)\sigma_0^2}{2\beta}$, and $B_K$ is given in Theorem~\ref{th:VrAFBS4NI_convergence}.

Moreover, we also have
\begin{equation}\label{eq:VrABFS4NI_th1_summable_bound1} 
\arraycolsep=0.2em
\begin{array}{lcl}
\sum_{k=0}^K\big[   (2 - 3\mu)\mu(k+r)  + 6\mu^2  \big] \Expn{\norms{ Fx^k + \xi^k }^2 } & \leq &   2\big(\bar{\Psi}_0^2 + \bar{E}_0^2 + B_K\big), \vspace{1.5ex}\\
\sum_{k=0}^{K} (k+r)(k+r - \mu^{-1})  \Expn{ \norms{\widetilde{F}^k - Fx^k}^2 } & \leq & \frac{ 2 (\bar{\Psi}_0^2 + \bar{E}_0^2 + B_K ) }{\beta \mu}.
\end{array}
\end{equation}
\end{theorem}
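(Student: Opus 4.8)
The plan is to reduce the analysis of the backward-forward scheme \eqref{eq:VrABFS4NI} to the forward-backward analysis already carried out for \eqref{eq:VrAFBS4NI}, by exploiting the structural parallelism between the two reformulations recorded in Lemma~\ref{le:FBS_cocoerciveness}. First I would observe that \eqref{eq:VrABFS4NI} is literally \eqref{eq:VrAFBS4NI} applied to the operator $S_{\lambda}$ in place of $G_{\lambda}$: the iterates $(u^k, v^k, s^k)$ play the role of $(x^k, y^k, z^k)$, the estimator $\widetilde{S}_{\lambda}^k = \widetilde{F}^k + \frac{1}{\lambda}(u^k - x^k)$ plays the role of $\widetilde{G}_{\lambda}^k$, and the update rules for $t_k$, $\eta_k$, $\nu$ are identical. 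The key point making this substitution legitimate is that Lemma~\ref{le:FBS_cocoerciveness}(ii) gives for $S_{\lambda}$ exactly the same ``generalized co-coercivity'' inequality \eqref{eq:S_cocoerciveness} that \eqref{eq:G_cocoerciveness} gives for $G_{\lambda}$, with the same constants $\bar{\beta}$ and $\Lambda$; the only change is that the correction term now involves $F\circ J_{\lambda T}$ evaluated at $u^k, u^{k-1}$ and the argument pair $J_{\lambda T}u^k - J_{\lambda T}u^{k-1}$ rather than $Fx^k - Fx^{k-1}$ and $x^k - x^{k-1}$. Since $\lambda \ge 2\rho$ in the parameter regime \eqref{eq:VrAFBS4NI_param_update}, Lemma~\ref{le:FBS_cocoerciveness}(iii) ensures $J_{\lambda T}$ is nonexpansive, so $\|J_{\lambda T}u^k - J_{\lambda T}u^{k-1}\| \le \|u^k - u^{k-1}\|$; this is precisely what is needed to rerun the chain of estimates in Lemmas~\ref{le:VrAFBS4NI_descent_property}--\ref{le:VrAFBS4NI_descent_property2} verbatim, because those lemmas only ever used the co-coercivity bound and the Lipschitz/nonexpansiveness control, not any structure specific to the composition $J_{\lambda T}\circ(\Id-\lambda F)$.

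Next I would verify the estimator side. By \eqref{eq:S_estimator} we have the exact identity $\|\widetilde{S}_{\lambda}^k - S_{\lambda}u^k\| = \|\widetilde{F}^k - Fx^k\|$, which is the analogue of \eqref{eq:G_estimator_bound1} but with equality; hence every place in the proof of Theorem~\ref{th:VrAFBS4NI_convergence} where $\|\widetilde{G}_{\lambda}^k - G_{\lambda}x^k\|^2$ is bounded by $\Delta_k$ via Definition~\ref{de:VR_Estimators} carries over unchanged, since $\widetilde{F}^k$ satisfies the $\mathbf{VR}(\Delta_k;\kappa_k,\Theta_k,\sigma_k)$ property by hypothesis. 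The same Lyapunov construction \eqref{eq:VrAFBS_Lyapunov_func}, with $x^k - z^k$ replaced by $u^k - s^k$, $G_{\lambda}x^k$ replaced by $S_{\lambda}u^k$, and $x^{\star}$ replaced by $u^{\star}$ (a zero of $S_{\lambda}$, which exists by \eqref{eq:BFS_reform} since $\zer{\Phi}\ne\emptyset$), then yields the descent and lower-bound estimates leading to \eqref{eq:VrAFBS4NI_th1_convergence1}. Substituting $S_{\lambda}u^k$ for $G_{\lambda}x^k$ and reading off the initial-term definitions gives $\Expn{\|S_{\lambda}u^K\|^2} \le \frac{2(\bar{\Psi}_0^2 + \bar{E}_0^2 + B_{K-1})}{\mu^2(K+r-1)^2}$ with $\bar{\Psi}_0^2$, $\bar{E}_0^2$ exactly as stated (the $\|u^0 - u^{\star}\|^2$ and $\norms{S_{\lambda}u^0}^2$ replacing $\|x^0 - x^{\star}\|^2$ and $\norms{G_{\lambda}x^0}^2$). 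Finally, I would translate the residual $S_{\lambda}u^K$ back to the stated quantity: since $x^k = J_{\lambda T}u^k$ means $\frac{1}{\lambda}(u^k - x^k) = \xi^k \in Tx^k$, the definition \eqref{eq:BFS_residual} gives $S_{\lambda}u^k = Fx^k + \xi^k$, so $\|S_{\lambda}u^k\| = \|Fx^k + \xi^k\|$, and also $\norms{S_{\lambda}u^0} = \norms{Fx^0 + \xi^0}$ with $\xi^0 = \frac{1}{\lambda}(u^0 - x^0)$ as set in the initialization; this is consistent since $u^0 = x^0 + \lambda\xi^0$ with $\xi^0 \in Tx^0$ forces $x^0 = J_{\lambda T}u^0$. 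The two summability bounds \eqref{eq:VrABFS4NI_th1_summable_bound1} follow in the same way from the analogues of the first two lines of \eqref{eq:VrAFBS4NI_th1_summable_bound1}, again using $\|\widetilde{S}_{\lambda}^k - S_{\lambda}u^k\| = \|\widetilde{F}^k - Fx^k\|$ for the second.

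The main obstacle I anticipate is not conceptual but bookkeeping: one must check carefully that the correction term in \eqref{eq:S_cocoerciveness}, namely $\Lambda L\iprods{F(J_{\lambda T}u^k) - F(J_{\lambda T}u^{k-1}),\, J_{\lambda T}u^k - J_{\lambda T}u^{k-1}}$, can be telescoped and bounded in the Lyapunov functions $\Qc_k$, $\Pc_k$ exactly as the term $\Lambda L\iprods{Fx^k - Fx^{k-1}, x^k - x^{k-1}}$ was in the FBS case. This works because in \eqref{eq:VrABFS4NI} the quantity $x^{k+1} - x^k = J_{\lambda T}u^{k+1} - J_{\lambda T}u^k$ satisfies $\|x^{k+1}-x^k\| \le \|u^{k+1}-u^k\|$ by nonexpansiveness of $J_{\lambda T}$, so wherever the original proof invoked $\norms{x^{k+1}-x^k}^2 \le \norms{y^k - x^k - \eta_k\widetilde{G}_{\lambda}^k}^2$-type control, the analogous estimate with $u$'s goes through — but one should confirm no place implicitly used that $Fx^k$ itself (as opposed to $F\circ J_{\lambda T}$) appears. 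I would state this as: ``repeating the proofs of Lemmas~\ref{le:VrAFBS4NI_descent_property}, \ref{le:VrAFBS4NI_Lk_lowerbound}, \ref{le:VrAFBS4NI_descent_property2} and Theorem~\ref{th:VrAFBS4NI_convergence} with $(x^k,y^k,z^k,G_{\lambda},x^{\star})$ replaced throughout by $(u^k,v^k,s^k,S_{\lambda},u^{\star})$ and using \eqref{eq:S_cocoerciveness}, \eqref{eq:S_estimator}, and Lemma~\ref{le:FBS_cocoerciveness}(iii)'' — deferring the routine verification to the appendix, where the referenced proof of this theorem already lives.
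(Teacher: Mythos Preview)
Your proposal is correct and follows essentially the same approach as the paper: reduce the BFS analysis to the FBS analysis of Theorem~\ref{th:VrAFBS4NI_convergence} by substituting $(u^k,v^k,s^k,S_{\lambda},u^{\star})$ for $(x^k,y^k,z^k,G_{\lambda},x^{\star})$, invoking the parallel co-coercivity bound \eqref{eq:S_cocoerciveness}, the estimator identity $\|\widetilde{S}_{\lambda}^k - S_{\lambda}u^k\| = \|\widetilde{F}^k - Fx^k\|$, and finally translating $S_{\lambda}u^k = Fx^k + \xi^k$. Your version is more explicit about why the correction term in \eqref{eq:S_cocoerciveness} and the $\bar{\Ec}_k$ bookkeeping transfer cleanly (the paper simply asserts that the results of Theorem~\ref{th:VrAFBS4NI_convergence} ``still hold''), but the substance is identical.
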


Similarly, we can also state both $\BigO{1/k^2}$ and $\SmallOs{1/k^2}$-convergence rates of \eqref{eq:VrABFS4NI}, whose proof is very similar to the proof of Theorem~\ref{th:VrAFBS4NI_o_rates_convergence2}, see Appendix~\ref{apdx:th:VrABFS4NI_o_rates}.

\begin{theorem}\label{th:VrABFS4NI_o_rates}
Under the same conditions and settings as in Theorems~\ref{th:VrAFBS4NI_o_rates_convergence2} and \ref{th:VrABFS4NI_O_rates}, if, in addition, \eqref{eq:summable_variance} holds, then for $\xi^k := \frac{1}{\lambda}(u^k - x^k) \in Tx^k$ and for all $k \geq 0$, we have 
\begin{equation*} 
\Expn{ \norms{Fx^k + \xi^k }^2}   \leq  \frac{2(\bar{\Psi}_0^2 + \bar{E}_0^2 + B_{\infty} ) }{\mu^2 (k + r - 1)^2 }.
\end{equation*}
We have the following summability bounds:
\begin{equation*} 
\begin{array}{llcl}
& \sum_{k=0}^{\infty} (k+1)  \Expn{\norms{ Fx^k + \xi^k }^2 } & < & +\infty, \vspace{1ex}\\
& \sum_{k=0}^{\infty} (k+1)^2   \Expn{ \norms{\widetilde{F}^k - Fx^k}^2 } & < & +\infty, \vspace{1ex}\\
& \sum_{k=0}^{\infty} (k+1)   \Expn{ \norms{u^{k+1} - u^k}^2 } & < & +\infty, \vspace{1ex}\\
& \sum_{k=0}^{\infty} (k+1)  \Expn{ \norms{x^{k+1} - x^k}^2 } & < & +\infty,
\end{array}
\end{equation*}
where the last three bound require $0 < \beta < \frac{(2-\mu) \bar{\beta} }{2+\mu}$. 
We also obtain the following limits:
\begin{equation*} 
\arraycolsep=0.2em
\begin{array}{llcl}
& \lim_{k\to\infty}k^2  \Expn{ \norms{u^{k+1} - u^k}^2 } &= & 0, \vspace{1ex}\\
& \lim_{k\to\infty}k^2  \Expn{ \norms{x^{k+1} - x^k}^2 } &= & 0, \vspace{1ex}\\
& \lim_{k\to\infty}k^2  \Expn{ \norms{Fx^k + \xi^k}^2 } &= & 0.
\end{array}
\end{equation*}
\end{theorem}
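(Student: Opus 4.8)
The plan is to reduce the statement to the forward--backward analysis of Section~\ref{sec:VR_AFBS_method}. The first observation is that \eqref{eq:VrABFS4NI} is \emph{verbatim} the scheme \eqref{eq:VrAFBS4NI} applied to the equation $S_{\lambda}u = 0$ in the variable $u$, with $(v^k, u^{k+1}, s^{k+1}, \widetilde{S}_{\lambda}^k)$ in the roles of $(y^k, x^{k+1}, z^{k+1}, \widetilde{G}_{\lambda}^k)$. Three facts make the substitution lossless. (i) By \eqref{eq:S_estimator} we have $\widetilde{S}_{\lambda}^k - S_{\lambda}u^k = \widetilde{F}^k - Fx^k$ exactly, so $\widetilde{S}_{\lambda}^k$ inherits the $\mathbf{VR}(\Delta_k;\kappa_k,\Theta_k,\sigma_k)$ property of $\widetilde{F}^k$ with identical parameters, and $S_{\lambda}u^k = Fx^k + \xi^k$ with $\xi^k := \frac{1}{\lambda}(u^k - x^k)\in Tx^k$ and $x^k = J_{\lambda T}u^k$. (ii) Lemma~\ref{le:FBS_cocoerciveness}(ii) supplies the exact analogue of the key inequality \eqref{eq:G_cocoerciveness}, namely $\iprods{S_{\lambda}u - S_{\lambda}v, u-v} \geq \bar{\beta}\norms{S_{\lambda}u - S_{\lambda}v}^2 + \Lambda L\iprods{F(J_{\lambda T}u) - F(J_{\lambda T}v), J_{\lambda T}u - J_{\lambda T}v}$, where the last term is nonnegative by co-coercivity of $F$; moreover, evaluated at $u = u^{k+1}$, $v = u^k$ it equals $\Lambda L\iprods{Fx^{k+1} - Fx^k, x^{k+1} - x^k}$, which is the \emph{same} correction term $\Ec_{k+1}$ that appears in Lemma~\ref{le:VrAFBS4NI_descent_property}, now evaluated at the shadow iterates. (iii) The quantity $\bar{\Ec}_k$ of Definition~\ref{de:VR_Estimators} is built from $\mbf{F}(\cdot,\xi)$ at $x^k, x^{k-1}$, which in \eqref{eq:VrABFS4NI} are exactly the shadow iterates $J_{\lambda T}u^k, J_{\lambda T}u^{k-1}$; hence it enters the analysis with precisely the same coefficient as before, with no mismatch between ``$u$-terms'' and ``$x$-terms''. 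With (i)--(iii), Lemmas~\ref{le:VrAFBS4NI_descent_property}, \ref{le:VrAFBS4NI_Lk_lowerbound}, and \ref{le:VrAFBS4NI_descent_property2} hold word for word after the substitution, which is exactly Theorem~\ref{th:VrABFS4NI_O_rates}.

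Given Theorem~\ref{th:VrABFS4NI_O_rates}, the argument mirrors that of Theorem~\ref{th:VrAFBS4NI_o_rates_convergence2}. Under \eqref{eq:summable_variance} the nondecreasing sequence $B_K$ has a finite limit $B_{\infty}$, so replacing $B_{K-1}$ by $B_{\infty}$ in \eqref{eq:VrABFS4NI_th1_convergence1} yields $\Expn{\norms{Fx^k + \xi^k}^2} \leq \frac{2(\bar{\Psi}_0^2 + \bar{E}_0^2 + B_{\infty})}{\mu^2(k+r-1)^2}$. Passing $K \to \infty$ in \eqref{eq:VrABFS4NI_th1_summable_bound1}, using $0 < \mu < \frac23$ (so $(2-3\mu)\mu(k+r) + 6\mu^2$ is bounded below by a constant multiple of $k+1$) and the choice of $r$ (so $(k+r)(k+r-\mu^{-1})$ is bounded below by a constant multiple of $(k+1)^2$), gives $\sum_k(k+1)\Expn{\norms{Fx^k + \xi^k}^2} < +\infty$ and $\sum_k(k+1)^2\Expn{\norms{\widetilde{F}^k - Fx^k}^2} < +\infty$.

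For the increment bounds I would write $u^{k+1} - u^k = -\eta_k\widetilde{S}_{\lambda}^k + \frac{1}{t_k}(s^k - u^k)$, bound $\norms{\widetilde{S}_{\lambda}^k}^2 \leq 2\norms{Fx^k + \xi^k}^2 + 2\norms{\widetilde{F}^k - Fx^k}^2$, and control $\frac{1}{t_k^2}\norms{s^k - u^k}^2$ from the boundedness of the Lyapunov terms together with the $(k+r)(k+r-\mu^{-1})$-weighted summability of $\Expn{\norms{S_{\lambda}u^{k+1} - S_{\lambda}u^k}^2}$ (the $u$-analogue of the third line of \eqref{eq:VrAFBS4NI_th1_summable_bound1}, which needs the strict inequality $0 < \beta < \frac{(2-\mu)\bar{\beta}}{2+\mu}$); this is the same bookkeeping as for $\sum_k(k+1)\Expn{\norms{x^{k+1}-x^k}^2}$ in \eqref{eq:VrAFBS4NI_summable_bound2}, and gives $\sum_k(k+1)\Expn{\norms{u^{k+1}-u^k}^2} < +\infty$. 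Then $\sum_k(k+1)\Expn{\norms{x^{k+1}-x^k}^2} < +\infty$ is immediate from nonexpansiveness of $J_{\lambda T}$ (Lemma~\ref{le:FBS_cocoerciveness}(iii), valid since $\lambda \geq 2\rho$), which gives $\norms{x^{k+1}-x^k} \leq \norms{u^{k+1}-u^k}$. Finally, each $\SmallOs{1/k^2}$ limit follows from the standard two-ingredient argument: the $u$-analogue of Lemma~\ref{le:VrAFBS4NI_descent_property2} makes $\{\Expn{\Pc_k}\}$ nonincreasing, hence convergent, so each of $k^2\Expn{\norms{u^{k+1}-u^k}^2}$, $k^2\Expn{\norms{x^{k+1}-x^k}^2}$, $k^2\Expn{\norms{Fx^k+\xi^k}^2}$ converges to some $\ell \geq 0$; if $\ell > 0$, the corresponding series from the previous paragraph would diverge like $\ell\sum_k \frac{1}{k+1}$, a contradiction, so $\ell = 0$.

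The step I expect to require the most care is justifying the ``verbatim substitution'' in the first paragraph: one must check that at every place where the forward--backward proof invokes the co-coercivity inequality \eqref{eq:G_cocoerciveness}, the residual relation $\norms{\widetilde{G}_{\lambda}^k - G_{\lambda}x^k} \leq \norms{\widetilde{F}^k - Fx^k}$, or the VR recursion of Definition~\ref{de:VR_Estimators}, the corresponding $S_{\lambda}$-object appears with matching constants and arguments --- in particular that the extra term $\Lambda L\iprods{F(J_{\lambda T}u)-F(J_{\lambda T}v),J_{\lambda T}u - J_{\lambda T}v}$ in \eqref{eq:S_cocoerciveness} collapses exactly to $\Ec_{k+1}$ and produces no new cross terms, and that the hypothesis $\lambda \geq 2\rho$ is precisely what is needed for the resolvent bound used in the increment estimate. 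Once this matching is verified, no genuinely new computation is required.
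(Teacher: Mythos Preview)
Your reduction is exactly the paper's approach: treat \eqref{eq:VrABFS4NI} as \eqref{eq:VrAFBS4NI} in the variable $u$ with $S_{\lambda}$ in place of $G_{\lambda}$, invoke Theorem~\ref{th:VrAFBS4NI_o_rates_convergence2} verbatim, and then pass to the shadow sequence via $\norms{x^{k+1}-x^k}\leq\norms{u^{k+1}-u^k}$ from Lemma~\ref{le:FBS_cocoerciveness}(iii). Your points (i)--(iii) in the first paragraph are precisely the matching conditions the paper relies on (and states more tersely in Appendix~\ref{apdx:th:VrABFS4NI_O_rates}).

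There is one genuine slip in your last paragraph. You argue that convergence of $\{\Expn{\widehat{\Pc}_k}\}$ implies that each of $k^2\Expn{\norms{u^{k+1}-u^k}^2}$, $k^2\Expn{\norms{x^{k+1}-x^k}^2}$, $k^2\Expn{\norms{Fx^k+\xi^k}^2}$ converges, after which the ``divergent harmonic series'' contradiction finishes. But $\widehat{\Pc}_k$ does not contain $t_k^2\norms{u^{k+1}-u^k}^2$, and even for $\norms{S_{\lambda}u^k}^2$ the Lyapunov function mixes $\beta a_k\norms{S_{\lambda}u^k}^2$ with $\frac{c_k}{2\nu\beta}\norms{s^k-u^{\star}}^2$ and a cross term, so convergence of the sum does not by itself yield convergence of either summand. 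The paper obtains the existence of these limits not from $\widehat{\Pc}_k$ but from the auxiliary recursions for $v^k := u^{k+1}-u^k+\eta_k\widetilde{S}_{\lambda}^k$ and $w^k := u^{k+1}-u^k+\eta_k(\widetilde{S}_{\lambda}^k - S_{\lambda}u^k)$ (the $u$-analogues of \eqref{eq:VrAFBS4NI_th12_proof102} and \eqref{eq:VrAFBS4NI_th12_proof101}), applying Lemma~\ref{le:A1_descent} to get $\lim_k t_k^2\Expn{\norms{v^k}^2}$ and $\lim_k t_k^2\Expn{\norms{w^k}^2}$ exist, and only then Lemma~\ref{le:A2_sum} (your contradiction argument) to force the limits to zero. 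You already invoked ``the same bookkeeping as for \eqref{eq:VrAFBS4NI_summable_bound2}'' for the summability; that same bookkeeping is what actually produces the existence of the limits as well, so just extend that reference rather than appealing to $\widehat{\Pc}_k$.
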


\begin{theorem}\label{th:VrABFS4NI_almost_sure_convergence}
Under the same the conditions and settings as in Theorems~\ref{th:VrAFBS4NI_o_rates_convergence3} and \ref{th:VrABFS4NI_o_rates},  we have the following summability bounds:
\begin{equation*} 
\begin{array}{llcl}
& \sum_{k=0}^{\infty} (k+1)  \norms{ Fx^k + \xi^k }^2 & < & +\infty, \vspace{1.5ex}\\
& \sum_{k=0}^{\infty} (k+1)^2    \norms{\widetilde{F}^k - Fx^k}^2  & < & +\infty, \vspace{1.5ex}\\
& \sum_{k=0}^{\infty} (k+1)    \norms{u^{k+1} - u^k}^2  & < & +\infty, \vspace{1.5ex}\\
& \sum_{k=0}^{\infty} (k+1)   \norms{x^{k+1} - x^k}^2  & < & +\infty.
\end{array}
\end{equation*}
The following almost sure limits also hold  $($showing $\SmallOs{1/k^2}$ almost sure convergence rates$)$:
\vspace{-0.5ex}
\begin{equation*} 
\arraycolsep=0.2em
\begin{array}{llcl}
& \lim_{k\to\infty}k^2   \norms{x^{k+1} - x^k}^2   &= & 0, \vspace{1ex}\\
& \lim_{k\to\infty}k^2   \norms{Fx^k + \xi^k}^2  &= & 0.
\end{array}
\vspace{-0.5ex}
\end{equation*}
Moreover, $\sets{u^k}$   almost surely converges to a $ \zer{S_{\lambda}}$-valued random variable $u^{\star} \in \zer{S_{\lambda}}$.
Consequently, $\sets{x^k}$ also almost surely converges  to $x^{\star} := J_{\lambda T}u^{\star} \in \zer{\Phi}$.
\end{theorem}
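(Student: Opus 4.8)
The plan is to mirror, almost verbatim, the argument already developed for the FBS iteration \eqref{eq:VrAFBS4NI} in Theorem~\ref{th:VrAFBS4NI_o_rates_convergence3}, exploiting the fact that the BFS iteration \eqref{eq:VrABFS4NI} is structurally the same recursion applied to the operator $S_\lambda$ of \eqref{eq:BFS_residual} in place of $G_\lambda$. First I would set up the correspondence: identify $(u^k, v^k, s^k, \widetilde{S}_\lambda^k, S_\lambda)$ with $(x^k, y^k, z^k, \widetilde{G}_\lambda^k, G_\lambda)$ of \eqref{eq:VrAFBS4NI}, note that $\norms{\widetilde{S}_\lambda^k - S_\lambda u^k} = \norms{\widetilde{F}^k - Fx^k}$ (stated right after \eqref{eq:S_estimator}), and invoke Lemma~\ref{le:FBS_cocoerciveness}(ii), which supplies for $S_\lambda$ the exact analogue of the cocoercivity-type inequality \eqref{eq:G_cocoerciveness} used to drive the whole FBS analysis. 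With this dictionary in hand, Lemmas~\ref{le:VrAFBS4NI_descent_property}, \ref{le:VrAFBS4NI_Lk_lowerbound}, and \ref{le:VrAFBS4NI_descent_property2} transfer with the Lyapunov functions $\Lc_k,\Qc_k,\Pc_k$ rebuilt from $S_\lambda$ and $u^k$; I would state the resulting supermartingale-type inequality $\Expsn{k}{\Pc_{k+1}} \le \Pc_k - (\text{nonnegative terms}) + \Gamma_k t_{k-1}(t_{k-1}-1)\sigma_k^2/2$ and the nonnegativity $\Pc_k \ge \Qc_k \ge \Lc_k \ge 0$.

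Next, under \eqref{eq:summable_variance} the accumulated noise $\sum_k \Gamma_k t_{k-1}(t_{k-1}-1)\sigma_k^2 = \mathcal{O}(B_\infty) < +\infty$, so the Robbins–Siegmund almost-sure supermartingale convergence theorem applies (exactly as in the proof of Theorem~\ref{th:VrAFBS4NI_o_rates_convergence3}): $\set{\Pc_k}$ converges almost surely to a finite limit, and the nonnegative terms it dominates are almost surely summable. Reading off those terms, using $t_k = \mu(k+r)$ and the coefficient lower bounds $\varphi_k = \Omega(k)$, $\bar\beta - (1+s)\beta > 0$ (which holds because $0 < \beta < \tfrac{(2-\mu)\bar\beta}{2+\mu}$), and $2\Lambda - \Gamma_k\Theta_k \ge \Lambda > 0$ (from \eqref{eq:VrAFBS4NI_param_cond_new}), yields $\sum_k (k+1)\norms{S_\lambda u^k}^2 < +\infty$, $\sum_k (k+1)^2\norms{\widetilde{F}^k - Fx^k}^2 < +\infty$, and $\sum_k (k+1)(t_{k-1}(t_{k-1}-1))^{-1}\cdot t_{k-1}(t_{k-1}-1)\norms{u^{k+1}-v^k}^2$-type sums; since $u^{k+1}-u^k$ is controlled by $u^{k+1}-v^k$ and $v^k - u^k = t_k^{-1}(s^k - u^k)$ with $\norms{s^k-u^k}$ bounded via the $\norms{s^k - u^\star}^2$ term in $\Lc_k$, I get $\sum_k(k+1)\norms{u^{k+1}-u^k}^2 < +\infty$. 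The bound $\sum_k (k+1)\norms{x^{k+1}-x^k}^2 < +\infty$ then follows from nonexpansiveness of $J_{\lambda T}$ — Lemma~\ref{le:FBS_cocoerciveness}(iii), valid since $\lambda \ge 2\rho$ — applied to $x^k = J_{\lambda T}u^k$. The $\SmallOs{1/k^2}$ a.s. rates for $\norms{x^{k+1}-x^k}^2$ and $\norms{Fx^k+\xi^k}^2 = \norms{S_\lambda u^k}^2$ come from combining a.s. summability of $\sum_k(k+1)a_k$ with a.s. monotone-up-to-noise control of $k^2 a_k$ along the lines of the FBS proof (the decreasing-quasi-Féjer argument on $\Pc_k$).

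For the final convergence-of-iterates claim I would run the Opial-type argument used for \eqref{eq:VrAFBS4NI}: from $\sum_k(k+1)\norms{S_\lambda u^k}^2 < +\infty$ a.s.\ there is a subsequence with $S_\lambda u^{k_j} \to 0$; boundedness of $\set{u^k}$ (from the a.s.-finite limit of $\Pc_k$, which dominates $\norms{s^k-u^\star}^2$ and $\norms{S_\lambda u^k}^2$, plus $u^k = s^k + \mathcal{O}(k)\cdot(v^k - \text{stuff})$ estimates) gives a cluster point $u^\dagger$ with $S_\lambda u^\dagger = 0$ by demiclosedness of $S_\lambda$ at $0$ (continuity of $F$ and closedness of $\gra{T}$); then the quasi-Féjer monotonicity of $\norms{u^k - u^\star}^2$ for any fixed $u^\star \in \zer{S_\lambda}$ — again extracted from the $\Pc_k$ recursion — forces the whole sequence $\set{u^k}$ to converge a.s.\ to a single $\zer{S_\lambda}$-valued random variable $u^\star$. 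Finally $x^k = J_{\lambda T}u^k \to J_{\lambda T}u^\star =: x^\star$ by continuity (nonexpansiveness) of $J_{\lambda T}$, and $x^\star \in \zer{\Phi}$ by \eqref{eq:BFS_reform}. The main obstacle I anticipate is not any single inequality but the bookkeeping in transferring the $\Pc_k$-recursion: one must check that Lemma~\ref{le:FBS_cocoerciveness}(ii) is strong enough to reproduce \emph{every} place where \eqref{eq:G_cocoerciveness} was used — in particular the cross term $\Lambda L\iprods{F(J_{\lambda T}u) - F(J_{\lambda T}v), J_{\lambda T}u - J_{\lambda T}v}$ carries an extra $J_{\lambda T}$ composition, so the telescoping of $\Ec_{k+1}$-type terms and the definition of $\Pc_k$ must be reorganized around the shadow iterates $x^k$ rather than $u^k$; getting the signs and the nonexpansiveness bound $\norms{x^{k+1}-x^k}\le\norms{u^{k+1}-u^k}$ to line up correctly is the delicate part, but it is purely mechanical once the dictionary is fixed.
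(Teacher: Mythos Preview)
Your proposal is correct and follows essentially the same route as the paper: transfer the entire almost-sure analysis of Theorem~\ref{th:VrAFBS4NI_o_rates_convergence3} to \eqref{eq:VrABFS4NI} via the dictionary $(u^k,s^k,S_\lambda,\widetilde{S}_\lambda^k)\leftrightarrow(x^k,z^k,G_\lambda,\widetilde{G}_\lambda^k)$ using Lemma~\ref{le:FBS_cocoerciveness}(ii) in place of~(i), then pass to the shadow iterates $x^k=J_{\lambda T}u^k$ by nonexpansiveness of $J_{\lambda T}$ and conclude convergence of $\{u^k\}$ via Lemma~\ref{le:A3_lemma} (your Opial/demiclosedness argument) and of $\{x^k\}$ by continuity of $J_{\lambda T}$. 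The paper's proof is terser---it simply cites Theorem~\ref{th:VrAFBS4NI_o_rates_convergence3} and Lemma~\ref{le:A3_lemma} as black boxes---whereas you unroll the Robbins--Siegmund and Opial steps, but the substance is identical, and your anticipated bookkeeping issue (the $J_{\lambda T}$ composition in the $\Lambda$-term of \eqref{eq:S_cocoerciveness}) is handled exactly as you suggest, by defining $\widehat{\Qc}_k$ with $\iprods{Fx^k-Fx^{k-1},x^k-x^{k-1}}$ on the shadow iterates.
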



If we specify \eqref{eq:VrABFS4NI} (or equivalently, Algorithm~\ref{alg:A2}) to obtain  a specific variant corresponding to each estimator in Section~\ref{subsec:VR_estimator_examples}, then we still obtain the same oracle complexity as in \eqref{eq:VrAFBS4NI}.
We omit these variants as they are similar to Subections~\ref{subsec:complexity_bounds1} and \ref{subsec:complexity_bounds2}.

\vspace{1ex}
\rv{\noindent\textbf{Comparison between \ref{eq:VrAFBS4NI} and \ref{eq:VrABFS4NI}.}
Both \ref{eq:VrAFBS4NI} and \ref{eq:VrABFS4NI} require the same assumptions to guarantee convergence. 
They also share the same convergence properties. 
However, $\sets{x^k}$ is the primal sequence of \ref{eq:VrAFBS4NI}, and it directly converges to a solution $x^{\star}$ of \eqref{eq:NI} almost surely. 
The primal sequence of \ref{eq:VrABFS4NI} is $\sets{u^k}$, which does not converge to $x^{\star}$. 
Instead, the shadow sequence $\sets{x^k}$ almost surely converges to $x^{\star}$, where $x^k := J_{\lambda T} u^k$. 
As mentioned earlier, \ref{eq:VrAFBS4NI} is more popular than \ref{eq:VrABFS4NI} in the literature. 
It covers the well-known proximal-gradient-type methods in convex optimization.
}

\beforesec
\section{Numerical Experiments}\label{sec:num_examples}
\aftersec
In this section, we present two numerical examples to validate our methods and compare the performance of different algorithms, including ours and recent methods from the literature.
All the algorithms are implemented in Python and executed on a MacBook Pro with Apple M4 processor and 24Gb of memory.

\beforesubsec
\subsection{Robust regularized logistic regression with ambiguous features}\label{subsec:logistic_reg_with_ambiguous_feature}
\aftersubsec
We apply our methods to solve  the regularized logistic regression problem with ambiguous features studied in \citet{tran2022accelerated} and compare them with the accelerated Halpern's fixed-point method using SARAH in \citet{cai2023variance}. 

\beforesubsubsec
\subsubsection{Mathematical model}\label{subsubsec:math_model}
\aftersubsubsec
We are given a dataset $\sets{(\hat{X}_i, y_i)}_{i=1}^n$, where $\hat{X}_i$ is an i.i.d. sample of a feature vector in $\R^{p_1}$ and $y_i \in \{0, 1\}$ is the corresponding label of $\hat{X}_i$.
We assume that $\hat{X}_i$ is ambiguous, i.e., it belongs to one of $p_2$ possible examples  $\{X_{ij} \}_{j=1}^{p_2}$.
Since we do not know $\hat{X}_i$ to evaluate the loss, we consider the worst-case loss $f_i(u) := \max_{1\leq j \leq p_2}\ell( \iprods{X_{ij}, u}, y_i)$ computed from $p_2$ examples $\{X_{ij} \}_{j=1}^{p_2}$, where $\ell(\tau, s) := \log(1 + \exp(\tau)) - s\tau$ is the standard logistic loss.

Since $\max_{1\leq j\leq p_2}\ell_j(\cdot) = \max_{v\in\Delta_{p_2}}\sum_{j=1}^{p_2} v_j \ell_j(\cdot)$, where $\Delta_{p_2}$ is  the standard simplex in $\R^{p_2}$, we can model this robustification of the regularized logistic regression problem into the following [non]convex-concave minimax formulation:
\vspace{-1ex}
\begin{equation}\label{eq:logistic_reg_exam}
\min_{u \in\R^{p_1} }  \Big\{ \phi(u) := \max_{v\in \R^{p_2} }\Big\{ \mcal{H}(u, v) := \frac{1}{n} \sum_{i=1}^{n} \sum_{j=1}^{p_2} v_j  \ell ( \iprods{X_{ij}, u}, y_i ) - \delta_{\Delta_{p_2}}(v) \Big\} + \bar{\lambda} R(u)  \Big\},
\vspace{-0.5ex}
\end{equation}
where $R(\cdot)$ is a given regularizer chosen later, $ \bar{\lambda} > 0$ is a regularization parameter, and $\delta_{\Delta_{p_2}}$ is the indicator of $\Delta_{p_2}$ that handles the constraint $v \in \Delta_{p_2}$.

Let us denote by $x := [u; v]$ and
\vspace{-0.5ex}
\begin{equation*}
\arraycolsep=0.2em
\begin{array}{lcl}
F_ix & := & \big[ \sum_{j=1}^{p_2}v_j \ell'(\iprods{X_{ij}, u}, y_i) X_{ij};\ -\ell(\iprods{X_{i1}, u}, y_i); \cdots; -\ell(\iprods{X_{ip_2}, u}, y_i) \big], \vspace{1ex} \\
Tx & := & [  \bar{\lambda}  \partial{R}(u); \partial{\delta_{\Delta_{p_2}}}(v)],
\end{array}
\vspace{-0.5ex}
\end{equation*}
 where $\ell'(\tau, s) = \frac{\exp(\tau)}{1+\exp(\tau)} - s$.
 Then, we have $Fx = \frac{1}{n}\sum_{i=1}^nF_ix$ as given in \eqref{eq:finite_sum_form}.
 The optimality condition of \eqref{eq:logistic_reg_exam} can be written as $0 \in Fx + Tx$, which is a special case of \eqref{eq:NI}.

\beforesubsubsec
\subsubsection{Implementation details and input data}\label{subsec:implementation_of_experiments}
\aftersubsubsec
We implement both methods: \ref{eq:VrAFBS4NI} and \ref{eq:VrABFS4NI}  to solve \eqref{eq:logistic_reg_exam}.
Each method consists of $4$ different variants: L-SVRG, SAGA, L-SARAH, and Hybrid-SGD.
They are abbreviated by \texttt{\ref{eq:VrAFBS4NI}{\!}-Svrg}, \texttt{\ref{eq:VrAFBS4NI}{\!}-Saga}, \texttt{\ref{eq:VrAFBS4NI}{\!}-Sarah}, \texttt{\ref{eq:VrAFBS4NI}{\!}-Hsgd}, \texttt{\ref{eq:VrABFS4NI}{\!}-Svrg}, \texttt{\ref{eq:VrABFS4NI}{\!}-Saga}, \texttt{\ref{eq:VrABFS4NI}{\!}-Sarah}, and \texttt{\ref{eq:VrABFS4NI}{\!}-Hsgd}, respectively.
We consider two cases of \eqref{eq:logistic_reg_exam} as follows.
\begin{compactitem}
\item \textbf{The monotone case.} 
We choose $R(u) := \norms{u}_1$ as an $\ell_1$-norm regularizer, leading to a convex-concave instance of \eqref{eq:logistic_reg_exam}. This corresponds to a monotone $T$ in \eqref{eq:NI}.
In this case, Assumption~\ref{as:A1}(iii) holds with $\rho = 0$.

\item \textbf{The nonmonotone case}: We choose $R(u)$ to be the SCAD (smoothly clipped absolute deviation)  regularizer from \citep{fan2001variable}, which promotes sparsity of $u$ using a nonconvex regularizer.
Hence, \eqref{eq:logistic_reg_exam} is nonconvex-concave, leading to a nonmonotone $T$ in \eqref{eq:NI}.
\rv{In fact, $T$ is locally $\rho$-co-hypomonotone with $\rho := a - 1 = 2.7$.
By Remark~\ref{re:local_property}, we can still apply our methods to solve \eqref{eq:logistic_reg_exam}.}
\end{compactitem}
For comparison, we also implement the variance-reduced Halpern's fixed-point method from  \citep{cai2023variance}, which is abbreviated  by \texttt{VrHalpern}.
For further comparison with other methods such as extragradient,  variance-reduced extragradient, and  extra-anchor gradient methods \citep{yoon2021accelerated}, we refer to Subsection~\ref{subsec:matrix_game} and also \citep{cai2023variance}.

Since it is difficult to exactly evaluate the co-coercive constant $L$, we approximate it by $L := \frac{1}{4}\Vert X^TX \Vert$, which represents the smoothness constant of the logistic loss.
Then, we choose $\lambda := \frac{1}{2L} < \frac{2}{L}$, and  $\bar{\beta} := \frac{\lambda(4  - L\lambda)}{4}$ as suggested by Lemma~\ref{le:FBS_cocoerciveness} and Theorem~\ref{th:VrAFBS4NI_convergence}.
From our theory and \citep{cai2023variance}, we choose the parameters for each variant as follows.
\begin{compactitem}
\item We  choose $\mu := \frac{0.95 \times 2}{3} < \frac{2}{3}$ and $r := 2 + \frac{1}{\mu}$ for all variants of our methods.
\item For the L-SVRG variants, we choose $\mbf{p}_k = \frac{1}{2n^{1/3}}$ and $b := \lfloor \frac{n^{2/3}}{2} \rfloor$, see Corollary~\ref{co:SVRG_complexity}.
\item For the SAGA variants, we choose $b := \lfloor \frac{n^{2/3}}{2} \rfloor$, see  Corollary~\ref{co:SAGA_complexity}.
\item For the L-SARAH variants, we choose $\mbf{p}_k := \frac{1}{2\sqrt{n}}$ and $b := \lfloor \frac{\sqrt{n}}{2} \rfloor$, see Corollary~\ref{co:SARAH_complexity}.
\item For the Hybrid-SGD variants, we choose $\theta := \frac{1}{n}$ and $b := \lfloor \frac{\sqrt{n}}{2} \rfloor$, see Corollary~\ref{co:HSGD_complexity_Esetting}.
\item For \texttt{VrHalpern}, we choose $\mbf{p}_k := \frac{1}{2\sqrt{n}}$ and $b := \lfloor \frac{\sqrt{n}}{2} \rfloor$, see \citep{cai2023variance}.
\end{compactitem}
We report the relative norm $\norms{G_{\lambda}x^k}/\norms{G_{\lambda}x^0}$ against the number of epochs as our main criterion in each experiment.
We choose the initial point $x^0 := 0.25 \times \mathrm{randn}(p)$ in all methods, and run each algorithm for $N_e := 200$ epochs.
Note that in the following experiments, we do not implement any tuning strategy for our parameters.

We use 4 different real datasets from \texttt{LIBSVM} \citep{CC01a}: 
\texttt{gisette} (5,000 features and 6,000 samples),
\texttt{w8a} (300 features and 49,749 samples),  
\texttt{a9a} (123 features and  32,561 samples), and 
\texttt{mnist} (784 features and  60,000 samples).
Here, the first dataset has a large number of features but a small number of samples, while the other ones have a small number of features but a large number of samples.
Since \texttt{mnist} is designed for multi-class classification, we convert it to a binary format by grouping the even digits (i.e., $\sets{0,2,4,6,8}$) into one class and the odd digits (i.e., $\sets{1, 3, 5, 7, 9}$) into the other.
As usual, we normalize the feature vector $\hat{X}_i$ such that each sample has unit norm, and add a column of  all ones to address the bias term.

To generate ambiguous features, we apply the following procedure.
First, we choose the number of ambiguous features to be $p_2 = 10$.
Next, for each sample $i$, we take the nominal feature vector $\hat{X}_i$ and add a random noise generated from a normal distribution of zero mean and variance $\sigma^2 = 0.05^2$.
We also choose the regularization parameter $\bar{\lambda}$ to be $\bar{\lambda} := 5\times 10^{-3}$.
This $\bar{\lambda}$ lead to a reasonable sparsity pattern of an approximate solution $u^k$ to $u^{\star}$ of \eqref{eq:logistic_reg_exam}, though it does not produce a highly accurate residual norm.

\beforesubsubsec
\subsubsection{Numerical experiments}\label{subsec:numerical_experiments}
\aftersubsubsec
We carry out different experiments to test our \ref{eq:VrAFBS4NI} and \ref{eq:VrABFS4NI} for both the monotone and nonmonotone $T$ using four estimators: L-SVRG, SAGA, L-SARAH, and HSGD.

\vspace{0.75ex}
\noindent\textbf{$\mathrm{(a)}$~Comparing 4 variants of \ref{eq:VrAFBS4NI} on monotone problems.}
First, we run four variants: L-SVRG, SAGA, L-SARAH, and HSGD on two real datasets: \texttt{w8a} and \texttt{gisette} to solve a convex-concave minimax instance of \eqref{eq:logistic_reg_exam} with the $\ell_1$-regularizer $R(u) = \norms{u}_1$ (i.e., $T$ is monotone).
The results of this experiment are revealed in Figure~\ref{fig:experiment1}.

\begin{figure}[ht!]
\vspace{-0ex}
\centering
\includegraphics[width=1\textwidth]{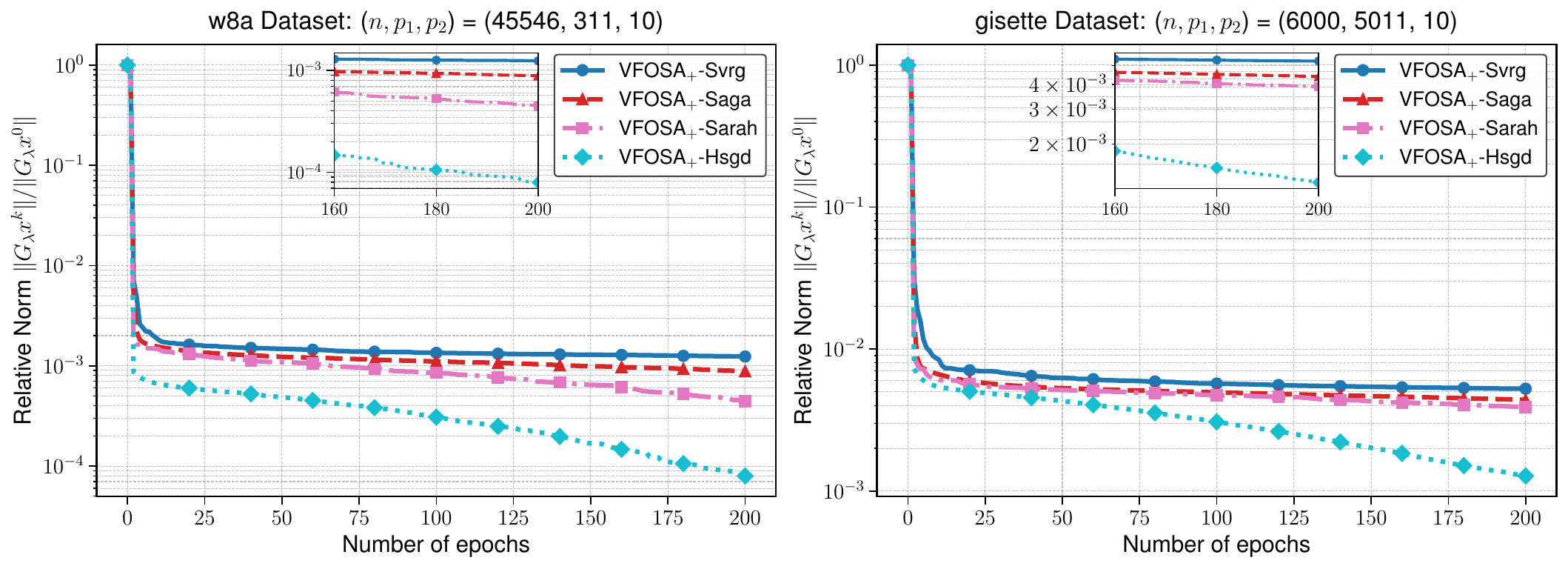}
\vspace{-5ex}
\caption{
The performance of 4 variants of \ref{eq:VrAFBS4NI}: L-SVRG, SAGA, L-SARAH, and HSGD for solving \eqref{eq:logistic_reg_exam} with the $\ell_1$-regularizer on two datasets: \texttt{w8a} and  \texttt{gisette}.
}
\label{fig:experiment1}
\vspace{-3ex}
\end{figure}

One can see from Figure~\ref{fig:experiment1} that both \texttt{\ref{eq:VrAFBS4NI}{\!}-Svrg} and \texttt{\ref{eq:VrAFBS4NI}{\!}-Saga} achieve similar performance, with \texttt{\ref{eq:VrAFBS4NI}{\!}-Saga} slightly outperforming \texttt{\ref{eq:VrAFBS4NI}{\!}-Svrg}, despite having the same oracle complexity.
\texttt{\ref{eq:VrAFBS4NI}{\!}-Sarah} performs better than both \texttt{\ref{eq:VrAFBS4NI}{\!}-Svrg} and \texttt{\ref{eq:VrAFBS4NI}{\!}-Saga}, while \texttt{\ref{eq:VrAFBS4NI}{\!}-Hsgd} appears to outperform all its competitors.
Although \texttt{\ref{eq:VrAFBS4NI}{\!}-Svrg} and \texttt{\ref{eq:VrAFBS4NI}{\!}-Sarah} occasionally compute full batches of $F$,  \texttt{\ref{eq:VrAFBS4NI}{\!}-Saga} and \texttt{\ref{eq:VrAFBS4NI}{\!}-Hsgd} require no full-batch evaluations except for the first epoch.
However, \texttt{\ref{eq:VrAFBS4NI}{\!}-Saga} must store all component mappings $F_ix^k$.
This experiment confirms that biased estimators such as L-SARAH and HSGD outperform unbiased ones like L-SVRG and SAGA, aligning with our theoretical findings.

\begin{figure}[ht!]
\vspace{-0ex}
\centering
\includegraphics[width=1\textwidth]{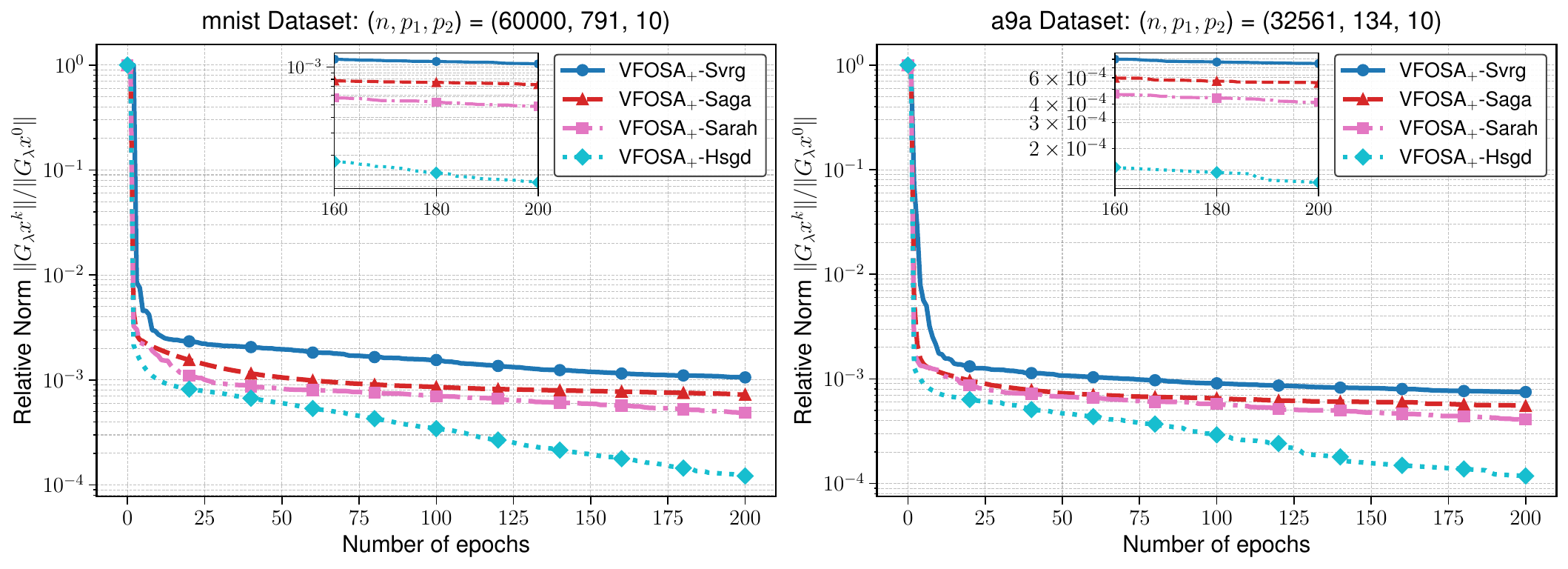}
\vspace{-5ex}
\caption{
The performance of 4 variants of \ref{eq:VrAFBS4NI}: L-SVRG, SAGA, L-SARAH, and HSGD for solving \eqref{eq:logistic_reg_exam} using the $\ell_1$-regularizer on two datasets: \texttt{mnist} and  \texttt{a9a}.
}
\label{fig:experiment2}
\vspace{-3ex}
\end{figure}

Similarly, we also compare these four algorithmic variants on the other two datasets: \texttt{mnist} and \texttt{a9a}.
The results are reported in Figure~\ref{fig:experiment2}.

Again, we observe a similar performance as in the first experiment. 
Both \texttt{\ref{eq:VrAFBS4NI}{\!}-Sarah} and \texttt{\ref{eq:VrAFBS4NI}{\!}-Hsgd} still outperform \texttt{\ref{eq:VrAFBS4NI}{\!}-Svrg} and \texttt{\ref{eq:VrAFBS4NI}{\!}-Saga}.
Overall, \texttt{\ref{eq:VrAFBS4NI}{\!}-Hsg} is still the best in this experiment.

\vspace{0.75ex}
\noindent\textbf{$\mathrm{(b)}$~Comparing 4 variants of \ref{eq:VrAFBS4NI} on nonmonotone problems.}
Next, we run these four algorithmic variants on four datasets above to solve a nonconvex-concave minimax instance of \eqref{eq:logistic_reg_exam} by choosing the SCAD regularizer (i.e., nonmonotone $T$).
The results are shown in Figure~\ref{fig:experiment3a} and Figure~\ref{fig:experiment3b}, respectively for each pair of datasets.

\begin{figure}[ht!]
\vspace{-0ex}
\centering
\includegraphics[width=1\textwidth]{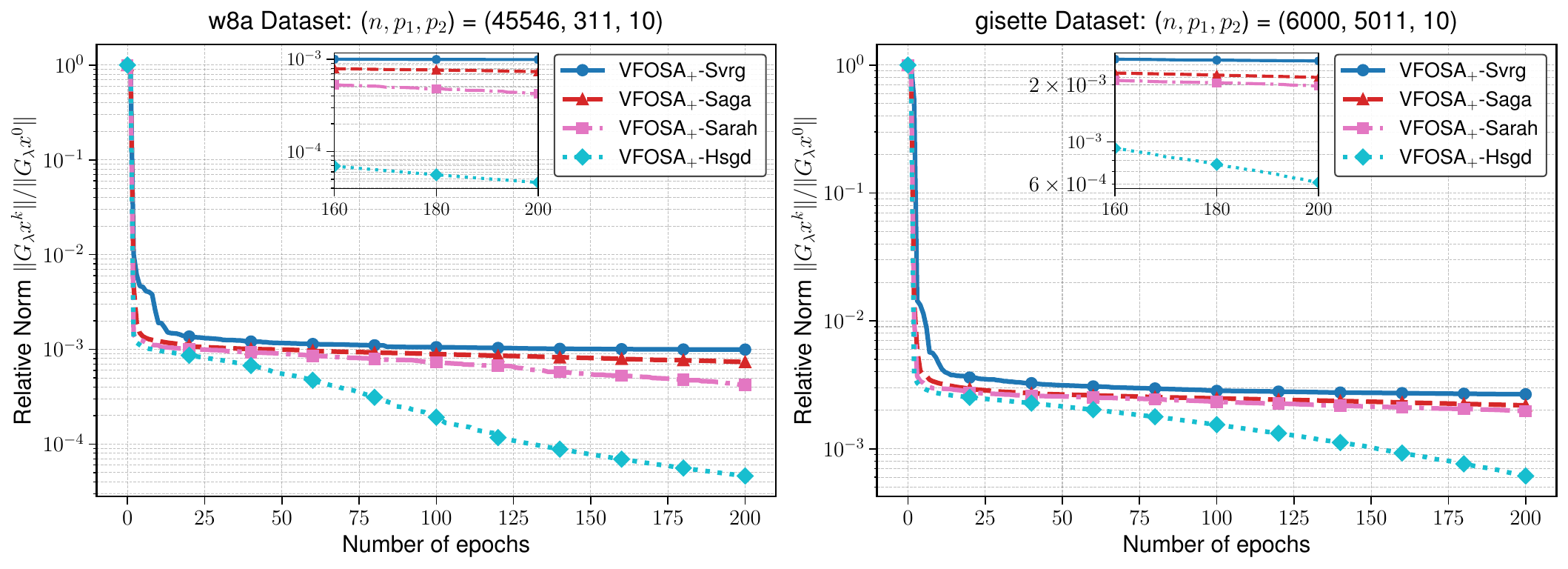}
\vspace{-5ex}
\caption{
The performance of 4 variants of \ref{eq:VrAFBS4NI}: SVRG, SAGA, SARAH, and HSGD for solving \eqref{eq:logistic_reg_exam} using the SCAD regularizer on two datasets:  \texttt{w8a} and  \texttt{gisette}.
}
\label{fig:experiment3a}
\vspace{-3ex}
\end{figure}

\begin{figure}[ht!]
\vspace{-1ex}
\centering
\includegraphics[width=1\textwidth]{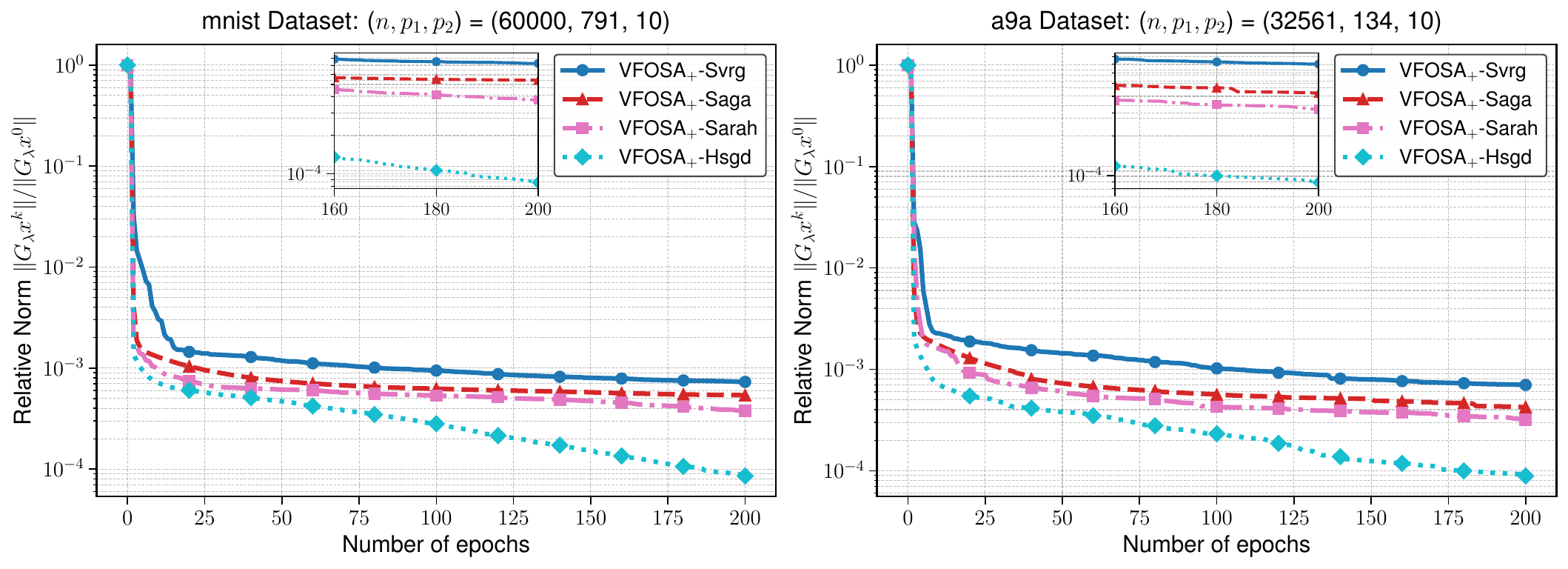}
\vspace{-5ex}
\caption{
The performance of 4 variants of \ref{eq:VrAFBS4NI}: L-SVRG, SAGA, L-SARAH, and HSGD for solving \eqref{eq:logistic_reg_exam} using the SCAD regularizer on the \texttt{mnist} and  \texttt{a9a} datasets.
}
\label{fig:experiment3b}
\vspace{-3ex}
\end{figure}

We can see from both Figures~\ref{fig:experiment3a} and \ref{fig:experiment3b} that our algorithms still work well with the SCAD regularizer, and show a similar performance as our first two experiments. 
Nevertheless, the solutions we obtain have a better sparsity pattern compared to the $\ell_1$-norm regularizer. 

\vspace{0.75ex}
\noindent\textbf{$\mathrm{(c)}$~Comparing 4 variants of \ref{eq:VrABFS4NI}.}
Alternatively, we conduct a similar type of experiments for our \ref{eq:VrABFS4NI} method.
This time we only compare four variants of \ref{eq:VrABFS4NI} on two datasets: \texttt{mnist} and  \texttt{gisette}  for both the $\ell_1$-norm regularizer (monotone case) and the SCAD regularizer (nonmonotone case).
The results of this experiment are presented in Figures~\ref{fig:experiment4a} and \ref{fig:experiment4b}, respectively.

\begin{figure}[ht!]
\vspace{-0ex}
\centering
\includegraphics[width=1\textwidth]{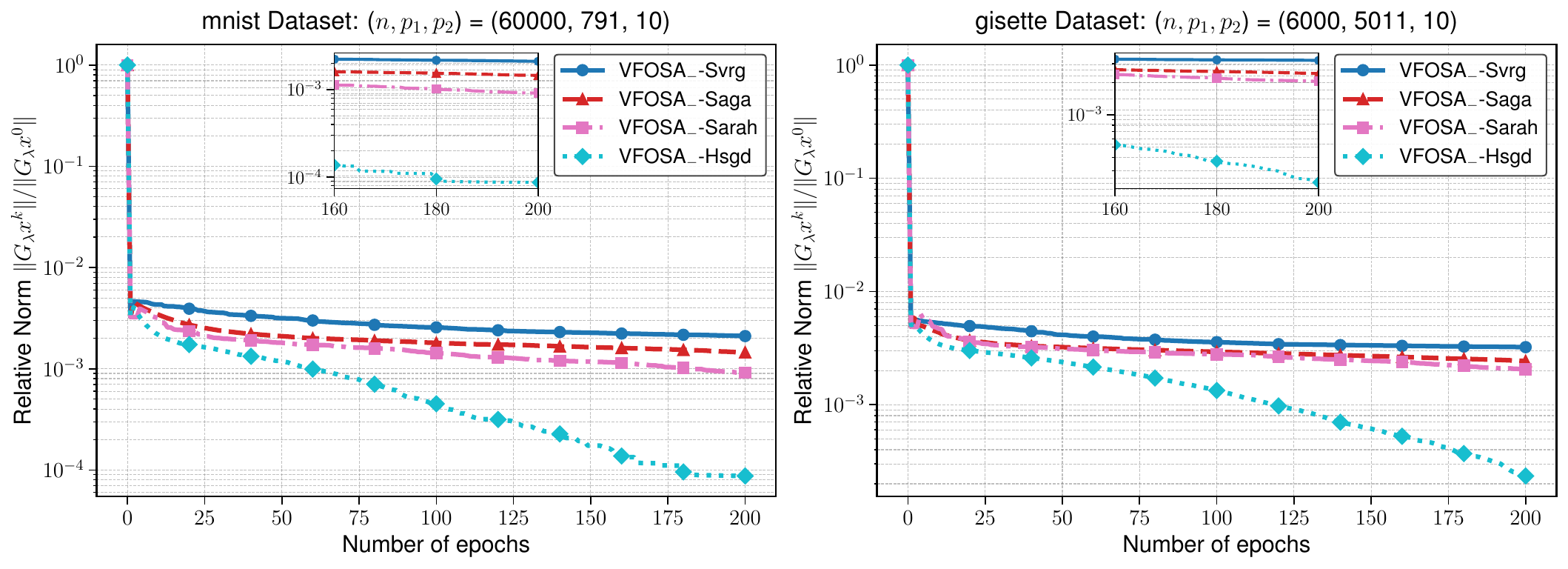}
\vspace{-5ex}
\caption{
The performance of 4 variants of \ref{eq:VrABFS4NI}: L-SVRG, SAGA, L-SARAH, and HSGD for solving \eqref{eq:logistic_reg_exam} with the $\ell_1$-norm regularizer on \texttt{mnist} and  \texttt{gisette}.
}
\label{fig:experiment4a}
\vspace{-3ex}
\end{figure}

\begin{figure}[ht!]
\vspace{-0ex}
\centering
\includegraphics[width=1\textwidth]{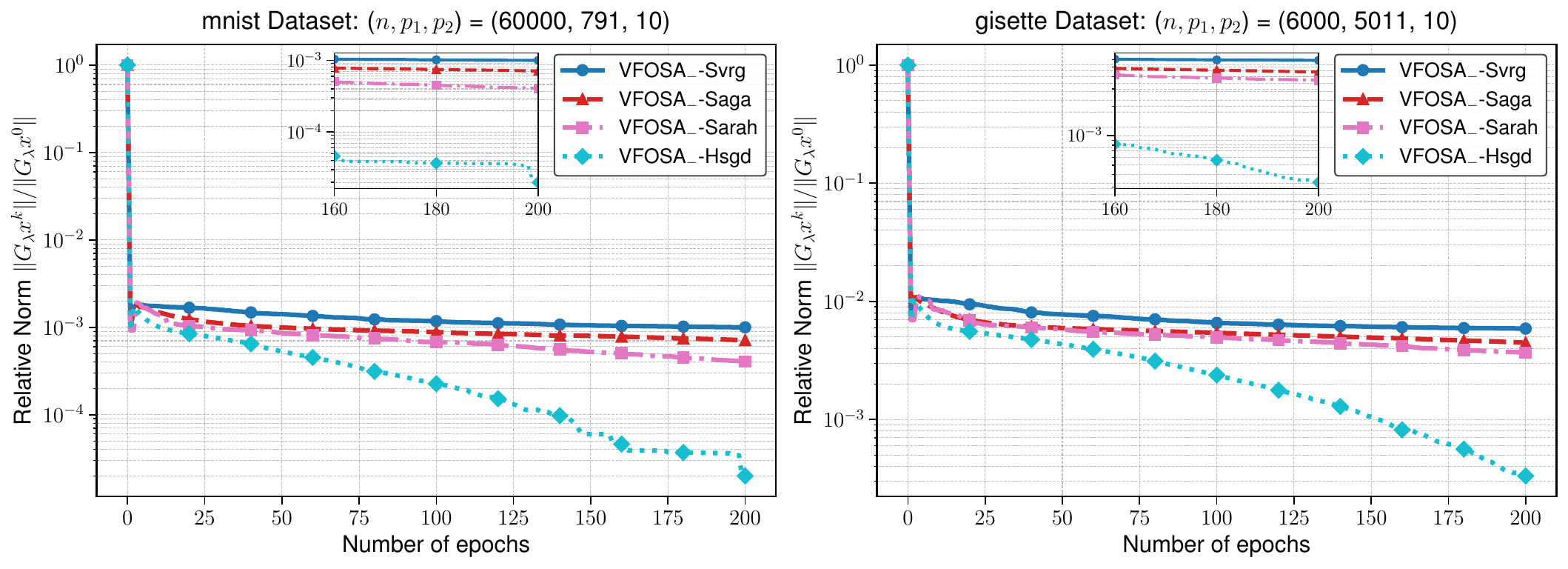}
\vspace{-5ex}
\caption{
The performance of 4 variants of \ref{eq:VrABFS4NI}: L-SVRG, SAGA, L-SARAH, and HSGD for solving \eqref{eq:logistic_reg_exam} with the SCAD regularizer on \texttt{mnist} and  \texttt{gisette}.
}
\label{fig:experiment4b}
\vspace{-4ex}
\end{figure}

We still see that our L-SARAH and HSGD work better than L-SVRG and SAGA.
They quickly reach the $10^{-2} \to 10^{-3}$ accuracies after a few epochs, but then make slow progress later. 
HSGD still outperforms its competitors, especially on the \texttt{gisette} and \texttt{a9a} datasets.

\vspace{0.75ex}
\noindent\textbf{$\mathrm{(d)}$~Comparing  \ref{eq:VrAFBS4NI},  \ref{eq:VrABFS4NI}, and \texttt{VrHalpern}.}
Finally, we compare our methods: \ref{eq:VrAFBS4NI} and  \ref{eq:VrABFS4NI} and  \texttt{VrHalpern} in \citep{cai2023variance}.
For each of our methods, we choose three variants: L-SVRG, L-SARAH, and HSGD as they do not need to store $F_i$ as in SAGA.
We run these algorithms on two datasets: \texttt{mnist} and \texttt{gisette} with the same setting as in the previous experiments for both the monotone and nonmonotone cases.
For \texttt{VrHalpern}, we choose $\lambda_k := \frac{2}{k+4}$ and  $\eta := \frac{1}{2L}$.
This $\eta$ is consistent to our learning rates, but twice larger than the suggested value $\eta = \frac{1}{4L}$ in \citep{cai2023variance}.

We run this experiment for $N_e=200$ epochs as before using both the $\ell_1$-norm and SCAD regularizers.
Figures~\ref{fig:experiment5a} and \ref{fig:experiment5b} reveal the results of these methods for each case, respectively.

\begin{figure}[ht!]
\vspace{-0ex}
\centering
\includegraphics[width=1\textwidth]{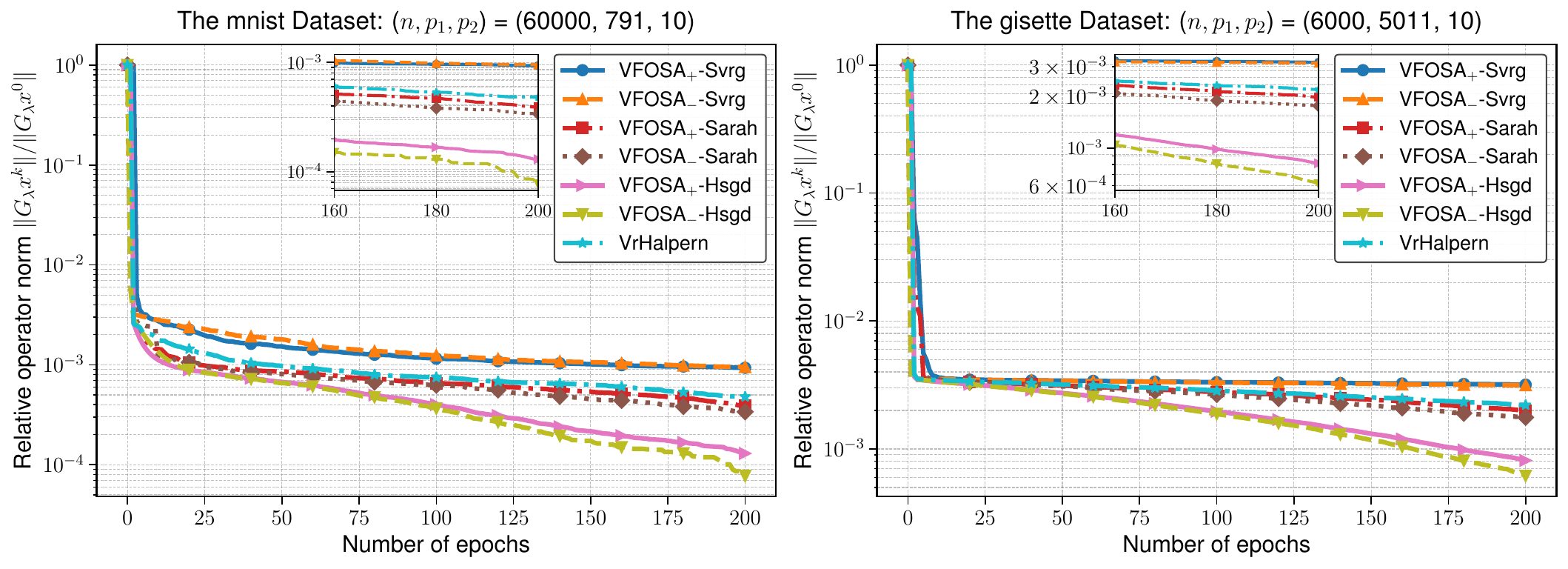}
\vspace{-5ex}
\caption{
The performance of 7 algorithms: 3 variants of each \ref{eq:VrAFBS4NI} and \ref{eq:VrABFS4NI}, and \texttt{VrHalpern} using the $\ell_1$-regularizer on two datasets:  \texttt{mnist} and  \texttt{gisette}.
}
\label{fig:experiment5a}
\vspace{-4ex}
\end{figure}

\begin{figure}[ht!]
\vspace{0ex}
\centering
\includegraphics[width=1\textwidth]{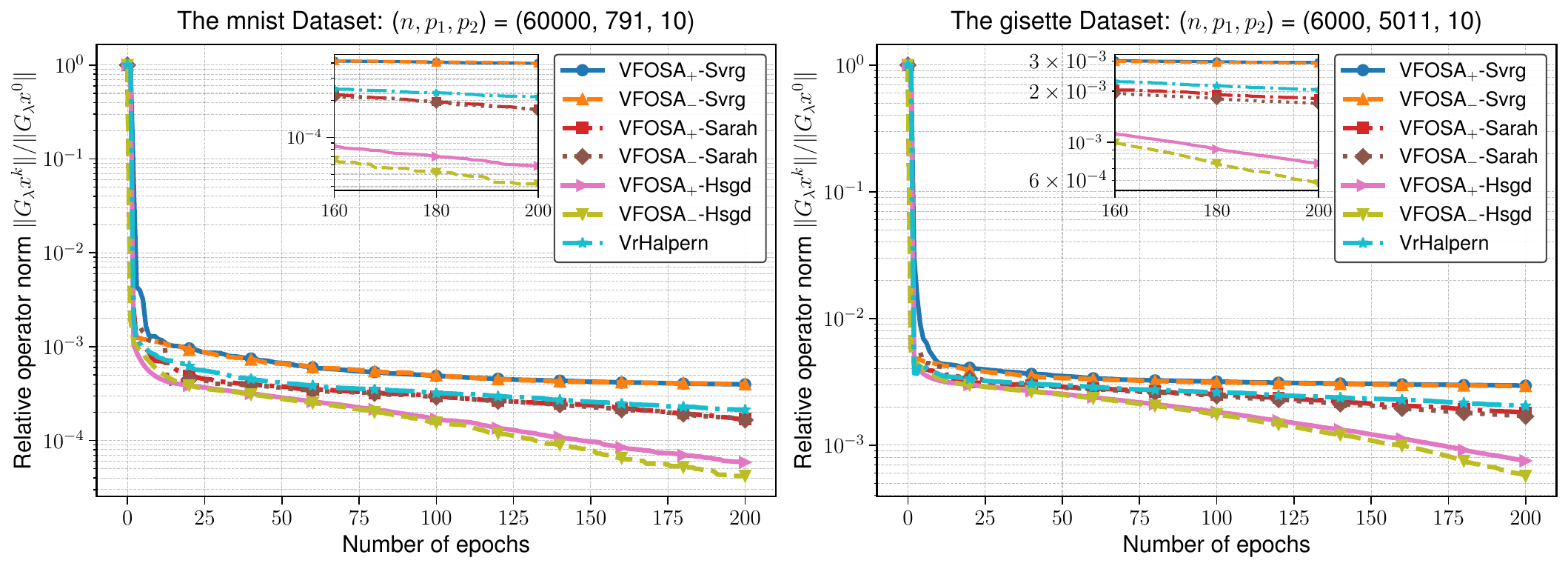}
\vspace{-5ex}
\caption{
The performance of 7 algorithms: 3 variants of each \ref{eq:VrAFBS4NI} and \ref{eq:VrABFS4NI}, and \texttt{VrHalpern} using the SCAD regularizer on 2 datasets:  \texttt{mnist} and  \texttt{gisette}.
}
\label{fig:experiment5b}
\vspace{-4ex}
\end{figure}

As observed in both Figures~\ref{fig:experiment5a} and \ref{fig:experiment5b}, our L-SARAH and HSGD variants perform better than L-SVRG and \texttt{VrHalpern} on the \texttt{mnist} and \texttt{gisette} datasets.
They also outperform L-SVRG and \texttt{VrHalpern} in both the monotone and nonmonotone cases.
Our \texttt{\ref{eq:VrABFS4NI}{\!}-Hsgd} appears to perform slightly better than \texttt{\ref{eq:VrAFBS4NI}{\!}-Hsgd}.
A key reason for the superior performance of the HSGD variants is that they avoid full-batch computations except for the first epoch, reducing the number of $F_i$ evaluations.
\texttt{VrHalpern} performs better than our L-SVRG variants, which aligns with the theoretical complexity results in both methods.

\rv{
\beforesubsec
\subsection{Policeman vs. Burglar problem --- Comparison between different methods}\label{subsec:matrix_game}
\aftersubsec
\noindent\textbf{$\mathrm{(a)}$~Mathematical model.}
Following \citet{nemirovski2013mini}, we consider the Policeman vs. Burglar problem as follows.
There are $p_1$ houses in a city, where the $i$-th house has wealth $w_i \in \R_{+}$.
Every evening, the Burglar chooses a house $i$ to attack, and the Policeman chooses his post near a house $j$ for all $1 \leq i, j \leq p_1$.
After the burglary begins, the Policeman becomes aware of where it is happening, and his probability of catching the Burglar is $\mathbf{p}_c := \exp\sets{ -\theta \cdot \textrm{dist}(i, j)}$, where $\textrm{dist}(i, j)$ is the distance between houses $i$ and $j$.
On the other hand, the Burglar seeks to maximize his expected profit $\mbf{L}_{ij} = w_i\big(1 - \exp\set{ -\theta \cdot \textrm{dist}(i, j)}  \big)$, while the Policeman’s interest is completely opposite.

Let $\mbf{L}$ be a $p_1\times p_1$ symmetric matrix such that $\mbf{L}_{ij} := w_i(1 - \exp\{-\theta \cdot \text{dist}(i, j)\})$ for $1 \leq i, j \leq p_1$, and let $\Uc = \Vc := \Delta_{p_1}$ be the standard simplex in $\R^{p_1}$.
Then, the above Policeman vs. Burglar problem can be formulated into the following two-person game:
\begin{equation}\label{eq:matrix_game}
\min_{u \in \mathcal U} \max_{v \in \mathcal V}\big\{ \Hc(u, v) :=  \iprods{ \mbf{L} u, v } \big\},
\end{equation}
where $u$ and $v$ represent mixed strategies of the Policeman and the Burglar, respectively.

The optimality condition  of \eqref{eq:matrix_game} is $ 0 \in Fx + Tx$, which is  a special case of \eqref{eq:NI}, where $x := [u, v]$, $Fx := [ \mbf{L}^\top v; -\mbf{L}u]$, and $Tx := [\partial{\delta}_{\mathcal{U}}(u) , \partial{\delta}_{\mathcal{V}}(v)]$ with $\delta_{\mathcal{X}}$ being the indicator of $\mathcal{X}$. 
Clearly, $F$ is skew-symmetric and thus monotone, but it is not average co-coercive.
To make $F$ average co-coercive, we add a small regularizer so that $Fx = [\epsilon u + \mbf{L}^\top v; \epsilon v -\mbf{L}u]$ for $\epsilon = 10^{-8}$.
This modification does not significantly interfere with \eqref{eq:matrix_game}.

\vspace{0.75ex}
\noindent\textbf{$\mathrm{(b)}$~Generating input data.}
First, we choose $\textrm{dist}(i, j) := \vert i - j \vert$ and set $\theta := 0.8$ that reflects a reasonable probability $\mbf{p}_c$ of catching the Burglar.
Next, we generate a vector $\hat{w} \in \mathbb{R}^{p_1}_{+}$ randomly from a standard normal distribution, followed by taking the absolute value to ensure nonnegativity.
We call this vector the nominal wealth.
Then, the wealth vector $w$ is generated by $w := \vert \hat{w} + \sigma \cdot \texttt{randn}(q)\vert$ as a nonnegative random vector, where $\sigma^2 := 0.05$ is the variance of the noise. 
Now, assume that $\mbf{L} := \frac{1}{n}\sum_{s=1}^n\mbf{L}_s$ is the mean of $n$ samples $\mbf{L}_s$ generated from the samples $w_s$ of $w$ for $s = 1, \cdots, n$.
Using this procedure, we generate two sets of problems corresponding to two experiments as follows.
\begin{compactitem}
\item \textit{Experiment 1.} Choose $p_1 := 100$ houses (on a $10\times 10$ grid) and $n = 1000$ samples, and generate $10$ problem instances of size $p = 2p_1 = 200$.
\item \textit{Experiment 2.} Choose $p_1 := 225$ houses (on a $15\times 15$ grid) and $n = 2000$ samples, and also generate $10$ problem instances of size $p = 2p_1 = 450$.
\end{compactitem}
\vspace{0.75ex}
\noindent\textbf{$\mathrm{(c)}$~Our algorithms and their competitors.}
We select the following five competitors.
\begin{compactitem}
\item The optimistic gradient method, e.g., in \citep{daskalakis2018training}, abbreviated by \texttt{OG}.
It is a non-accelerated deterministic variant of Popov's past-extragradient method.
\item The fast Krasnosel'ki\v{i}-Mann (KM) method in \citet{bot2022bfast,tran2022connection}, called \texttt{FKM}.
This is a Nesterov's accelerated variant of the KM scheme.
\item The variance-reduced forward-reflected-backward splitting (FRBS) algorithm in \citet{alacaoglu2021forward}, abbreviated by \texttt{VrFRBS}.
\item The variance-reduced extragradient (EG) algorithm in \citet{alacaoglu2021stochastic}, abbreviated by \texttt{VrEG}.
This is a non-accelerated variance-reduced EG method.
\item The variance-reduced Halpern fixed-point method in \citet{cai2023variance}, called \texttt{VrHalpern}.
\end{compactitem}
Since the stochastic competitors either use L-SVRG or L-SARAH, we implement two variants of our \ref{eq:VrAFBS4NI}: \texttt{\ref{eq:VrAFBS4NI}{\!}-Svrg} and \texttt{\ref{eq:VrAFBS4NI}{\!}-Sarah} and compare them with the above competitors.
We run each experiment on $10$ problem instances and report the mean of the relative FBS residual norm $\norms{G_{\lambda}x^k}/\norms{G_{\lambda}x^0}$ against the number of epochs for $200$ epochs.

\vspace{0.75ex}
\noindent\textbf{$\mathrm{(d)}$~Parameter selection.}
We test all the algorithms using the recommended parameters from their theory.
More specifically, for all the L-SVRG variants, we choose the probability for snapshot points $\tilde{x}^k$ as $\mbf{p}_k = \frac{1}{2n^{1/3}}$ and the mini-batch size $b_k := \lfloor \frac{n^{2/3}}{2} \rfloor$, while for all the L-SARAH variants, we choose $\mbf{p}_k = \frac{1}{2\sqrt{n}}$ and $b_k := \lfloor \frac{\sqrt{n}}{2} \rfloor$.
We also choose $x^0 := \frac{2}{p}\cdot \textrm{ones}(p)$ as the initial point in all methods so that it is feasible to \eqref{eq:matrix_game}.
The learning rate of both  \texttt{OG} and \texttt{FKM} is $\eta = \frac{1}{L}$ as suggested by their theory.
The learning rate of \texttt{VrFRBS} is $\eta = 0.99 \cdot \frac{1-\sqrt{1-\mbf{p}_k}}{2L}$ as recommended in \citet{alacaoglu2021forward}.
The learning rate of \texttt{VrEG} is $\eta = 0.99 \cdot \frac{\sqrt{1-\alpha}}{L}$ for $\alpha := 1 - \mbf{p}_k$ as shown in \citet{alacaoglu2021stochastic}.
Note that the learning rate of both \texttt{VrFRBS} and \texttt{VrEG} was derived for the single sample case, while we use it here for the mini-batch case.
However, since $\mbf{p}_k$ is larger than the theoretical value $\frac{1}{n}$ or $\frac{2}{n}$, this learning rate is larger than the one in their paper.
The learning rate of \texttt{VrHalpern} is $\eta := \frac{1}{4L}$ as in \citet{cai2023variance}.
For our \ref{eq:VrAFBS4NI} methods, since $\rho = 0$, we choose $\mu := 0.95 \cdot \frac{2}{3}$, $r := 2 + \frac{1}{\mu}$, $\lambda := \frac{1}{L}$, $\bar{\beta} := \frac{\lambda(4-L\lambda)}{4}$, and $\beta := \frac{(2-\mu)\bar{\beta}}{2+\mu}$ as suggested by our theory in  Theorem~\ref{th:VrAFBS4NI_convergence}.

\vspace{0.75ex}
\noindent\textbf{$\mathrm{(e)}$~Numerical results.}
The performance of all the algorithms are reported in Figure~\ref{fig:matrix_game_1}.

\begin{figure}[ht!]
\vspace{-0ex}
\centering
\includegraphics[width=1\linewidth]{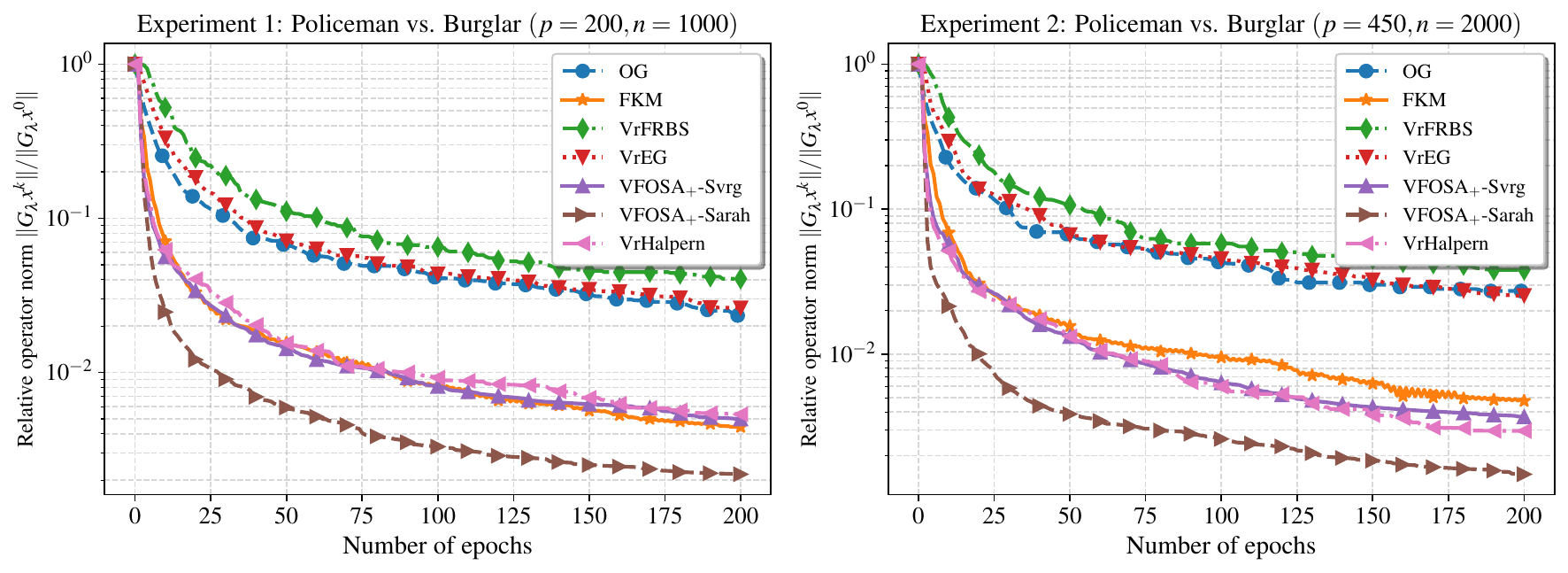}
\vspace{-5ex}
\caption{The performance of $2$ variants of \ref{eq:VrAFBS4NI} and 5 competitors for solving \eqref{eq:matrix_game} using theoretical parameters. 
The average of $10$ problems in each experiment.}
\label{fig:matrix_game_1}
\vspace{-3ex}
\end{figure}

As shown in Figure~\ref{fig:matrix_game_1}, the three accelerated methods consistently outperform their non-accelerated counterparts, including both deterministic and variance-reduced variants.
The three non-accelerated schemes: \texttt{OG}, \texttt{VrFRBS}, and \texttt{VrEG}, exhibit similar performance across both experiments when using their respective theoretical parameters.
Among the accelerated methods, the deterministic algorithm \texttt{FKM} still performs well and is comparable to our \texttt{VFOSA$_{+}${\!}-Svrg} and \texttt{VrHalpern}.
However, our \texttt{VFOSA$_{+}${\!}-Sarah} achieves the best performance, attaining the lowest relative residual norm among all methods.

Finally, we reduce both $\mbf{p}_k$ and $b_k$ by half to obtain a smaller probability and mini-batch size, respectively.
We then rerun both experiments to evaluate how these parameters influence the performance of the stochastic methods.
The results are presented in Figure~\ref{fig:matrix_game_2}.

\begin{figure}[ht!]
\vspace{-0ex}
\centering
\includegraphics[width=1\linewidth]{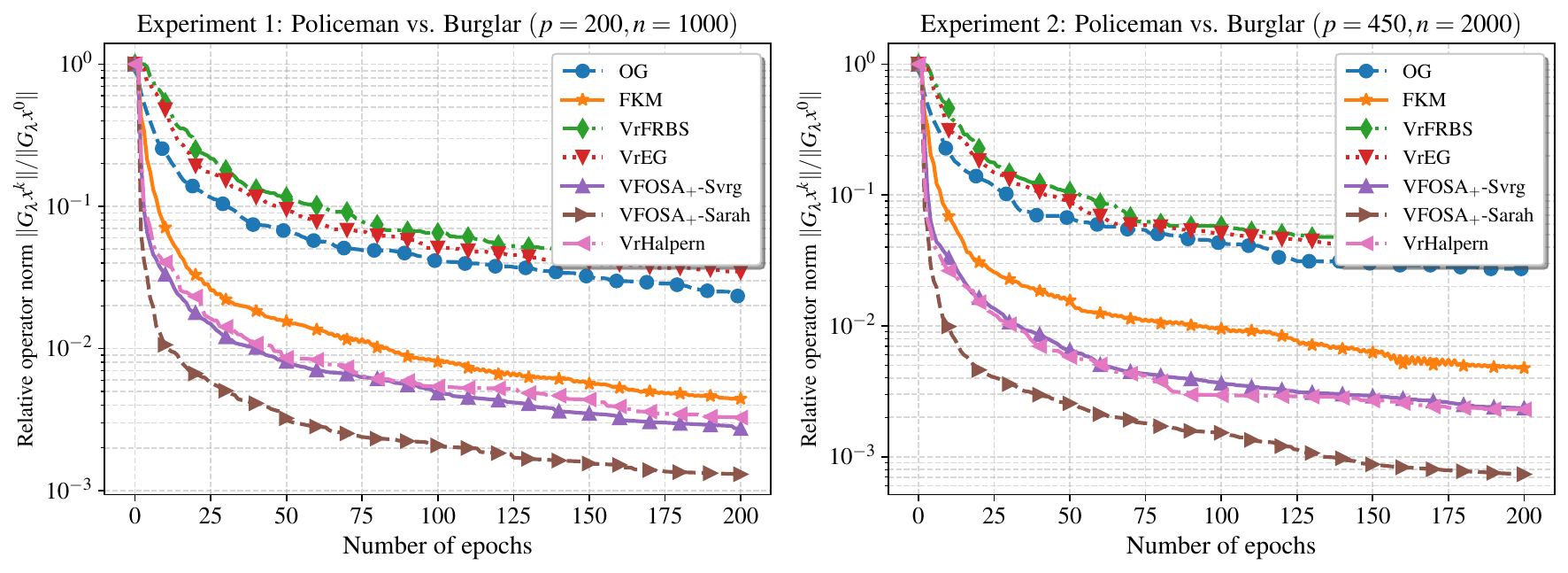}
\vspace{-5ex}
\caption{The performance of $2$ variants of \ref{eq:VrAFBS4NI} and 5 competitors for solving \eqref{eq:matrix_game} using smaller $\mbf{p}_k$ and $b_k$. 
The average of $10$ problems in each experiment.}
\label{fig:matrix_game_2}
\vspace{-3ex}
\end{figure}

As shown in Figure~\ref{fig:matrix_game_2}, the variance-reduced accelerated methods show improved performance compared to the previous run.
They also outperform the deterministic \texttt{FKM} method from the earlier experiment.
This is because the smaller values of $\mbf{p}_k$ and $b_k$ increase the number of iterations within the same number of epochs, enhancing performance.
}
\rv{
\beforesec
\section{Conclusions}\label{sec:conclusions}
\aftersec
We have developed two fast operator splitting frameworks with variance reduction to solve a class of generalized equations in both finite-sum and expectation settings, covering certain nonmonotone problems.
Our methods exploit both the forward-backward and the backward-forward splitting schemes and support a wide range of variance-reduced estimators, covering both unbiased and biased instances.
We have established convergence rates of order $\BigOs{1/k^2}$ and $\SmallOs{1/k^2}$ in expectation for our methods. 
Then, we have also proved almost sure $\SmallOs{1/k^2}$ convergence rates along with almost sure convergence of the iterates to a solution of our problem.
Our frameworks comprise popular estimators such as L-SVRG, SAGA, L-SARAH, and Hybrid-SGD, and we have derived oracle complexity results that match or closely approach the best-known in the literature.
Several interesting questions remain open. 
For example, can our approach be extended to extragradient methods and their variants to weaken the co-coercivity of $F$? 
Can adaptive schemes be developed to remove the need for estimating the co-coercivity constant $L$ and the co-hypomonotonicity modulus $\rho$? 
We plan to explore these topics in our future work.
}

\vspace{1ex}
\noindent\textbf{Acknowledgements.}
This work is  partially supported by the National Science Foundation (NSF), grant no. NSF-RTG DMS-2134107 and the Office of Naval Research (ONR), grant No. N00014-23-1-2588 (2023-2026).
The author gratefully acknowledges Mr. Nghia Nguyen-Trung for his careful proofreading of this work.

\appendix
\beforesec
\section{Technical Lemmas and Proof of Lemma~\ref{le:FBS_cocoerciveness}}\label{apdx:A1:useful_lemmas}
\aftersec
This appendix recalls necessary technical results and gives the full proof of Lemma~\ref{le:FBS_cocoerciveness}.

\beforesubsec
\subsection{Technical lemmas}\label{apdx:tech_lemmas}
\aftersubsec
We need the following technical results for our convergence analysis in the sequel.

\begin{lemma}[\citet{Bauschke2011}, Lemma 5.31]\label{le:A1_descent}
Let $\sets{\alpha_k}$, $\sets{\zeta_k}$, $\sets{\gamma_k}$, and $\sets{\varepsilon_k}$ be nonnegative sequences such that $\sum_{k=0}^{\infty}\gamma_k < +\infty$ and $\sum_{k=0}^{\infty} \varepsilon_k < +\infty$.
In addition, for all $k\geq 0$, we assume that
\begin{equation}\label{eq:lm_A1_cond}
\alpha_{k+1} \leq (1 + \gamma_k)\alpha_k - \zeta_k + \varepsilon_k.
\end{equation}
Then, we conclude that $\lim_{k\to\infty}\alpha_k$ exists and $\sum_{k=0}^{\infty}\zeta_k < +\infty$.
\end{lemma}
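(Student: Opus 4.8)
\textbf{Proof proposal for Lemma~\ref{le:A1_descent} (Robbins--Siegmund type lemma).}

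The plan is to prove this deterministic Robbins--Siegmund lemma by the standard ``rescaling plus monotone convergence'' argument. First I would absorb the multiplicative perturbation $(1+\gamma_k)$ into a normalizing product. Define $P_k := \prod_{j=0}^{k-1}(1+\gamma_j)$ with $P_0 := 1$. Since $\sum_{k=0}^\infty \gamma_k < +\infty$ and all $\gamma_k \geq 0$, the infinite product $P_\infty := \prod_{j=0}^\infty(1+\gamma_j)$ converges to a finite limit $P_\infty \in [1, +\infty)$, and moreover $1 \leq P_k \leq P_\infty$ for all $k$. Now set $\bar\alpha_k := \alpha_k / P_k$, $\bar\zeta_k := \zeta_k / P_{k+1}$, and $\bar\varepsilon_k := \varepsilon_k / P_{k+1}$. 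Dividing the hypothesis \eqref{eq:lm_A1_cond} by $P_{k+1} = (1+\gamma_k)P_k$ yields
\[
\bar\alpha_{k+1} \leq \bar\alpha_k - \bar\zeta_k + \bar\varepsilon_k, \qquad k \geq 0.
\]
Since $P_{k+1} \geq 1$, we have $\sum_k \bar\varepsilon_k \leq \sum_k \varepsilon_k < +\infty$, so the perturbation is summable in the rescaled variables as well.

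Next I would extract the two conclusions from this ``almost supermartingale'' inequality in the deterministic setting. Iterating the inequality from index $k$ down to $0$ gives, for every $k \geq 0$,
\[
\bar\alpha_{k+1} + \sum_{j=0}^{k} \bar\zeta_j \leq \bar\alpha_0 + \sum_{j=0}^{k} \bar\varepsilon_j \leq \bar\alpha_0 + \sum_{j=0}^{\infty} \bar\varepsilon_j < +\infty.
\]
Because $\bar\alpha_{k+1} \geq 0$ and $\bar\zeta_j \geq 0$, the partial sums $\sum_{j=0}^{k}\bar\zeta_j$ are bounded above, hence $\sum_{j=0}^\infty \bar\zeta_j < +\infty$; and since $0 \leq \bar\zeta_j = \zeta_j/P_{j+1}$ with $P_{j+1} \leq P_\infty$ finite, this gives $\sum_{j=0}^\infty \zeta_j \leq P_\infty \sum_{j=0}^\infty \bar\zeta_j < +\infty$, establishing the second conclusion. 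For the first conclusion, consider the auxiliary sequence $u_k := \bar\alpha_k + \sum_{j=k}^\infty \bar\varepsilon_j$, which is well-defined and nonnegative since the tail sum is finite. The rescaled inequality gives $u_{k+1} = \bar\alpha_{k+1} + \sum_{j=k+1}^\infty\bar\varepsilon_j \leq \bar\alpha_k - \bar\zeta_k + \bar\varepsilon_k + \sum_{j=k+1}^\infty \bar\varepsilon_j = \bar\alpha_k + \sum_{j=k}^\infty\bar\varepsilon_j - \bar\zeta_k \leq u_k$, so $\{u_k\}$ is nonincreasing and bounded below by $0$, hence convergent; since $\sum_{j=k}^\infty\bar\varepsilon_j \to 0$ as $k \to \infty$, it follows that $\lim_k \bar\alpha_k$ exists. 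Finally, $\lim_k P_k = P_\infty$ exists and is a positive finite number, so $\lim_k \alpha_k = P_\infty \cdot \lim_k \bar\alpha_k$ exists as well.

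There is no genuine obstacle here; this is a classical result (it is essentially the deterministic specialization of the Robbins--Siegmund theorem). The only point requiring a little care is the convergence and two-sided boundedness of the infinite product $P_k$, which rests on the elementary estimate $\prod(1+\gamma_j) \leq \exp(\sum \gamma_j)$ together with $\gamma_j \geq 0$; I would state this explicitly. Everything else is monotone convergence and telescoping of nonnegative sequences, so the write-up should be short. If one prefers to cite rather than reprove, this is exactly \citet[Lemma 5.31]{Bauschke2011}, as indicated in the statement.
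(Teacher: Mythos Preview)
Your proof is correct and follows the classical rescaling-by-products argument used to establish this deterministic Robbins--Siegmund lemma. Note, however, that the paper does not actually prove Lemma~\ref{le:A1_descent}: it simply records it as a technical tool and cites \citet[Lemma~5.31]{Bauschke2011}. So there is no ``paper's own proof'' to compare against; your write-up supplies exactly the standard argument one would give if asked to reprove the cited result.
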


\begin{lemma}\label{le:A2_sum}
Given a nonnegative sequence $\sets{\alpha_k}$ and $\omega \geq 0$ such that $\lim_{k\to\infty} k^{\omega + 1} \alpha^k$ exists and $\sum_{k=0}^{\infty}k^{\omega} \alpha_k < +\infty$.
Then, we conclude that $\lim_{k\to\infty}k^{\omega + 1} \alpha^k = 0$.
\end{lemma}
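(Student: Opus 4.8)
The plan is to argue by contradiction, exploiting the fact that both the existence of the limit and the summability of the weighted sequence constrain the tail behavior of $k^{\omega}\alpha_k$. First I would set $\ell := \lim_{k\to\infty} k^{\omega+1}\alpha_k$, which exists by hypothesis; since $\alpha_k \geq 0$, necessarily $\ell \geq 0$. Suppose, for contradiction, that $\ell > 0$. Then there exists $K_0$ such that for all $k \geq K_0$ we have $k^{\omega+1}\alpha_k \geq \ell/2$, i.e.\ $k^{\omega}\alpha_k \geq \frac{\ell}{2k}$.

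Summing this lower bound over $k \geq K_0$ gives
\begin{equation*}
\sum_{k=K_0}^{\infty} k^{\omega}\alpha_k \;\geq\; \frac{\ell}{2}\sum_{k=K_0}^{\infty}\frac{1}{k} \;=\; +\infty,
\end{equation*}
since the harmonic series diverges. This contradicts the assumption $\sum_{k=0}^{\infty} k^{\omega}\alpha_k < +\infty$. Hence $\ell = 0$, which is exactly the claim $\lim_{k\to\infty} k^{\omega+1}\alpha_k = 0$.

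This argument is short and essentially elementary; the only mild subtlety — and the step I would be most careful about — is handling the case $\omega = 0$ (where $k^{\omega} = 1$ and the hypothesis is simply $\sum_k \alpha_k < \infty$) uniformly with $\omega > 0$, and making sure the passage from $k^{\omega+1}\alpha_k \geq \ell/2$ to $k^{\omega}\alpha_k \geq \ell/(2k)$ is valid for all $k \geq K_0 \geq 1$. No convexity, monotonicity, or Young-type inequality is needed here; the divergence of $\sum 1/k$ does all the work. One could alternatively phrase the same idea without contradiction by noting $\limsup_k k^{\omega+1}\alpha_k > 0$ would force a divergent subseries, but the contradiction form above is cleanest since the limit is assumed to exist.
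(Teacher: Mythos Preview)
Your proof is correct and follows essentially the same approach as the paper: argue by contradiction assuming the limit $\ell>0$, use the definition of limit to get $k^{\omega}\alpha_k \geq c/k$ for large $k$ and some $c>0$, and invoke the divergence of the harmonic series to contradict summability. The only cosmetic difference is that the paper picks the lower bound $\alpha-\epsilon$ rather than your $\ell/2$.
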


\begin{proof}
Since $\alpha_k \geq 0$, suppose by contradiction that $\lim_{k\to\infty}k^{\omega + 1} \alpha^k = \alpha > 0$.
For any $0 < \epsilon < \alpha$, there exists $k_0$  sufficiently large such that $k^{\omega+1}\alpha_k \geq \alpha - \epsilon > 0$ for all $k \geq k_0$.
Hence, we get $k^{\omega}\alpha_k \geq \frac{\alpha - \epsilon}{k}$.
However, since $\sum_{k=0}^{\infty}k^{\omega} \alpha_k < +\infty$, the last relation leads to
\begin{equation*}
\begin{array}{lcl}
+\infty < \sum_{k=k_0}^{\infty}\frac{\alpha - \epsilon}{k} \leq  \sum_{k=k_0}^{\infty}k^{\omega} \alpha_k < +\infty.
\end{array}
\end{equation*}
This relation shows a contradiction. 
Thus, we conclude that $\lim_{k\to\infty}k^{\omega + 1} \alpha^k = \alpha = 0$.
\end{proof}

We also need the well-known Robbins-Siegmund  supermartingale theorem \citep{robbins1971convergence}, which we state it here as a technical lemma.

\begin{lemma}\label{le:RS_lemma}
Let $\sets{U_k}$, $\sets{\gamma_k}$, $\sets{V_k}$ and $\sets{E_k}$ be  sequences of nonnegative integrable random variables on some arbitrary probability space and adapted to the filtration $\set{\Fc_k}_{k\geq 0}$ with $\sum_{k=0}^{\infty}\gamma_k < +\infty$ and $\sum_{k=0}^{\infty}E_k < +\infty$ almost surely, and 
\begin{equation}\label{eq:RS_martingale_cond}
\Expn{ U_{k+1} \mid \Fc_k} \leq (1 + \gamma_k)U_k - V_k + E_k, 
\end{equation}
almost surely for all $k \geq 0$.
Then, $\sets{U_k}$ almost surely converges to a random variable and $\sum_{k=0}^{\infty}V_k <+\infty$ almost surely.
\end{lemma}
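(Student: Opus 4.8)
The plan is to reduce the perturbed recursion~\eqref{eq:RS_martingale_cond} to a nonnegative supermartingale, to which the supermartingale convergence theorem applies, and to use a localization (stopping-time) argument to cope with the fact that $\sum_k\gamma_k$ and $\sum_k E_k$ are finite only \emph{almost surely} rather than in expectation. Throughout I assume, as is standard for this result, that $\gamma_k$, $V_k$, and $E_k$ are $\Fc_k$-measurable.

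First I would remove the multiplicative growth factor by deflation. Set $\beta_k := \prod_{i=0}^{k-1}(1+\gamma_i)^{-1}$ with $\beta_0 := 1$; since each $\gamma_i$ is $\Fc_i$-measurable, $\beta_{k+1}$ is $\Fc_k$-measurable, and because $\sum_i\gamma_i < +\infty$ almost surely the infinite product converges to a limit $\beta_{\infty} \in (0,1]$ almost surely. Multiplying~\eqref{eq:RS_martingale_cond} by $\beta_{k+1} = \beta_k/(1+\gamma_k)$ and moving this $\Fc_k$-measurable factor inside the conditional expectation gives $\Expn{ U_{k+1}' \mid \Fc_k} \le U_k' - V_k' + E_k'$, where $U_k' := \beta_k U_k$, $V_k' := \beta_{k+1}V_k$, and $E_k' := \beta_{k+1}E_k$ are all nonnegative, with $\sum_k E_k' \le \sum_k E_k < +\infty$ almost surely.

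Next I would form partial sums and extract a supermartingale. Put $S_k := U_k' + \sum_{i=0}^{k-1}V_i'$ and $T_k := S_k - \sum_{i=0}^{k-1}E_i'$. A short computation using the deflated inequality yields $\Expn{S_{k+1}\mid\Fc_k} \le S_k + E_k'$, and hence $\Expn{T_{k+1}\mid\Fc_k} \le T_k$, so $\sets{T_k}$ is a supermartingale while $S_k \ge 0$. The main obstacle is that $T_k$ need not be nonnegative nor $L^1$-bounded, because its negative part is controlled only by $\sum_i E_i'$, which is finite merely almost surely; this blocks a direct application of the convergence theorem.

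To resolve this I would localize. For each integer $m \ge 1$ define the stopping time $\tau_m := \inf\sets{ k \ge 0 : \sum_{i=0}^{k}E_i' > m }$, which is a genuine stopping time since $\sum_{i=0}^{k}E_i'$ is $\Fc_k$-measurable; by construction $\sum_{i=0}^{(k\wedge\tau_m)-1}E_i' \le m$ for every $k$. The stopped process $T_{k\wedge\tau_m}$ is again a supermartingale, and $T_{k\wedge\tau_m} = S_{k\wedge\tau_m} - \sum_{i=0}^{(k\wedge\tau_m)-1}E_i' \ge -m$, so $T_{k\wedge\tau_m} + m$ is a nonnegative supermartingale and converges almost surely. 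Since $\sum_i E_i' < +\infty$ almost surely, we have $\Probn\big( \bigcup_m \sets{\tau_m = +\infty} \big) = 1$, and on $\sets{\tau_m = +\infty}$ the stopped and unstopped processes coincide; letting $m \to \infty$ shows $\sets{T_k}$ converges almost surely to a finite limit. Consequently $S_k = T_k + \sum_{i=0}^{k-1}E_i'$ converges almost surely; as $U_k' \ge 0$, the nondecreasing sums $\sum_{i=0}^{k-1}V_i' \le S_k$ are bounded, so $\sum_i V_i' < +\infty$ and then $U_k' = S_k - \sum_{i=0}^{k-1}V_i'$ converges almost surely. Finally I would undo the deflation: since $\beta_k \to \beta_{\infty} \in (0,1]$ almost surely, $U_k = U_k'/\beta_k$ converges almost surely to a finite random variable, and $\sum_i V_i = \sum_i V_i'/\beta_{i+1} \le \beta_{\infty}^{-1}\sum_i V_i' < +\infty$ almost surely, which establishes both conclusions.
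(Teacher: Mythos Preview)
Your proof is correct and follows the classical deflation-plus-localization argument that goes back to Robbins and Siegmund. Note, however, that the paper does not actually prove this lemma: it is stated as a technical tool and attributed directly to \citet{robbins1971convergence}, so there is no ``paper's own proof'' to compare against. Your write-up is a faithful reconstruction of the standard proof of that result.
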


The following lemma is Proposition 4.1 of \citet{davis2022variance}.
It was proven for a demiclosed mapping $G$, but we recall it here for the case $G$ is continuous in a finite-dimensional space.

\begin{lemma}[\citet{davis2022variance}, Proposition 4.1]\label{le:A3_lemma}
Suppose that $G : \R^p\to\R^p$ is continuous and $\zer{G}\neq\emptyset$.
Let $\sets{x^k}$ be a sequence of random vectors such that for all $x^{\star} \in \zer{G}$, the sequence $\sets{\norms{x^k - x^{\star}} }$ almost surely converges to a $[0, \infty)$-valued random variable. 
In addition, assume that $\sets{\norms{Gx^k}}$ also almost surely converges to zero.
Then, $\sets{x^k}$ almost surely converges to a $\zer{G}$-valued random variable.
\end{lemma}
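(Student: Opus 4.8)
\textbf{Proof plan for Lemma~\ref{le:A3_lemma}.}

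The plan is to show that $\sets{x^k}$ has a unique cluster point almost surely, and that this cluster point lies in $\zer{G}$. First I would invoke the hypothesis that for \emph{each} fixed $x^{\star} \in \zer{G}$, the sequence $\sets{\norms{x^k - x^{\star}}}$ converges almost surely to some $[0,\infty)$-valued random variable. The subtle point is that this is a statement for each \emph{fixed} $x^{\star}$, so the natural route is to fix a countable dense subset $\Dc \subset \zer{G}$ (possible since $\R^p$ is separable) and take the intersection of the countably many almost-sure events on which $\sets{\norms{x^k - x^{\star}}}$ converges, one for each $x^{\star}\in\Dc$; this intersection still has probability one. I would also intersect with the almost-sure event on which $\norms{Gx^k}\to 0$. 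Work from now on on this single event of probability one, so the argument becomes deterministic.

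Next I would establish boundedness of $\sets{x^k}$ on this event: since $\norms{x^k - x^{\star}}$ converges for at least one fixed $x^{\star}\in\Dc$, it is in particular bounded, hence $\sets{x^k}$ is bounded. Therefore it has at least one cluster point $\bar{x}$; pick a subsequence $x^{k_j}\to\bar{x}$. By continuity of $G$, $Gx^{k_j}\to G\bar{x}$, but $\norms{Gx^{k_j}}\to 0$, so $G\bar{x}=0$, i.e.\ $\bar{x}\in\zer{G}$.

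The main obstacle — the heart of the Opial-type argument — is upgrading "has a cluster point in $\zer{G}$" to "converges to a point in $\zer{G}$". Suppose $\bar{x}_1,\bar{x}_2$ are two cluster points, both in $\zer{G}$ by the above. I want to conclude $\bar{x}_1=\bar{x}_2$. Here is where density is used to transfer the convergence of $\norms{x^k - x^{\star}}$ from $x^{\star}\in\Dc$ to $x^{\star}=\bar{x}_1$ (and $\bar{x}_2$), which need not lie in $\Dc$. For any $x^{\star}\in\Dc$, write
\begin{equation*}
\norms{x^k - \bar{x}_1}^2 = \norms{x^k - x^{\star}}^2 + 2\iprods{x^k - x^{\star}, x^{\star} - \bar{x}_1} + \norms{x^{\star} - \bar{x}_1}^2,
\end{equation*}
and use boundedness of $\sets{x^k}$ together with $\norms{x^{\star}-\bar{x}_1}$ small to show $\limsup_k \norms{x^k-\bar{x}_1}^2 - \liminf_k\norms{x^k-\bar{x}_1}^2$ can be made arbitrarily small; hence $\norms{x^k - \bar{x}_1}$ converges, and likewise $\norms{x^k-\bar{x}_2}$ converges. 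Then expand $\norms{x^k - \bar{x}_1}^2 - \norms{x^k - \bar{x}_2}^2 = 2\iprods{x^k, \bar{x}_2 - \bar{x}_1} + \norms{\bar{x}_1}^2 - \norms{\bar{x}_2}^2$; the left side converges, so $\iprods{x^k, \bar{x}_2 - \bar{x}_1}$ converges. Evaluating the limit along the subsequence $x^{k_j}\to\bar{x}_1$ gives $\iprods{\bar{x}_1, \bar{x}_2-\bar{x}_1}$, and along a subsequence converging to $\bar{x}_2$ gives $\iprods{\bar{x}_2, \bar{x}_2-\bar{x}_1}$; equating the two limits yields $\norms{\bar{x}_1-\bar{x}_2}^2 = 0$, so $\bar{x}_1 = \bar{x}_2$. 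Thus $\sets{x^k}$ has a single cluster point, and being bounded it converges to it; this limit is in $\zer{G}$. Since all of this holds on an event of probability one, and the limit is measurable (as an a.s.\ limit of random vectors), $\sets{x^k}$ converges almost surely to a $\zer{G}$-valued random variable. $\square$
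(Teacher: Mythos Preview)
The paper does not actually prove this lemma; it is stated as a recall of Proposition~4.1 in \citet{davis2022variance} without an accompanying proof. So there is no ``paper's own proof'' to compare against. That said, your argument is correct and is the standard stochastic Opial-type proof: the countable-dense-subset trick to reduce the uncountable family of almost-sure events to a single full-measure event is exactly the right move, the density-transfer step showing that $\norms{x^k-\bar{x}_1}$ still converges for the (random) cluster point $\bar{x}_1\in\zer{G}$ is carried out cleanly, and the final polarization identity forcing $\bar{x}_1=\bar{x}_2$ is the classical Opial conclusion. Nothing is missing.
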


\beforesubsec
\subsection{The proof of Lemma~\ref{le:FBS_cocoerciveness} --- Equivalent reformulations of \eqref{eq:NI}}\label{apdx:le:FBS_cocoerciveness}
\aftersubsec
\begin{proof}
Let $A_{\lambda T}x := \frac{1}{\lambda}(x - J_{\lambda T}x)$ be the Moreau-Yosida approximation of $\lambda T$.
First, we show that $A_{\lambda T}$ is $(\lambda-\rho)$-co-coercive, provided that $\lambda > \rho$.
The \rv{co-coercivity} of $A_{\lambda T}$ was proven in \citet{attouch2018backward}, but we give a short proof here for completeness.

Indeed, for any $x$ and $y$, we denote by $u := J_{\lambda T}x$ and $v := J_{\lambda T}y$.
Then, we have $A_{\lambda T}x = \frac{1}{\lambda}(x - u) \in Tu$ and $A_{\lambda T}y = \frac{1}{\lambda}(y - v) \in Tv$.
Since $T$ is $\rho$-co-hypomonotone, we have $\iprods{A_{\lambda T}x - A_{\lambda T}y, u - v} \geq -\rho\norms{A_{\lambda T}x - A_{\lambda T}y}^2$.
Substituting $u = x - \lambda A_{\lambda T}x$ and $v = y - \lambda A_{\lambda T}y$ into this inequality, and rearranging the result, we get
$\iprods{A_{\lambda T}x - A_{\lambda T}y, x - y} \geq  (\lambda - \rho)\norms{A_{\lambda T}x - A_{\lambda T}y}^2$, which proves that $A_{\lambda T}$ is $(\lambda-\rho)$-co-coercive, provided that $\lambda > \rho$.

(i)~To prove \eqref{eq:G_cocoerciveness}, from \eqref{eq:FBS_residual} we have  $A_{\lambda T}(x - \lambda Fx) = \frac{1}{\lambda }( x - \lambda Fx  - J_{\lambda T}(x - \lambda Fx) ) = G_{\lambda}x - Fx$ and $A_{\lambda}(y - \lambda Fy) = G_{\lambda}y - Fy$. 
By the $(\lambda-\rho)$-\rv{co-coercivity} of  $A_{\lambda T}$, we have 
\begin{equation*}
\iprods{ G_{\lambda}x - G_{\lambda}y - (Fx - Fy), x - y - \lambda(Fx - Fy)} \geq (\lambda - \rho) \norms{G_{\lambda}x - G_{\lambda}y - (Fx - Fy)}^2.
\end{equation*}
Expanding this inequality and rearranging the result, we get
\begin{equation*} 
\arraycolsep=0.2em
\begin{array}{lcl}
\iprods{G_{\lambda}x - G_{\lambda}y, x - y} & \geq &  (\lambda-\rho)\norms{G_{\lambda}x - G_{\lambda}y}^2 -  (\lambda-2\rho)\iprods{G_{\lambda}x - G_{\lambda}y, Fx - Fy}  \vspace{1ex}\\
&& + {~} \iprods{Fx - Fy, x - y} - \rho\norms{Fx - Fy}^2 \vspace{1ex}\\
& = & (\lambda - \rho)\norms{G_{\lambda}x - G_{\lambda}y}^2 -  (\lambda - 2\rho) \iprods{G_{\lambda}x - G_{\lambda}y, Fx - Fy}  \vspace{1ex}\\
&& + {~} \frac{L}{\hat{L}}\iprods{Fx - Fy, x - y}  + \frac{\hat{L} - L}{\hat{L}}\iprods{Fx - Fy, x - y} - \rho\norms{Fx - Fy}^2.
\end{array} 
\end{equation*}
Since $F$ is $\frac{1}{L}$-co-coercive by our assumption, the last inequality leads to
\begin{equation*}
\arraycolsep=0.2em
\begin{array}{lcl}
\iprods{G_{\lambda}x - G_{\lambda}y, x - y} &\geq & (\lambda - \rho)\norms{G_{\lambda}x - G_{\lambda}y}^2 + \big(\frac{1}{\hat{L}} - \rho\big)\norms{Fx - Fy}^2 \vspace{1ex}\\
&& - {~} (\lambda - 2\rho) \iprods{G_{\lambda}x - G_{\lambda}y, Fx - Fy}  + \frac{\hat{L} - L}{\hat{L}}\iprods{Fx - Fy, x - y} \vspace{1ex}\\
& = & \frac{4(1- \hat{L} \rho)(\lambda - \rho) - (\lambda - 2\rho)^2\hat{L}}{4(1 - \hat{L}\rho)} \norms{G_{\lambda}x - G_{\lambda}y}^2  + \frac{\hat{L} - L}{\hat{L}}\iprods{Fx - Fy, x - y} \vspace{1ex}\\
&& + {~} \frac{1 - \hat{L} \rho }{\hat{L}}\big\Vert Fx - Fy - \frac{(\lambda-2\rho)\hat{L}}{2(1- \hat{L}\rho )}(G_{\lambda}x - G_{\lambda}y) \big\Vert^2 \vspace{1ex}\\
& \geq &  \frac{\lambda(4 - \hat{L}\lambda)  - 4\rho }{4(1 - \hat{L} \rho )}  \norms{G_{\lambda}x - G_{\lambda}y}^2 + \frac{\hat{L} - L}{\hat{L}}\iprods{Fx - Fy, x - y},
\end{array} 
\end{equation*}
which exactly proves \eqref{eq:G_cocoerciveness}, where $\bar{\beta} :=  \frac{\lambda(4  - \hat{L}\lambda) - 4\rho}{4(1 - \hat{L} \rho )} \geq 0$ and $\Lambda := \frac{\hat{L} - L}{L\hat{L}}$, provided that $\rho\hat{L} < 1$ and $\rho < \lambda \leq \frac{2 + 2\sqrt{1 - \hat{L}\rho}}{\hat{L}}$.

(ii)~To prove \eqref{eq:S_cocoerciveness}, 
we denote by $x := J_{\lambda T}u$ and $y := J_{\lambda T}v$ for given $u, v \in \dom{T}$, where $\lambda > \rho$.
Then, we have $A_{\lambda}u := \frac{1}{\lambda}(u - J_{\lambda T}u) = \frac{1}{\lambda}(u - x)$.
Now, by \eqref{eq:BFS_residual}, we have $A_{\lambda}u = S_{\lambda}u - Fx$.
Similarly, we also have $A_{\lambda}v = S_{\lambda}v - Fy$.
Using these two relations and the $(\lambda-\rho)$-co-coercivity of $A_{\lambda}$, we can show that
\begin{equation*}
\arraycolsep=0.2em
\begin{array}{lcl}
\iprods{S_{\lambda}u - S_{\lambda}v - (Fx - Fy), u - v} & \geq  & (\lambda - \rho)\norms{S_{\lambda}u - S_{\lambda}v - (Fx - Fy)}^2.
\end{array} 
\end{equation*}
Utilizing again $x - \lambda Fx = u - \lambda S_{\lambda}u$ from \eqref{eq:BFS_residual},
the last inequality leads to 
\begin{equation*}
\arraycolsep=0.1em
\begin{array}{lcl}
\iprods{S_{\lambda}u - S_{\lambda}v, u - v} & \geq  &  (\lambda-\rho) \norms{S_{\lambda}u - S_{\lambda}v}^2  +  (\lambda-\rho)\norms{Fx - Fy}^2 \vspace{1ex}\\
&& - {~} (\lambda - 2\rho)\iprods{S_{\lambda}u - S_{\lambda}v, Fx - Fy} +  \iprods{Fx - Fy, u - v - \lambda (S_{\lambda}u  -  S_{\lambda}v)}\vspace{1ex}\\
& = & (\lambda-\rho) \norms{S_{\lambda}u - S_{\lambda}v}^2    -  (\lambda - 2\rho)\iprods{S_{\lambda}u - S_{\lambda}v, Fx - Fy} \vspace{1ex}\\
&& + {~}   \frac{L}{\hat{L}} \iprods{Fx - Fy, x - y } +  \frac{\hat{L} - L}{\hat{L}} \iprods{Fx - Fy, x - y } - \rho \norms{Fx - Fy}^2.
\end{array} 
\end{equation*}
Substituting $\iprods{Fx - Fy, x - y} \geq \frac{1}{L}\norms{Fx - Fy}^2$ from the $\frac{1}{L}$-co-coercivity of $F$ into the last inequality, we can further derive that
\begin{equation*}
\arraycolsep=0.2em
\begin{array}{lcl}
\iprods{S_{\lambda}u - S_{\lambda}v, u - v} & \geq  &   (\lambda - \rho)\norms{S_{\lambda}u - S_{\lambda}v}^2 + \big(\frac{1}{\hat{L}} - \rho\big)\norms{Fx - Fy}^2 \vspace{1ex}\\
&& - {~} (\lambda - 2\rho) \iprods{S_{\lambda}u - S_{\lambda}v, Fx - Fy}  + \frac{\hat{L} - L}{\hat{L}}\iprods{Fx - Fy, x - y} \vspace{1ex}\\
& = & \frac{4(1 - \hat{L}\rho )(\lambda - \rho) - (\lambda - 2\rho)^2\hat{L}}{4(1 -  \hat{L}\rho )} \norms{S_{\lambda}u - S_{\lambda}v}^2  + \frac{(\hat{L} - L)}{\hat{L}}\iprods{Fx - Fy, x - y} \vspace{1ex}\\
&& + {~} \frac{1- \hat{L}\rho}{\hat{L}}\norms{Fx - Fy - \frac{(\lambda-2\rho)\hat{L}}{2(1- \hat{L}\rho)}(S_{\lambda}u - S_{\lambda}v)}^2 \vspace{1ex}\\
& \geq &  \frac{\lambda(4 - \hat{L}\lambda)  - 4\rho}{4(1 -  \hat{L}\rho)}  \norms{S_{\lambda}u - S_{\lambda}v}^2 + \frac{\hat{L} - L}{\hat{L}}\iprods{Fx - Fy, x - y}.
\end{array} 
\end{equation*}
This proves \eqref{eq:S_cocoerciveness} with $\bar{\beta} :=  \frac{\lambda(4 - \hat{L}\lambda) - 4\rho}{4(1 -  \hat{L}\rho)}  \geq 0$ and $\Lambda := \frac{\hat{L} - L}{L\hat{L}} \geq 0$ as in Statement (ii). 

(iii)~Finally, since $J_{\lambda T}x = x - \lambda A_{\lambda T}x$ and $J_{\lambda T}y = y - \lambda A_{\lambda T}y$, using the $(\lambda-\rho)$-\rv{co-coercivity} of $A_{\lambda T}$, we can show that
\begin{equation*} 
\arraycolsep=0.2em
\begin{array}{lcl}
\norms{J_{\lambda T}x - J_{\lambda T}y}^2 &= & \norms{x - y - \lambda (A_{\lambda T}x - A_{\lambda T}y)}^2 \vspace{1ex}\\
&= & \norms{x - y}^2 - 2\lambda\iprods{A_{\lambda T}x - A_{\lambda T}y, x - y} + \lambda^2\norms{A_{\lambda T}x - A_{\lambda T}y}^2 \vspace{1ex}\\
& \leq & \norms{x - y}^2 - \lambda\big(\lambda - 2\rho)\norms{A_{\lambda T}x - A_{\lambda T}y}^2.
\end{array} 
\end{equation*}
Thus, if $\lambda \geq 2\rho$, then $\norms{J_{\lambda T}x - J_{\lambda T}y} \leq \norms{x - y}$, implying that $J_{\lambda T}$ is nonexpansive.
\end{proof}

\vspace{-2ex}
\beforesec
\section{The Proof of Technical Results in Section~\ref{sec:VR_estimators}}\label{apdx:sec:VR_estimators}
\aftersec
We provide the full proof of all technical lemmas in the main text of Section~\ref{sec:VR_estimators}.

\vspace{-0.5ex}
\beforesubsec
\subsection{The proof of Lemma~\ref{le:loopless_svrg_bound} --- The L-SVRG estimator}\label{apdx:proof_SVRG_estimator_bound}
\aftersubsec
\begin{proof}
At the $k$-th iteration, we have $3$ independent random quantities: a mini-batch $\Sc_k$, a mega-batch $\bar{\Sc}_k$ to form $\bar{F}\tilde{x}^k$, and a Bernoulli's random variable $i_k$ following the rule \eqref{eq:xy_hat}.

Define $\widehat{F}^k := F\tilde{x}^k + \Fb(x^k, \Sc_k) -  \Fb(\tilde{x}^k,  \Sc_k)$.
Then, from \eqref{eq:loopless_svrg}, we have $\widetilde{F}^k = \widehat{F}^k + \bar{F}\tilde{x}^k - F\tilde{x}^k$.
By Young's inequality, for any $\tau > 0$, we can show that
\begin{equation}\label{eq:loopless_svrg_proof1}
\Expsn{(\Sc_k, \bar{\Sc}_k)}{\norms{\widetilde{F}^k - Fx^k}^2 } \leq (1 + \tau)\Expsn{\Sc_k}{ \norms{\widehat{F}^k - Fx^k}^2 } + \tfrac{1 + \tau}{\tau}\Expsn{\bar{\Sc}_k}{ \norms{ \bar{F}\tilde{x}^k - F\tilde{x}^k }^2 }.
\end{equation}
For the expectation setting \eqref{eq:expectation_form}, we consider $X^k(\xi) := \Fb(x^k, \xi) - \Fb(\tilde{x}^k, \xi) - (Fx^k - F\tilde{x}^k)$.
Then, we have $\Expsn{\xi}{X^k(\xi)} = 0$.
Since $\Sc_k$ is i.i.d., we can show that
\begin{equation*}
\arraycolsep=0.2em
\begin{array}{lcl}
\Expsn{\Sc_k}{\norms{\widehat{F}^k - Fx^k}^2 } & = & \Expsn{\Sc_k}{\norms{ F\tilde{x}^k  + \Fb(x^k, \Sc_k) -  \Fb(\tilde{x}^k,  \Sc_k) - Fx^k}^2 } \vspace{1ex}\\
& = & \Expsn{\Sc_k}{ \Vert \frac{1}{b_k}\sum_{\xi_i \in\Sc_k}  \big[ \Fb(x^k, \xi_i) -  \Fb(\tilde{x}^k,  \xi_i) - (Fx^k - F\tilde{x}^k) \big] \Vert^2  } \vspace{1ex}\\
& = & \Expsn{\Sc_k}{  \Vert \frac{1}{b_k}\sum_{\xi_i \in\Sc_k} X^k(\xi_i) \Vert^2 }\vspace{1ex}\\
& = & \frac{1}{b_k} \Expsn{\xi}{ \norms{ X^k(\xi)}^2 } \vspace{1ex}\\
& \leq & \frac{1}{b_k}\Expsn{\xi}{ \norms{\Fb(x^k, \xi) - \Fb(\tilde{x}^k, \xi)}^2 }.
\end{array}
\end{equation*}
For the finite-sum case \eqref{eq:finite_sum_form},  we denote by $X^k_i := F_ix^k - F_i\tilde{x}^k - (Fx^k - F\tilde{x}^k)$.
Then, $\Expsn{i}{X^k_i} = 0$.
\rv{Similar to \citet[Lemma 2]{Pham2019}, we can show that}
\begin{equation*}
\arraycolsep=0.2em
\begin{array}{lcl}
\Expsn{\Sc_k}{\norms{\widehat{F}^k - Fx^k}^2  } & = & \Expsn{\Sc_k}{\norms{ F\tilde{x}^k  + \Fb(x^k, \Sc_k) -  \Fb(\tilde{x}^k,  \Sc_k) - Fx^k}^2 } \vspace{1ex}\\
& = & \Expsn{\Sc_k}{ \Vert \frac{1}{b_k}\sum_{i \in\Sc_k} \big[ F_ix^k -  F_i\tilde{x}^k - (Fx^k - F\tilde{x}^k) \big] \Vert^2 } \vspace{1ex}\\
& = & \Expsn{\Sc_k}{ \Vert \frac{1}{b_k}\sum_{i \in\Sc_k} X^k_i \Vert^2   } \vspace{1ex}\\
& \leq & \frac{n-b_k}{(n-1)b_k} \cdot \frac{1}{n} \sum_{i=1}^n\norms{F_ix^k - F_i\tilde{x}^k}^2 \vspace{1ex}\\
& \leq & \ \frac{1}{b_k n} \sum_{i=1}^n\norms{F_ix^k - F_i\tilde{x}^k}^2 \vspace{1ex}\\
& = & \frac{1}{b_k}\Expsn{\xi}{ \norms{\Fb(x^k, \xi) - \Fb(\tilde{x}^k, \xi)}^2 }, \quad \textrm{where $\mbf{F}(x,\xi) := F_ix$}.
\end{array}
\end{equation*}
Combining either the first or second relation above and \eqref{eq:loopless_svrg_proof1}, and then taking the conditional expectation $\Expsn{k}{\cdot}$ on both sides of the result, we get
\begin{equation}\label{eq:loopless_svrg_proof1b}
\hspace{-1ex}
\arraycolsep=0.2em
\begin{array}{lcl}
\Expsn{k}{\norms{\widetilde{F}^k - Fx^k}^2 } \leq \frac{1 + \tau }{b_k}\Expsn{k}{ \Expsn{\xi}{ \norms{\Fb(x^k, \xi) - \Fb(\tilde{x}^k, \xi)}^2 } } + \tfrac{1 + \tau}{\tau}\Expsn{k}{ \norms{ \bar{F}\tilde{x}^k - F\tilde{x}^k }^2 }.
\end{array}
\hspace{-1ex}
\end{equation}
If we define $\hat{\Delta}_k := \frac{1}{b_k}\Expsn{\xi}{ \norms{\Fb(x^k, \xi) - \Fb(\tilde{x}^k, \xi)}^2 }$, then \eqref{eq:loopless_svrg_proof1b} implies \eqref{eq:loopless_svrg_bound2}.

Next, for a Bernoulli's random variable $i_k$ following the rule \eqref{eq:xy_hat}, we have
\begin{equation*}
\arraycolsep=0.2em
\begin{array}{lcl}
\Expsn{\xi,i_k}{\norms{ \Fb(x^k, \xi) - \Fb(\tilde{x}^k, \xi)}^2 } & = &  \mbf{p}_k \Expsn{\xi}{\norms{ \Fb(x^k, \xi) - \Fb(x^{k-1}, \xi)}^2} \vspace{1ex} \\
&& + {~} (1- \mbf{p}_k)\Expsn{\xi}{\norms{ \Fb(x^k, \xi) - \Fb(\tilde{x}^{k-1}, \xi)}^2 }.
\end{array}
\end{equation*}
Now, for any $c > 0$, by Young's inequality, we have
\begin{equation*}
\arraycolsep=0.2em
\begin{array}{lcl}
\Expsn{\xi}{\norms{ \Fb(x^k, \xi) - \Fb(\tilde{x}^{k-1}, \xi)}^2 } & \leq & (1 + c)\Expsn{\xi}{\norms{ \Fb(x^{k-1}, \xi) - \Fb(\tilde{x}^{k-1}, \xi)}^2 }  \vspace{1ex} \\
&& + {~} \left(1 +  \frac{1}{c} \right) \Expsn{\xi}{\norms{ \Fb(x^k, \xi) - \Fb(x^{k-1}, \xi)}^2 }.
\end{array}
\end{equation*}
Combining the last two expressions, taking the conditional expectation $\Expsn{k}{\cdot}$ on both sides of the result, and using the definition of $\hat{\Delta}_k$ and $b_{k-1} \leq b_k$, we can show that
\begin{equation*}
\arraycolsep=0.2em
\begin{array}{lcl}
\Expsn{k}{ \hat{\Delta}_k }  &\leq &  (1 + c)(1-\mbf{p}_k) \hat{\Delta}_{k-1} + \frac{1}{b_k}\left[ (1-\mbf{p}_k )\left( 1 + \frac{1}{c} \right) + \mbf{p}_k  \right] \Expsn{\xi}{\norms{ \Fb(x^k, \xi) - \Fb(x^{k-1}, \xi)}^2}.
\end{array}
\end{equation*}
Let us choose $c := \frac{(1-\alpha)\mbf{p}_k}{1-\mbf{p}_k}$ for some $\alpha \in (0, 1)$.
Then, we get $(1 + c)(1- \mbf{p}_k) = 1 - \alpha \mbf{p}_k$ and $(1- \mbf{p}_k)\left(1 + \frac{1}{c} \right) + \mbf{p}_k = \frac{1 - (1+\alpha)\mbf{p}_k  + \mbf{p}_k^2}{(1-\alpha)\mbf{p}_k} \leq \frac{1}{(1-\alpha)\mbf{p}_k}$ for any $0 \leq \mbf{p}_k \leq 1$.
Hence, we obtain
\begin{equation*}
\arraycolsep=0.2em
\begin{array}{lcl}
\Expsn{k}{ \hat{\Delta}_k } &\leq & (1 - \alpha \mbf{p}_k ) \hat{\Delta}_{k-1}  + \frac{ 1 }{(1-\alpha) b_k \mbf{p}_k} \cdot \Expsn{\xi}{\norms{ \Fb(x^k, \xi) - \Fb(x^{k-1}, \xi)}^2},
\end{array}
\end{equation*}
which proves \eqref{eq:loopless_svrg_bound}.

Next, since $\Expsn{\bar{\Sc}_k}{ \norms{ \bar{F}\tilde{x}^k - F\tilde{x}^k }^2 } \leq \frac{\sigma^2}{n_k}$, \eqref{eq:loopless_svrg_proof1b} implies
\begin{equation}\label{eq:loopless_svrg_proof1c}
\arraycolsep=0.2em
\begin{array}{lcl}
\Expsn{k}{\norms{\widetilde{F}^k - Fx^k}^2 } \leq (1 + \tau)\Expsn{k}{ \hat{\Delta}_k } + \tfrac{(1 + \tau)\sigma^2}{\tau n_k}.
\end{array}
\end{equation}
Let us define $\Delta_k := (1 + \tau)\hat{\Delta}_k + \frac{(1 + \tau)\sigma^2}{\tau n_k}$.
Then, plugging $\Delta_k$ from \eqref{eq:loopless_svrg_proof1c} into \eqref{eq:loopless_svrg_bound} and using $n_{k-1}\leq n_k$, we can show that
\begin{equation}\label{eq:loopless_svrg_proof1e}
\arraycolsep=0.2em
\begin{array}{lcl}
\Expsn{k}{ \Delta_k } &\leq & (1 - \alpha \mbf{p}_k )\Delta_{k-1}  + \frac{1+ \tau}{(1-\alpha)b_k \mbf{p}_k } \Expsn{\xi}{\norms{ \Fb(x^k, \xi) - \Fb(x^{k-1}, \xi)}^2} + \frac{(1 + \tau)\alpha \mbf{p}_k \sigma^2}{\tau n_k}.
\end{array}
\end{equation}
If  we choose $\tau := 1$, then using \eqref{eq:loopless_svrg_proof1c} and \eqref{eq:loopless_svrg_proof1e}, we can show that $\widetilde{F}^k$ satisfies Definition~\ref{de:VR_Estimators} with $\Delta_k := 2\hat{\Delta}_k + \frac{2\sigma^2}{n_k}$, $\kappa_k := \alpha\mbf{p}_k$, $\Theta_k := \frac{2}{(1-\alpha)b_k\mbf{p}_k}$,  and $\sigma_k^2 := \frac{2\alpha\mbf{p}_k \sigma^2}{n_k}$.

Finally, if $\bar{F}\tilde{x}^k = F\tilde{x}^k$, then by setting $\tau = 0$ in \eqref{eq:loopless_svrg_bound2} and combining the result and \eqref{eq:loopless_svrg_bound}, they imply that $\widetilde{F}^k$ satisfies Definition~\ref{de:VR_Estimators} with $\Delta_k = \hat{\Delta}_k$,  $\kappa_k = \alpha\mbf{p}_k$, $\Theta_k = \frac{1}{(1-\alpha)b_k\mbf{p}_k}$,  and $\sigma_k^2 = 0$.
\end{proof}

\beforesubsec
\subsection{The proof of Lemma~\ref{le:SAGA_estimator_full} --- The SAGA estimator}\label{apdx:proof_SAGA_estimator_bound}
\aftersubsec
\begin{proof} 
Let $X^k_i := F_ix^k -  \hat{F}_i^k$ for all $i \in [n]$.
Then, we have $\Expsn{i}{X^k_i} = Fx^k - \frac{1}{n}\sum_{j=1}^n\hat{F}^k_j$ for any $i \in [n]$.
Therefore, we can derive
\begin{equation*}
\arraycolsep=0.2em 
\begin{array}{lcl}
\Expsn{\Sc_k}{ \norms{  \widetilde{F}^k - Fx^k }^2 } & = & \Expsn{\Sc_k}{\norms{ \frac{1}{b_k }\sum_{i \in\Sc_k}X^k_i  -  \big[ Fx^k  -  \frac{1}{n}\sum_{j=1}^n \hat{F}_j^k \big] }^2 } \vspace{1ex} \\
& = & \Expsn{\Sc_k}{ \norms{ \frac{1}{b_k}\sum_{i \in\Sc_k}\big( X^k_i  -  \Expsn{i}{X^k_i} ] \big) }^2   } \vspace{1ex} \\
& = & \frac{1}{b_k^2}  \sum_{i\in\Sc_k} \Expsn{i}{\norms{X^k_i - \Expsn{i}{X^k_i} }^2   } \vspace{1ex} \\
& \leq & \frac{1}{b_k^2} \sum_{i\in\Sc_k} \Expsn{i}{ \norms{  X_i^k }^2  }  \vspace{1ex} \\
& = & \frac{1}{n b_k}  \sum_{i = 1}^n  \norms{  F_ix^k - \hat{F}_i^k }^2.
\end{array}
\end{equation*}
Taking the conditional expectation $\Expsn{k}{\cdot}$ on both sides of this inequality and using $\Delta_k := \frac{1}{n b_k}  \sum_{i = 1}^n  \norms{  F_ix^k - \hat{F}_i^k }^2$, we obtain the first line of \eqref{eq:SAGA_variance}.

Now, from the definition of $\Delta_k$ and the update rule \eqref{eq:SAGA_ref_points}, for any $c > 0$, by Young's inequality, we can show that
\begin{equation*}
\arraycolsep=0.2em
\begin{array}{lcl}
\Expsn{\Sc_k}{ \Delta_k } & = &  \frac{1}{n b_k}  \sum_{i = 1}^n \Expsn{\Sc_k}{  \norms{  F_ix^k - \hat{F}_i^k }^2 } \vspace{1ex} \\
& \overset{ \eqref{eq:SAGA_ref_points} }{ = } & \big(1 - \frac{b_k}{n}\big)  \frac{1}{n b_k}  \sum_{i = 1}^n  \norms{  F_ix^k - \hat{F}_i^{k-1} }^2  +   \frac{b_k}{n} \cdot  \frac{1}{n b_k}  \sum_{i = 1}^n   \norms{  F_ix^k - F_ix^{k-1} }^2  \vspace{1ex} \\
& \leq &   \frac{(1+c)b_{k-1}}{b_k} \big(1 - \frac{b_k}{n}\big) \frac{1}{nb_{k-1}}  \sum_{i = 1}^n  \norms{  F_ix^{k-1} - \hat{F}_i^{k-1} }^2   \vspace{1ex}\\
&& + {~}  \frac{(1 + c)}{c n b_k}\big(1 - \frac{b_k}{n}\big)  \sum_{i = 1}^n \norms{  F_ix^k -  F_ix^{k-1} }^2 + \frac{1}{n^2}   \sum_{i = 1}^n  \norms{  F_ix^k -  F_ix^{k-1} }^2  \vspace{1ex} \\
&  =  &  \frac{(1 + c)b_{k-1}}{b_k}  \big(1 - \frac{b_k}{n}\big) \Delta_{k-1} + \big[ \frac{1}{n} + \big(1 - \frac{b_k}{n}\big)\frac{(1+c)}{c b_k} \big] \frac{1}{n} \sum_{i = 1}^n  \norms{  F_ix^k - F_ix^{k-1} }^2 .
\end{array}
\end{equation*}
For any $\alpha \in (0, 1)$, if we choose $c := \frac{(n - \alpha b_k)b_k}{(n - b_k)b_{k-1}} - 1$, then $\frac{(1+c)b_{k-1}}{b_k}(1 - \frac{b_k}{n}) = 1 - \frac{\alpha b_k}{n}$.
In addition, we can also compute $\underline{C}_k :=  \frac{1}{n} + \big(1 - \frac{b_k}{n}\big)\frac{(1+c)}{c b_k}$ as $\underline{C}_k  = \frac{1}{n} +\frac{(n-b_k)(n -\alpha b_k)}{n[ n(b_k-b_{k-1}) + b_kb_{k-1} - \alpha b_k^2]}$.
Hence, taking $\Expsn{k}{\cdot}$ on both sides, we obtain from the last inequality that
\begin{equation}\label{eq:saga_estimator_proof3}
\arraycolsep=0.2em
\begin{array}{lcl}
\Expsn{k}{ \Delta_k } & \leq &  \big(1 -  \frac{\alpha b_k}{n} \big)  \Delta_{k-1}  +  \frac{\underline{C}_k}{n} \sum_{i=1}^n \norms{F_ix^k - F_ix^{k-1}}^2.
\end{array}
\end{equation}
Since $b_{k-1} \geq b_k \geq b_{k-1} - \frac{(1-\alpha)b_kb_{k-1}}{2n}$, we have 
\begin{equation*}
\arraycolsep=0.2em
\begin{array}{lcl}
n(b_k - b_{k-1}) + b_kb_{k-1} - \alpha b_k^2 \geq b_kb_{k-1} -\alpha b_k^2 - \frac{(1-\alpha)}{2}b_kb_{k-1} = \frac{1+\alpha}{2}b_kb_{k-1} - \alpha b_k^2 \geq \frac{(1-\alpha)b_k^2}{4}.
\end{array}
\end{equation*}
This implies that $\underline{C}_k \leq \frac{1}{n} + \frac{2(n-b_k)(n-\alpha b_k)}{n(1-\alpha)b_k^2} \leq  \frac{1}{n} + \frac{2n}{(1-\alpha)b_k^2} \leq \frac{(3-\alpha)n}{(1-\alpha)b_k^2} =: \Theta_k$.
Using this bound, we obtain from \eqref{eq:saga_estimator_proof3} the second bound in  \eqref{eq:SAGA_variance}.

Consequently, $\widetilde{F}^k$ satisfies the $\textbf{VR}(\kappa_k, \Theta_k, \Delta_k, \sigma_k)$ property in Definition~\ref{de:VR_Estimators} with $\kappa_k := \frac{\alpha b_k }{n} \in (0, 1]$, $\Delta_k$ and $\Theta_k$ given above, and $\sigma_k^2 = 0$.
\end{proof}

\aftersubsec
\subsection{The proof of Lemma~\ref{le:loopless_sarah_bound} --- The L-SARAH estimator}\label{apdx:proof_SARAH_estimator_bound}
\aftersubsec
\begin{proof} 
We prove for the expectation setting \eqref{eq:expectation_form}.
The proof of the finite-sum setting \eqref{eq:finite_sum_form} is similar to the ones in \citet{driggs2019accelerating,li2020page}.
Let $i_k$ be the Bernoulli's random variable following the switching rule in \eqref{eq:loopless_sarah}.
Then, we have 
\begin{equation*}
\arraycolsep=0.2em
\begin{array}{lcl}
\Expsn{i_k}{\norms{ \widetilde{F}^k - Fx^k }^2 } & = & (1 - \mbf{p}_k) \norms{ \widetilde{F}^{k-1}  + \Fb(x^k, \Sc_k) - \Fb(x^{k-1}, \Sc_k) - Fx^k }^2 + \mbf{p}_k  \norms{ \bar{F}x^k - Fx^k }^2.
\end{array}
\end{equation*}
By the proof of the loopless SARAH estimator \cite[Lemma~3]{li2020page}, we have
\begin{equation*}
\arraycolsep=0.2em
\begin{array}{lcl}
A_k &:= & \Expsn{k}{\norms{ \widetilde{F}^{k-1}  + \Fb(x^k, \Sc_k) - \Fb(x^{k-1}, \Sc_k) - Fx^k }^2}   \vspace{1ex} \\
&=&  \norms{ \widetilde{F}^{k-1}  - Fx^{k-1}}^2  +  \Expsn{k}{\norms{\Fb(x^k, \Sc_k) - \Fb(x^{k-1}, \Sc_k)}^2} - \norms{Fx^k - Fx^{k-1}}^2  \vspace{1ex} \\
&\leq &  \norms{ \widetilde{F}^{k-1}  - Fx^{k-1}}^2 +  \frac{1}{b_k}\Expsn{\xi}{\norms{\Fb(x^k, \xi) - \Fb(x^{k-1}, \xi)}^2}.
\end{array}
\end{equation*}
Taking the conditional expectation $\Expsn{k}{\cdot}$ on both sides of the first estimate and combining the result with the second expression, we obtain \eqref{eq:loopless_sarah_var_bound}.

Finally, note that $\Expsn{\bar{\Sc}_k}{\norms{ \bar{F}x^k - Fx^k }^2} \leq \frac{\sigma^2}{n_k}$ and $1-\mbf{p}_k \leq 1$, \eqref{eq:loopless_sarah_var_bound} shows that $\widetilde{F}^k$ satisfies Definition~\ref{de:VR_Estimators} with $\Delta_k :=   \norms{\widetilde{F}^k - Fx^k }^2$,  $\kappa_k =  \mbf{p}_{k}$,  $\Theta_k  := \frac{1}{b_k}$, and $\sigma_k^2 := \frac{\mbf{p}_{k} \sigma^2}{n_k}$.
However, if we choose $\bar{F}x^k = Fx^k$, then we can set $\sigma_k = 0$ since $\Expsn{\bar{\Sc}_k}{\norms{ \bar{F}x^k - Fx^k }^2} = 0$.
\end{proof}

\beforesubsec
\subsection{The proof of Lemma~\ref{le:HSGD_estimator_bound} --- The HSGD estimator}\label{apdx:proof_HSGD_estimator_bound}
\aftersubsec
\begin{proof} 
Let $e^k := \widetilde{F}^k - Fx^k$, $X^k(\xi) := \Fb(x^k, \xi) - \Fb(x^{k-1}, \xi) - (Fx^k - Fx^{k-1})$, and $Y^k := \bar{F}x^k - Fx^k$.
Then, we have $\Expsn{\xi}{X^k(\xi)} = 0$ and $\Expsn{\hat{\Sc}_k}{Y^k} = 0$ by our assumption.
In addition, if we denote $X^k := \frac{1}{b_k}\sum_{\xi \in \Sc_k}X^k(\xi)$, then we also have $\Expsn{\Sc_k}{ \norms{X^k}^2 } \leq \frac{1}{b_k}\bar{\Ec}_k$ for $\bar{\Ec}_k$ defined in Definition~\ref{de:VR_Estimators}.
From \eqref{eq:HSGD_estimator}, we can write
\begin{equation*}
\arraycolsep=0.2em
\begin{array}{lcl}
e^k & = &   \widetilde{F}^k - Fx^k =  (1-\tau_k) \widetilde{F}^{k-1} + (1-\tau_k)[ \Fb(x^k, \Sc_k) - \Fb(x^{k-1}, \Sc_k)]  + \tau_k\bar{F}x^k - Fx^k \vspace{1ex}\\
& = &  (1-\tau_k)( \widetilde{F}^{k-1} - Fx^{k-1} ) + (1-\tau_k)[ \Fb(x^k, \Sc_k) - \Fb(x^{k-1}, \Sc_k) - (Fx^k - Fx^{k-1})] \vspace{1ex}\\
&& + {~} \tau_k(\bar{F}x^k - Fx^k ) \vspace{1ex}\\
& = &  (1-\tau_k)e^{k-1}   +  (1-\tau_k)X^k + \tau_k Y^k.
\end{array}
\end{equation*}
This expression leads to 
\begin{equation*}
\arraycolsep=0.2em
\begin{array}{lcl}
\norms{e^k}^2 & = &  (1-\tau_k)^2\norms{e^{k-1}}^2 +  (1-\tau_k)^2\norms{X^k}^2 + \tau_k^2\norms{Y^k}^2 \vspace{1ex}\\
&& + {~} 2(1-\tau_k)^2\iprods{e^{k-1}, X^k} + 2(1-\tau_k)\tau_k \iprods{X^k, Y^k} + 2(1-\tau_k)\tau_k \iprods{e^{k-1}, Y^k}.
\end{array}
\end{equation*}
Taking $\Expsn{(\Sc_k,\hat{\Sc}_k)}{\cdot}$ on both sides of this expression and using $\Expsn{(\Sc_k,\hat{S}_k)}{X^k} = \Expsn{\hat{\Sc}_k}{\Expsn{\Sc_k}{X^k  \mid \hat{\Sc}_k } } = 0$ and $\Expsn{(\Sc_k,\hat{S}_k)}{Y^k} = \Expsn{\Sc_k}{\Expsn{\hat{\Sc}_k}{Y^k  \mid \Sc_k } } = 0$, we can show that
\begin{equation}\label{eq:HSGD_proof5}
\arraycolsep=0.2em
\begin{array}{lcl}
\Expsn{(\Sc_k,\hat{\Sc}_k)}{\norms{e^k}^2} & = &  (1-\tau_k)^2\norms{e^{k-1}}^2 +  (1-\tau_k)^2\Expsn{\Sc_k}{ \norms{X^k}^2 } + \tau_k^2 \Expsn{\hat{\Sc}_k}{ \norms{Y^k}^2 } \vspace{1ex}\\
&& + {~}  2(1-\tau_k)\tau_k \Expsn{(\Sc_k,\hat{\Sc}_k)}{ \iprods{X^k, Y^k} }.
\end{array}
\end{equation}
Here, we have used the facts that $X^k$ only depends on $\Sc_k$ and $Y^k$ only depends on $\hat{\Sc}_k$.

\noindent
Now, we consider two cases.

(i)~If $\Sc_k$ and $\hat{\Sc}_k$ are independent, then $\Expsn{(\Sc_k,\hat{\Sc}_k)}{ \iprods{X^k, Y^k} \mid \Fc_k} = 0$.
Using this fact, $\Expsn{\Sc_k}{ \norms{X^k}^2 } \leq \frac{1}{b_k}\bar{\Ec}_k$, and $\delta_k^2 :=  \Expsn{\hat{\Sc}_k}{ \norms{Y^k}^2 }$ into \eqref{eq:HSGD_proof5}, and then taking the conditional expectation $\Expsn{k}{\cdot}$ on both sides of the result, we obtain \eqref{eq:HSGD_key_est_a}.

(ii)~If $\Sc_k$ and $\hat{\Sc}_k$ are not independent, then by Young's inequality, we have
\begin{equation*}
\arraycolsep=0.2em
\begin{array}{lcl}
2(1-\tau_k)\tau_k \Expsn{(\Sc_k,\hat{\Sc}_k)}{ \iprods{X^k, Y^k} } & \leq &  (1-\tau_k)^2 \Expsn{\Sc_k}{ \norms{X^k}^2 } + \tau_k^2 \Expsn{\hat{\Sc}_k}{ \norms{Y^k}^2 }.
\end{array}
\end{equation*}
Substituting this inequality, $\Expsn{\Sc_k}{ \norms{X^k}^2 } \leq \frac{1}{b_k}\bar{\Ec}_k$, and $\delta_k^2 :=  \Expsn{\hat{\Sc}_k}{ \norms{Y^k}^2 }$ into \eqref{eq:HSGD_proof5}, and taking the conditional expectation $\Expsn{k}{\cdot}$ on both sides of the result, we obtain \eqref{eq:HSGD_key_est_b}.

Finally, to prove (iii), we utilize $\Expsn{\hat{\Sc}_k}{ \delta_k^2 } \leq \frac{\sigma^2}{\hat{b}_k}$ to  obtain \eqref{eq:VR_property} in Definition~\ref{de:VR_Estimators} from \eqref{eq:HSGD_key_est_a} and \eqref{eq:HSGD_key_est_b}, respectively.
\end{proof}

\beforesec
\section{The Proof of Technical Results in Section~\ref{sec:VR_AFBS_method}}\label{apdx:sec:VrAFBS_method}
\aftersec
This appendix presents the full proof of technical lemmas and theorems in Section~\ref{sec:VR_AFBS_method}.

\beforesubsec
\subsection{The proof of Lemma~\ref{le:VrAFBS4NI_descent_property}}\label{apdx:le:VrAFBS4NI_descent_property}
\aftersubsec
\begin{proof} 
From the first two lines of \eqref{eq:VrAFBS4NI}, we can easily show that  $t_kx^{k+1} = (t_k-1)x^k + z^k - t_k\eta_k\widetilde{G}_{\lambda}^k$.
Rearranging this expression and using $z^k = z^{k+1} - \nu(x^{k+1} - y^k) = z^{k+1} + \nu\eta_k\widetilde{G}_{\lambda}^k$ from the last line of \eqref{eq:VrAFBS4NI}, we obtain
\begin{equation}\label{eq:VrAFBS4NI_lm2_proof1} 
\arraycolsep=0.2em
\left\{\begin{array}{lcl}
t_k(t_k-1)(x^{k+1} - x^k) &= & - (t_k-1) (x^k - z^k) - t_k(t_k-1)\eta_k \widetilde{G}_{\lambda}^k, \vspace{1ex}\\
t_k(t_k-1)(x^{k+1} - x^k) & = & - t_k(x^{k+1} - z^k) - t_k^2\eta_k \widetilde{G}_{\lambda}^k \vspace{1ex}\\
& = & -t_k(x^{k+1} - z^{k+1}) - t_k(t_k - \nu) \eta_k \widetilde{G}_{\lambda}^k.
\end{array}\right.
\end{equation}
Let us define $\Ec_{k+1} := L \iprods{Fx^{k+1} - Fx^k, x^{k+1} - x^k}$ as in Lemma~\ref{le:VrAFBS4NI_descent_property}.
Then, from \eqref{eq:G_cocoerciveness}, we have 
\begin{equation*}
\arraycolsep=0.2em
\begin{array}{lcl}
\widetilde{\Tc}_{[1]} &:= & t_k(t_k-1)\iprods{G_{\lambda}x^{k+1}, x^{k+1} - x^k} - t_k(t_k-1) \iprods{G_{\lambda}x^k, x^{k+1} - x^k} \vspace{1ex}\\
& \geq &  \beta t_k(t_k-1)\norms{G_{\lambda}x^{k+1} - G_{\lambda}x^k}^2 + (\bar{\beta} - \beta) t_k(t_k-1)\norms{G_{\lambda}x^{k+1} - G_{\lambda}x^k}^2 \vspace{1ex}\\
&& + {~} \Lambda t_k(t_k-1) \Ec_{k+1}.
\end{array}
\end{equation*}
Substituting \eqref{eq:VrAFBS4NI_lm2_proof1} into $\widetilde{\Tc}_{[1]}$ and using $e_{\lambda}^k := \widetilde{G}^k_{\lambda} - G_{\lambda}x^k$ and $\theta_k := \frac{t_k-1}{t_k-\nu}$, we can show that
\begin{equation*}
\arraycolsep=0.2em
\begin{array}{lcl}
\widehat{\Tc}_{[1]} &:= & (t_k-1)\iprods{G_{\lambda}x^k, x^k - z^k } - t_k\iprods{G_{\lambda}x^{k+1}, x^{k+1} - z^{k+1} } \vspace{1ex}\\
& \geq & \eta_k t_k(t_k - \nu) \iprods{G_{\lambda}x^{k+1}, \widetilde{G}_{\lambda}^k}  - \eta_k t_k(t_k-1) \iprods{G_{\lambda}x^k, \widetilde{G}_{\lambda}^k} + \beta t_k(t_k-1) \norms{G_{\lambda}x^{k+1} - G_{\lambda}x^k}^2 \vspace{1ex}\\
&& + {~} (\bar{\beta} - \beta)t_k(t_k-1)\norms{G_{\lambda}x^{k+1} - G_{\lambda}x^k}^2 + \Lambda t_k(t_k-1) \Ec_{k+1} \vspace{1ex}\\
& = & \eta_kt_k(t_k - \nu ) \iprods{G_{\lambda}x^{k+1}, G_{\lambda}x^k}  - \eta_k t_k(t_k-1) \norms{G_{\lambda}x^k}^2 + \beta t_k(t_k-1) \norms{G_{\lambda}x^{k+1} - G_{\lambda}x^k}^2   \vspace{1ex}\\
&& + {~}  (\bar{\beta} - \beta)t_k(t_k-1)\norms{G_{\lambda}x^{k+1} - G_{\lambda}x^k}^2 +  \Lambda t_k(t_k-1) \Ec_{k+1} \vspace{1ex}\\
&& + {~} \eta_k t_k(t_k-\nu) \iprods{G_{\lambda}x^{k+1} - \theta_k G_{\lambda}x^k, e_{\lambda}^k}.
\end{array}
\end{equation*}
By Young's inequality, for any $s > 0$, we can derive from $\widehat{\Tc}_{[1]}$ that
\begin{equation*}
\arraycolsep=0.2em
\begin{array}{lcl}
\widehat{\Tc}_{[1]} &:= & (t_k-1)\iprods{G_{\lambda}x^k, x^k - z^k } - t_k\iprods{G_{\lambda}x^{k+1}, x^{k+1} - z^{k+1} } \vspace{1ex}\\
& \geq & \eta_k t_k(t_k - \nu) \iprods{G_{\lambda}x^{k+1}, G_{\lambda}x^k}  - \eta_k t_k(t_k-1) \norms{G_{\lambda}x^k}^2 +  \Lambda t_k(t_k-1) \Ec_{k+1} \vspace{1ex}\\
&& + {~} \beta t_k(t_k-1) \norms{G_{\lambda}x^{k+1} - G_{\lambda}x^k}^2   +  (\bar{\beta} - \beta)t_k(t_k-1)\norms{G_{\lambda}x^{k+1} - G_{\lambda}x^k}^2  \vspace{1ex}\\
&& - {~} s\beta t_k(t_k-\nu)  \norms{G_{\lambda}x^{k+1} -  \theta_k G_{\lambda}x^k}^2   - \frac{\eta_k^2t_k(t_k-\nu)}{4s\beta}\norms{e_{\lambda}^k}^2.
\end{array}
\end{equation*}
Substituting the following identity
\begin{equation*}
\arraycolsep=0.2em
\begin{array}{lcl}
\norms{G_{\lambda}x^{k+1} - \theta_k G_{\lambda}x^k}^2 & = &  (1-\theta_k)\norms{G_{\lambda}x^{k+1}}^2  - \theta_k(1-\theta_k)\norms{G_{\lambda}x^k}^2 + \theta_k\norms{G_{\lambda}x^{k+1} - G_{\lambda}x^k}^2 \vspace{1ex}\\
& = & \frac{1-\nu}{t_k-\nu}\norms{G_{\lambda}x^{k+1}}^2  - \frac{(1-\nu)(t_k-1)}{(t_k-\nu)^2}\norms{G_{\lambda}x^k}^2 + \frac{t_k-1}{t_k-\nu}\norms{G_{\lambda}x^{k+1} - G_{\lambda}x^k}^2
\end{array}
\end{equation*}
into the last expression $\widehat{\Tc}_{[1]}$, one can show that
\begin{equation*}
\arraycolsep=0.2em
\begin{array}{lcl}
\Tc_{[1]} &:= & t_{k-1}\iprods{G_{\lambda}x^k, x^k - z^k} - t_k\iprods{G_{\lambda}x^{k+1}, x^{k+1} - z^{k+1}}  \vspace{1ex}\\
& \geq &   \beta t_k\big[ t_k - 1  -  s(1-\nu) \big] \norms{G_{\lambda}x^{k+1}}^2 - t_k(t_k-1) \big[ \eta_k - \beta  - \frac{s\beta(1-\nu)}{t_k-\nu}  \big]\norms{G_{\lambda}x^k}^2 \vspace{1ex}\\ 
&& + {~} t_k \big[ \eta_k(t_k-\nu) - 2\beta(t_k-1) \big] \iprods{G_{\lambda}x^{k+1}, G_{\lambda}x^k} + (t_{k-1} - t_k + 1)\iprods{G_{\lambda}x^k, x^k - z^k} \vspace{1ex}\\
&& + {~} \big[ \bar{\beta} - (1+s)\beta \big] t_k(t_k-1) \norms{G_{\lambda}x^{k+1} - G_{\lambda}x^k}^2  +  \Lambda t_k(t_k-1) \Ec_{k+1}  - \frac{\eta_k^2t_k(t_k-\nu)}{4s\beta}\norms{e_{\lambda}^k}^2.
\end{array}
\end{equation*}
Since $z^{k+1} - z^k = -\nu\eta_k \widetilde{G}_{\lambda}^k$, by Young's inequality again,  we have
\begin{equation*}
\arraycolsep=0.2em
\begin{array}{lcl}
\Tc_{[2]} &:= & \frac{1-\mu}{2\nu\eta_k}\norms{z^k - x^{\star}}^2 - \frac{1-\mu}{2\nu\eta_{k+1}} \norms{z^{k+1} - x^{\star}}^2 \vspace{1ex}\\
& = & \frac{1-\mu}{2\nu\eta_k} \big[ \norms{z^k - x^{\star}}^2 -  \norms{z^{k+1} - x^{\star}}^2 \big] + \frac{(1-\mu)}{2\nu}\big(\frac{1}{\eta_k} - \frac{1}{\eta_{k+1}} \big) \norms{z^{k+1} - x^{\star}}^2 \vspace{1ex}\\
& = & -\frac{1-\mu}{\nu\eta_k}\iprods{z^{k+1} - z^k, z^k - x^{\star}} - \frac{1-\mu}{2\nu\eta_k}\norms{z^{k+1} - z^k}^2 + \frac{(1-\mu)}{2\nu}\big(\frac{1}{\eta_k} - \frac{1}{\eta_{k+1}} \big) \norms{z^{k+1} - x^{\star} }^2 \vspace{1ex}\\
&= & (1-\mu) \iprods{\widetilde{G}_{\lambda}^k, z^k - x^{\star}} - \frac{(1-\mu)\nu\eta_k}{2} \norms{\widetilde{G}_{\lambda}^k}^2 + \frac{(1-\mu)}{2\nu}\big(\frac{1}{\eta_k} - \frac{1}{\eta_{k+1}} \big)\norms{z^{k+1} - x^{\star} }^2  \vspace{1ex}\\
&= & (1-\mu) \iprods{G_{\lambda}x^k, z^k - x^{\star}} +  (1-\mu) \iprods{e_{\lambda}^k, z^k - x^{\star}}  - \frac{(1-\mu)\nu\eta_k}{2} \norms{\widetilde{G}_{\lambda}^k}^2 \vspace{1ex}\\
&& + {~} \frac{(1-\mu)}{2\nu}\big(\frac{1}{\eta_k} - \frac{1}{\eta_{k+1}} \big) \norms{z^{k+1} - x^{\star} }^2  \vspace{1ex}\\
& \geq & (1-\mu) \iprods{G_{\lambda}x^k, z^k - x^{\star}} - \frac{(1-\mu)\nu\beta(t_{k-1}-1)(t_k-1)}{\mu(1-\nu)}\norms{e_{\lambda}^k}^2 - \frac{\mu(1-\mu)(1-\nu)}{4\nu\beta(t_{k-1}-1)(t_k-1)}\norms{z^k - x^{\star}}^2 \vspace{1ex}\\
&& - {~}  (1-\mu)\nu\eta_k \norms{G_{\lambda}x^k}^2 - (1-\mu)\nu\eta_k \norms{e_{\lambda}^k}^2 + \frac{(1-\mu)}{2\nu}\big(\frac{1}{\eta_k} - \frac{1}{\eta_{k+1}} \big) \norms{z^{k+1} - x^{\star} }^2.
\end{array}
\end{equation*}
Since $\eta_k = \frac{2\beta(t_k - 1)}{t_k - \nu}$ and $t_k = \mu(k + r)$ due to \eqref{eq:VrAFBS4NI_tk_etak_update}, we have  $\eta_k(t_k - \nu) - 2\beta(t_k-1) = 0$ and $t_{k-1} - t_k + 1 = 1 - \mu$, respectively.
In addition, we also have $\frac{1}{\eta_k} - \frac{1}{\eta_{k+1}} = \frac{\mu(1-\nu)}{2\beta(t_k-1)(t_{k+1}-1)}$.
Adding $\Tc_{[1]}$ to $\Tc_{[2]}$  and then using the last three relations, we can derive
\begin{equation*}
\arraycolsep=0.2em
\begin{array}{lcl}
\Tc_{[3]} &:= & t_{k-1}\iprods{G_{\lambda}x^k, x^k - z^k} - t_k\iprods{G_{\lambda}x^{k+1}, x^{k+1} - z^{k+1}} \vspace{1ex}\\
&& + {~} \frac{1-\mu}{2\nu\eta_k}\norms{z^k - x^{\star}}^2 - \frac{1-\mu}{2\nu\eta_{k+1}} \norms{z^{k+1} - x^{\star}}^2  \vspace{1ex}\\
& \geq & \beta t_k \big[ t_k - 1 - s(1 - \nu) \big] \norms{G_{\lambda}x^{k+1}}^2 \vspace{1ex}\\
&& - {~}  \frac{\beta (t_k-1)}{t_k - \nu} \big[ t_k(t_k - 2 + \nu - s(1-\nu)) + 2(1-\mu)\nu \big]  \norms{G_{\lambda}x^k}^2 \vspace{1ex}\\
&& + {~} \big[ \bar{\beta} - (1+s)\beta \big] t_k(t_k - 1) \norms{G_{\lambda}x^{k+1} - G_{\lambda}x^k}^2 + \Lambda t_k(t_k-1) \Ec_{k+1}    
\vspace{1ex}\\
&& - {~} \big[ \frac{ \beta t_k(t_k-1)^2}{s(t_k - \nu)} + \frac{(1-\mu)\nu\beta(t_{k-1} - 1)(t_k - 1) }{\mu(1-\nu)} +  \frac{2\beta\nu(1-\mu)(t_k-1)}{t_k - \nu} \big]\norms{e_{\lambda}^k}^2 \vspace{1ex}\\
&& - {~}   \frac{\mu(1-\mu)(1-\nu)}{4\nu\beta(t_{k-1} - 1)(t_k - 1) }\norms{z^k - x^{\star}}^2 + \frac{\mu(1-\mu)(1-\nu)}{4\nu\beta(t_k-1)(t_{k+1}-1)}\norms{z^{k+1} - x^{\star}}^2 \vspace{1ex}\\
&& + {~} (1-\mu) \iprods{G_{\lambda}x^k, x^k - x^{\star}}.
\end{array}
\end{equation*}
Since $\frac{1}{\eta_k} + \frac{\mu(1-\nu)}{2\beta(t_{k-1} - 1)(t_k - 1) }  = \frac{(t_k-\nu)(t_{k-1}-1) + \mu(1 - \nu)}{2\beta(t_{k-1}-1)(t_k-1)}$, using $\Lc_k$ from \eqref{eq:VrAFBS_Lyapunov_func} and $\norms{e_{\lambda}^k} \leq \norms{\widetilde{F}^k - Fx^k} = \norms{e^k}$ from \eqref{eq:G_estimator_bound1}, the last inequality leads to
\begin{equation*}
\arraycolsep=0.2em
\begin{array}{lcl}
\Lc_k - \Lc_{k+1} & \geq & \beta \varphi_k \cdot \norms{G_{\lambda}x^k}^2 + (1 - \mu) \iprods{G_{\lambda}x^k, x^k - x^{\star}} + \Lambda t_k(t_k-1) \Ec_{k+1}   \vspace{1ex}\\
&& + {~} \big[ \bar{\beta} - (1+s)\beta  \big] t_k(t_k-1) \norms{G_{\lambda}x^{k+1} - G_{\lambda}x^k}^2  - \psi_k  \norms{e^k}^2,
\end{array}
\end{equation*}
which proves \eqref{eq:VrAFBS4NI_desecent_property}, where $\varphi_k$ and $\psi_k$ are respectively 
\begin{equation*}
\arraycolsep=0.2em
\begin{array}{lcl}
\varphi_k & := & t_{k-1} \big[ t_{k-1} - 1 - s(1 - \nu) \big] - \frac{(t_k-1)}{t_k - \nu} \big[ t_k(t_k - 2 + \nu - s(1-\nu)) + 2\nu(1-\mu) \big],  \vspace{1ex}\\
\psi_k & := & \beta(t_k-1)\big[ \frac{t_k(t_k-1)}{s(t_k - \nu)} +  \frac{2\nu(1-\mu)}{t_k-\nu} + \frac{\nu(1-\mu)(t_{k-1} - 1) }{\mu(1-\nu)}  \big],
\end{array}
\end{equation*}
as given in \eqref{eq:VrAFBS4NI_mk_param}.
\end{proof}

\vspace{-0.5ex}
\beforesubsec
\subsection{The proof of Lemma~\ref{le:VrAFBS4NI_Lk_lowerbound}}\label{apdx:le:VrAFBS4NI_Lk_lowerbound}
\aftersubsec
\begin{proof} 
First, since $F$ is $\frac{1}{L}$-co-coercive, it is monotone.
From \eqref{eq:G_cocoerciveness}, $G_{\lambda}x^{\star} = 0$ for any $x^{\star} \in \zer{\Phi}$, and the monotonicity of $F$, we have $\iprods{G_{\lambda}x^k, x^k - x^{\star}} \geq \bar{\beta} \norms{G_{\lambda}x^k}^2$. 

Next, utilizing the last inequality, \eqref{eq:VrAFBS_Lyapunov_func}, and $\eta_k = \frac{2\beta(t_k-1)}{t_k-\nu}$ from \eqref{eq:VrAFBS4NI_tk_etak_update}, we can show that
\vspace{-0.5ex}
\begin{equation*} 
\arraycolsep=0.2em
\begin{array}{lcl}
\Lc_k & := &  \beta a_k \norms{G_{\lambda}x^k}^2 + t_{k-1} \iprods{G_{\lambda}x^k, x^k - z^k} +  \frac{(1-\mu)[ (t_k-\nu)(t_{k-1}-1) + \mu(1-\nu) ]}{4\nu\beta(t_{k-1}-1)(t_k-1)} \norms{z^k - x^{\star}}^2 \vspace{1ex}\\
& = & \frac{\beta}{2}\big( 2a_k - t_{k-1}^2) \norms{G_{\lambda}x^k}^2  + \big[  \frac{(1-\mu)[ (t_k-\nu)(t_{k-1}-1) + \mu(1-\nu) ]}{4\nu\beta(t_{k-1}-1)(t_k-1)}  - \frac{1}{2\beta}\big] \norms{z^k - x^{\star}}^2 \vspace{1ex}\\
&& + {~} \frac{\beta t_{k-1}^2}{2}\norms{G_{\lambda}x^k}^2 + t_{k-1} \iprods{G_{\lambda}x^k, x^k - x^{\star}}   - t_{k-1} \iprods{G_{\lambda}x^k, z^k - x^{\star}} + \frac{1}{2\beta}\norms{z^k - x^{\star}}^2 \vspace{1ex}\\
& = & \frac{\beta(2a_k - t_{k-1}^2)}{2}\norms{G_{\lambda}x^k}^2 + \frac{(t_{k-1}-1)[(1-\mu-2\nu)t_k + \nu(1+\mu)] + \mu(1-\mu)(1-\nu)}{4\nu\beta(t_{k-1}-1)(t_k-1)} \norms{z^k - x^{\star}}^2  \vspace{1ex}\\
&& + {~}  t_{k-1} \iprods{G_{\lambda}x^k, x^k - x^{\star}} + \frac{1}{2\beta}\norms{z^k - x^{\star} - \beta t_{k-1}G_{\lambda}x^k }^2 \vspace{1ex}\\
& \geq & \frac{\beta(2a_k - t_{k-1}^2) + 2\bar{\beta}t_{k-1}}{2}\norms{G_{\lambda}x^k}^2 +  \frac{(t_{k-1}-1)[(1-\mu-2\nu)t_k + \nu(1+\mu)] + \mu(1-\mu)(1-\nu)}{4\nu\beta(t_{k-1}-1)(t_k-1)}   \norms{z^k - x^{\star}}^2.
\end{array}
\vspace{-0.5ex}
\end{equation*}
Now, we have $A_k := \beta(2a_k - t_{k-1}^2) + 2\bar{\beta}t_{k-1} =   \beta t_{k-1}[ t_{k-1} - 2 - 2s(1 - \nu)] +   2\bar{\beta}t_{k-1}$. 
Using this relation into the last estimate, we obtain \eqref{eq:VrAFBS4NI_Lyapunov_func_lowerbound}.

Finally, from the definitions of $\Pc_k$ and $\Qc_k$ in \eqref{eq:VrAFBS_Lyapunov_func}, the nonnegativity of the last terms in $\Pc_k$ and $\Qc_k$, and Assumption~\ref{as:A2},  we can easily show that $\Pc_k \geq \Qc_k \geq \Lc_k \geq 0$.
\end{proof}

\beforesubsec
\subsection{The proof of Lemma~\ref{le:VrAFBS4NI_descent_property2}}\label{apdx:le:VrAFBS4NI_descent_property2}
\aftersubsec
\begin{proof}
First, taking the conditional expectation $\Expsn{k}{\cdot}$ on both sides of \eqref{eq:VrAFBS4NI_desecent_property}, we obtain
\begin{equation*} 
\arraycolsep=0.2em
\begin{array}{lcl}
\Lc_k  & \geq &  \Expsn{k}{ \Lc_{k+1} } + \beta \varphi_k \norms{G_{\lambda}x^k}^2   + (1-\mu) \iprods{G_{\lambda}x^k, x^k - x^{\star}}  \vspace{1ex}\\
&& + {~} \big[ \bar{\beta} - (1+s)\beta \big] t_k(t_k-1) \Expsn{k}{ \norms{G_{\lambda}x^{k+1} - G_{\lambda}x^k }^2 } \vspace{1ex}\\
&& + {~}  \Lambda t_k(t_k-1) \Expsn{k}{ \Ec_{k+1} }  - \psi_k \Expsn{k}{ \norms{e^k}^2 }.
\end{array}
\end{equation*}
Here, $\psi_k$ and $\varphi_k$ from \eqref{eq:VrAFBS4NI_mk_param} are respectively given by
\begin{equation*} 
\arraycolsep=0.2em
\begin{array}{lcl}
\psi_k & := & \beta(t_k-1)\big[ \frac{t_k(t_k-1)}{s(t_k - \nu)} +  \frac{2\nu(1-\mu)}{t_k-\nu} + \frac{\nu(1-\mu)(t_{k-1} - 1) }{\mu(1-\nu)}  \big], \vspace{1ex}\\
\varphi_k & := & t_{k-1} \big[ t_{k-1} - 1 - s(1 - \nu) \big] - \frac{(t_k-1)}{t_k-\nu} \big[ t_k(t_k - 2 + \nu - s(1-\nu)) + 2(1-\mu)\nu \big].
\end{array}
\end{equation*}
Adding $\big[ \bar{\beta} - (1+s) \beta \big] t_{k-1}(t_{k-1} -1) \norms{G_{\lambda}x^k - G_{\lambda}x^{k-1} }^2 + \Lambda t_{k-1}(t_{k-1}-1)  \Ec_k$ to both sides of the last inequality, and using $\Qc_k$ from \eqref{eq:VrAFBS_Lyapunov_func} we have
\begin{equation}\label{eq:VrAFBS4NI_ds1}
\arraycolsep=0.2em
\begin{array}{lcl}
\Qc_k  & \geq &  \Expsn{k}{ \Qc_{k+1} } + \beta \varphi_k \norms{G_{\lambda}x^k}^2   + (1-\mu) \iprods{G_{\lambda}x^k, x^k - x^{\star}}  \vspace{1ex}\\
&& + {~} \big[ \bar{\beta} - (1+s)\beta \big] t_{k-1} (t_{k-1} - 1) \norms{G_{\lambda}x^k - G_{\lambda}x^{k-1} }^2 \vspace{1ex}\\
&& + {~}  \Lambda t_{k-1}(t_{k-1} - 1)  \Ec_k - \psi_k \Expsn{k}{ \norms{e^k}^2 }.
\end{array}
\end{equation}
Next, from \eqref{eq:G_estimator_bound1} and Definition~\ref{de:VR_Estimators}, we have 
\begin{equation}\label{eq:VrAFBS4NI_ds1b} 
\arraycolsep=0.2em
\begin{array}{lcl}
\Expsn{k}{ \norms{e^k}^2 } & = &  \Expsn{k}{ \norms{\widetilde{F}^k - Fx^k}^2 } \leq \Expsn{k}{ \Delta_k }. 
\end{array}
\end{equation}
Let $\bar{\Ec}_k$ be defined in Definition~\ref{de:VR_Estimators}.
Then, by Assumption~\ref{as:A2}, we have
\begin{equation}\label{eq:VrAFBS4NI_ds1c}
\arraycolsep=0.2em
\begin{array}{lcl}
 \Ec_k  & = &  L \cdot  \iprods{Fx^k - Fx^{k-1}, x^k - x^{k-1} }  \geq  \bar{\Ec}_k.
\end{array}
\end{equation}
Now, from \eqref{eq:VR_property} in Definition~\ref{de:VR_Estimators} and \eqref{eq:VrAFBS4NI_ds1b}, for $\Gamma_k \geq 0$ in \eqref{eq:VrAFBS_Lyapunov_func}, we can show that
\begin{equation*}
\arraycolsep=0.2em
\begin{array}{lcl}
\frac{(1 - \kappa_k)\Gamma_kt_{k-1}(t_{k-1}-1)}{2} \Delta_{k-1} & \geq & \frac{\Gamma_k t_{k-1}(t_{k-1}-1)}{2} \Expsn{k}{ \Delta_k } -  \frac{\Theta_k\Gamma_kt_{k-1}(t_{k-1} - 1) }{2}  \bar{\Ec}_k - \frac{\Gamma_kt_{k-1}(t_{k-1} - 1) }{2} \sigma_k^2 \vspace{1ex}\\
& \geq & \psi_k \Expsn{k}{ \norms{e^k}^2 } + \frac{\Gamma_kt_{k-1}(t_{k-1} - 1)  - 2\psi_k}{2} \Expsn{k}{ \Delta_k } -  \frac{\Theta_k\Gamma_kt_{k-1}(t_{k-1} - 1) }{2}  \bar{\Ec}_k \vspace{1ex}\\
&& - {~} \frac{\Gamma_kt_{k-1}(t_{k-1}-1)}{2} \sigma_k^2.
\end{array}
\end{equation*}
Adding this inequality to \eqref{eq:VrAFBS4NI_ds1}, and then using \eqref{eq:VrAFBS4NI_ds1c} we get
\begin{equation}\label{eq:VrAFBS4NI_ds2}
\hspace{-0.2ex}
\arraycolsep=0.2em
\begin{array}{lcl}
\Tc_{[1]} &:= & \Qc_k + \frac{(1 - \kappa_k)\Gamma_kt_{k-1}(t_{k-1}-1)}{2} \Delta_{k-1} \vspace{1ex}\\
& \geq &  \Expsn{k}{ \Qc_{k+1}} + \frac{[ \Gamma_k t_{k-1}(t_{k-1} - 1) - 2 \psi_k]}{2} \Expsn{k}{ \Delta_k } + \beta \varphi_k \norms{G_{\lambda}x^k}^2 \vspace{1ex}\\
&& + {~} \big[ \bar{\beta} - (1+s)\beta  \big] t_{k-1} (t_{k-1}-1) \norms{G_{\lambda}x^k - G_{\lambda}x^{k-1} }^2  + (1-\mu) \iprods{G_{\lambda} x^k, x^k - x^{\star}} \vspace{1ex}\\
&& + {~} \frac{t_{k-1}(t_{k-1}-1)}{2}\big( 2\Lambda   - \Theta_k\Gamma_k  \big) \bar{ \Ec}_k   - \frac{\Gamma_k t_{k-1}(t_{k-1} - 1)}{2}  \sigma_k^2.
\end{array}
\hspace{-4ex}
\end{equation}
Assume that $[\Gamma_{k+1}(1 -\kappa_{k+1}) + \mu^{-1}\beta] t_k(t_k-1) \leq \Gamma_kt_{k-1}(t_{k-1} - 1) - 2\psi_k$, which is exactly the condition \eqref{eq:VrAFBS4NI_param2}.
Then, using $\Pc_k$ from \eqref{eq:VrAFBS_Lyapunov_func} and this condition, we obtain from \eqref{eq:VrAFBS4NI_ds2} that
\begin{equation*} 
\hspace{-0.2ex}
\arraycolsep=0.2em
\begin{array}{lcl}
\Pc_k &:= & \Qc_k + \frac{[\mu(1-\kappa_k)\Gamma_k + \beta] t_{k-1}(t_{k-1}-1) }{2\mu}\Delta_{k-1}  \vspace{1ex}\\
& \geq &  \Expsn{k}{ \Qc_{k+1}} + \frac{[\mu(1-\kappa_{k+1})\Gamma_{k+1} + \beta] t_k(t_k-1)}{2\mu} \Expsn{k}{ \Delta_k } + \frac{\beta t_{k-1}(t_{k-1}-1) }{2\mu}\Delta_{k-1} \vspace{1ex}\\
&& + {~} \beta \varphi_k \norms{G_{\lambda}x^k}^2 + (1-\mu) \iprods{G_{\lambda} x^k, x^k - x^{\star}}  \vspace{1ex}\\
&& + {~} \big[ \bar{\beta} - (1+s)\beta \big] t_{k-1}(t_{k-1} - 1)  \norms{G_{\lambda}x^k - G_{\lambda}x^{k-1} }^2  \vspace{1ex}\\
&& + {~} \frac{t_{k-1}(t_{k-1}-1)}{2} \big( 2\Lambda   - \Gamma_k \Theta_k  \big)  \bar{\Ec}_k - \frac{\Gamma_k t_{k-1}(t_{k-1}-1) }{2} \sigma_k^2.
\end{array}
\end{equation*}
Finally, using again $\Pc_{k+1} := \Qc_{k+1} + \frac{[\mu(1-\kappa_{k+1})\Gamma_{k+1} + \beta] t_k(t_k-1)}{2\mu}\Delta_k$ from \eqref{eq:VrAFBS_Lyapunov_func}, the last expression implies \eqref{eq:VrAFBS4NI_desecent_property2}.
\end{proof}

\beforesubsec
\subsection{The proof of Theorem~\ref{th:VrAFBS4NI_convergence} --- Key estimates}\label{apdx:subsec:th:VrAFBS4NI_convergence}
\aftersubsec
\begin{proof}
Suppose that we choose $\nu := \frac{\mu}{2}$,  $s := \frac{2(\mu - \nu)}{1 - \nu} = \frac{2\mu}{2-\mu}$, and $0 < \beta \leq \frac{\bar{\beta}}{1+s} = \frac{(2-\mu)\bar{\beta}}{2 + \mu}$.
Using these choices and  $t_k := \mu(k + r)$ from \eqref{eq:VrAFBS4NI_param_update} for $r \geq 2+\frac{1}{\mu}$, $\varphi_k$ and $\psi_k$ defined by \eqref{eq:VrAFBS4NI_mk_param} respectively become 
\begin{equation*} 
\arraycolsep=0.2em
\begin{array}{lcl}
\varphi_k & := & t_{k-1}(t_{k-1} - 1  - 2(\mu-\nu)) - \frac{ t_k-1}{t_k - \nu} \big[ t_k^2 - (2 + 2\mu - 3\nu)t_k + 2(1-\mu)\nu \big] \vspace{1ex}\\
& \geq & \frac{(2 - 3\mu)t_k}{2} + 3\mu^2, \vspace{1ex}\\
\psi_k &:= & \beta(t_k-1)\big[ \frac{(1-\nu) t_k(t_k-1)}{2(\mu -\nu)(t_k - \nu)} +  \frac{2(1-\mu)\nu}{t_k-\nu} + \frac{(1-\mu)\nu(t_{k-1} - 1) }{\mu(1-\nu)}  \big] \vspace{1ex}\\
& \leq &  \frac{2\beta}{\mu} t_k(t_k-1).
\end{array}
\end{equation*}
If we choose $0 < \mu < \frac{2}{3}$ as in \eqref{eq:VrAFBS4NI_param_update}, then from the first expression, we get $\varphi_k > 0$.

From the second expression, we can check that  \eqref{eq:VrAFBS4NI_param2} of Lemma~\ref{le:VrAFBS4NI_descent_property2} holds if we impose
\begin{equation*} 
\arraycolsep=0.2em
\begin{array}{lcl}
\big[ \Gamma_{k+1}(1-\kappa_{k+1}) +  \frac{5\beta}{\mu} \big] t_k(t_k-1) \leq  \Gamma_k t_{k-1}(t_{k-1}-1).
\end{array}
\end{equation*}
This condition is equivalent to $\kappa_{k+1} \geq 1 - \frac{\Gamma_k t_{k-1}(t_{k-1} - 1)}{\Gamma_{k+1} t_k(t_k - 1)} + \frac{5\beta}{\mu\Gamma_{k+1}}$, which is exactly the first condition in \eqref{eq:VrAFBS4NI_param_cond}.

Using $\varphi_k$, we can derive from \eqref{eq:VrAFBS4NI_desecent_property2} that
\begin{equation}\label{eq:VrAFBS4NI_th1_proof5} 
\hspace{-0.5ex}
\arraycolsep=0.2em
\begin{array}{lcl}
\Pc_k  & \geq &  \Expsn{k}{ \Pc_{k+1} }  +  \frac{\beta}{2}\big[(2 - 3\mu)t_k  + 6\mu^2 \big] \norms{G_{\lambda}x^k}^2 \vspace{1ex}\\
&& - {~}  \frac{\Gamma_k t_{k-1}(t_{k-1}-1) }{2}   \sigma_k^2 + \frac{\beta t_{k-1}(t_{k-1}-1)}{2\mu} \Delta_{k-1} + \frac{(2\Lambda   - \Gamma_k \Theta_k)t_{k-1}(t_{k-1}-1)}{2} \bar{\Ec}_k  \vspace{1ex}\\
&& + {~} \big[ \bar{\beta} - \frac{(2+\mu)\beta}{2-\mu} \big] t_{k-1}(t_{k-1} - 1)  \norms{G_{\lambda}x^k -  G_{\lambda}x^{k-1} }^2. 
\end{array}
\hspace{-4ex}
\end{equation}
Let us denote by $\hat{\psi}_k := \beta\big[(2 - 3\mu)t_k + 6\mu^2 \big] \geq 0$.
Then, \eqref{eq:VrAFBS4NI_th1_proof5} is equivalent to
\begin{equation}\label{eq:VrAFBS4NI_th1_proof6a} 
\arraycolsep=0.2em
\begin{array}{lcl}
\Expsn{k}{ \Pc_{k+1} } & \leq & \Pc_k - \frac{ \hat{\psi}_k }{2}  \norms{G_{\lambda}x^k}^2 - \frac{\beta t_{k-1}(t_{k-1}- 1) }{2\mu} \Delta_{k-1}   +  \frac{\Gamma_k t_{k-1}(t_{k-1}- 1)  }{2}  \sigma_k^2   \vspace{1ex}\\
&& - {~}  \big[ \bar{\beta} - \frac{(2 + \mu)\beta}{2 - \mu} \big]  t_{k-1}(t_{k-1} - 1) \norms{G_{\lambda}x^k - G_{\lambda}x^{k-1} }^2 \vspace{1ex}\\
&& - {~} \frac{(2\Lambda   - \Gamma_k \Theta_k)t_{k-1}(t_{k-1}-1)}{2} \bar{\Ec}_k. 
\end{array}
\end{equation}
Since $2\Lambda   - \Gamma_k \Theta_k \geq 0$ from the second condition of \eqref{eq:VrAFBS4NI_param_cond}, dropping the last term of \eqref{eq:VrAFBS4NI_th1_proof6a}, and then taking the total expectation $\Expn{\cdot}$ on both sides of the result, we get
\begin{equation}\label{eq:VrAFBS4NI_th1_proof6b} 
\arraycolsep=0.2em
\begin{array}{lcl}
\Expn{ \Pc_{k+1} } & \leq &   \Expn{ \Pc_k } - \frac{ \hat{\psi}_k }{2}  \Expn{ \norms{G_{\lambda}x^k}^2 }   - \frac{\beta t_{k-1}(t_{k-1}-1)}{2\mu} \Expn{ \Delta_{k-1} } +  \frac{ t_{k-1}(t_{k-1}-1)  }{2}  \Gamma_k \sigma_k^2  \vspace{1ex}\\
&& - {~}  \big[ \bar{\beta} - \frac{(2 + \mu)\beta}{2-\mu} \big] t_{k-1}(t_{k-1} - 1) \Expn{ \norms{G_{\lambda}x^k - G_{\lambda}x^{k-1} }^2 }. 
\end{array}
\end{equation}
Let $\underline{B}_K := \frac{1 }{2\beta}\sum_{k=0}^K  \Gamma_k t_{k-1}(t_{k-1}-1) \sigma_k^2$.
Then, since $\Gamma_k \leq \frac{2\Lambda}{\Theta_k}$ by \eqref{eq:VrAFBS4NI_param_cond}, we can show that
\begin{equation*}
\arraycolsep=0.2em
\begin{array}{lcl}
\underline{B}_K := \frac{1 }{2\beta}\sum_{k=0}^K   t_{k-1}(t_{k-1}-1) \Gamma_k\sigma_k^2 &\leq & B_K := \frac{\Lambda}{\beta} \sum_{k=0}^Kt_{k-1}(t_{k-1} - 1)\frac{\sigma_k^2}{\Theta_k}.
\end{array}
\end{equation*}
By induction and the last relation, we obtain from \eqref{eq:VrAFBS4NI_th1_proof6b} that
\begin{equation}\label{eq:VrAFBS4NI_th1_proof6} 
\hspace{-0.2ex}
\arraycolsep=0.2em
\begin{array}{llcl}
& \Expn{ \Pc_K } & \leq & \Pc_0 + \beta B_{K-1}\vspace{1ex}\\
& \sum_{k=0}^K \hat{\psi}_k\Expn{\norms{ G_{\lambda}x^k }^2 } & \leq &  2\Expn{ \Pc_0 } + 2\beta B_K, \vspace{1ex}\\
& \sum_{k=0}^K t_{k-1}(t_{k-1}-1) \Expn{ \Delta_{k-1} } & \leq & \frac{2\mu}{\beta}\big( \Expn{\Pc_0 } + \beta B_K \big), \vspace{1ex}\\
& \sum_{k=0}^K \big[ \bar{\beta} - \frac{(2+\mu)\beta}{2-\mu} \big] t_{k-1}(t_{k-1}-1) \Expn{ \norms{G_{\lambda}x^k - G_{\lambda}x^{k-1} }^2 } & \leq &  \Expn{ \Pc_0 } + \beta B_K.
\end{array}
\hspace{-2ex}
\end{equation}
Because $x^0 = z^0$, we have
\begin{equation*} 
\arraycolsep=0.2em
\begin{array}{lcl}
\Lc_0 & := & \beta a_0\norms{G_{\lambda}x^0}^2 + \frac{1-\mu}{2\nu\eta_0}\norms{x^0 - x^{\star}}^2 \vspace{1ex}\\
& \leq & \beta \mu(r-1)(\mu r - 2\mu - 1) \norms{G_{\lambda}x^0}^2  + \frac{2r-1}{4\beta(\mu r - 1)}\norms{x^0 - x^{\star}}^2 \vspace{1ex}\\
& \leq & \beta \mcal{R}_0^2, \quad \text{where}  \ \  \mcal{R}_0^2 := \mu^2r^2 \norms{G_{\lambda}x^0}^2  + \frac{2r-1}{4\beta^2(\mu r - 1)}\norms{x^0 - x^{\star}}^2.
\end{array}
\end{equation*}
Since $\kappa_0 \in (0, 1]$,  $x^{-1} = x^0$, $\Delta_{-1} = \Delta_0$, and $\Expn{\Delta_0} \leq \sigma_0^2$, from \eqref{eq:VrAFBS_Lyapunov_func}, we get
\begin{equation*} 
\arraycolsep=0.2em
\begin{array}{lcl}
\Expn{ \Pc_0 } &= & \Expn{ \Qc_0 } = \Expn{ \Lc_0 } + \frac{(r-1)(\mu r - \mu - 1)[(1-\kappa_0)\mu\Gamma_0 + \beta]}{2}\Expn{ \Delta_0 } \vspace{1ex}\\
& \leq & \beta \big[ \Expn{ \mcal{R}_0^2 } + \frac{\mu r^2(\mu\Gamma_0 + \beta) }{2\beta} \sigma_0^2 \big] \vspace{1ex}\\
& = & \beta \big( \Psi_0^2 + E_0^2),
\end{array}
\end{equation*}
where $\Psi_0^2 := \Expn{ \mcal{R}_0^2 }$ and $E_0^2 := \frac{\mu r^2(\mu\Gamma_0 + \beta) }{2\beta} \sigma_0^2$ are given in \eqref{eq:VrAFBS4NI_th1_convergence1_init}.

Now, since $A_k := \beta t_{k-1}(t_{k-1} - 2 - 2\mu) + 2\bar{\beta}t_{k-1} \geq \beta[ t_{k-1}^2 - 2(1+\mu)t_{k-1} + \frac{2(2+\mu)}{2-\mu}t_{k-1}] \geq \beta t_{k-1}^2$ and $1 - \mu - 2\nu \geq 0$, from Lemma~\ref{le:VrAFBS4NI_Lk_lowerbound}, we also have
\begin{equation*} 
\arraycolsep=0.2em
\begin{array}{lcl}
\Lc_k &\geq & \frac{A_k}{2}\norms{G_{\lambda}x^k}^2 \geq  \frac{\beta t_{k-1}^2}{2}\norms{G_{\lambda}x^k}^2 =  \frac{\beta\mu^2(k+r-1)^2}{2}\norms{G_{\lambda}x^k}^2.
\end{array}
\end{equation*}
Combining the last two bounds and the first estimate of \eqref{eq:VrAFBS4NI_th1_proof6}, we can derive that
\begin{equation*} 
\arraycolsep=0.2em
\begin{array}{lcl}
\frac{\beta\mu^2(K+r - 1)^2}{2}\Expn{ \norms{G_{\lambda}x^K }^2 } & \leq &  \Expn{ \Lc_K }  \leq \Expn{ \Pc_K } \leq  \beta \big( \Psi_0^2 + E_0^2 +  B_K \big).
\end{array}
\end{equation*}
This expression leads to \eqref{eq:VrAFBS4NI_th1_convergence1}.

Next, since $\hat{\psi}_k = \beta\big[(2 - 3\mu)t_k + 6\mu^2 \big] > 0$ due to $0 < \mu < \frac{2}{3}$, we obtain from the second estimate of \eqref{eq:VrAFBS4NI_th1_proof6} that
\begin{equation*} 
\arraycolsep=0.2em
\begin{array}{lcl}
\sum_{k=0}^K \beta \big[ (2 - 3\mu)\mu(k+r) + 6\mu^2 \big] \Expn{\norms{ G_{\lambda}x^k }^2 } \leq  2\Expn{ \Pc_0} + 2\beta B_K.
\end{array}
\end{equation*}
Substituting $\Expn{ \Pc_0 } \leq \beta (\Psi_0^2 + E_0^2)$ into this inequality, we obtain the first line of \eqref{eq:VrAFBS4NI_th1_summable_bound1}.
The second bound of \eqref{eq:VrAFBS4NI_th1_summable_bound1} is a consequence of the third line of \eqref{eq:VrAFBS4NI_th1_proof6} and the facts that $\Expn{ \norms{e^{k-1}}^2 } \leq \Exp{\Delta_{k-1}}$ and $\Expn{ \Pc_0} \leq \beta (\Psi_0^2 + E_0^2)$.
Similarly, the third line of \eqref{eq:VrAFBS4NI_th1_summable_bound1} is a direct consequence of the last  line of \eqref{eq:VrAFBS4NI_th1_proof6} and $\Expn{ \Pc_0 } \leq \beta (\Psi_0^2 + E_0^2)$.
\end{proof}

\beforesubsec
\subsection{The proof of Theorem~\ref{th:VrAFBS4NI_o_rates_convergence2} --- The $\BigO{1/k^2}$ and $\SmallOs{1/k^2}$-convergence rates}\label{apdx:subsec:th:VrAFBS4NI_o_rates_convergence2}
\aftersubsec
\begin{proof} 
Let us divide this proof into several steps as follows.

\vspace{0.5ex}
\noindent\textit{\textbf{Step 1. Proving \eqref{eq:VrAFBS4NI_BigO_rate}.}}
Since $B_{\infty} := \frac{\Lambda}{\beta}\sum_{k=0}^{\infty} t_{k-1}(t_{k-1}-1)\frac{\sigma_k^2}{\Theta_k}  < +\infty$ by \eqref{eq:summable_variance}, \eqref{eq:VrAFBS4NI_BigO_rate} follows directly from \eqref{eq:VrAFBS4NI_th1_convergence1} and $B_k \leq B_{\infty}$.

\vspace{0.5ex}
\noindent\textit{\textbf{Step 2. The first two summability bounds of \eqref{eq:VrAFBS4NI_summable_bound2}.}}
By our condition \eqref{eq:summable_variance}, we have $\Psi_0^2 + B_K \leq \Psi_0^2 + B_{\infty}$.
Combining this fact,  $e^k_{\lambda} := \widetilde{G}_{\lambda}^k - G_{\lambda}x^k$, $e^k := \widetilde{F}^k - Fx^k$ and $\norms{e^k_{\lambda}} \leq \norms{e^k}$ from \eqref{eq:G_estimator_bound1}, we can show from \eqref{eq:VrAFBS4NI_th1_summable_bound1} that
\begin{equation}\label{eq:VrAFBS4NI_th2_proof3} 
\arraycolsep=0.2em
\begin{array}{lcl}
\sum_{k=0}^{\infty} t_k \Expn{\norms{ G_{\lambda}x^k }^2 } & < & +\infty, \vspace{1ex}\\
\sum_{k=0}^{\infty} t_k^2 \Expn{\norms{e^k_{\lambda} }^2 } & < & +\infty, \vspace{1ex}\\
\sum_{k=0}^{\infty} t_k^2 \Expn{ \norms{e^k }^2 } & < & +\infty, \vspace{1ex}\\
\sum_{k=0}^{\infty} t_k^2  \Expn{ \norms{ G_{\lambda}x^{k+1} - G_{\lambda}x^k }^2 } & < & +\infty. 
\end{array}
\end{equation}
These prove the first and second lines of \eqref{eq:VrAFBS4NI_summable_bound2}.
Here, the last line of \eqref{eq:VrAFBS4NI_th2_proof3} requires $0 < \beta < \frac{(2-\mu)\bar{\beta}}{2+\mu}$. 
Combining $\norms{\widetilde{G}_{\lambda}^k}^2 \leq 2\norms{e_{\lambda}^k}^2 + 2\norms{G_\lambda x^k}^2$ and the first two lines of \eqref{eq:VrAFBS4NI_th2_proof3} we get.
\begin{equation}\label{eq:VrAFBS4NI_th2_proof3b} 
\arraycolsep=0.2em
\begin{array}{lcl}
\sum_{k=0}^{\infty} t_k \Expn{\norms{ \widetilde{G}_{\lambda}^k }^2 } & < & +\infty.
\end{array}
\end{equation}
\vspace{0.5ex}
\noindent\textit{\textbf{Step 3. The last summability  bound of \eqref{eq:VrAFBS4NI_summable_bound2}.}}
From \eqref{eq:VrAFBS4NI} we can show that
\begin{equation*} 
\arraycolsep=0.2em
\begin{array}{lcl}
t_k(x^{k+1} - x^k + \eta_k\widetilde{G}_\lambda^k) &= & z^k - x^k, \vspace{1ex}\\
(t_{k-1} - 1)(x^k - x^{k-1} + \eta_{k-1}\widetilde{G}_\lambda^{k-1}) &= & z^{k-1} - x^k - \eta_{k-1}\widetilde{G}_\lambda^{k-1} \vspace{1ex}\\
& = & z^k - x^k - (1-\nu)\eta_{k-1}\widetilde{G}_\lambda^{k-1}.
\end{array}
\end{equation*}
Combining these two expressions, we get
\begin{equation}\label{eq:VrAFBS4NI_th12_proof100} 
\arraycolsep=0.2em
\begin{array}{lcl}
t_k(x^{k+1} - x^k + \eta_k\widetilde{G}_{\lambda}^k) & = & (t_{k-1}-1)(x^k - x^{k-1} + \eta_{k-1}\widetilde{G}_{\lambda}^{k-1}) + (1 - \nu) \eta_{k-1}\widetilde{G}_{\lambda}^{k-1}.
\end{array}
\end{equation}
If we denote $v^{k} := x^{k+1} - x^k + \eta_k\widetilde{G}_{\lambda}^k$, then we can rewrite the last expression as
\begin{equation*} 
\arraycolsep=0.2em
\begin{array}{lcl}
t_kv^{k} & := &  \big(1 - \frac{1}{t_{k-1}}\big)t_{k-1}v^{k-1} + \frac{(1-\nu) \eta_{k-1}t_{k-1}}{t_{k-1}}\widetilde{G}_{\lambda}^{k-1}.
\end{array}
\end{equation*}
By the convexity of $\norms{\cdot}^2$ and $\frac{1}{t_{k-1}} \in (0, 1]$, since $\eta_k = \frac{2\beta(t_k-1)}{t_k-\nu} \leq 2\beta$, we have
\begin{equation}\label{eq:VrAFBS4NI_th12_proof102}  
\arraycolsep=0.2em
\begin{array}{lcl}
t_k^2\norms{v^k}^2 & \leq & \big(1 - \frac{1}{t_{k-1}}\big)t_{k-1}^2 \norms{v^{k-1}}^2 + (1-\nu)^2\eta_{k-1}^2t_{k-1}\norms{\widetilde{G}_{\lambda}^{k-1}}^2 \vspace{1ex}\\
& \leq &  t_{k-1}^2\norms{v^{k-1}}^2 - t_{k-1}\norms{v^{k-1}}^2 + 4(1-\nu)^2\beta^2t_{k-1}\norms{ \widetilde{G}_{\lambda}^{k-1} }^2.
\end{array}
\end{equation}
Taking the total expectation of this inequality, and then applying Lemma~\ref{le:A1_descent} with the fact that $\sum_{k=0}^{\infty}t_k \Expn{ \norms{\widetilde{G}_{\lambda}^k}^2 } < +\infty$ from   \eqref{eq:VrAFBS4NI_th2_proof3b}, we conclude that
\begin{equation}\label{eq:VrAFBS4NI_th2_proof3c}  
\arraycolsep=0.2em
\begin{array}{lcl}
\lim_{k\to\infty}t_k^2 \Expn{\norms{v^k}^2} ~\textrm{exists} \quad \textrm{and} \quad \sum_{k=0}^{\infty}t_k \Expn{ \norms{v^k}^2 } < +\infty.
\end{array}
\end{equation}
By Young's inequality and $\eta_k \leq 2\beta$, we have
\begin{equation*} 
\arraycolsep=0.2em
\begin{array}{lcl}
\norms{x^{k+1} - x^k}^2 \leq 2\norms{x^{k+1} - x^k + \eta_k\widetilde{G}_{\lambda}^k}^2 + 2\eta_k^2\norms{\widetilde{G}_{\lambda}^k}^2 \leq 2\norms{v^k}^2 + 8\beta^2\norms{\widetilde{G}_{\lambda}^k}^2.
\end{array}
\end{equation*}
Combining this inequality,  \eqref{eq:VrAFBS4NI_th2_proof3b}, \eqref{eq:VrAFBS4NI_th2_proof3c}, we get the last summability bound of \eqref{eq:VrAFBS4NI_summable_bound2}.

\vspace{0.5ex}
\noindent\textit{\textbf{Step 4. The limit of $t_k^2 \Expn{ \norms{v^k}^2 }$.}}
Since $\sum_{k=0}^{\infty}t_k \Expn{ \norms{v^k}^2 } < +\infty$ and $\lim_{k\to\infty}t_k^2 \Expn{ \norms{v^k}^2 }$ exists, 
applying Lemma~\ref{le:A2_sum}, we conclude that 
\begin{equation}\label{eq:VrAFBS4NI_small_o_rates_line1}
\lim_{k\to\infty}t_k^2 \Expn{ \norms{v^k}^2 } = \lim_{k\to\infty}k^2 \Expn{ \norms{v^k}^2 } = 0. 
\end{equation}
\vspace{0.5ex}
\noindent\textit{\textbf{Step 5. The first limit of \eqref{eq:VrAFBS4NI_small_o_rates}.}}
From \eqref{eq:VrAFBS4NI_th12_proof100}, we can show that
\begin{equation*} 
\arraycolsep=0.15em
\begin{array}{lcl}
t_k[x^{k+1} - x^k + \eta_k(\widetilde{G}_\lambda^k - G_\lambda x^k)]  
&=& (t_{k-1} - 1)\big[ x^k - x^{k-1} + \eta_{k-1}(\widetilde{G}_\lambda^{k-1} - G_\lambda x^{k-1}) \big] \vspace{1ex}\\
&& + {~} (1 - \nu)\eta_{k-1} \widetilde{G}_\lambda^{k-1} - t_k \eta_k G_\lambda x^k + (t_{k-1} - 1)\eta_{k-1} G_\lambda x^{k-1}.
\end{array}
\end{equation*}
Let us define $w^k := x^{k+1} - x^k + \eta_ke_{\lambda}^k$ for $e_{\lambda}^k := \widetilde{G}_\lambda^k - G_\lambda x^k$.
Then this expression becomes
\begin{equation*} 
\arraycolsep=0.2em
\begin{array}{lcl}
t_k w^k &=& (t_{k-1} - 1) w^{k-1} + (1 - \nu)\eta_{k-1} \widetilde{G}_\lambda^{k-1} - t_k \eta_k G_\lambda x^k + (t_{k-1} - 1)\eta_{k-1} G_\lambda x^{k-1} \vspace{1ex}\\
&=& \big(1 - \frac{1}{t_{k-1}} \big) t_{k-1} \big[ w^{k-1} - \frac{t_k \eta_k}{t_{k-1} - 1} (G_\lambda x^k - G_\lambda x^{k-1}) \big] \vspace{1ex}\\
&& + {~} \frac{1}{t_{k-1}} \left[ t_{k-1}[\eta_{k-1}( t_{k-1} - 1)  - t_k \eta_k ] G_\lambda x^{k-1} + (1-\nu) t_{k-1} \eta_{k-1} e_{\lambda}^{k-1} \right].
\end{array}
\end{equation*}
By the convexity of $\norms{\cdot}^2$, Young's inequality, the \rv{$\bar{\beta}$-co-coercivity} of $G_{\lambda}$ from  \eqref{eq:G_cocoerciveness}, and $\norms{e^k_{\lambda}} \leq \norms{e^k}$ from \eqref{eq:G_estimator_bound1}, we can deduce that
\begin{equation*} 
\arraycolsep=0.15em
\begin{array}{lcl}
	t_k^2 \norms{w^k}^2 &\leq& \left(t_{k-1} - 1\right) t_{k-1} \norms{ w^{k-1} - \frac{t_k \eta_k}{t_{k-1} - 1} (G_\lambda x^k - G_\lambda x^{k-1})}^2  \vspace{1ex}\\
	&& + {~} t_{k-1} \norms{[\eta_{k-1}(t_{k-1} - 1) - t_k \eta_k] G_\lambda x^{k-1} + (1-\nu) \eta_{k-1} e_{\lambda}^{k-1}}^2 \vspace{1ex}\\
	&\leq& \left(t_{k-1} - 1\right) t_{k-1} \norms{w^{k-1}}^2 + \frac{t_{k-1} t_k^2 \eta_k^2}{t_{k-1} - 1} \norms{G_\lambda x^k - G_\lambda x^{k-1}}^2 \vspace{1ex}\\
	&& - {~} 2 t_{k-1} t_k \eta_k \iprods{G_\lambda x^k - G_\lambda x^{k-1}, x^k - x^{k-1}} - 2 t_{k-1} t_k \eta_{k-1} \eta_k \iprods{G_\lambda x^k - G_\lambda x^{k-1}, e_{\lambda}^{k-1}} \vspace{1ex} \\
	&& + {~} 2 t_{k-1} [\eta_{k-1}( t_{k-1} - 1) - t_k \eta_k]^2 \norms{G_\lambda x^{k-1}}^2 + 2 (1 - \nu)^2 t_{k-1} \eta_{k-1}^2 \norms{e^{k-1}_{\lambda}}^2 \vspace{1ex}\\
	&\leq& \left(t_{k-1} - 1\right) t_{k-1} \norms{w^{k-1}}^2 + t_{k-1} t_k \eta_k \left( \frac{t_k \eta_k}{t_{k-1} - 1} - 2\bar{\beta} + \eta_{k-1} \right) \norms{G_\lambda x^k - G_\lambda x^{k-1}}^2 \vspace{1ex}\\
	&& + {~} 2t_{k-1} (\eta_{k-1}( t_{k-1} - 1) - t_k \eta_k )^2 \norms{G_\lambda x^{k-1}}^2 \vspace{1ex}\\
	&& + {~} t_{k-1}\eta_{k-1} [2(1-\nu)^2 \eta_{k-1} + t_k \eta_k] \norms{e^{k-1}}^2.
\end{array}
\end{equation*}
Let us denote by $Z_k := s_k \norms{G_\lambda x^k - G_\lambda x^{k-1}}^2 + \hat{s}_k \norms{G_\lambda x^{k-1}}^2 + \tilde{s}_k \norms{e^{k-1}}^2$, where
\begin{equation*} 
\arraycolsep=0.2em
\begin{array}{lclcl}
	s_k &:=& t_{k-1} t_k \eta_k \big( \frac{t_k \eta_k}{t_{k-1} - 1} - 2\bar{\beta} + \eta_{k-1} \big) & \leq &   8\beta^2 t_k^2, \vspace{1ex}\\
	\hat{s}_k &:=& 2t_{k-1}[\eta _{k-1}(t_{k-1} - 1) - t_k \eta_k]^2 & \leq & 8\beta^2 (\mu + \nu)^2 t_{k-1}, \vspace{1ex}\\
	\tilde{s}_k &:=&  t_{k-1}\eta_{k-1} [2(1-\nu)^2 \eta_{k-1} + t_k \eta_k] & \leq & 4\beta^2 t_{k-1} (t_k + 2) \leq 8\beta^2 t_k^2.
\end{array}
\end{equation*} 
Then, the last expression can be briefly rewritten as
\begin{equation}\label{eq:VrAFBS4NI_th12_proof101} 
\arraycolsep=0.2em
\begin{array}{lcl}
	t_k^2 \norms{w^k}^2 &\leq & t_{k-1}^2 \norms{w^{k-1}}^2 - t_{k-1} \norms{w^{k-1}}^2 + Z_k.
\end{array}
\end{equation}
Taking the total expectation $\Expn{\cdot}$ on both sides of this inequality, we get
\begin{equation*} 
\arraycolsep=0.2em
	\begin{array}{lcl}
		t_k^2 \Expn{\norms{w^k}^2} \leq t_{k-1}^2 \Expn{\norms{w^{k-1}}^2} - t_{k-1}\Expn{\norms{w^{k-1}}^2} + \Expn{ Z_k }.
	\end{array}
\end{equation*}
Using the upper bound of the coefficients $s_k$, $\hat{s}_k$, and $\tilde{s}_k$, and \eqref{eq:VrAFBS4NI_th2_proof3}, we can easily show that $\sum_{k=0}^\infty \Expn{ Z_k } < + \infty$. 
Applying again Lemma~\ref{le:A1_descent} to the last inequality, we conclude that
\begin{equation*} 
\arraycolsep=0.2em
\begin{array}{lcl}
\lim_{k\to\infty}t_k^2 \Expn{\norms{w^k}^2} ~\textrm{exists} \quad \textrm{and} \quad \sum_{k=0}^{\infty}t_k \Expn{ \norms{w^k}^2 } < +\infty.
\end{array}
\end{equation*}
Applying Lemma~\ref{le:A2_sum}, these  statements imply that $\lim_{k \to \infty} t_k^2 \Expn{\norms{w^k}^2} = 0$.

By Young's inequality and $\norms{e^k_{\lambda}} \leq \norms{e^k}$, one has
\begin{equation}\label{eq:VrAFBS4NI_th12_proof105}  
\arraycolsep=0.2em
\begin{array}{lcl}
\norms{x^{k+1} - x^k}^2 \leq 2\norms{x^{k+1} - x^k + \eta_k e_{\lambda}^k}^2 + 2 \eta_k^2 \norms{e_{\lambda}^k}^2 \leq 2\norms{w^k}^2 + 8\beta^2\norms{e^k}^2.
\end{array}
\end{equation}
Combining this fact, $\lim_{k \to \infty} t_k^2 \Expn{\norms{w^k}^2} = 0$ and \eqref{eq:VrAFBS4NI_th2_proof3}, we prove the second limit of \eqref{eq:VrAFBS4NI_small_o_rates}.

\vspace{0.5ex}
\noindent\textit{\textbf{Step 6. The second limit of \eqref{eq:VrAFBS4NI_small_o_rates}.}}
Finally, since $\eta_k = \frac{2\beta(t_k-1)}{t_k-\nu} \geq \frac{4\beta(r\mu-1)}{\mu(2r -1)}$, by Young's inequality, we have
\begin{equation*} 
\arraycolsep=0.2em
\begin{array}{lcl}
\frac{16\beta^2(r\mu-1)^2}{\mu^2(2r-1)^2} \norms{G_\lambda x^k}^2 & \leq & \eta_k^2\norms{G_\lambda x^k}^2 \vspace{1ex}\\
& \leq & 2 \norms{x^{k+1} - x^k + \eta_k (\widetilde{G}_\lambda^k - G_\lambda x^k)}^2 + 2  \norms{x^{k+1} - x^k + \eta_k \widetilde{G}_\lambda^k}^2 \vspace{1ex}\\
& \leq & 2 \norms{w^k}^2 + 2 \norms{v^k}^2.
\end{array}
\end{equation*}
This fact,  \eqref{eq:VrAFBS4NI_small_o_rates_line1}, and $\lim_{k \to \infty} t_k^2 \Expn{\norms{w^k}^2} = 0$ imply the first limit of  \eqref{eq:VrAFBS4NI_small_o_rates}.
\end{proof}

\beforesubsec
\subsection{The proof of Theorem~\ref{th:VrAFBS4NI_o_rates_convergence3} --- Almost sure convergence}\label{apdx:th:VrAFBS4NI_o_rates_convergence3}
\aftersubsec
\begin{proof}
Again, we divide this proof into the following steps.

\vspace{0.5ex}
\noindent\textit{\textbf{Step 1. The first summability bound in \eqref{eq:VrAFBS4NI_summable_bound2_as}.}}
First, applying our assumption $\Gamma_k\Theta_k \leq \Lambda$ from \eqref{eq:VrAFBS4NI_param_cond_new}, we can deduce  from \eqref{eq:VrAFBS4NI_th1_proof6a} that
\begin{equation}\label{eq:VrAFBS4NI_th3_proof1} 
\hspace{-0.5ex}
\arraycolsep=0.2em
\begin{array}{lcl}
\Expn{ \Pc_{k+1} \mid \Fc_k } & \leq &  \Pc_k - \frac{ \hat{\psi}_k }{2}  \norms{G_{\lambda}x^k}^2   - \frac{\beta t_{k-1}(t_{k-1}-1) }{2 \mu} \Delta_{k-1} -  \frac{\Lambda t_{k-1}(t_{k-1}-1)}{2} \bar{\Ec}_k \vspace{1ex}\\
&& - {~} \big(\bar{\beta} - \frac{(2+\mu)\beta}{2-\mu} \big)t_{k-1}(t_{k-1} - 1) \norms{G_{\lambda}x^k - G_{\lambda}x^{k-1} }^2 +  \frac{\Lambda t_{k-1}(t_{k-1}-1) \sigma_k^2 }{2\Theta_k}. 
\end{array}
\hspace{-2ex}
\end{equation}
Next, we denote  $U_k := \Pc_k$,  $\gamma_k :=  0$, $E_k := \frac{\Lambda t_{k-1}(t_{k-1}-1) \sigma_k^2 }{2\Theta_k}$, and 
\begin{equation*}
\arraycolsep=0.2em
\begin{array}{lcl}
V_k & := & \frac{ \hat{\psi}_k }{2}  \norms{G_{\lambda}x^k}^2 + \frac{\beta t_{k-1}(t_{k-1}-1)}{2\mu} \Delta_{k-1} + (\bar{\beta} - 2\beta)t_{k-1}(t_{k-1} - 1) \norms{G_{\lambda}x^k - G_{\lambda}x^{k-1} }^2 \vspace{1ex}\\
&& + {~}  \frac{ \Lambda  t_{k-1}(t_{k-1}-1)}{2} \bar{\Ec}_k. 
\end{array}
\end{equation*}
These quantities are nonnegative.
By \eqref{eq:summable_variance}, we have $\sum_{k=0}^{\infty}E_k = \frac{\beta}{2} B_{\infty} < +\infty$ surely.
In addition,  $\sum_{k=0}^{\infty}\gamma_k = 0 < +\infty$ and $\hat{\psi}_k = 2\beta[ (2-5\mu)t_k - (2 - 3\mu - 2\mu^2)] = \BigOs{t_k} > 0$ surely.
Moreover, $\Fc_k$ is a filtration.
By Lemma~\ref{le:RS_lemma}, we conclude that
\vspace{-0.5ex}
\begin{equation}\label{eq:VrAFBS4NI_th3_proof2} 
\arraycolsep=0.2em
\begin{array}{lcl}
\lim_{k\to\infty} \Pc_k \quad \textrm{exists almost surely}, \vspace{1ex}\\
\sum_{k=0}^{\infty}t_k   \norms{G_{\lambda}x^k}^2 & < & +\infty \quad \textrm{almost surely},  \vspace{1ex}\\
\sum_{k=0}^{\infty}t_k^2   \norms{G_{\lambda}x^k - G_{\lambda}x^{k-1}}^2 & < & +\infty \quad \textrm{almost surely}, \vspace{1ex}\\
\sum_{k=0}^{\infty}t_k^2  \Delta_k & < & +\infty \quad \textrm{almost surely}, \vspace{1ex}\\
\sum_{k=0}^{\infty}t_k^2 \bar{\Ec}_k  & < & + \infty \quad \textrm{almost surely}.
\end{array}
\vspace{-0.5ex}
\end{equation}
The second line of \eqref{eq:VrAFBS4NI_th3_proof2} is the first summability bound of \eqref{eq:VrAFBS4NI_summable_bound2_as}.

\vspace{0.5ex}
\noindent\textit{\textbf{Step 2. The second summability bound in \eqref{eq:VrAFBS4NI_summable_bound2_as}.}}
Since $\Expsn{k}{\norms{e^k}^2} \leq \Expsn{k}{ \Delta_k}$ by the first line of \eqref{eq:VR_property} and $\Gamma_k\Theta_k \leq \Lambda$ by \eqref{eq:VrAFBS4NI_param_cond_new}, utilizing these relations and the second line of \eqref{eq:VR_property}, we get
\begin{equation*} 
\arraycolsep=0.2em
\begin{array}{lcl}
\Gamma_k \Expsn{k}{\norms{e^k}^2} & \leq &  \Gamma_k \Expsn{k}{ \Delta_k } \overset{\tiny\eqref{eq:VR_property} }{ \leq } \Gamma_k(1-\kappa_k) \Delta_{k-1} + \Gamma_k\Theta_k\bar{\Ec}_k + \Gamma_k\sigma_k^2  \vspace{1ex}\\
&\leq & \Gamma_k(1-\kappa_k) \Delta_{k-1} + \Lambda\bar{\Ec}_k + \frac{\Lambda\sigma_k^2}{\Theta_k}.
\end{array}
\end{equation*}
Multiplying both sides of this inequality by $t_{k-1}(t_{k-1}-1)$ and utilizing $(1-\kappa_k)\Gamma_kt_{k-1}(t_{k-1}-1) \leq \Gamma_{k-1}t_{k-2}(t_{k-2}-1)$ from \eqref{eq:VrAFBS4NI_param2} and $\Gamma_{k-1} \leq \frac{\Lambda}{\underline{\Theta}}$ from \eqref{eq:VrAFBS4NI_param_cond_new}, we can show that
\begin{equation*} 
\arraycolsep=0.2em
\begin{array}{lcl}
\Gamma_kt_{k-1}(t_{k-1}-1) \Expsn{k}{\norms{e^k}^2}
&\leq &  \Gamma_{k-1}t_{k-2}(t_{k-2}-1) \Delta_{k-1} + \Lambda t_{k-1}(t_{k-1}-1) \big( \bar{\Ec}_k + \frac{\sigma_k^2}{\Theta_k}\big) \vspace{1ex}\\
&\leq & \Gamma_{k-1}t_{k-2}(t_{k-2}-1) \norms{e^{k-1}}^2 -  \Gamma_{k-1}t_{k-2}(t_{k-2}-1) \norms{e^{k-1}}^2 \vspace{1ex}\\
&& + {~} \frac{\Lambda}{\underline{\Theta}}t_{k-2}(t_{k-2}-1) \Delta_{k-1} + \Lambda t_{k-1}(t_{k-1}-1)  \big( \bar{\Ec}_k + \frac{\sigma_k^2}{\Theta_k}\big).
\end{array}
\end{equation*}
Let us denote by
\begin{equation*} 
\arraycolsep=0.2em
\begin{array}{lcl}
E_k &:= &  \frac{\Lambda}{\underline{\Theta}} t_{k-2}(t_{k-2}-1) \Delta_{k-1} + \Lambda t_{k-1}(t_{k-1}-1) \bar{\Ec}_k + \Lambda t_{k-1}(t_{k-1}-1) \frac{\sigma_k^2}{\Theta_k}.
\end{array}
\end{equation*}
Then, by \eqref{eq:VrAFBS4NI_th3_proof2} and \eqref{eq:summable_variance}, we have $\sum_{k=0}^{\infty}E_k \leq \frac{\Lambda}{\underline{\Theta}} \sum_{k=0}^{\infty}t_k^2\Delta_k + \Lambda\sum_{k=0}^{\infty}t_k^2\bar{\Ec}_k + \beta B_{\infty} < +\infty$ almost surely.
Thus, applying again Lemma~\ref{le:RS_lemma} with $U_k :=  \Gamma_{k-1}t_{k-2}(t_{k-2}-1) \norms{e^{k-1}}^2$, $\gamma_k := 0$, $V_k :=  \Gamma_{k-1}t_{k-2}(t_{k-2}-1) \norms{e^{k-1}}^2$, and $E_k$ given above,  we conclude that 
\begin{equation*}
\arraycolsep=0.2em
\begin{array}{lcl}
\sum_{k=0}^{\infty}\Gamma_k t^2_k  \norms{e^k}^2 & < & +\infty \quad  \textrm{almost surely}.
\end{array}
\end{equation*}
However, since $\Gamma_k \geq \underline{\Gamma} > 0$ by \eqref{eq:VrAFBS4NI_param_cond_new}, this implies the second summability bound in \eqref{eq:VrAFBS4NI_summable_bound2_as}.

\vspace{0.5ex}
\noindent\textit{\textbf{Step 3. The last summability bound in \eqref{eq:VrAFBS4NI_summable_bound2_as}.}}
Since $\norms{\widetilde{G}^k_{\lambda}}^2 \leq 2\norms{G_{\lambda}x^k}^2 + 2\norms{e_{\lambda}^k}^2 \leq 2\norms{G_{\lambda}x^k}^2 + 2\norms{e^k}^2$, using the second lines of \eqref{eq:VrAFBS4NI_th3_proof2} and \eqref{eq:VrAFBS4NI_summable_bound2_as}, this inequality implies that 
\begin{equation}\label{eq:VrAFBS4NI_th3_proof3d} 
\arraycolsep=0.2em
\begin{array}{lcl}
\sum_{k=0}^{\infty}t_k \norms{\widetilde{G}^k_{\lambda}}^2 < +\infty \quad  \textrm{almost surely}.
\end{array}
\end{equation}
Let $v^k := x^{k+1} - x^k + \eta_k\widetilde{G}_{\lambda}^k$.
Taking the conditional expectation $\Expsn{k}{\cdot}$ of \eqref{eq:VrAFBS4NI_th12_proof102},  we get
\begin{equation*} 
\arraycolsep=0.2em
\begin{array}{lcl}
\Expsn{k}{ t_k^2\norms{v^k}^2} & \leq &  t_{k-1}^2\norms{v^{k-1}}^2 - t_{k-1}\norms{v^{k-1}}^2 + 4(1-\nu)^2\beta^2t_{k-1}\norms{ \widetilde{G}_{\lambda}^{k-1} }^2.
\end{array}
\end{equation*}
Since  $\sum_{k=0}^{\infty}t_k \norms{\widetilde{G}^k_{\lambda}}^2 < +\infty$ by \eqref{eq:VrAFBS4NI_th3_proof3d}, applying Lemma~\ref{le:RS_lemma} to the last inequality, and then using Lemma~\ref{le:A2_sum}, we conclude that the following statements hold almost surely:
\begin{equation}\label{eq:VrAFBS4NI_th3_proof4}  
\arraycolsep=0.2em
\begin{array}{lcl}
\lim_{k\to\infty}t_k^2 \norms{v^k}^2 = 0 \quad \textrm{and} \quad \sum_{k=0}^{\infty}t_k  \norms{v^k}^2 < +\infty.
\end{array}
\end{equation}
Let $w^k := x^{k+1} - x^k + \eta_ke_{\lambda}^k$ as before.
Following similar arguments here, but applying them to \eqref{eq:VrAFBS4NI_th12_proof101}, we can also prove that the following statements hold almost surely:
\begin{equation}\label{eq:VrAFBS4NI_th3_proof4b}  
\arraycolsep=0.2em
\begin{array}{lcl}
\lim_{k\to\infty}t_k^2 \norms{w^k}^2 = 0 \quad \textrm{and} \quad \sum_{k=0}^{\infty}t_k  \norms{w^k}^2 < +\infty.
\end{array}
\end{equation}
Combining \eqref{eq:VrAFBS4NI_th12_proof105}, \eqref{eq:VrAFBS4NI_th3_proof4b}, and the second line of \eqref{eq:VrAFBS4NI_summable_bound2_as}, we can easily prove that $\sum_{k=0}^{\infty}t_k\norms{x^{k+1} - x^k}^2 < +\infty$ almost surely.
This is the last line of \eqref{eq:VrAFBS4NI_summable_bound2_as}.

\vspace{0.5ex}
\noindent\textit{\textbf{Step 4. The limits of \eqref{eq:VrAFBS4NI_small_o_rates_as}.}}
Combining the limits in \eqref{eq:VrAFBS4NI_th3_proof4} and \eqref{eq:VrAFBS4NI_th3_proof4b} and Young's inequality, similar to \textbf{Step 6} in the proof of Theorem~\ref{th:VrAFBS4NI_o_rates_convergence2}, we can easily show that
\begin{equation}\label{eq:VrAFBS4NI_th3_proof5}  
\arraycolsep=0.2em
\begin{array}{lcl}
\lim_{k\to\infty}t_k^2 \norms{G_{\lambda}x^k}^2 = 0 \quad \textrm{almost surely.}
\end{array}
\end{equation}
This limit is the first limit in \eqref{eq:VrAFBS4NI_small_o_rates_as}.
The second limit of \eqref{eq:VrAFBS4NI_small_o_rates_as} follows from \eqref{eq:VrAFBS4NI_th12_proof105}, the first limit of \eqref{eq:VrAFBS4NI_th3_proof4b}, and $\lim_{k\to\infty} t_k^2\norms{e^k}^2 = 0$ almost surely from \eqref{eq:VrAFBS4NI_summable_bound2_as}.

\vspace{0.5ex}
\noindent\textit{\textbf{Step 5. The existence of $\lim_{k\to\infty}\norms{z^k - x^{\star}}^2$.}}
From \eqref{eq:VrAFBS4NI}, we can easily shows that $z^k - x^k = t_k(x^{k+1} - x^k) + t_k\eta_k\widetilde{G}_{\lambda}^k = t_kv^k$.
Since $\lim_{k\to\infty}t_k^2\norms{v^k}^2 = 0$ almost surely as shown in \eqref{eq:VrAFBS4NI_th3_proof4}, we conclude that $\lim_{k\to\infty}\norms{z^k - x^k} = 0$ almost surely.
Using this limit and $\lim_{k\to\infty}t_k^2\norms{G_{\lambda}x^k}^2 = 0$ from \eqref{eq:VrAFBS4NI_th3_proof5}, we have
\begin{equation*} 
\arraycolsep=0.2em
\begin{array}{lcl}
2\vert t_k\iprods{G_{\lambda}x^k, x^k - z^k} \leq \norms{x^k - z^k}^2 + t_k^2\norms{G_{\lambda}x^k}^2 \to 0 \quad\textrm{as} \ k \to \infty \quad \textrm{almost surely}.
\end{array}
\end{equation*}
Hence, we get $\lim_{k\to\infty}t_{k-1}\vert \iprods{G_{\lambda}x^k, x^k - z^k} \vert = 0$ almost surely.

Next, as discussed in Assumption~\ref{as:A2}, $F$ is $L$-Lipschitz continuous, we have 
\begin{equation*} 
\arraycolsep=0.2em
\begin{array}{lcl}
\vert \iprods{Fx^k - Fx^{k-1}, x^k - x^{k-1}} \vert \leq \norms{Fx^k - Fx^{k-1}}\norms{x^k - x^{k-1}} = L\norms{x^k - x^{k-1}}^2.
\end{array}
\end{equation*}
Since $\lim_{k\to\infty} t_{k-1}^2 \norms{x^k - x^{k-1}}^2 = 0$ due to the second limit of \eqref{eq:VrAFBS4NI_small_o_rates_as}, we conclude that $\lim_{k\to\infty} t_{k-1}(t_{k-1} - 1) \vert \iprods{Fx^k - Fx^{k-1}, x^k - x^{k-1}} \vert = 0$ almost surely.

Now, from \eqref{eq:VrAFBS_Lyapunov_func} we can write
\begin{equation}\label{eq:VrAFBS4NI_th3_proof6}   
\vspace{-0.5ex}
\arraycolsep=0.2em
\begin{array}{lcl}
\Pc_k & = &  \beta a_k \norms{G_{\lambda}x^k}^2 + t_{k-1} \iprods{G_{\lambda}x^k, x^k - z^k}  +  \frac{[ \mu(1-\kappa_k)\Gamma_k + \beta] t_{k-1}(t_{k-1}-1)}{2\mu}\Delta_{k-1} \vspace{1ex}\\
&&  + {~} \big[ \bar{\beta} - \frac{(2+\mu)\beta}{2-\mu} \big] t_{k-1} (t_{k-1} -1) \norms{G_{\lambda}x^k - G_{\lambda}x^{k-1} }^2 \vspace{1ex}\\
&& + {~} \Lambda L t_{k-1}(t_{k-1}-1) \iprods{Fx^k - Fx^{k-1}, x^k - x^{k-1}} + \frac{c_k}{2\nu\beta}\norms{z^k - x^{\star}}^2.
\end{array}
\vspace{-0.25ex}
\end{equation}
Collecting all the necessary almost sure limits proven above, we have, almost surely
\vspace{-0.5ex}
\begin{equation*} 
\arraycolsep=0.2em
\begin{array}{lcl}
\lim_{k\to\infty}\Pc_k \ \ \textrm{exists as shown in \eqref{eq:VrAFBS4NI_th3_proof2}} , \vspace{1ex}\\
\lim_{k\to\infty}a_k \norms{G_{\lambda}x^k}^2 = \lim_{k\to\infty}t_k^2 \norms{G_{\lambda}x^k}^2 = 0 \quad \textrm{by \eqref{eq:VrAFBS4NI_th3_proof5}}, \vspace{1ex}\\
\lim_{k\to\infty}t_{k-1}\vert \iprods{G_{\lambda}x^k, x^k - z^k} \vert = 0, \vspace{1ex}\\
\lim_{k\to\infty} t_{k-1}(t_{k-1}-1) \vert \iprods{Fx^k - Fx^{k-1}, x^k - x^{k-1}} \vert = 0, \vspace{1ex}\\
\lim_{k\to\infty}t_k^2   \norms{G_{\lambda}x^k - G_{\lambda}x^{k-1}}^2 = 0 \quad \textrm{from \eqref{eq:VrAFBS4NI_th3_proof2}}, \vspace{1ex}\\
\lim_{k\to\infty}t_k^2\Delta_k = 0 \quad \textrm{from \eqref{eq:VrAFBS4NI_th3_proof2}}.
\end{array}
\vspace{-0.5ex}
\end{equation*}
Combining these limits and \eqref{eq:VrAFBS4NI_th3_proof6}, and noting that $c_k = \BigOs{1}$ due to \eqref{eq:VrAFBS_coefficients}, we conclude that  $\lim_{k\to\infty}\norms{z^k - x^{\star}}^2$ exists almost surely.

\vspace{0.5ex}
\noindent\textit{\textbf{Step 6.  Almost sure convergence of $\sets{x^k}$ and $\sets{z^k}$.}}
Since $\lim_{k\to\infty}\norms{x^k - z^k} = 0$  and $\lim_{k\to\infty}\norms{z^k - x^{\star}}$ exists almost surely, combining these facts and $\vert \norms{x^k - x^{\star}} - \norms{z^k - x^{\star}} \vert \leq \norms{x^k - z^k}$, we conclude that $\lim_{k\to\infty}\norms{x^k - x^{\star}}^2$ exists almost surely.

Finally, since $\lim_{k\to\infty}\norms{x^k - x^{\star}}^2$ exists almost surely, $\lim_{k\to\infty}\norms{G_{\lambda}x^k} = 0$ almost surely by the second line of \eqref{eq:VrAFBS4NI_th3_proof2}, and $G_{\lambda}$ is continuous, applying Lemma~\ref{le:A3_lemma}, we conclude that $\sets{x^k}$ converges to  a random variable $\bar{x}^{\star} \in \zer{G_{\lambda}}$ almost surely.
However, since $\bar{x}^{\star} \in \zer{G_{\lambda}}$ if and only if $\bar{x}^{\star} \in \zer{\Phi}$ surely, we also conclude that  $\sets{x^k}$  almost surely converges to a solution $\bar{x}^{\star} \in \zer{\Phi}$.
In addition, since $\lim_{k\to\infty}\norms{z^k - x^k} = 0$ almost surely, we can state that $\sets{z^k}$  almost surely converges to  $\bar{x}^{\star} \in \zer{\Phi}$.
\end{proof}

\vspace{-2ex}
\beforesec
\section{The Proof of Corollaries in Subsections~\ref{subsec:complexity_bounds1} and \ref{subsec:complexity_bounds2}}\label{apdx:subsec:concrete_estimator_complexity}
\aftersec
We now present the full proofs of Corollaries in Subsections~\ref{subsec:complexity_bounds1} and \ref{subsec:complexity_bounds2}.

\beforesubsec
\subsection{The finite-sum setting \eqref{eq:finite_sum_form}}\label{apdx:subsec:complexity_bounds1}
\aftersubsec
This appendix presents the proof of Corollaries \ref{co:SVRG_complexity}, \ref{co:SAGA_complexity}, and \ref{co:SARAH_complexity}, respectively.

\beforesubsubsec
\subsubsection{The proof of Corollary~\ref{co:SVRG_complexity} --- The L-SVRG variant of \ref{eq:VrAFBS4NI}}\label{apdx:co:SVRG_complexity}
\aftersubsubsec
\begin{proof} 
Since the  \ref{eq:loopless_svrg} estimator is used and $\bar{F}\tilde{x}^k := F\tilde{x}^k$, by Lemma~\ref{le:loopless_svrg_bound}, we have
\begin{equation*}
\arraycolsep=0.2em
\begin{array}{ll}
& \Delta_k = \hat{\Delta}_k, \quad \kappa_k = \alpha \mbf{p}_k, \quad  \Theta_k =  \frac{1}{(1-\alpha)b_k\mbf{p}_k} \geq \underline{\Theta} := \frac{1}{(1-\alpha)n} > 0,  \quad \textrm{and} \quad \sigma_k^2 = 0.
\end{array}
\end{equation*}
Thus, the quantity $B_K$ in Theorem~\ref{th:VrAFBS4NI_convergence} becomes $B_K := \frac{\Lambda}{\beta}\sum_{k=0}^K t_{k-1}(t_{k-1}-1)  \frac{\sigma_k^2}{\Theta_k}  = 0$.
Moreover, since $x^0 = \tilde{x}^0$, we also have $\Delta_0 = \hat{\Delta}_0 := \frac{1}{nb_0}\sum_{i=1}^n \norms{F_ix^0 - F_i\tilde{x}^0}^2 \leq \sigma_0^2 = 0$.

Next, let us choose $\alpha = \frac{1}{2}$ and assume that $\Gamma_k = \frac{5c_p\beta n^{\omega}}{\mu} =: \underline{\Gamma} > 0$ for all $k\geq 0$ and given $c_p > 0$ and $\omega > 0$.
We recall the first condition of \eqref{eq:VrAFBS4NI_param_cond} in Theorem~\ref{th:VrAFBS4NI_convergence} as follows:
\begin{equation*}
\arraycolsep=0.2em
\begin{array}{lcl}
\kappa_k & = & \alpha\mbf{p}_k = \frac{\mbf{p}_k}{2} \geq 1 - \frac{\Gamma_{k-1}t_{k-2}(t_{k-2}-1)}{\Gamma_k t_{k-1}(t_{k-1}-1)} + \frac{5\beta}{\mu \Gamma_k } = 1 + \frac{5\beta}{\mu\Gamma_k} - \frac{t_{k-2}(t_{k-2}-1)}{t_{k-1}(t_{k-1}-1)} \vspace{1ex}\\
& = & \frac{1}{c_pn^{\omega}} + \frac{2\mu}{\mu(k+r-1) - 1} - \frac{\mu+1}{(k+r-1)(\mu(k+r-1) - 1)}.
\end{array}
\end{equation*}
This condition holds if $\mbf{p}_k \geq \frac{2}{c_p n^{\omega}} + \frac{4\mu}{\mu(k + r-1)-1}$.

Now, for $r > 5 + \frac{1}{\mu}$ and $n^{\omega} \geq \frac{1}{c_p} \max\big\{ \frac{2\mu(r-1)-2}{\mu(r-5)-1}, \frac{\mu(r-1) -1 }{4\mu} \big\}$, if we choose
\vspace{-0.5ex}
\begin{equation*}
\mbf{p}_k := \begin{cases}
\frac{2}{c_p n^{\omega}} + \frac{4\mu}{\mu(k+r-1) - 1} &\textrm{if}~0 \leq k \leq K_0 := \lfloor 4c_p n^{\omega} + 1 + \mu^{-1} - r \rfloor, \\
\frac{3}{c_p n^{\omega}}  & \textrm{otherwise},
\end{cases}
\vspace{-0.25ex}
\end{equation*}
then $0 <  \frac{2}{c_p n^{\omega}} + \frac{4\mu}{\mu(k + r-1) - 1} \leq \mbf{p}_k \leq 1$.
Consequently, the first condition in \eqref{eq:VrAFBS4NI_param_cond} holds.

Since $\Gamma_k\Theta_k \leq \frac{10 c_p\beta n^{\omega}}{\mu b_k \mbf{p}_k} \leq \frac{5\beta c_p^2n^{2\omega}}{\mu b_k}$, the second condition of \eqref{eq:VrAFBS4NI_param_cond} in Theorem~\ref{th:VrAFBS4NI_convergence} and \eqref{eq:VrAFBS4NI_param_cond_new} in Theorem~\ref{th:VrAFBS4NI_o_rates_convergence3} both hold if 
\begin{equation*} 
\arraycolsep=0.2em
\begin{array}{lcl}
\frac{5\beta c_p^2n^{2\omega}}{\mu b} \leq \Lambda.
\end{array}
\end{equation*}
Clearly, if we choose $b_k = b := \big\lfloor \frac{5\beta c_p^2 n^{2\omega}}{\mu\Lambda} \big\rfloor = \lfloor c_b n^{2\omega}\rfloor$ with $c_b := \frac{5\beta c_p^2}{\mu\Lambda}$, then this condition holds.
Consequently, the second condition of \eqref{eq:VrAFBS4NI_param_cond} and \eqref{eq:VrAFBS4NI_param_cond_new} hold.

Since $B_K = 0$ and $E_0^2 = 0$, for a given $\epsilon > 0$, to guarantee  $\Expn{ \norms{G_{\lambda}x^{K}}^2} \leq \epsilon^2$,  from \eqref{eq:VrAFBS4NI_th1_convergence1} we can impose that $\frac{2\Psi_0^2 }{\mu^2(K+r-1)^2} \leq \epsilon^2$.
The last condition holds if we choose $K := \big\lfloor \frac{\sqrt{2}\Psi_0}{\mu \epsilon} \big\rfloor - 1$.

The expected total number of oracle calls is $\Expn{\Tc_K} := n + \Expn{\hat{\Tc}_K}$, where
\begin{equation*}
\arraycolsep=0.2em
\begin{array}{lcl}
\Expn{ \hat{\Tc}_K } & = & \sum_{k=0}^K ( n \mbf{p}_k + 2b ) \vspace{1ex}\\
& = & n\sum_{k=0}^K\mbf{p}_k  + 2b  (K+1) \vspace{1ex}\\
& = & 2b (K+1) + n \sum_{k=0}^{K_0} \mbf{p}_k + n \sum_{k=K_0+1}^K\mbf{p}_k\vspace{1ex}\\
& \leq & 2 b (K+1) + n \big[ \sum_{k=0}^{K_0} \frac{2}{c_p n^{\omega}} + 4\sum_{k=0}^{K_0}\frac{\mu}{\mu(k+r-1)-1} + \sum_{k=K_0+1}^K \frac{3}{c_p n^{\omega}} \big] \vspace{1ex}\\
& \leq &  \frac{2\sqrt{2}c_b\Psi_0 n^{2\omega} }{\mu \epsilon} + n \big[ \frac{2(4c_p n^{\omega} - r + 1 + \mu^{-1})}{c_p n^{\omega}} + 4\ln(4c_p n^{\omega} - r+2+\mu^{-1}) +  \frac{3\sqrt{2}\Psi_0}{c_p n^{\omega}\mu \epsilon} \big] \vspace{1ex}\\
& \leq & 4 n\big[ 2 + \ln(4c_p n^{\omega}) \big] + \frac{\sqrt{2}\Psi_0}{\mu \epsilon}  \big( 2c_bn^{2\omega} + \frac{ 3n^{1-\omega}}{c_p} \big).
\end{array}
\end{equation*}
Here, we have used $r - 1 - \frac{1}{\mu} \geq 1$ and $\sum_{k=0}^{K_0}\frac{\mu}{\mu(k+r-1)-1} \leq \ln(1 + (r-1-\frac{1}{\mu})^{-1}K_0) \leq \ln(K_0 + 1)$ in the last inequality.
Therefore, we conclude that the expected total number of oracle calls $F_i$ and evaluations $J_{\lambda T}$ in \eqref{eq:VrAFBS4NI}  is at most 
\begin{equation*}
\arraycolsep=0.2em
\begin{array}{lcl}
\Expn{ \Tc_K } & := & \Big\lfloor n + 4 n\big[ 2 + \ln(4c_p n^{\omega}) \big] + \frac{\sqrt{2}\Psi_0}{\mu \epsilon}  \big( 2c_bn^{2\omega} + \frac{ 3n^{1-\omega} }{c_p} \big) \Big\rfloor
\end{array}
\end{equation*}
to achieve $\Expn{ \norms{G_{\lambda}x^{K}}^2} \leq \epsilon^2$.
In particular, if we choose $\omega := \frac{1}{3}$, then we get $\Expn{ \Tc_K } :=  \big\lfloor n + 4 n\big[ 2 + \ln(4c_p n^{1/3}) \big] + \frac{\sqrt{2}\Psi_0 (3 + 2c_bc_p) n^{2/3} }{c_p \mu \epsilon} \big\rfloor$.
\end{proof}

\beforesubsubsec
\subsubsection{The proof of Corollary~\ref{co:SAGA_complexity} --- The SAGA variant of \ref{eq:VrAFBS4NI}}\label{apdx:co:SAGA_complexity}
\aftersubsubsec
\begin{proof}
Since the \ref{eq:SAGA_estimator} estimator is used, by Lemma~\ref{le:SAGA_estimator_full}, we have
\begin{equation*}
\arraycolsep=0.2em
\begin{array}{lcl}
\kappa_k = \frac{\alpha b_k }{n}, \quad \Theta_k =   \frac{(3-\alpha)n}{(1-\alpha)b_k^2} \geq \underline{\Theta} := \frac{3-\alpha}{(1-\alpha)n} > 0, \quad \textrm{and} \quad \sigma_k^2 = 0.
\end{array}
\end{equation*}
Note that $B_K$ in Theorem~\ref{th:VrAFBS4NI_convergence} becomes $B_K := \frac{\Lambda}{\beta}\sum_{k=0}^K t_{k-1}(t_{k-1}-1) \frac{ \sigma_k^2 }{\Theta_k} = 0$.
Moreover, since $\hat{F}_i^0 = F_ix^0$ for all $i \in [n]$, we also have $\Delta_0 :=   \frac{1}{nb_0} \sum_{i=1}^n \norms{ F_ix^0 -  \hat{F}_{i}^0 }^2  \leq \sigma_0^2 = 0$.

Next, let us choose $\alpha := \frac{1}{2}$ and $\Gamma_k := \frac{5\beta n^{\omega}}{\mu c_b} =: \underline{\Gamma} > 0 $ for all $k\geq 0$ and given $\omega > 0$ and $c_b > 0$.
We recall the first condition of \eqref{eq:VrAFBS4NI_param_cond} in Theorem~\ref{th:VrAFBS4NI_convergence} as follows:
\begin{equation*}
\arraycolsep=0.2em
\begin{array}{lcl}
\kappa_k = \frac{\alpha b_k}{n} = \frac{b_k}{2n} \geq 1 - \frac{\Gamma_{k-1}t_{k-2}(t_{k-2}-1)}{\Gamma_k t_{k-1}(t_{k-1} - 1)} + \frac{5\beta}{\mu\Gamma_k } = \frac{c_b}{n^{\omega}} + \frac{2\mu}{\mu(k+r-1) - 1} - \frac{1+\mu}{(k+r-1)(\mu(k+r-1)-1)}.
\end{array}
\end{equation*}
Let us choose 
\begin{equation*}
b_k := \begin{cases}
2c_bn^{1-\omega} + \frac{4\mu n}{\mu(k+r-1)-1} & \textrm{if}~~k \leq K_0 := \lfloor 4n^{\omega} + 1 + \mu^{-1} - r \rfloor, \\
3c_bn^{1-\omega}, &\textrm{otherwise}.
\end{cases}
\end{equation*}
Then, the last condition holds.
Consequently, the first condition of \eqref{eq:VrAFBS4NI_param_cond} holds.
Clearly, we have $b_k \leq b_{k-1}$ for all $k\geq 1$.
Moreover, using the update of $b_k$, we have
\begin{equation*}
\arraycolsep=0.2em
\begin{array}{lcl}
b_{k-1} - b_k = \frac{4\mu^2 n}{[\mu(k+r-1)-1][\mu(k+r-2)-1]} \leq  \frac{b_kb_{k-1}}{4n}.
\end{array}
\end{equation*}
Therefore, we finally get $b_{k-1} - \frac{(1-\alpha)b_kb_{k-1}}{2n} \leq b_k \leq b_{k-1}$, which verifies the condition of Lemma~\ref{le:SAGA_estimator_full}.
Note that since $1\leq b_k \leq n$ and $K_0 \geq 0$,  we require $r > 5 + \frac{1}{\mu}$ and $n^{\omega} \geq \max \big\{\frac{\mu(r-1)-1}{4\mu}, \frac{2c_b[\mu(r-1)-1]}{\mu(r-5)-1} \big\}$ as stated in Corollary~\ref{co:SAGA_complexity}.

From the definition of $\Theta_k$ above and $b_k \geq 2c_bn^{1-\omega}$, we can easily show that $\Theta_k =  \frac{(3-\alpha)n}{(1-\alpha) b_k^2} = \frac{5n}{b_k^2} \leq \frac{5}{4c_b^2n^{1-2\omega}}$.
Since  $\Gamma_k\Theta_k  \leq  \frac{ 25 \beta n^{\omega}}{4 \mu c_b^2n^{1-2\omega}}$, the second condition of \eqref{eq:VrAFBS4NI_param_cond} in Theorem~\ref{th:VrAFBS4NI_convergence} and the condition \eqref{eq:VrAFBS4NI_param_cond_new} in Theorem~\ref{th:VrAFBS4NI_o_rates_convergence3} both hold if $ \frac{ 25\beta }{4 \mu c_b^2n^{1 - 3\omega}} \leq \Lambda$.
Let us choose $\omega := \frac{1}{3}$ and  $c_b := \frac{5}{2} \sqrt{ \frac{\beta}{\mu\Lambda}}$.
Then, the last condition holds.
Consequently, the second condition of \eqref{eq:VrAFBS4NI_param_cond} and \eqref{eq:VrAFBS4NI_param_cond_new} are both satisfied.

Since $B_K = 0$ and $E_0^2 = 0$, for a given $\epsilon > 0$, to guarantee  $\Expn{ \norms{G_{\lambda}x^{K}}^2} \leq \epsilon^2$, from \eqref{eq:VrAFBS4NI_th1_convergence1}, we can impose $\frac{2\Psi_0^2 }{\mu^2(K+r-1)^2} \leq \epsilon^2$.
This condition holds if we choose $K := \big\lfloor \frac{\sqrt{2}\Psi_0}{\mu \epsilon} \big\rfloor - 1$.

The expected total number of oracle calls becomes $\Expn{\Tc_K} := n + \Expn{\hat{\Tc}_K}$, where $n$ is the number of $F_i$ evaluations for the first epoch, and 
\begin{equation*}
\arraycolsep=0.2em
\begin{array}{lcl}
\Expn{\hat{\Tc}_K} & = & \sum_{k=0}^K b_k = \sum_{k=0}^{K_0}b_k + \sum_{k=K_0+1}^Kb_k \vspace{1ex}\\
& \leq & 2c_bn^{2/3}(K_0 + 1) + \sum_{k=0}^{K_0}\frac{4\mu n}{\mu(k+r-1) - 1} + 3c_b(K-K_0)n^{2/3} \vspace{1ex}\\
& \leq & 8c_bn^{2/3}n^{1/3} + 4 n \ln(K_0 + 1) + 3c_bKn^{2/3} \vspace{1ex}\\
& \leq & 8c_bn + 4n\ln(4n^{1/3}) + \frac{3\sqrt{2}c_b \Psi_0 n^{2/3}}{\mu\epsilon}.
\end{array}
\end{equation*}
Thus, we get $\Expn{\Tc_K} :=  \big\lfloor  [8c_b + 4\ln(4n^{1/3})] n   + \frac{3\sqrt{2} c_b \Psi_0 n^{2/3}}{\mu\epsilon} \big\rfloor$.
We conclude that the total number of $F_i$ and $J_{\lambda T}$ evaluations of \eqref{eq:VrAFBS4NI}  is at most $\Expn{\Tc_K} := \BigOs{ n\ln(n) + \frac{n^{2/3}}{\epsilon} }$ to achieve $\Expn{ \norms{G_{\lambda}x^{K}}^2} \leq \epsilon^2$.
\end{proof}

\beforesubsubsec
\subsubsection{The proof of Corollary~\ref{co:SARAH_complexity} --- The L-SARAH variant of \ref{eq:VrAFBS4NI}}\label{apdx:co:SARAH_complexity}
\aftersubsubsec
\begin{proof} 
Since the \ref{eq:loopless_sarah} estimator is used and $\bar{F}x^k = Fx^k$, by Lemma~\ref{le:loopless_sarah_bound}, we have
\begin{equation*}
\arraycolsep=0.2em
\begin{array}{lcl}
\kappa_k =  \mbf{p}_k, \quad \Theta_k =  \frac{1}{b_k} \geq \underline{\Theta} := \frac{1}{n} > 0,  \quad \textrm{and} \quad \sigma_k^2 = 0.
\end{array}
\end{equation*}
The quantity $B_K$ in Theorem~\ref{th:VrAFBS4NI_convergence} becomes $B_K := \frac{\Lambda}{\beta}\sum_{k=0}^K t_{k-1}(t_{k-1}-1)\frac{\sigma_k^2}{\Theta_k} = 0$.
Moreover, since $\widetilde{F}^0 := Fx^0$, we have $\norms{\widetilde{F}^0 - Fx^0}^2 \leq \sigma_0^2 = 0$.

Next, let us choose $\Gamma_k = \frac{5c_p\beta n^{\omega}}{\mu} =: \underline{\Gamma} > 0$ for all $k\geq 0$ and  given $c_p > 0$ and $\omega > 0$.
We recall the first condition of \eqref{eq:VrAFBS4NI_param_cond} in Theorem~\ref{th:VrAFBS4NI_convergence} as follows:
\begin{equation*}
\arraycolsep=0.2em
\begin{array}{lcl}
\kappa_k = \mbf{p}_k \geq 1 - \frac{\Gamma_{k-1} t_{k-2}(t_{k-2}-1)}{\Gamma_{k} t_{k-1}(t_{k-1} - 1)} + \frac{5\beta }{\mu\Gamma_k} = \frac{1}{c_p n^{\omega}}  + \frac{2\mu}{\mu(k+r-1) - 1} - \frac{1+\mu}{(k+r-1)(\mu(k+r-1)-1)}.
\end{array}
\end{equation*}
If $r > 3 + \frac{1}{\mu}$ and $n^{\omega} \geq \frac{1}{c_p} \max\sets{\frac{\mu(r-1) - 1}{\mu(r-3)-1}, \frac{\mu(r-1)-1}{2\mu}}$, then by choosing
\begin{equation*}
\mbf{p}_k := \begin{cases}
\frac{1}{c_p n^{\omega}} + \frac{2\mu}{\mu(k+r-1) - 1} &\textrm{if}~k \leq K_0 := \lfloor 2c_p n^{\omega} - r + 1 + \mu^{-1} \rfloor, \\
\frac{2}{c_p n^{\omega}}  & \textrm{otherwise},
\end{cases}
\end{equation*}
we have $0 < \mbf{p}_k \leq 1$ and the last condition holds.
Thus the first condition of \eqref{eq:VrAFBS4NI_param_cond} holds.

Since $\Gamma_k\Theta_k = \frac{5\beta c_p  n^{\omega}}{\mu b_k}$,
the second condition of \eqref{eq:VrAFBS4NI_param_cond} in Theorem~\ref{th:VrAFBS4NI_convergence} and the condition \eqref{eq:VrAFBS4NI_param_cond_new} in Theorem~\ref{th:VrAFBS4NI_o_rates_convergence3} both hold if $ \frac{5 c_p\beta n^{\omega}}{\mu b_k} \leq  \Lambda$.
Clearly, if we choose $b_k = b := \big\lfloor\frac{5\beta c_p n^{\omega}}{\mu\Lambda} \big\rfloor = \lfloor c_b n^{\omega}\rfloor$ with $c_b := \frac{5\beta c_p}{\mu\Lambda}$, then this condition holds.
Consequently, the second condition of \eqref{eq:VrAFBS4NI_param_cond} and \eqref{eq:VrAFBS4NI_param_cond_new} are both satisfied.

Since $B_K = 0$ and $E_0^2 = 0$, for a given $\epsilon > 0$, to guarantee  $\Expn{ \norms{G_{\lambda}x^{K}}^2} \leq \epsilon^2$, from \eqref{eq:VrAFBS4NI_th1_convergence1}, we can impose $\frac{2\Psi_0^2 }{\mu^2(K+r-1)^2} \leq \epsilon^2$.
This condition holds if we choose $K := \big\lfloor \frac{\sqrt{2}\Psi_0}{\mu \epsilon} \big\rfloor - 1$.

We can estimate the expected total number of oracle calls as $\Expn{\Tc_K} := n + \Expn{ \hat{\Tc}_K}$, where
\begin{equation*}
\arraycolsep=0.2em
\begin{array}{lcl}
\Expn{\hat{\Tc}_K} & = & \sum_{k=0}^K[ \mbf{p}_k n + 2(1 - \mbf{p}_k )b] \vspace{1ex}\\
& = & n\sum_{k=0}^K\mbf{p}_k + 2b \sum_{k=0}^K(1 - \mbf{p}_k ) \vspace{1ex}\\
& = & 2b (K+1) + (n - 2b)\sum_{k=0}^{K_0} \mbf{p}_k + (n - 2b)\sum_{k=K_0+1}^K \mbf{p}_k \vspace{1ex}\\
& \leq & 2b (K+1) + n \big[ \sum_{k=0}^{K_0} \frac{1}{c_p n^{\omega}} + 2\sum_{k=0}^{K_0}\frac{\mu}{\mu(k+r-1) - 1} + \sum_{k=K_0+1}^K \frac{2}{c_p n^{\omega}} \big] \vspace{1ex}\\
& \leq &  \frac{\sqrt{2} c_b \Psi_0 n^{\omega} }{\mu \epsilon} + n \big[ \frac{2c_p n^{\omega} - r + 1 + \mu^{-1}}{c_p n^{\omega}} + 2\ln(2c_p n^{\omega} - r + 1+\mu^{-1}) +  \frac{2\sqrt{2}\Psi_0}{c_p n^{\omega}\mu \epsilon} \big] \vspace{1ex}\\
& \leq & 2n\big[ 2 + \ln(c_p n^{\omega}) \big] + \frac{\sqrt{2}\Psi_0}{\mu \epsilon}  \big( c_bn^{\omega} + \frac{ 2n^{1-\omega}}{c_p} \big).
\end{array}
\end{equation*}
Here, we have used $r - 1 - \frac{1}{n} \geq 1$ in the last inequality.
Hence, we conclude that the expected total number of oracle calls $F_i$ and $J_{\lambda T}$ evaluations of \eqref{eq:VrAFBS4NI}  is at most 
\begin{equation*}
\arraycolsep=0.2em
\begin{array}{lcl}
\Expn{\Tc_K} := \Big\lfloor n + 2n\big[ 2 + \ln( c_p n^{\omega}) \big] + \frac{\sqrt{2}\Psi_0}{\mu \epsilon}  \big( c_bn^{\omega} + \frac{ 2n^{1-\omega}}{c_p} \big) \Big\rfloor
\end{array}
\end{equation*}
to achieve $\Expn{ \norms{G_{\lambda}x^{K}}^2} \leq \epsilon^2$.
Clearly, if we choose $\omega = \frac{1}{2}$, then we obtain $\Expn{\Tc_K} := \BigOs{ n\ln(n^{1/2}) + \frac{n^{1/2}}{\epsilon} }$.
\end{proof}

\beforesubsec
\subsection{The expectation setting \eqref{eq:expectation_form}}\label{apdx:subsec:complexity_bounds2}
\aftersubsec
This appendix presents the full proof of Corollaries \ref{co:SVRG_complexity_Esetting}, \ref{co:SARAH_complexity_Esetting}, and \ref{co:HSGD_complexity_Esetting}, respectively.

\beforesubsubsec
\subsubsection{The proof of Corollary~\ref{co:SVRG_complexity_Esetting} --- The L-SVRG variant of \ref{eq:VrAFBS4NI}}\label{apdx:co:SVRG_complexity_Esetting}
\aftersubsubsec
\begin{proof} 
Since \ref{eq:loopless_svrg} is used, substituting $\tau := 1$ and $\alpha := \frac{1}{2}$ into Lemma~\ref{le:loopless_svrg_bound}, we have
\begin{equation*}
\arraycolsep=0.2em
\begin{array}{ll}
\Delta_k := 2\hat{\Delta}_k + \frac{2\sigma^2}{n_k}, \quad \kappa_k := \alpha \mbf{p}_k = \frac{\mbf{p}_k}{2}, \quad \Theta_k :=   \frac{2}{(1-\alpha)b_k\mbf{p}_k} = \frac{4}{b_k\mbf{p}_k},  \quad \textrm{and} \quad \sigma_k^2 := \frac{2\alpha\mbf{p}_k \sigma^2}{n_k} = \frac{\mbf{p}_k \sigma^2}{n_k},
\end{array}
\end{equation*}
to fulfill the $\textbf{VR}(\kappa_k, \Theta_k, \Delta_k, \sigma_k)$ condition of Definition~\ref{de:VR_Estimators}.

Next, let us choose $\Gamma_k := \frac{5\beta}{\mu  \epsilon^{\omega}} =: \underline{\Gamma} > 0$ for a given tolerance $\epsilon \in (0, 1)$ and a given $\omega > 0$.
We recall the first condition of \eqref{eq:VrAFBS4NI_param_cond} in Theorem~\ref{th:VrAFBS4NI_convergence} as follows:
\begin{equation*}
\arraycolsep=0.2em
\begin{array}{lcl}
\kappa_k = \alpha\mbf{p}_k = \frac{\mbf{p}_k}{2} \geq 1 - \frac{\Gamma_{k-1} t_{k-2}(t_{k-2}-1)}{\Gamma_k t_{k-1}(t_{k-1}-1)} + \frac{5\beta}{\mu\Gamma_k }=  \epsilon^{\omega} + \frac{2\mu}{\mu(k+r-1)-1} - \frac{1+\mu}{(k+r-1)(\mu(k+r-1)-1)}.
\end{array}
\end{equation*}
From this relation, we can see that if we choose
\begin{equation*}
\arraycolsep=0.2em
\begin{array}{lcl}
\mbf{p}_k & := & 2\epsilon^{\omega} + \frac{4\mu}{\mu(k+r-1) - 1} \geq \underline{\mbf{p}} := 2\epsilon^{\omega},
\end{array}
\end{equation*}
then the first condition in \eqref{eq:VrAFBS4NI_param_cond} holds and $\mbf{p}_k \leq 1$, provided that $r > 5 + \frac{1}{\mu}$ and $\epsilon^{\omega} \leq \frac{\mu(r-5)-1}{2\mu(r-1)-2}$.

Since $\Gamma_k\Theta_k \leq \frac{10\beta }{\mu(1-\alpha)b \mbf{p}_k \epsilon^{\omega}} \leq \frac{10\beta}{\mu b\epsilon^{2\omega}}$, the second condition of \eqref{eq:VrAFBS4NI_param_cond} in Theorem~\ref{th:VrAFBS4NI_convergence} and the condition \eqref{eq:VrAFBS4NI_param_cond_new} in Theorem~\ref{th:VrAFBS4NI_o_rates_convergence3} both hold if $\frac{10\beta}{\mu b\epsilon^{2\omega}} \leq \Lambda$.
Therefore, if we choose $b_k = b := \big\lfloor \frac{10\beta}{\mu\Lambda \epsilon^{2\omega}} \big\rfloor = \lfloor\frac{c_b}{\epsilon^{2\omega}}\rfloor $ with $c_b := \frac{10\beta }{\mu\Lambda}$, then the last condition holds.
Consequently, the second condition of \eqref{eq:VrAFBS4NI_param_cond} and \eqref{eq:VrAFBS4NI_param_cond_new} are both satisfied.
Moreover, since $b_k = b > 0$ for all $k\geq 0$, we have $\Theta_k \geq \frac{4}{b} =: \underline{\Theta} > 0$, which fulfills  \eqref{eq:VrAFBS4NI_param_cond_new}.

If we fix $n_k = n > 0$, then the quantity $B_K$ in Theorem~\ref{th:VrAFBS4NI_convergence} becomes 
\begin{equation*} 
\arraycolsep=0.2em
\begin{array}{lcl}
B_K & := & \frac{\Lambda}{\beta}\sum_{k=0}^K t_{k-1}(t_{k-1}-1) \frac{\sigma_k^2}{\Theta_k} = \frac{5 \sigma^2}{\mu n \epsilon^{2\omega}}\sum_{k = 0}^K \mbf{p}_k^2  t_{k-1}(t_{k-1} - 1) \vspace{1ex}\\
& \leq & \frac{10\sigma^2}{ n \epsilon^{\omega}}\sum_{k=0}^K\big( \epsilon^{\omega} + \frac{2\mu}{\mu (k+r-1)-1}\big)(k+r-1)[\mu(k+r-1) - 1] \vspace{1ex}\\
& \leq & \frac{10\mu \sigma^2}{ n\epsilon^{\omega}}\sum_{k=0}^K\big[ \epsilon^{\omega}(k+r-1)^2 + 2(k+r-1) \big] \vspace{1ex}\\
& \leq & \frac{10\mu \sigma^2}{ n } \big( K  + \epsilon^{-\omega} \big) (K+r-1)^2.
\end{array}
\end{equation*}
Moreover, from the construction \eqref{eq:loopless_svrg} of $\widetilde{F}^0$, we have $E_0^2 = \frac{\mu r^2(\mu\Gamma_0 + \beta)\sigma^2}{2\beta n}$.
We also have $\Gamma_0\mu + \beta = \frac{5\beta}{\epsilon^{\omega}} + \beta \leq \frac{6\beta}{\epsilon^{\omega}}$.
Using these bounds,  to guarantee  $\Expn{ \norms{G_{\lambda}x^{K}}^2} \leq \epsilon^2$, from \eqref{eq:VrAFBS4NI_th1_convergence1}, we can impose
\begin{equation*} 
\arraycolsep=0.2em
\begin{array}{lcl}
\Expn{ \norms{G_{\lambda}x^K}^2}   & \leq &   \frac{2( \Psi_0^2 + B_{K-1})}{\mu^2(K+r-1)^2} + \frac{\mu r^2(\Gamma_0\mu + \beta)}{\mu^2 n \beta(K + r - 1)^2} \sigma^2  \vspace{1ex}\\
& \leq &  \frac{2 \Psi_0^2}{\mu^2(K+r-1)^2} + \frac{6 r^2 \sigma^2}{\mu (K + r - 1)^2\epsilon^{\omega}n} + \frac{3 \sigma^2}{\mu n}\big(K + \epsilon^{-\omega}\big)  \vspace{1ex}\\
& \leq & \epsilon^2.
\end{array}
\end{equation*}
This condition holds if 
\begin{equation*} 
\arraycolsep=0.2em
\begin{array}{lcl}
\frac{2 \Psi_0^2}{\mu^2(K+r-1)^2} \leq \frac{\epsilon^2}{4}, \quad \frac{3\sigma^2K }{\mu  n} \leq  \frac{\epsilon^2}{4}, \quad  \frac{3\sigma^2  }{n\epsilon^{\omega}} \leq  \frac{\epsilon^2}{4}, \quad \textrm{and} \quad \frac{ 6 r^2\sigma^2}{\mu (K + r - 1)^2\epsilon^{\omega}n} \leq \frac{\epsilon^2}{4}.
\end{array}
\end{equation*}
These four conditions are simultaneously satisfied if we choose 
\begin{equation*} 
\arraycolsep=0.2em
\begin{array}{lcl}
K :=  \big\lfloor \frac{2\sqrt{2}\Psi_0}{\mu \epsilon} - 1\big\rfloor \quad\textrm{and} \quad n \geq \sigma^2 \max\set{ \frac{3\mu r^2}{\Psi_0^2\epsilon^{\omega}},  \frac{12}{\epsilon^{2+\omega}}, \frac{24\sqrt{2}\Psi_0}{\mu^2\epsilon^{3}} }.  
\end{array}
\end{equation*}
Hence, we can choose $\omega := 1$ and $n := \frac{c_n}{\epsilon^3} = \BigOs{\frac{1}{\epsilon^3}}$ for given $c_n := 12 \sigma^2 \max\sets{1, \frac{2\sqrt{2}\Psi_0}{\mu^2}}$.

Since we fix $n_k = n > 0$ and $b_k = b > 0$ for all $k \geq 0$, the expected total number of  oracle calls becomes $\Expn{ \Tc_K} = n + \Expn{ \hat{\Tc}_K }$, where
\begin{equation*}
\arraycolsep=0.2em
\begin{array}{lcl}
\Expn{ \hat{\Tc}_K } & = & \sum_{k=0}^K ( n \mbf{p}_k + 2b ) = 2b (K + 1) + n \sum_{k=0}^{K} \mbf{p}_k \vspace{1ex}\\
& \leq & 2 b (K + 1) + 2n \big[ \sum_{k=0}^{K} \epsilon  + 2 \sum_{k=0}^{K}\frac{\mu}{\mu(k+r-1)-1} \big] \vspace{1ex}\\
& \leq &  \frac{4c_b\sqrt{2}\Psi_0 }{\mu \epsilon^3} + \frac{2c_n}{\epsilon^{3}} \big[  \epsilon (K + 1)  + 2\ln(K + 1) \big] \vspace{1ex}\\
& \leq & \frac{4\sqrt{2} c_b \Psi_0 }{\mu \epsilon^3} + \frac{2c_n}{\epsilon^{3}} \big[ \frac{2\sqrt{2}\Psi_0}{\mu} + 2\ln\big(\frac{2\sqrt{2}\Psi_0}{\mu \epsilon}\big) \big].
\end{array}
\end{equation*}
Therefore, we obtain $\Expn{ \hat{\Tc}_K } = \BigOs{ \epsilon^{-3} +  \epsilon^{-3}\ln(\epsilon^{-1})} = \widetilde{\mcal{O}}(\epsilon^{-3})$.
This inequality shows that the expected total number of oracle calls $\Fb(\cdot, \xi)$ and $J_{\lambda T}$ evaluations of \eqref{eq:VrAFBS4NI}  is at most $\Expn{ \Tc_K } :=  \BigOs{ \epsilon^{-3} +  \epsilon^{-3}\ln(\epsilon^{-1})}$ to achieve $\Expn{ \norms{G_{\lambda}x^{K}}^2} \leq \epsilon^2$.
\end{proof}

\beforesubsubsec
\subsubsection{The proof of Corollary~\ref{co:SARAH_complexity_Esetting} --- The L-SARAH variant of \ref{eq:VrAFBS4NI}}\label{apdx:co:SARAH_complexity_Esetting}
\aftersubsubsec
\begin{proof} 
Since the \ref{eq:loopless_sarah} estimator is used, by Lemma~\ref{le:loopless_sarah_bound}, we have
\vspace{-0.5ex}
\begin{equation*}
\arraycolsep=0.2em
\begin{array}{lcl}
\kappa_k :=  \mbf{p}_k, \quad \Theta_k :=  \frac{1}{b_k},  \quad \textrm{and} \quad \sigma_k^2 := \frac{\mbf{p}_k \sigma^2}{n}.
\end{array}
\vspace{-0.5ex}
\end{equation*}
Next, let us choose $\Gamma_k = \frac{5\beta }{\mu \epsilon^{\omega}} =: \underline{\Gamma} > 0$ for all $k\geq 0$, a given tolerance $\epsilon \in (0, 1)$, and $\omega > 0$.
We recall the first condition of \eqref{eq:VrAFBS4NI_param_cond} in Theorem~\ref{th:VrAFBS4NI_convergence} as follows:
\vspace{-0.5ex}
\begin{equation*}
\arraycolsep=0.2em
\begin{array}{lcl}
\kappa_k  & = & \mbf{p}_k  \geq 1 - \frac{\Gamma_{k-1}t_{k-2}(t_{k-2}-1)}{\Gamma_kt_k(t_k-1)} + \frac{5\beta}{\mu\Gamma_k} = \frac{5\beta}{\mu\Gamma_k}  + \frac{2\mu}{\mu(k+r-1)-1} - \frac{1+\mu}{(k+r-1)(\mu(k+r-1)-1)} \vspace{1ex}\\
&  = &  \epsilon^{\omega} + \frac{2\mu}{\mu(k+r-1)-1} - \frac{1+\mu}{(k+r-1)(\mu(k+r-1)-1)}.
\end{array}
\vspace{-0.5ex}
\end{equation*}
This condition holds if we choose $\mbf{p}_{k+1} :=  \epsilon^{\omega} + \frac{2\mu}{\mu(k+r-1)-1}$, provided that $r > 3 + \frac{1}{\mu}$ and $0 < \epsilon^{\omega} \leq \frac{\mu(r-3)-1}{\mu(r-1)-1}$.
In addition, we also have $0 < \underline{\mbf{p}} := \epsilon^{\omega} \leq \mbf{p}_k \leq 1$.

Since $\Gamma_k \Theta_k = \frac{5\beta}{\mu b_k \epsilon^{\omega}}$, the second condition of \eqref{eq:VrAFBS4NI_param_cond} in Theorem~\ref{th:VrAFBS4NI_convergence} and the condition \eqref{eq:VrAFBS4NI_param_cond_new} in Theorem~\ref{th:VrAFBS4NI_o_rates_convergence3}  both hold if $\frac{5\beta}{\mu b_k \epsilon^{\omega}} \leq \Lambda$.
Therefore, if we choose $b_k = b := \big\lfloor \frac{5\beta}{\mu\Lambda\epsilon^{\omega}}  \big\rfloor = \big\lfloor \frac{c_b}{\epsilon^{\omega}}\big\rfloor$ with $c_b := \frac{5\beta}{\mu\Lambda}$, then the last condition holds.
Consequently, the second condition of \eqref{eq:VrAFBS4NI_param_cond} and \eqref{eq:VrAFBS4NI_param_cond_new} are both satisfied.
Since $b_k = b > 0$, we have $\Theta_k \geq \frac{1}{b} =: \underline{\Theta} > 0$.

If we fix $n_k = n > 0$, then the quantity $B_K$ in Theorem~\ref{th:VrAFBS4NI_convergence} becomes 
\begin{equation*} 
\arraycolsep=0.2em
\begin{array}{lcl}
B_K & := & \frac{\Lambda}{\beta}\sum_{k=0}^K  t_{k-1}(t_{k-1}-1) \frac{\sigma_k^2}{\Theta_k} = \frac{5\sigma^2}{2\mu n \epsilon^{\omega}}\sum_{k=0}^n\mbf{p}_k t_{k-1}(t_{k-1}-1) \vspace{1ex}\\
& = & \frac{5\sigma^2}{2 n \epsilon^{\omega}}\sum_{k=0}^K\big(\epsilon^{\omega} + \frac{2\mu}{\mu(k+r-1)-1}\big)(k+r-1)[\mu(k+r-1)-1] \vspace{1ex}\\
& \leq & \frac{5\mu\sigma^2}{2n}\big(K + 2\epsilon^{-\omega}\big)(K+r-1)^2.
\end{array}
\end{equation*}
Moreover, we also have $E_0^2 = \frac{\mu r^2(\mu\Gamma_0 + \beta)\sigma^2}{2\beta n}$ and $\Gamma_0\mu + \beta = \frac{5\beta}{\epsilon^{\omega}} + \beta \leq \frac{6\beta}{\epsilon^{\omega}}$.
Using these bounds, from \eqref{eq:VrAFBS4NI_th1_convergence1}, to guarantee  $\Expn{ \norms{G_{\lambda}x^{K}}^2} \leq \epsilon^2$, we impose
\begin{equation*} 
\arraycolsep=0.2em
\begin{array}{lcl}
\Expn{ \norms{G_{\lambda}x^K}^2}   & \leq &   \frac{2( \Psi_0^2 + B_{K-1})}{\mu^2(K+r-1)^2} + \frac{\mu r^2(\Gamma_0\mu + \beta)}{ \mu^2 n \beta(K + r - 1)^2} \sigma^2  \vspace{1ex}\\
& \leq &  \frac{2\Psi_0^2}{\mu^2(K+r-1)^2} + \frac{3 \sigma^2}{\mu n}\big(K + 2\epsilon^{-\omega}\big) + \frac{6 r^2\sigma^2}{\mu (K + r - 1)^2\epsilon^{\omega}n} \vspace{1ex}\\
& \leq & \epsilon^2.
\end{array}
\end{equation*}
This condition holds if 
\begin{equation*} 
\arraycolsep=0.2em
\begin{array}{lcl}
\frac{2\Psi_0^2}{\mu^2(K+r-1)^2} \leq \frac{\epsilon^2}{4}, \quad \frac{3\sigma^2 K }{\mu  n} \leq  \frac{\epsilon^2}{4}, \quad  \frac{6\sigma^2  }{\mu \beta n\epsilon^{\omega}} \leq  \frac{\epsilon^2}{4}, \quad \textrm{and} \quad \frac{6 r^2 \sigma^2}{\mu (K + r - 1)^2\epsilon^{\omega}n} \leq \frac{\epsilon^2}{4}.
\end{array}
\end{equation*}
These four conditions are simultaneously satisfied  if we choose 
\begin{equation*} 
\arraycolsep=0.2em
\begin{array}{lcl}
K :=  \big\lfloor \frac{2\sqrt{2}\Psi_0}{\mu \epsilon} - 1 \big\rfloor \quad\textrm{and} \quad n \geq \sigma^2 \max\set{  \frac{24\sqrt{2}\Psi_0}{\mu^2\epsilon^3}, \frac{24}{\mu\beta\epsilon^{2+\omega}}, \frac{3 r^2\mu}{\Psi_0^2\epsilon^{\omega}}}.  
\end{array}
\end{equation*}
Thus, we can choose $\omega = 1$ and $n := \frac{c_n}{\epsilon^3} = \BigOs{\frac{1}{\epsilon^3}}$ for given $c_n := 24\sigma^2 \max\set{  \frac{\sqrt{2}\Psi_0}{\mu^2}, \frac{1}{\mu\beta} }$.

Since we fix $n_k = n > 0$ and $b_k = b > 0$, the expected total number of oracle calls becomes $\Expn{ \Tc_K}  = n + \Expn{ \hat{\Tc}_K }$, where
\begin{equation*}
\arraycolsep=0.2em
\begin{array}{lcl}
\Expn{\hat{\Tc}_K} & = & \sum_{k=0}^K[ n \mbf{p}_k + 2b(1 - \mbf{p}_k)] = n\sum_{k=0}^K\mbf{p}_k  + 2b \sum_{k=0}^K(1 - \mbf{p}_k) \vspace{1ex}\\
& = & 2b (K + 1) + (n - 2b)\sum_{k=0}^{K} \mbf{p}_k \vspace{1ex}\\
& \leq & 2 b (K + 1) + n \big[ \sum_{k=0}^{K} \epsilon  + 2 \sum_{k=0}^{K}\frac{\mu}{\mu(k+r-1)-1} \big] \vspace{1ex}\\
& \leq &  \frac{4c_b\sqrt{2}\Psi_0 }{\mu \epsilon^2} + \frac{c_n}{\epsilon^{3}} \big[  \epsilon (K + 1) + 2\ln(K+r) \big] \vspace{1ex}\\
& \leq & \frac{4c_b\sqrt{2}\Psi_0 }{\mu \epsilon^2} + \frac{c_n}{ \epsilon^{3}} \big[ \frac{2\sqrt{2}\Psi_0}{\mu} + 2\ln\big(\frac{3\sqrt{2}\Psi_0}{\mu \epsilon}\big) \big].
\end{array}
\end{equation*}
Therefore, we obtain $\Expn{\hat{\Tc}_K } = \BigOs{ \epsilon^{-2} +  \epsilon^{-3} +  \epsilon^{-3}\ln(\epsilon^{-1})} = \widetilde{\mcal{O}}(\epsilon^{-3})$.
This inequality shows that the expected total number of oracle calls $\Fb(\cdot, \xi)$ and $J_{\lambda T}$ evaluations of \eqref{eq:VrAFBS4NI}  is at most $\Expn{\Tc_K} :=  \BigOs{ \epsilon^{-2} + \epsilon^{-3} +  \epsilon^{-3}\ln(\epsilon^{-1})}$ to achieve $\Expn{ \norms{G_{\lambda}x^{K}}^2} \leq \epsilon^2$.
\end{proof}

\beforesubsubsec
\subsubsection{The proof of Corollary~\ref{co:HSGD_complexity_Esetting} --- The HSGD variant of \ref{eq:VrAFBS4NI}}\label{apdx:co:HSGD_complexity_Esetting}
\aftersubsubsec
\begin{proof}
Since \eqref{eq:HSGD_estimator} is used for $Fx^k$, by Lemma~\ref{le:HSGD_estimator_bound}, we have
\begin{equation*}
\arraycolsep=0.2em
\begin{array}{lcl}
\kappa_k :=  1 - (1-\tau_k)^2, \quad \Theta_k := \frac{2(1-\tau_k)^2}{b_k},  \quad \textrm{and} \quad \sigma_k^2 := \frac{2\tau_k^2\sigma^2 }{ \hat{ b}_k}.
\end{array}
\end{equation*}
Now, let us choose $\Gamma_k = \frac{5\beta t_k(t_k-1)}{\theta \mu t_{k-1}(t_{k-1}-1)} \geq \frac{5\beta}{\theta\mu} =: \underline{\Gamma} > 0$ for all $k\geq 0$ and some $\theta \in (0, 1)$.
Then, the first condition of \eqref{eq:VrAFBS4NI_param_cond} in Theorem~\ref{th:VrAFBS4NI_convergence} is equivalent to
\begin{equation*}
\arraycolsep=0.2em
\begin{array}{lcl}
\kappa_k  =  1 - (1-\tau_k)^2  & \geq &  1 - \frac{\Gamma_{k-1} t_{k-2}(t_{k-2}-1)}{\Gamma_k t_{k-1}(t_{k-1}-1)} + \frac{5\beta}{\mu \Gamma_k }  =   1 - \frac{(1-\theta) t_{k-1}(t_{k-1}-1)}{t_k(t_k-1)}.
\end{array}
\end{equation*}
This condition is equivalent to 
\begin{equation*}
\arraycolsep=0.2em
\begin{array}{lcl}
\frac{(1-\theta)t_{k-1}(t_{k-1}-1)}{t_k(t_k-1)} \geq  (1-\tau_k)^2.
\end{array}
\end{equation*}
If we choose $\tau_k := 1 - \sqrt{\frac{(1-\theta)t_{k-1}(t_{k-1}-1)}{t_k(t_k-1)}}$ as stated in Corollary~\ref{co:HSGD_complexity_Esetting}, then the last condition holds.
Consequently, the  first condition of \eqref{eq:VrAFBS4NI_param_cond} in Theorem~\ref{th:VrAFBS4NI_convergence} holds.

Since $\Theta_k = \frac{2(1-\tau_k)^2}{b_k} = \frac{2(1-\theta)t_{k-1}(t_{k-1} - 1)}{b_kt_k(t_k-1)}$, we have  $\Gamma_k\Theta_k = \frac{10\beta(1-\theta)}{\mu\theta b_k}$.
The second condition of \eqref{eq:VrAFBS4NI_param_cond} in Theorem~\ref{th:VrAFBS4NI_convergence} and the condition \eqref{eq:VrAFBS4NI_param_cond_new} in Theorem~\ref{th:VrAFBS4NI_o_rates_convergence3} both hold if $ \frac{10\beta(1-\theta)}{\mu\theta b_k} \leq \Lambda$.
Therefore, if we choose $b_k = b \geq \frac{10\beta}{\mu\Lambda\theta}$, then the last condition holds.
Consequently, the second condition of \eqref{eq:VrAFBS4NI_param_cond} and \eqref{eq:VrAFBS4NI_param_cond_new} are both satisfied.
Moreover, since $\tau_k \geq 1 - \sqrt{1-\theta}$, $b_k = b > 0$ and $\hat{b}_k = \hat{b} > 0$ for all $k\geq 0$, we have $\Theta_k \geq \underline{\Theta} := \frac{\hat{c} (1-\theta)}{b} > 0$ for a given $\hat{c} > 0$.

Now, since $t_k = t_{k-1} + \mu$, if $t_{k-1} \geq \frac{1}{1 - \mu^2}$ (which holds if $r \geq 1 + \frac{1}{\mu(1 -\mu^2)}$), then we have $\sqrt{t_k(t_k - 1)}  \leq \sqrt{t_{k-1}(t_{k-1}- 1)} + 1$.
Therefore, we get
\begin{equation*} 
\arraycolsep=0.2em
\begin{array}{lcl}
\Tc_{[1]} &:= & \big[ \sqrt{t_k(t_k-1)} - \sqrt{(1-\theta)t_{k-1}(t_{k-1}-1)} \big]^2 \vspace{1ex}\\
& \leq & \big[ (1-\sqrt{1-\theta})\sqrt{t_{k-1}(t_{k-1}- 1)}  + 1\big]^2 \vspace{1ex}\\
& \leq & \big( \theta t_{k - 1} + 1 \big)^2 = \mu^2\theta^2(k+r-1)^2 + 2\mu\theta(k+r-1) + 1.
\end{array}
\end{equation*}
Since we choose $r \geq 5 + \frac{1}{\mu}$, we also have $r \geq 1 + \frac{1}{\mu(1-\mu^2)}$ and $\Gamma_0 \leq  \frac{5c_0\beta}{\mu\theta}$ for a constant $c_0 > 0$.

In this case, we can bound the quantity $B_K$ in Theorem~\ref{th:VrAFBS4NI_convergence} as follows:
\begin{equation*} 
\arraycolsep=0.2em
\begin{array}{lcl}
B_K & := & \frac{\Lambda}{\beta}\sum_{k=0}^K t_{k-1}(t_{k-1}-1) \frac{  \sigma_k^2}{\Theta_k} \vspace{1ex}\\
& = & \frac{\Lambda\sigma^2 b}{\beta(1-\theta)\hat{b} } \sum_{k=0}^K t_k(t_k-1)\Big(1 - \sqrt{\frac{(1-\theta)t_{k-1}(t_{k-1}-1)}{t_k(t_k-1)}} \Big)^2 \vspace{1ex}\\
& = &  \frac{\Lambda\sigma^2 b}{\beta(1-\theta)\hat{b} } \sum_{k=0}^K \big[ \sqrt{t_k(t_k-1)} - \sqrt{(1-\theta)t_{k-1}(t_{k-1}-1)} \big]^2 \vspace{1ex}\\
& \leq &  \frac{\Lambda\sigma^2 b}{\beta(1-\theta)\hat{b} } \sum_{k=0}^K \big[ \mu^2\theta^2(k+r-1)^2 + 2\mu\theta(k+r-1) + 1 \big] \vspace{1ex}\\
& \leq &    \frac{\Lambda\sigma^2 b}{\beta(1-\theta)\hat{b} } \big[ \mu^2\theta^2 K(K+r-1)^2 + 2\mu\theta(K+r-1)^2 + (K+1) \big].
\end{array}
\end{equation*}
In addition, we  also have $E_0^2 = \frac{\mu r^2(\mu\Gamma_0 + \beta)\sigma^2}{2\beta n_0}$.
Using these bounds and  $\Gamma_0 \leq \frac{5c_0\beta}{\mu\theta}$, to guarantee  $\Expn{ \norms{G_{\lambda}x^{K}}^2} \leq \epsilon^2$ for any $\epsilon \in (0, 1/2]$, from \eqref{eq:VrAFBS4NI_th1_convergence1}, we impose
\begin{equation*} 
\arraycolsep=0.2em
\begin{array}{lcl}
\Expn{ \norms{G_{\lambda}x^K}^2}   & \leq &   \frac{2( \Psi_0^2 + B_{K-1})}{\mu^2(K+r-1)^2} + \frac{\mu r^2(\Gamma_0\mu + \beta) }{\mu^2n_0 \beta(K + r - 1)^2}\sigma^2 \vspace{1ex}\\
& \leq &  \frac{2\Psi_0^2}{\mu^2(K+r-1)^2} +   \frac{2\Lambda\sigma^2 b}{\mu^2\beta(1-\theta)\hat{b} } \big(\mu^2\theta^2K + 2\mu \theta + \frac{1}{K}\big) + \frac{ r^2\sigma^2(5c_0 + \theta)}{(K + r - 1)^2\mu \theta n_0} \vspace{1ex}\\
& \leq & \epsilon^2.
\end{array}
\end{equation*}
If we assume that $\theta \in (0, 1/2]$ and $\theta \leq c_0$, then the last condition holds if 
\begin{equation*} 
\arraycolsep=0.2em
\begin{array}{lcl}
\frac{2\Psi_0^2}{\mu^2(K+r-1)^2} \leq \frac{\epsilon^2}{5}, \ \ \frac{4\Lambda \sigma^2 \theta^2 b K }{\beta \hat{b}} \leq  \frac{\epsilon^2}{5}, \ \ \frac{8\Lambda\theta\sigma^2b}{\mu\beta \hat{b}} \leq  \frac{\epsilon^2}{5}, \quad \frac{4\Lambda\sigma^2 b}{\mu^2\beta \hat{b} K} \leq  \frac{\epsilon^2}{5}, \ \  \textrm{and} \ \ \frac{6c_0r^2 \sigma^2}{(K + r - 1)^2\mu \theta n_0 } \leq \frac{\epsilon^2}{5}.
\end{array}
\end{equation*}
These five conditions are simultaneously satisfied  if we choose 
\begin{equation*} 
\arraycolsep=0.2em
\begin{array}{lcl}
K :=   \big\lfloor \frac{\sqrt{10}\Psi_0}{\mu \epsilon} - 1 \big\rfloor, \quad \theta := \epsilon, \quad n_0 :=  \frac{3c_0\mu r^2 \sigma^2}{\Psi_0^2\epsilon}, \quad\textrm{and} \quad \frac{\hat{b}}{b} \geq 2\sigma^2 \max\set{ \frac{10\sqrt{10}\Lambda\Psi_0}{\mu\beta\epsilon}, \frac{20\Lambda\sigma^2 }{\mu\beta\epsilon },  \frac{\sqrt{10}\Lambda\sigma^2}{\mu\beta\Psi_0\epsilon}}.  
\end{array}
\end{equation*}
Combining this condition and $b_k = b \geq \frac{3\beta}{\mu\Lambda\theta} = \frac{3\beta}{\mu\Lambda\epsilon}$ above, we conclude that $b_k = b := \big\lfloor \frac{c_b}{\epsilon} \big\rfloor$ for $c_b := \frac{3\beta}{\mu\Lambda}$ and 
$\hat{b}_k = \hat{b} := \lfloor \frac{ \hat{c}_b}{\epsilon^2} \rfloor$ for $\hat{c}_b :=  \frac{6\sqrt{10} \sigma^2}{\mu^2} \max\set{ 10 \Psi_0,  2\sqrt{10},  \frac{1}{\Psi_0}  }$ and all $k\geq 1$, and $n_0 := \big\lfloor \frac{c_n}{\epsilon}\big\rfloor$ for $c_n :=  \frac{3c_0\mu r^2 \sigma^2}{\Psi_0^2}$.
Since $\epsilon \in (0, 1/2]$ and $\theta = \epsilon$, we also have $\theta \in (0, 1/2]$.

Finally, since $b_k = b > 0$ and $\hat{b}_k = \hat{b} > 0$ for all $k\geq 0$, the expected total number of oracle calls is $\Expn{ \Tc_K}  = \Expn{ \Tc_0 } + \Expn{ \hat{\Tc}_k }$, where $\hat{\Tc}_k$ is
\vspace{-0.5ex}
\begin{equation*}
\arraycolsep=0.2em
\begin{array}{lcl}
\Expn{\hat{\Tc}_K}  = & (2b + \hat{b})(K + 1) = \big( \frac{2c_b}{\epsilon} + \frac{\hat{c}_b}{\epsilon^2 }\big)\frac{\sqrt{10}\Psi_0}{\mu \epsilon} \leq  \frac{(2c_b + \hat{c}_b) \sqrt{10}\Psi_0}{\mu \epsilon^3},
\end{array}
\vspace{-0.5ex}
\end{equation*}
and $\Expn{\Tc_0 } = 2b + n_0 =  \BigOs{\epsilon^{-1}}$ is the expected total number  of oracle calls for evaluating $\widetilde{F}^0$.
Overall, the expected total number of oracle calls $\Fb(\cdot, \xi)$ and $J_{\lambda T}$ evaluations of \eqref{eq:VrAFBS4NI}  is at most $\Expn{ \Tc_K } :=  \BigOs{ \epsilon^{-1} + \epsilon^{-3} }$ to achieve $\Expn{ \norms{G_{\lambda}x^{K}}^2} \leq \epsilon^2$.
\end{proof}

\beforesec
\section{The Convergence Analysis of Algorithm~\ref{alg:A2}}\label{apdx:sec:convergence_of_VrABFS4NI_method}
\aftersec
This appendix provides the full proofs of the results in Section~\ref{sec:VrABFS_method}.

\vspace{-1ex}
\beforesubsec
\subsection{The proof of Theorem~\ref{th:VrABFS4NI_O_rates} --- Key estimates}\label{apdx:th:VrABFS4NI_O_rates}
\aftersubsec
\begin{proof}
To analyze the convergence of \eqref{eq:VrABFS4NI}, we construct the following functions:
\vspace{-0.5ex}
\begin{equation*}
\arraycolsep=0.2em
\begin{array}{lcl}
\widehat{\Lc}_k & := &  \beta a_k \norms{S_{\lambda}u^k}^2 + t_{k-1} \iprods{S_{\lambda}u^k, u^k - s^k} + \frac{c_k}{2\nu\beta}\norms{s^k - u^{\star}}^2, \vspace{1ex}\\
\widehat{\Qc}_k &:= & \widehat{\Lc}_k + \big[ \bar{\beta} - (1+s) \beta  \big] t_{k-1} (t_{k-1} -1) \norms{S_{\lambda}u^k - S_{\lambda}u^{k-1} }^2 \vspace{1ex}\\
&& + {~} \Lambda L t_{k-1}(t_{k-1}-1) \iprods{Fx^k - Fx^{k-1}, x^k - x^{k-1}}, \vspace{1ex}\\
\widehat{\Pc}_k &:= & \widehat{\Qc}_k + \frac{[\mu(1 - \kappa_k)\Gamma_k + \beta] t_{k-1}(t_{k-1}-1) }{2 \mu }\Delta_{k-1},
\end{array}
\vspace{-0.5ex}
\end{equation*}
where $u^{\star} \in \zer{S_{\lambda}}$, $a_k := t_{k-1}[ t_{k-1} - 1 - s(1-\nu)]$,  $\mu \in (0, 1]$, $s > 0$, $c_k$ is given in \eqref{eq:VrAFBS_coefficients}, and $\Gamma_k > 0$ are given parameters.
The quantity $\Delta_k$ and the parameter $\kappa_k$ are given in Definition~\ref{de:VR_Estimators}.
The functions $\widehat{\Lc}_k$, $\widehat{\Qc}_k$, and $\widehat{\Pc}_k$ are similar to the ones $\Lc_k$, $\Qc_k$, and $\Pc_k$ in \eqref{eq:VrAFBS_Lyapunov_func}, respectively.

In this case, $S_{\lambda}u$ and $(u^k, s^k)$ play the same role as $G_{\lambda}x$ and $(x^k, z^k)$ in Section~\ref{sec:VR_AFBS_method} and the results of Theorem~\ref{th:VrAFBS4NI_convergence} still hold for $S_{\lambda}u$ and $\sets{(u^k, s^k)}$.
Using the relation $\norms{Fx^k + \xi^k} = \norms{ F(J_{\lambda T}u^k) + \lambda^{-1}(u^k - J_{\lambda T}u^k)} = \norms{S_{\lambda}u^k}$, we obtain \eqref{eq:VrABFS4NI_th1_convergence1} from \eqref{eq:VrAFBS4NI_th1_convergence1}.
The estimates in \eqref{eq:VrABFS4NI_th1_summable_bound1} follow from \eqref{eq:VrAFBS4NI_th1_summable_bound1} and the relation $\norms{Fx^k + \xi^k} = \norms{S_{\lambda}u^k}$.
Finally, since $u \in J_{\lambda T}u + \lambda T(J_{\lambda T}u)$, it is obvious to check that $\xi^k := \frac{1}{\lambda}(u^k - J_{\lambda T}u^k) = \frac{1}{\lambda}(u^k - x^k) \in T(J_{\lambda T}u^k) =  Tx^k$.
\end{proof}

\vspace{-3ex}
\beforesubsec
\subsection{The proof of Theorem~\ref{th:VrABFS4NI_o_rates}  --- The $\BigOs{1/k^2}$ and $\SmallOs{1/k^2}$-rates}\label{apdx:th:VrABFS4NI_o_rates}
\aftersubsec
\begin{proof}
The conclusions of Theorem \ref{th:VrABFS4NI_o_rates} follow from the results of Theorem~\ref{th:VrAFBS4NI_o_rates_convergence2}, respectively and the relations $\norms{x^{k+1} - x^k} = \norms{J_{\lambda T}u^{k+1} - J_{\lambda T}u^k} \leq \norms{u^{k+1} - u^k}$ and $\norms{Fx^k + \xi^k} = \norms{S_{\lambda}u^k}$ for $\xi^k := \frac{1}{\lambda}(u^k - x^k) \in Tx^k$.
We omit the details to avoid a repetition.
\end{proof}

\vspace{-3ex}
\beforesubsec
\subsection{The proof of Theorem~\ref{th:VrABFS4NI_almost_sure_convergence} --- Almost sure convergence}\label{apdx:th:VrABFS4NI_almost_sure_convergence}
\aftersubsec
\begin{proof}
First, the $\SmallOs{1/k^2}$ almost sure  convergence rates also follow from those in Theorem~\ref{th:VrAFBS4NI_o_rates_convergence3}, respectively and the relations $\norms{x^{k+1} - x^k} = \norms{J_{\lambda T}u^{k+1} - J_{\lambda T}u^k} \leq \norms{u^{k+1} - u^k}$ and $\norms{Fx^k + \xi^k} = \norms{S_{\lambda}u^k}$ for $\xi^k := \frac{1}{\lambda}(u^k - x^k) \in Tx^k$.

Next, we have  $\lim_{k\to\infty}\norms{u^k - u^{\star}}$ exists almost surely for all $u^{\star} \in \zer{S_{\lambda}}$ as in Theorem~\ref{th:VrAFBS4NI_o_rates_convergence3}.
Moreover, we also have $\lim_{k\to\infty}\norms{S_{\lambda}u^k} = 0$ almost surely as in  Theorem~\ref{th:VrAFBS4NI_o_rates_convergence3}.
Applying Lemma~\ref{le:A3_lemma} again, we conclude that $\sets{u^k}$ almost surely converges to  a $ \zer{S_{\lambda}}$-valued random variable $\bar{u}^{\star} \in \zer{S_{\lambda}}$.

Finally, since $x^k = J_{\lambda T}u^k$, $x^{\star} = J_{\lambda T}u^{\star} \in \zer{\Phi}$ almost surely, and $J_{\lambda T}$ is continuous (since it is nonexpansive), it is easy to show that $\sets{x^k}$ also converges to $x^{\star}$ almost surely by the classical Continuous Mapping Theorem \citep[Exercise 1.3.3]{durrett2019probability}.
\end{proof}

\bibliographystyle{plain}
\itemsep=-0.1em

\end{document}